\numberwithin{equation}{section}
\numberwithin{figure}{section}
\theoremstyle{plain}
\newtheorem{thm}{\protect\theoremname}[section]
  \theoremstyle{definition}
  \newtheorem{example}[thm]{\protect\examplename}
  \theoremstyle{plain}
  \newtheorem{prop}[thm]{\protect\propositionname}
  \theoremstyle{plain}
  \newtheorem{lem}[thm]{\protect\lemmaname}
  \theoremstyle{remark}
  \newtheorem{rem}[thm]{\protect\remarkname}
  \theoremstyle{plain}
  \newtheorem{cor}[thm]{\protect\corollaryname}
  \theoremstyle{definition}
  \newtheorem{defn}[thm]{\protect\definitionname}
\def\makebbb#1{
    \expandafter\gdef\csname#1\endcsname{
        \ensuremath{\Bbb{#1}}}
}\makebbb{R}\makebbb{N}\makebbb{Z}\makebbb{C}\makebbb{H}\makebbb{E}\makebbb{H}\makebbb{P}\makebbb{B}\makebbb{Q}\makebbb{E}
  \providecommand{\corollaryname}{Corollary}
  \providecommand{\definitionname}{Definition}
  \providecommand{\examplename}{Example}
  \providecommand{\lemmaname}{Lemma}
  \providecommand{\propositionname}{Proposition}
  \providecommand{\remarkname}{Remark}
\providecommand{\theoremname}{Theorem}
\begin{document}

\title{complex optimal transport and the pluripotential theory of Kähler-Ricci
solitons }

\author{Robert J. Berman, David Witt Nyström}
\begin{abstract}
Let $(X,L)$ be a (semi-) polarized complex projective variety and
$T$ a real torus acting holomorphically on $X$ with moment polytope
$P.$ Given a probability density $g$ on $P$ we introduce a new
type of Monge-Ampère measure $MA_{g}(\phi)$ on $X,$ defined for
singular $T-$invariant metrics $\phi$ on the line bundle $L,$ generalizing
the ordinary Monge-Ampère of global pluripotential theory, which corresponds
to the case when $T$ is trivial (or $g=1).$ In the opposite extreme
case when $T$ has maximal rank, i.e. $(X,L,T)$ is a toric variety,
the solution $\phi$ of the corresponding Monge-Ampère equation $MA_{g}(\phi)=\mu$
corresponds to the convex Kantorovich potential for the optimal transport
map in the Monge-Kantorovich transport problem betweeen $\mu$ and
$g$ (with a quadratic cost function). Accordingly, our general setting
can be seen as a complex version of optimal transport theory. Our
main complex geometric applications concern the pluripotential study
of singular (shrinking) Kähler-Ricci solitons. In particular, we establish
the uniqueness of such solitons, modulo automorphisms, and explore
their relation to a notion of modified K-stability inspired by the
work of Tian-Zhu. The quantization of this setup, in the sense of
Donaldson, is also studied.

\tableofcontents{}
\end{abstract}
\maketitle

\section{Introduction}

\subsection{Background and motivation}

Ever since the seminal work of Yau \cite{y} and Aubin \cite{au}
on Kähler-Einstein metrics on complex manifolds, i.e. Kähler metrics
with constant Ricci curvature, complex Monge-Ampère equations have
played a central role in complex geometry. Recall that a Riemannian
metric on a complex manifold $X$ with complex structure $J$ is said
to be Kähler when it can be written as $\omega(\cdot,J\cdot)$ for
a closed two-form $\omega$ on $X,$ which equivalently means that
$\omega$ can be locally written as 
\[
\omega=\omega_{\phi}:=\frac{i}{2\pi}\partial\bar{\partial}\phi,
\]
for a local function $\phi,$ which is strictly plurisubharmonic,
i.e. the complex Hessian $\frac{\partial^{2}\phi}{\partial z_{i}\partial\bar{z_{j}}}$
is positive. In the case when $\omega$ has integral periods, i.e.
$[\omega]\in H^{2}(X,\Z),$ the local functions $\phi$ patch to define
a Hermitian metric on a positive/ample line bundle $L$ with curvature
form $\omega_{\phi},$ representing the first Chern class $c_{1}(L)$
in $H^{2}(X,\Z).$ In particular, Yau's solution of the Calabi conjecture
concerning the existence of a Ricci flat Kähler metric $\omega$ on
a Calabi-Yau manifold $X$ amounts (in the case when $[\omega]\in H^{2}(X,\Z)$)
to the solvability of a complex Monge-Ampère equation, which in local
notation may be formulated as 
\begin{equation}
\det(\frac{\partial^{2}\phi}{\partial z_{i}\partial\bar{z_{j}}})=f\label{eq:local ma eq intro}
\end{equation}
for $f$ a given positive smooth density on the $n-$dimensional complex
manifold $X$ (similarly, the equation for non-Ricci flat Kähler-Einstein
metrics is obtained by replacing $f$ with $e^{\pm\phi}).$ In global
terms the equation \ref{eq:local ma eq intro} thus prescribes the
volume form, i.e. the top exterior power $\omega_{\phi}^{n}$ of the
Kähler metric $\omega_{\phi}.$

Subsequently Bedford and Taylor \cite{b-t1} developed the local pluripotential
theory which, in particular, furnishes a notion of weak solution to
the highly non-linear complex Monge-Ampère equation is a \emph{(singular)
Kähler-Ricci soliton} on $X$ if the metric\ref{eq:local ma eq intro}
by making sense of the the wedge products ($i\partial\bar{\partial}\phi)^{n},$
as long as $i\partial\bar{\partial}\phi$ is positive in the sense
of currents and $\phi$ is locally bounded. The local work of Bedford-Taylor
and its extension to compact Kähler manifolds by Kolodziej \cite{ko1}
and Guedj-Zeriahi \cite{g-z} was generalized to a very general global
complex geometric frame work in \cite{begz}. In particular, using
the non-pluripolar product of positive currents defined in \cite{begz}
this allows one to define the complex Monge-Ampère measure 
\[
MA(\phi):=(\frac{i}{2\pi}\partial\bar{\partial}\phi)^{n}
\]
for \emph{any} (possibly singular) positively curved metric $\phi$
on a big line bundle $L\rightarrow X.$ As shown in \cite{begz} this
leads to very general existence and uniqueness results for global
complex Monge-Ampère equations of the form \ref{eq:local ma eq intro},
by using pluripotential capacity techniques to reduce the situation
to the original setting of Aubin and Yau. A direct \emph{variational}
approach to complex Monge-Ampère equations and Kähler-Einstein metrics
was recently introduced in \cite{begz,bbgz,be2}, which can be seen
as a non-linear version of the classical Dirichlet energy variational
principle for the Laplace equation on a Riemann surface.

There is also a\emph{ real} version of this story. Indeed, as is well-known,
in the case when the plurisubharmonic function $\phi(z)$ is independent
of the imaginary part of $z$ - i.e. $\phi(z)=\varphi(x)$ for a convex
function $\varphi$ on $\R^{n}$ - the complex Monge-Ampère measure
may be identified with the \emph{real} Monge-Ampère measure $MA_{\R}(\varphi)$
of the convex function $\varphi.$ The latter measure was geometrically
defined in the classical works by Alexandrov and Pogorelov by using
the multivalued map defined by the subgradient $\partial\varphi$
of $\varphi:$ the mass $MA(\varphi)(E)$ of a Borel set $E$ in $\R^{n}$
is the Lesbegue volume of the image of $E$ under $\partial\varphi.$
This real situation appears naturally in the global complex geometric
framework when $(X,L)$ is\emph{ toric,} i.e. when there is an action
of the full complex torus $T_{c}^{n}$ on $(X,L)$ - then any metric
$\phi$ on $L$ which is invariant under the action of the corresponding
real torus $T^{n}$ is naturally identified with a convex function
$\varphi$ on $\R^{n}.$ However, an important flexibility which arises
in the setting of the real Monge-Ampère equation is that, for any
given (say, continuous) non-negative function $g(p)$ on the space
$\R^{n},$ invariantly viewed as the dual real vector space, the product
\begin{equation}
MA_{g}(\varphi)_{\R}:=MA(\varphi)_{\R}g(\nabla\varphi)\label{eq:g monge amp intro motiv}
\end{equation}
is well-defined as a measure, as long as $\varphi$ is convex (indeed,
compared with the previous definition one simply replaces the Lesbegue
measure $dp$ on the dual $\R^{n}$ by $g(p)dp)$. In particular,
this situation appears naturally in the theory of \emph{optimal (mass)
transport, }originating in the classical works of Monge and Kantorovich.
The point is that the corresponding Monge-Ampère equation
\[
MA_{g}(\varphi)_{\R}=fdx
\]
 is equivalent to the mass density $f(x)$ being transported optimally
by the $L_{loc}^{\infty}-$ map defined by gradient $\nabla\varphi$
to the density $g(p)dp:$ 

\[
(\nabla\varphi)_{*}(fdx)=g(p)dp,
\]
As shown by Brenier \cite{br} the existence of such a convex function
$\varphi$ follows from variational considerations (involving Kantorovich
duality), using that $\varphi$ minimizes the Kantorovich cost functional.
In fact, as observed in \cite{berm7} the complex and real variational
approaches in \cite{bbgz} and \cite{br}, respectively, are essentially
equivalent in the toric setting. In particular, the\emph{ cost} in
the real setting corresponds, in the complex setting, to\emph{ energy.}

One of the main general aims of the present paper is to consider the
general situation where a torus $T$ - not necessarily of maximal
rank - acts on $(X,L)$ and define a generalized version of the $g-$Monge-Ampère
measure in formula \ref{eq:g monge amp intro motiv}, for any $T-$invariant,
possibly singular, metric $\phi$ on $L$ with positive curvature
current. This situation can thus be seen as a hybrid of the complex
and the real settings for the Monge-Ampère equation and leads to a
complex generalization of the theory of optimal transport (this point
will be expanded on elsewhere). Our starting point is the basic observation
that for a smooth metric $\phi$ on $L$ there is natural generalization
of the gradient map, namely the\emph{ moment map} $m_{\phi}$ determined
by the metric $\phi$ in terms of symplectic geometry, which defines
a map from $X$ into the dual of the Lie algebra of $T$ (depending
on the local first derivatives of $\phi$ along the torus orbits).
Inspired by some ideas originating in some recent work on the connection
between filtrations and pluripotential theory one one hand and test
configurations and geodesic rays in Kähler geometry on the other \cite{w,r-n1,hi},
we are led to a canonical definition of the generalized $g-$Monge-Ampère
measure, which has good continuity properties. 

Our main motivation for developing this general framework is to provide
a pluripotential theoretic notion of a weak solution to the \emph{Kähler-Ricci
soliton equation} on a, possibly singular, Fano variety $X,$ where
$L$ is the anti-canonical line bundle $-K_{X},$ i.e. the top exterior
power of the holomorphic tangent bundle of $X.$ In the ordinary smooth
case a Kähler metric $\omega$ is a Kähler-Ricci soliton precisely
when it can be written as the curvature form $\omega_{\phi}$ of a
smooth metric $\phi$ on $-K_{X},$ locally satisfying the complex
Monge-Ampère equation 
\begin{equation}
\det(\frac{\partial^{2}\phi}{\partial z_{i}\partial\bar{z_{j}}})e^{f^{\phi}}=e^{-\phi},\label{eq:local krs eq intro}
\end{equation}
 where $f^{\phi}$ is the Hamiltonian function corresponding to the
imaginary part of a holomorphic vector field $V$ on $X,$ generating
an action of a real torus $T$ on $(X,-K_{X})$ (i.e. the orbits of
the torus $T$ coincide with the closure of the orbits of flow of
the imaginary part of $V).$ Since the Hamiltonian may be expressed
as $f^{\phi}=\left\langle m_{\phi},\xi\right\rangle ,$ where $\xi$
is the element in the Lie algebra of $T$ corresponding to $V,$ the
right hand side in the equation \ref{eq:local krs eq intro} is the
density of $MA_{g_{V}}(\phi)$ for $g_{V}(\cdot)=\exp\left\langle \cdot,\xi\right\rangle .$
This observation will allow us to define the notion of a weak solution
$\phi$ of the Kähler-Ricci soliton equation \ref{eq:local krs eq intro},
a notion which turns out to be very useful for both uniqueness and
existence problems, as well as convergence problem. We also develop
a general ``quantized'' (finite dimensional) version of the $g-$Monge-Ampère
setting, which generalizes Donaldson's setting of balanced metrics
introduced in \cite{do3} and, in particular, leads to a new finite
dimensional analog of a Kähler-Ricci soliton.

\subsection{Pluripotential theory of moment maps and $g-$Monge-Ampère equations}

Let $L$ be a holomorphic line bundle over a compact $n-$dimensional
complex manifold $X$ and assume that $(X,L)$ comes with a holomorphic
action of a real torus $T$ of rank $m$ (which equivalently means
that the $T-$action on $X$ is Hamiltonian). To fix ideas first assume
that $L$ is ample. Then any $T-$invariant smooth positively curved
metric $\phi$ on $L$ induces, via the symplectic form $\omega_{\phi}$
defined by the curvature of $\phi,$ a\emph{ moment map} 
\[
m_{\phi}:\,\,\, X\rightarrow\R^{m},\,\,\,\,\,\, P:=m_{\phi}(X)
\]
 for the $T-$action, where we have identified the Lie algebra of
$T$ (and its dual) with $\R^{m}$ in the standard way. As is well-known
the image $P$ is compact and independent of $\phi$ - more precisely,
$P$ is a convex polytope and thus usually referred to as the \emph{moment
polytope} - and coincides with the support of the corresponding (normalized)
\emph{Duistermaat-Heckman measure} 
\[
\nu:=(m_{\phi})_{*}MA(\phi),\,\,\,\,\,\,\, MA(\phi):=\frac{1}{c_{1}(L)^{n}}\omega_{\phi}^{n}
\]
defining a probability measure on $\R^{m},$ which is absolutely continuous
with respect to Lesbegue measure and independent of $\phi$ \cite{d-h}.

More generally, this setup applies as long as $L$ is \emph{semi-positive
and big,} i.e. $L$ admits some smooth metric $\phi$ with non-negative
curvature form $\omega_{\phi}$ of positive total volume, which will
be assumed henceforth (in particular, by passing to a resolution $X$
may be allowed to be singular). Given a continuous non-negative function
$g$ on $\R^{n},$ or rather on $P$ (usually normalized so that $g\nu$
is a probability measure) we will write 

\begin{equation}
MA_{g}(\phi):=MA(\phi)g(m_{\phi}),\label{eq:gma for smooth intro}
\end{equation}
which defines, for any smooth and $T-$invariant non-negatively curved
metric $\phi,$ a measure on $X$ which will be referred to as the\emph{
$g-$Monge-Ampère measure} (or the $g-$modified Monge-Ampère measure).
As explained above one of the main points of the present paper is
to extend the definition of $MA_{g}(\phi)$ to the space of\emph{
all} (possibly singular) $T-$invariant metrics on $L$ with positive
curvature current and show that it has the same good continuity properties
as in the standard case when $g=1.$
\begin{thm}
\label{thm:g-ma intro}Let $L\rightarrow X$ be a line bundle with
an action of a real torus $T,$ as above, and $g$ a continuous function
on the corresponding moment polytope $P.$ Then there exists a unique
extension of the smooth $g-$Monge-Ampère measure $MA_{g}(\phi)$
defined by formula \ref{eq:gma for smooth intro} to the space of
all $T-$invariant (possibly singular) metrics $\phi$ on $L$ with
positive curvature with the following properties:
\begin{itemize}
\item If $\phi_{j}$ is a sequence of metrics decreasing to a locally bounded
metric $\phi,$ then the corresponding measures $MA_{g}(\phi_{j})$
converge weakly to $MA_{g}(\phi)$ on $X.$ 
\item The measure $MA_{g}(\phi)$ does not charge pluripolar subsets of
$X$
\item The measure $MA_{g}(\phi)$ is local with respect to the $T-$plurifine
topology on $X$
\end{itemize}
Moreover, the following properties also hold:
\begin{itemize}
\item \textup{$\int_{X}MA_{g}(\phi)\leq\int_{P}gd\nu$ }with equality if
and only if the metric $\phi$ has full Monge-Ampère mass (i.e. $\int_{X}MA(\phi)=1).$
\item The convergence statement in the first point above more generally
holds for any decreasing sequence of singular metrics converging to
a metric $\phi$ of full Monge-Ampère mass.
\end{itemize}
\end{thm}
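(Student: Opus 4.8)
The plan is to build $MA_{g}(\phi)$ on top of the classical Bedford--Taylor / BEGZ theory \cite{b-t1,begz} of the unweighted complex Monge--Amp\`ere operator, the only new ingredient being the \emph{moment-map weight} $g(m_{\phi})$, whose meaning for singular $\phi$ is supplied by elementary convex analysis. I would first dispose of \emph{locally bounded} $T$-invariant metrics. Fixing a point of $X$ and local holomorphic coordinates in which the $T$-action is linearized along its orbits, one gets logarithmic coordinates $t\in\R^{m}$ on the orbit directions in which $\phi$ is, fibrewise over the transverse directions, a convex function $\varphi(t,\cdot)$, and $m_{\phi}$ is — via the fixed affine identification coming from the linearization — the gradient $\nabla_{t}\varphi$. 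Since a locally bounded convex function is locally Lipschitz, $m_{\phi}$ is a Borel map defined off a Lebesgue-null subset of the orbit directions with values in the moment polytope $P$ (the latter because it holds for smooth approximants $\phi_{j}\downarrow\phi$, along which $\nabla_{t}\varphi_{j}\to\nabla_{t}\varphi$ at every point of differentiability of the limit, and $P$ is compact). On the non-differentiability locus one instead reads the weight in the Alexandrov fashion recalled in \ref{eq:g monge amp intro motiv}: a Borel subset $E$ of a single orbit-fibre receives mass $\int_{\partial_{t}\varphi(E)}g\,dp$ rather than $\|g\|_{\infty}$ times its real Monge--Amp\`ere mass, and where $\varphi$ is differentiable $MA_{\R}$-a.e.\ this is just $g(\nabla_{t}\varphi)\,MA_{\R}(\varphi)$. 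Assembling these fibrewise data one \emph{defines}, for locally bounded $T$-invariant $\phi$, the measure $MA_{g}(\phi)$, which on the good locus equals $g(m_{\phi})\,MA(\phi)$ with $MA(\phi)$ the Bedford--Taylor measure; this is a genuine measure since the weight is bounded and Borel, and it reduces to \ref{eq:gma for smooth intro} when $\phi$ is smooth.

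The analytic heart is the continuity statement along decreasing sequences with locally bounded limit, which also yields uniqueness. Let $\phi_{j}\downarrow\phi$ with $\phi$ locally bounded. Bedford--Taylor gives $MA(\phi_{j})\to MA(\phi)$ weakly, so everything hinges on the densities $g(m_{\phi_{j}})$. The convex-analytic input is that if finite convex functions decrease to a finite convex limit, then their subgradients converge in the sense of graphs, and their gradients converge pointwise wherever the limit is differentiable. The main obstacle is exactly that $MA(\phi_{j})$ may concentrate precisely onto the non-differentiability loci, so pointwise-a.e.\ (for Lebesgue in the orbit directions) convergence of $m_{\phi_{j}}$ need not survive integration against the possibly singular measures $MA(\phi_{j})$. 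Overcoming this requires the structure theory of $T$-invariant positive currents: locally, the interplay of $MA(\phi)$ with the real Monge--Amp\`ere operator in the orbit directions (partial Legendre transform / symplectic reduction) reduces the weak convergence of the weighted measures — tested against continuous functions and patched by a partition of unity — to the one-variable assertion that, for convex $\varphi_{j}\downarrow\varphi$, $g(\nabla\varphi_{j})\,MA_{\R}(\varphi_{j})\to g(\nabla\varphi)\,MA_{\R}(\varphi)$ weakly, which is precisely what the Alexandrov definition of the weight together with graph-convergence of subgradients delivers. Granting this, uniqueness on locally bounded metrics is automatic: smooth $T$-invariant metrics are dense from above among locally bounded ones (average a Demailly regularization over $T$), and continuity forces any candidate extension to coincide with the one above.

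For locally bounded $\phi$, the facts that $MA_{g}(\phi)$ charges no pluripolar set and is local in the $T$-plurifine topology are inherited from the corresponding properties of $MA(\phi)$ \cite{b-t1,begz}, since the weight is a bounded Borel density; the one extra remark is that two $T$-invariant metrics agreeing on a $T$-plurifine open set have, $MA$-a.e.\ there, the same subgradient in the orbit directions, hence the same weight. For a general singular $\phi$ with positive curvature current I would extend by the non-pluripolar truncation of \cite{begz}: fixing a bounded reference metric $\psi$, set
\[
MA_{g}(\phi):=\lim_{k\to\infty}\mathbf{1}_{\{\phi>\psi-k\}}\,MA_{g}\!\big(\max(\phi,\psi-k)\big),
\]
the limit being monotone by the plurifine locality just established; this is manifestly the unique extension with the three bulleted properties, and it specialises to the non-pluripolar product when $g\equiv1$.

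Finally the mass statements. Pushing forward by the moment map, $\int_{X}MA_{g}(\phi)=\int_{P}g\,d\big((m_{\phi})_{*}MA(\phi)\big)$ for locally bounded $\phi$, and the truncation exhibits $(m_{\phi})_{*}MA(\phi)$, for general $\phi$, as an increasing limit of measures dominated by the Duistermaat--Heckman measure $\nu$; hence $\int_{X}MA_{g}(\phi)\le\int_{P}g\,d\nu$. Equality forces $(m_{\phi})_{*}MA(\phi)=\nu$, in particular $\int_{X}MA(\phi)=\nu(P)=1$, i.e.\ $\phi$ has full Monge--Amp\`ere mass; conversely, for full-mass $\phi$ the Duistermaat--Heckman rigidity $(m_{\phi})_{*}MA(\phi)=\nu$ — true for smooth $\phi$ by \cite{d-h} and extended to full-mass $\phi$ by the monotone truncation and the continuity of the second paragraph — yields equality. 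The same truncation-plus-continuity scheme, fed with this mass equality, upgrades the convergence statement from locally bounded limits to arbitrary decreasing sequences with full-mass limit, exactly as for $g\equiv1$ in \cite{begz}. The step I expect to be the crux is the one flagged above: reconciling the Lebesgue-a.e.\ convergence of the moment maps $m_{\phi_{j}}$ with the possible concentration of $MA(\phi_{j})$, which is what forces one to exploit the partial-Legendre product structure and the Alexandrov reading of the weight rather than a naive pointwise evaluation of $g$.
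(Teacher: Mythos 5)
Your overall architecture (define the operator for locally bounded metrics, prove continuity along decreasing sequences, then extend by the non-pluripolar truncation and deduce uniqueness, masses, and the full-mass case) parallels the paper, and the truncation/uniqueness and mass bookkeeping at the end are essentially right. But the two steps you yourself flag as the crux contain a genuine gap. First, your definition of $MA_{g}(\phi)$ for merely locally bounded $\phi$ rests on reading the weight fibrewise along the torus orbits ``in the Alexandrov fashion'': when the rank $m$ of $T$ is smaller than $n$ (the case of main interest, e.g.\ K\"ahler--Ricci solitons with a one-dimensional torus), the complex Monge--Amp\`ere measure of a bounded $T$-invariant metric does \emph{not} disintegrate over the transverse directions into real Monge--Amp\`ere measures of the convex fibre functions $\varphi(t,\cdot)$, so the subgradient-image recipe on an orbit has no demonstrated relation to the mass that $MA(\phi)$ actually puts on the non-differentiability locus; and that locus can carry positive $MA(\phi)$-mass, so it cannot be dismissed as negligible. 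Second, and for the same reason, the claimed reduction of the continuity statement to the one-variable assertion $g(\nabla\varphi_{j})MA_{\R}(\varphi_{j})\to g(\nabla\varphi)MA_{\R}(\varphi)$ via a ``partial Legendre transform / symplectic reduction'' is asserted rather than proved: no such product structure is available for the complex operator acting on metrics that are only locally bounded, and this missing mechanism is exactly the analytic content of the theorem, not a technical afterthought.

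The paper circumvents the pointwise meaning of $m_{\phi}$ altogether. The key device is the Kiselman-type envelope $P_{\lambda}\phi:=\inf_{t\geq0}\left(\phi_{t}-\left\langle t,\lambda\right\rangle \right)$ of formula \ref{eq:def of lower env metric}, together with the identity $MA_{\chi_{\lambda}}(\phi)=MA(P_{\lambda}\phi)$ (formula \ref{eq:relation for ma of chi in lemma}, proved for smooth $\phi$ by the comparison principle and maximality of envelopes). One then \emph{defines} $MA_{\chi_{\lambda}}(\phi)$ for bounded $\phi$ by this identity, extends to step functions by linearity and to continuous $g$ by uniform approximation, and the continuity along decreasing sequences becomes a consequence of purely unweighted pluripotential theory: if $\phi_{j}\downarrow\phi$ then $P_{\lambda}\phi_{j}\downarrow P_{\lambda}\phi$ with bounded differences (Lemma \ref{lem:basic prop of lower env metric}), so Proposition \ref{prop:cont of ma with sing} applies, and a two-sided sandwich $g_{k}^{-}\leq g\leq g_{k}^{+}$ by step functions handles general $g$. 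The total-mass statement and the extension of the convergence to full Monge--Amp\`ere mass limits then follow from the mass estimate of Theorem 2.17 in \cite{begz}, not from any moment-map rigidity. If you want to salvage your route, you would have to prove the disintegration/reduction you invoke, which in effect amounts to re-deriving the envelope identity; it is cleaner to take $MA(P_{\lambda}\phi)$ as the definition from the start, which is exactly what the paper does.
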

In particular, the previous theorem implies that for any bounded decreasing
sequence $\phi_{j}$ of smooth positively curved metrics on $L$ the
corresponding measures $MA(\phi_{j})g(m_{\phi_{j}})$ have a unique
weak limit on $X,$ which seems hard to prove directly even if the
limiting metric is smooth.

With the previous theorem in hand it is rather straight forward to
adapt the variational approach in \cite{bbgz} to prove the following
result, generalizing the case when $T$ is trivial considered in \cite{bbgz,begz,ko1}: 
\begin{thm}
\label{thm:solv of gma eq intro}Let $\mu$ be a probability measure
on $X$ which is $T-$invariant and assume that $gd\nu$ is a probability
measure on the moment polytope $P.$ Then $\mu$ does not charge pluripolar
sets iff there exists a metric $\phi$ on $L$ with positive curvature
current such that 
\[
MA_{g}(\phi)=\mu
\]
Moreover, the following is equivalent if $g$ is bounded from below
by a positive constant: 
\begin{itemize}
\item The measure $\mu$ has finite (pluricomplex) energy
\item The solution $\phi$ has finite (pluricomplex) energy
\end{itemize}
and in the finite energy case any solution $\phi$ is unique modulo
constants. In particular, if $\mu$ has a density $f$ in $L^{p}(X),$
for some $p>1,$ then the solution $\phi$ is continuous.
\end{thm}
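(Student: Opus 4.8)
The plan is to run the variational method of \cite{bbgz}, with the Monge--Amp\`ere energy replaced by a suitable $g$-twisted energy $E_g$. On the space $\mathcal{E}^1_T$ of $T$-invariant positively curved finite-energy metrics on $L$ I would first construct $E_g$ as a primitive of the one-form $\phi\mapsto MA_g(\phi)$: fix a smooth reference $\phi_0$, put $E_g(\phi_0)=0$, and require $\frac{d}{dt}E_g(\phi_t)=\int_X\dot\phi_t\,MA_g(\phi_t)$ along affine and along monotone paths; concretely $E_g(\phi):=\int_0^1\big(\int_X(\phi-\phi_0)\,MA_g(\phi_0+t(\phi-\phi_0))\big)\,dt$ for smooth $\phi$, extended to $\mathcal{E}^1_T$ by monotonicity using the continuity in Theorem~\ref{thm:g-ma intro}. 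The one place the argument genuinely leaves the framework of \cite{bbgz} is the verification that $E_g$ is well defined, i.e. the cocycle identity $E_g(\phi)-E_g(\psi)=\int_0^1\int_X(\phi-\psi)\,MA_g(\psi+t(\phi-\psi))\,dt$, equivalently the symmetry of the pairing $(\psi_1,\psi_2)\mapsto\frac{d}{ds}\big|_{0}\int_X\psi_1\,MA_g(\phi+s\psi_2)$. This I would deduce from the first-variation formula for the moment map: $\delta_{\psi_2}m_\phi$ equals, up to an additive constant fixed by the normalisation of $m_\phi$, the $T$-directional gradient of $\psi_2$ (in the toric model of \cite{berm7} this is simply $\psi_2\mapsto\nabla\psi_2$); substituting this into $\delta[g(m_\phi)]\,MA(\phi)$ and integrating by parts renders the extra term a manifestly symmetric bilinear form, whose sign also shows $E_g$ is concave along affine paths (in the toric model, the convexity of $\varphi\mapsto\int_P\varphi^\ast g\,dp$). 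One also checks that $E_g$ is monotone and upper semicontinuous in $L^1$, and that when $g\ge c>0$ the comparability $c\,MA(\phi)\le MA_g(\phi)\le C\,MA(\phi)$ makes $E_g(\phi)>-\infty$ equivalent to $E(\phi)>-\infty$, so $\mathcal{E}^1_T$ is the correct domain.

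Assume now $\mu$ has finite energy. On $\{\phi\in\mathcal{E}^1_T:\sup_X\phi=0\}$ consider $F_\mu(\phi):=E_g(\phi)-\int_X\phi\,d\mu$; its differential is $\psi\mapsto\int_X\psi\,MA_g(\phi)-\int_X\psi\,d\mu$, so a critical point solves $MA_g(\phi)=\mu$, automatically with full mass since $\int_X MA_g(\phi)\le\int_P g\,d\nu=1=\mu(X)$ and equality forces $\phi$ to have full Monge--Amp\`ere mass by Theorem~\ref{thm:g-ma intro}. To obtain a maximiser of $F_\mu$ I would combine compactness of sup-normalised $T$-invariant psh metrics in $L^1$, upper semicontinuity of $E_g$, and the coercivity estimate $\int_X(\phi-\phi_0)\,d\mu\le\varepsilon(-E_g(\phi))+C_\varepsilon$ for $\mu$ of finite energy, which transfers from the corresponding estimate of \cite{bbgz} for $E$ via the comparability above. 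Upgrading a maximiser from ``critical point along bounded directions'' to an actual solution of $MA_g(\phi)=\mu$ is then the differentiability argument of \cite{bbgz}: perturb by bounded psh functions and use that $MA_g$ charges no pluripolar set and is continuous along monotone sequences of bounded metrics (Theorem~\ref{thm:g-ma intro}). Reading the construction of $E_g$ backwards gives the equivalence between $\mu$ and the solution having finite energy; uniqueness modulo constants follows since $F_\mu$ is concave and strictly concave modulo additive constants --- two finite-energy solutions $\phi_0,\phi_1$ make $F_\mu$ affine along the segment between them, hence $MA_g$ constant there, hence $\phi_0-\phi_1$ constant by the domination principle for $MA_g$ (or, using $g\ge c>0$, by reduction to the known uniqueness for the untwisted equation).

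That $MA_g(\phi)=\mu$ forces $\mu$ to charge no pluripolar set is part of Theorem~\ref{thm:g-ma intro}. Conversely, for an arbitrary $\mu$ not charging pluripolar sets I would follow \cite{g-z,ko1}: by inner regularity write $\mu$ as an increasing limit of probability measures $\mu_j$ of finite energy (truncate $\mu$ along a suitable exhaustion, mixing in a fixed smooth measure), solve $MA_g(\phi_j)=\mu_j$ with $\sup_X\phi_j=0$ by the previous step, and pass to the limit: the $\phi_j$ are relatively compact in $L^1$, and the domination principle for $MA_g$ together with the convergence statements of Theorem~\ref{thm:g-ma intro} (convergence along decreasing sequences to limits of full Monge--Amp\`ere mass) yield a limit $\phi$ with $MA_g(\phi)=\mu$ and no mass lost to pluripolar sets. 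Finally, if $\mu=f\,dV$ with $f\in L^p$, $p>1$ --- so $\mu$ has finite energy and we are in the case $g\ge c>0$ --- the finite-energy solution $\phi$ satisfies $MA(\phi)\le c^{-1}f\,dV$, hence its sublevel sets have Monge--Amp\`ere mass dominated by a power of their Monge--Amp\`ere capacity, so $\phi$ is bounded by Kolodziej's estimate; then $g(m_\phi)\ge c$ is a bounded measurable function, $MA(\phi)=(f/g(m_\phi))\,dV$ has $L^p$ density, and $\phi$ is continuous by Kolodziej's regularity theorem.

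The main obstacle is the construction of $E_g$ with the correct first variation --- the closedness of the $MA_g$ one-form --- which uses the first-variation formula for the moment map and is the only step where the special structure of the problem really enters; everything after that is a transcription of the \cite{bbgz,g-z,ko1} machinery, legitimate precisely because Theorem~\ref{thm:g-ma intro} provides the continuity, $T$-plurifine locality and no-pluripolar-mass properties those proofs rely on, together with $MA_g\asymp MA$ when $g$ is pinched between positive constants.
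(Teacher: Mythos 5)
Your proposal is correct and follows essentially the same route as the paper's own argument: the variational method of \cite{bbgz} run with the $g$-energy $\mathcal{E}_{g}$ as primitive of the one-form $MA_{g}$ (whose closedness the paper takes from Zhu \cite{z}/Berndtsson \cite{bern1}, or from its quantized version, rather than your direct symmetry computation for the moment map), coercivity and compactness transferred from the untwisted case via $c\, MA\leq MA_{g}\leq C\, MA$, the projection/differentiability argument to identify the extremizer as a solution, uniqueness via concavity along affine segments reduced to the untwisted energy and the $I$-functional argument, a decomposition/approximation argument reducing general non-pluripolar $\mu$ to the finite-energy case, and Kolodziej/\cite{e-g-z} continuity when $\mu$ has $L^{p}$ density. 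No genuinely different ideas and no gaps beyond the level of detail the paper itself leaves to the cited references.
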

(the notion of pluricomplex energy is recalled in sections \ref{sub:Energy-type-functionals and proj},
\ref{sub:Monge-Amp=0000E8re-equations-and}). In the other extreme
case, i.e. when $T$ has maximal rank $n,$ so that $(X,L)$ is a
polarized toric variety, the previous result, concerning the finite
energy measures $\mu,$ is essentially equivalent to the existence
and uniqueness result of Brenier for optimal transport maps \cite{br}.
The point is that in this situation finite energy corresponds to finite
cost (see \cite{ber-ber,berm7}). Moreover, the case of a general
probability measure $\mu$ can, in the toric situation, be seen as
a variant of a result of McCann \cite{mc1} (who also proves uniqueness).

\subsection{Applications to Kähler-Ricci solitons }

Recall that Hamilton's Ricci flow emanating from an initial Kähler
metric $\omega_{0}$ on a complex manifold $X$ preserves the Kähler
property and can thus (after normalization) be written as the\emph{
Kähler-Ricci flow} 
\[
\frac{d\omega_{t}}{dt}:=-\mbox{Ric \ensuremath{\omega_{t}+\omega_{t}}}
\]
which exists for any positive time $t$ \cite{ca} (here $\mbox{Ric \ensuremath{\omega}}$
denotes, as usual, the Ricci curvature form of a Kähler metric $\omega).$
The fixed points of the (normalized) Kähler-Ricci flow are \emph{Kähler-Einstein
metrics} with positive Ricci curvature and more generally, the fixed
points of the induced flow on the space of all Kähler metrics modulo
automorphisms correspond to (shrinking) \emph{Kähler-Ricci solitons}
on $X,$ i.e. a Kähler metric $\omega$ on $X$ such that there exists
a complex holomorphic vector field $V$ on $X$ with the property
that 
\begin{equation}
\mbox{Ric \ensuremath{\omega=\omega}}+L_{V}\omega,\label{eq:k-r soliton eq for metric omega intro}
\end{equation}
 where $L_{V}$ denotes the Lie derivative of $\omega$ with respect
to $V.$ In particular, the metric $\omega$ is invariant under the
flow generated by the imaginary part $\mbox{Im\ensuremath{V}}$ of
$V,$ which equivalently means that $\mbox{Im\ensuremath{V}}$ generates
a Hamiltonian action of a torus $T$ on $X$ and $L_{V}\omega=dd^{c}f,$
where $f$ is a Hamiltonian function for the flow of $\mbox{Im\ensuremath{V}}.$
As a consequence any manifold $X$ carrying a Kähler-Ricci soliton
is \emph{Fano,} i.e. the anti-canonical line bundle $-K_{X}(:=\Lambda^{n}TX)$
is positive/ample (since $\mbox{Ric \ensuremath{\omega}}$ represents
the first Chern class $c_{1}(-K_{X})).$ According to the Hamilton-Tian
conjecture \cite{ti1,t-zhang} the Kähler-Ricci flow $\omega_{t}$
on a Fano manifold $X$ the Riemannian metric defined by $\omega_{t}$
always (sub)converges in the Gromov-Hausdorff topology to a singular
generalization of a Kähler-Ricci soliton $\omega_{\infty}$ defined
on a singular normal Fano variety $X_{\infty},$ which is a complex
deformation of the original Fano manifold $X.$ 

Motivated by the Hamilton-Tian conjecture and in particular its relation
to K-stability (see below) we will in this paper initiate the pluripotential
study of Kähler-Ricci solitons. First note that there is a natural
differential geometric definition of a Kähler-Ricci soliton $\omega$
on a singular (normal) Fano variety $X$ (compare \cite{bbegz} for
the Kähler-Einstein case): the Kähler metric $\omega$ is defined
on the regular part $X_{reg}$ of $X,$ where it solves the equation
\ref{eq:k-r soliton eq for metric omega intro} for some holomorphic
vector field $V$ on $X_{reg}$ and moreover the volume of the metric
$\omega$ on $X_{reg}$ is maximal in the sense that it coincides
with the global algebraic top intersection number $c_{1}(-K_{X})^{n},$
i.e. the degree of the Fano variety $X$ (abusing terminology slightly
we will also refer to the corresponding pair $(\omega,V)$ as a Kähler-Ricci
soliton). Our first result reveals, in particular, that this definition
coincides with various a priori stronger definitions previously proposed
in the literature \cite{p-s-s,z2,t-zhang} (that impose some information
on the singularities of $X$ and on $\omega$ along the singular locus
of $X$): 
\begin{thm}
\label{thm:krs regularity intro}Let $X$ be a normal Fano variety
admitting a Kähler-Ricci soliton $\omega$ such that the imaginary
part of the corresponding holomorphic vector field $V$ generates
a torus $T.$ Then $X$ has log terminal singularities and the Kähler
metric $\omega,$ originally defined on the regular locus $X_{reg},$
extends to a unique positive current $\bar{\omega}$ on $X$ in $c_{1}(-K_{X})$
with continuous potentials. More precisely, $\bar{\omega}$ is the
curvature current of a continuous $T-$invariant metric $\phi$ on
the line bundle $L:=-K_{X}$ satisfying the global $g_{V}-$Monge-Ampère
equation on $X$ corresponding to the equation \ref{eq:local krs eq intro}.
Conversely, any (singular) $T-$invariant metric $\phi$ on $-K_{X}$
with positive curvature current and full Monge-Ampère mass which is
a weak solution to the equation \ref{eq:local krs eq intro} is continuous
and has a curvature current which is smooth on $X_{reg},$ satisfying
the equation \ref{eq:k-r soliton eq for metric omega intro} there.
In the case that $X$ is a priori assumed to have log terminal singularities
the existence of the torus $T$ is automatic.
\end{thm}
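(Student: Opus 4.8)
The plan is to split the statement into three parts: (i) the forward direction (a Kähler--Ricci soliton yields a continuous solution of the global $g_V$-Monge--Ampère equation and forces log terminal singularities), (ii) the converse (a weak solution of full Monge--Ampère mass is continuous and smooth on $X_{reg}$), and (iii) the automatic existence of the torus when $X$ is already known to be log terminal. The essential engine will be Theorems \ref{thm:g-ma intro} and \ref{thm:solv of gma eq intro}, together with the standard interior regularity theory for the complex Monge--Ampère equation applied on $X_{reg}$.

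For the forward direction I would start on $X_{reg}$, where by hypothesis $\omega = \omega_\phi$ for a smooth $T$-invariant metric $\phi$ on $-K_X|_{X_{reg}}$ solving \eqref{eq:local krs eq intro}; rewriting the right-hand side as the density of $MA_{g_V}(\phi)$ with $g_V(\cdot)=\exp\langle\cdot,\xi\rangle$ (as observed in the introduction) shows $\phi$ solves the $g_V$-Monge--Ampère equation on $X_{reg}$. The key point is to upgrade $\phi$ to a \emph{global} positively curved metric on $L=-K_X$: the maximality of the volume, i.e. $\int_{X_{reg}}\omega^n = c_1(-K_X)^n$, means that the trivial extension $\bar\phi$ of $\phi$ across $X_{sing}$ (as an $\omega$-psh function, using that pluripolar sets are negligible and that $X_{sing}$ has the right codimension) has full Monge--Ampère mass. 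Then I would invoke Theorem \ref{thm:solv of gma eq intro}: the measure $\mu$ defined by $MA_{g_V}(\bar\phi)$ on $X$ (equivalently $e^{-\bar\phi}$, interpreted as the adapted measure on $-K_X$) is a finite-energy probability measure with a density comparable to $e^{-\bar\phi}$, hence in $L^p$ for all $p$ once we know $\bar\phi$ is bounded; the theorem then gives that the (unique, full-mass) solution is continuous, so $\bar\phi$ is continuous. Log terminality follows because log terminal is precisely the condition that $e^{-\bar\phi}$ is locally integrable near $X_{sing}$, and this integrability is forced by the finiteness of $\int_X MA_{g_V}(\bar\phi) = \int_P g_V\,d\nu < \infty$ together with the full-mass property (this is the same mechanism as in \cite{bbegz} for the Kähler--Einstein case, and is where the equality case of $\int_X MA_g(\phi)\le\int_P g\,d\nu$ in Theorem \ref{thm:g-ma intro} enters). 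The identification $\bar\omega = dd^c\bar\phi$ and its uniqueness follow from the uniqueness modulo constants in Theorem \ref{thm:solv of gma eq intro}, with the constant pinned down by the equation itself (the soliton equation is not invariant under adding constants).

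For the converse, given a $T$-invariant positively curved metric $\phi$ of full Monge--Ampère mass which weakly solves \eqref{eq:local krs eq intro}, I would again read the equation as $MA_{g_V}(\phi)=\mu$ with $\mu$ having density $e^{-\phi}$. First, boundedness: log terminality of $X$ (which we may now assume, either by the first part or by hypothesis) gives $e^{-\psi_0}\in L^{1+\epsilon}$ for a reference metric $\psi_0$, and by comparison the density of $\mu$ lies in $L^p$ for some $p>1$, so the last sentence of Theorem \ref{thm:solv of gma eq intro} yields continuity of $\phi$. Then on $X_{reg}$ I would bootstrap: the density $e^{-\phi}g_V(m_\phi)^{-1}$ — wait, rather $e^{-\phi}$ against $MA(\phi)$ — is continuous and strictly positive, and $m_\phi$ depends smoothly on first derivatives of $\phi$ along the torus directions, so the standard Evans--Krylov plus Schauder bootstrap (exactly as in the Kähler--Einstein case, treating $g_V(m_\phi)$ as a lower-order term) promotes $\phi$ to $C^\infty$ on $X_{reg}$, where it then satisfies \eqref{eq:k-r soliton eq for metric omega intro}.

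Part (iii) — automatic existence of $T$ when $X$ is a priori log terminal — I would handle by noting that the holomorphic vector field $V$ on $X_{reg}$ extends to $X$ (reflexivity of the tangent sheaf on a normal variety, or Hartogs across the codimension $\ge 2$ singular locus) and generates a $\mathbb{C}^*$-action or, after taking the closure of the one-parameter subgroup generated by $\mathrm{Im}\,V$ inside the (compact) isometry group of the soliton metric, a compact torus $T$; the soliton metric is invariant under this torus by construction.

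\medskip

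The main obstacle I anticipate is the \textbf{passage from the local/regular picture to the global full-mass statement}: showing that the soliton potential $\phi$, a priori only controlled on $X_{reg}$, extends across $X_{sing}$ to a \emph{bounded} (hence, via Theorem \ref{thm:solv of gma eq intro}, continuous) global $\omega$-psh function with full Monge--Ampère mass, and simultaneously deducing log terminality. The volume-maximality hypothesis is exactly tailored to make this work, but turning it into the full-mass property for the extended current — ruling out loss of mass to $X_{sing}$ — requires the non-pluripolar product machinery of \cite{begz} together with the mass-comparison inequality of Theorem \ref{thm:g-ma intro}, and the $T$-equivariance must be tracked throughout so that $m_\phi$ and $g_V(m_\phi)$ remain well-defined in the limit. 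Once this extension step is in place, continuity, smoothness on $X_{reg}$, and uniqueness are comparatively routine consequences of the two theorems already established plus classical elliptic regularity.
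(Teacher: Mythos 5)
Your forward direction follows essentially the paper's route (extension across $X_{sing}$ by normality, full Monge--Amp\`ere mass from volume maximality, log terminality from the finiteness of $\int_X MA_{g_V}(\phi)$), but your continuity step has a gap: you claim the density $e^{-\phi}$ of $\mu$ is in $L^p$, $p>1$, either ``once we know $\bar\phi$ is bounded'' (which is circular, boundedness being part of what is to be proved) or ``by comparison'' with a reference metric $\psi_0$. The comparison $\phi\leq\psi_0+C$ gives only a \emph{lower} bound on $e^{-\phi}$; the needed upper integrability comes from a different mechanism, namely that full Monge--Amp\`ere mass forces $\phi$ to have zero Lelong numbers, whence by Skoda-type integrability (the appendix of \cite{bbegz}) the measure $\mu_\phi$ has $L^p_{loc}$ density on a resolution for every $p$. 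That, combined with $MA(\phi)\leq C\mu_\phi$ from Corollary \ref{cor:comp of g ma and ma} and the Kolodziej-type results of \cite{e-g-z}, is how the paper gets continuity; without the Lelong-number step your argument does not close.

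The more serious gap is the converse (smoothness on $X_{reg}$). A ``standard Evans--Krylov plus Schauder bootstrap'' starting from a merely \emph{continuous} weak solution does not work: local regularity for weak solutions of $\det(\phi_{i\bar j})=F>0$ fails in general (Pogorelov-type examples), there is no a priori $C^{1,1}$ (or even $C^1$) bound to feed into Evans--Krylov, and here the right-hand side additionally depends on $\nabla\phi$ through the Hamiltonian $f_\phi=\langle m_\phi,\xi\rangle$, so one cannot even read the equation classically. This is exactly the point the paper flags as new even for smooth $X$, and its proof is global, not local: one runs the Song--Tian K\"ahler--Ricci flow $\phi_t$ from the continuous initial metric $\phi$ (smooth on $X_{reg}$ for $t>0$), observes that $\psi_t:=\exp(tV)^*\phi_t$ solves the modified flow along which the modified Ding functional $\mathcal{D}_V$ decreases, and uses the variational characterization of weak solitons as minimizers of $\mathcal{D}_V$ (Theorem \ref{thm:variational prop of mod mab ding}) to conclude that $\psi_t$ is stationary, hence $\psi_t=\phi$ for all $t>0$, so $\phi$ is smooth on $X_{reg}$. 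Some such global identification of $\phi$ with a solution already known to be smooth on $X_{reg}$ (via the flow, or an approximation scheme plus uniqueness) is needed; the local bootstrap you propose would fail. Your part (iii) (closure of the flow of $\mathrm{Im}\,V$ in a compact group preserving an invariant metric) is in line with Lemma \ref{lem:vector field torus} and is fine.
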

It should be stressed that the regularity statement concerning the
solution $\phi$ in the previous theorem is new even when $X$ is
smooth. Our next result extends the Tian-Zhu uniqueness theorem for
Kähler-Ricci solitons \cite{t-z1,t-z1b} (which in turn generalizes
the Bando-Mabuchi uniqueness theorem for Kähler-Einstein metrics)
to the singular setting. 
\begin{thm}
\label{thm:A-K=0000E4hler-Ricci-soliton is unique intro}A Kähler-Ricci
soliton $(\omega,V)$ on a Fano variety is unique modulo the action
of the group $\mbox{Aut}(X)_{0}$ of all holomorphic automorphism
of $X$ homotopic to the identity. Moreover, a Kähler-Ricci soliton
$\omega$ defined with respect to a\emph{ fixed} holomorphic vector
field $V,$ is unique modulo the subgroup $\mbox{Aut}(X,V)_{0}$ of
$\mbox{Aut}(X)_{0}$ consisting of all automorphism of $X$ commuting
with the flow of $V.$ 
\end{thm}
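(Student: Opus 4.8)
The plan is to reduce the uniqueness of K\"ahler-Ricci solitons to the uniqueness statement in the finite-energy part of Theorem \ref{thm:solv of gma eq intro}, using the weak $g_{V}$-Monge-Amp\`ere formulation established in Theorem \ref{thm:krs regularity intro}. First I would fix a maximal torus $T$ containing the flow of $\mbox{Im}V$ and work $T$-equivariantly. By Theorem \ref{thm:krs regularity intro} any K\"ahler-Ricci soliton $(\omega,V)$ corresponds to a continuous $T$-invariant metric $\phi$ on $-K_{X}$ of full Monge-Amp\`ere mass solving $MA_{g_{V}}(\phi)=\mu_{\phi}$, where the right-hand side is itself built from $\phi$ via the adapted volume form $e^{-\phi}$; so the equation is of the form $MA_{g_{V}}(\phi)=e^{-\phi}/\!\int_{X}e^{-\phi}$ (suitably normalized). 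The natural device is the \emph{Ding-type functional} $\mathcal{D}_{V}$ whose critical points are exactly such $\phi$: its derivative along a path is $\langle MA_{g_{V}}(\phi)-e^{-\phi}/Z_{\phi},\dot\phi\rangle$, and the key analytic input is that $\mathcal{D}_{V}$ is \emph{convex} along the $g_{V}$-weighted geodesics in the space of $T$-invariant finite-energy potentials. Convexity of the energy part $\mathcal{E}_{g_{V}}$ along such geodesics should follow from the general machinery behind Theorem \ref{thm:solv of gma eq intro} (this is precisely the statement that made uniqueness work there), while convexity of the $-\log\int e^{-\phi}$ term along geodesics is the Berndtsson-type plurisubharmonicity argument, which survives the $T$-invariant reduction.

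The main steps, in order: (1) set up the $T$-equivariant space of finite-energy metrics and the $g_{V}$-geodesic connecting two given solitons $\phi_{0},\phi_{1}$, noting both are continuous hence finite-energy; (2) show $\mathcal{D}_{V}$ is convex along this geodesic and affine only on genuine holomorphic families, using the Berndtsson subharmonicity of $t\mapsto-\log\int_{X}e^{-\phi_{t}}$ together with linearity/convexity of $\mathcal{E}_{g_{V}}$; (3) since $\phi_{0},\phi_{1}$ are both minimizers of $\mathcal{D}_{V}$, the function $t\mapsto\mathcal{D}_{V}(\phi_{t})$ is constant, hence affine, forcing the geodesic to come from a one-parameter family of automorphisms; (4) identify this family: the "slope" of the geodesic at the endpoints must be the real part of a holomorphic vector field (this is the rigidity case of Berndtsson's theorem), and because everything is $T$-invariant and compatible with the soliton vector field $V$, the resulting automorphism lies in $\mbox{Aut}(X,V)_{0}$; (5) for the first, stronger statement, allow $V$ to vary: given solitons $(\omega_{0},V_{0})$ and $(\omega_{1},V_{1})$, use that the soliton vector field is uniquely determined (up to conjugation) as the minimizer of the Tian-Zhu-type weight functional on the Lie algebra of a maximal torus — so after applying an element of $\mbox{Aut}(X)_{0}$ we may assume $V_{0}=V_{1}=V$ and invoke the fixed-$V$ case.

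Several points require care in the singular setting. The space $X$ is only normal with log terminal singularities (granted by Theorem \ref{thm:krs regularity intro}), so the integral $\int_{X}e^{-\phi}$ and the adapted measure must be interpreted via the canonical (Mehta-Ramanathan type) extension, and one must check that Berndtsson's positivity of direct image bundles applies on a log resolution and descends; this should be routine given that the metrics in play are continuous and the singularities are log terminal. The $g_{V}$-weighting means the relevant geodesics are not the usual Mabuchi geodesics but the ones natural for $MA_{g_{V}}$, and one must confirm that the endpoints of a weak $g_{V}$-geodesic between finite-energy potentials are again finite-energy and that the convexity argument of Theorem \ref{thm:solv of gma eq intro} applies verbatim. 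The hardest step will be step (4)/(5): extracting from the affinity of $\mathcal{D}_{V}$ a genuine holomorphic vector field on the possibly-singular $X$, and then showing the corresponding automorphism is homotopic to the identity and commutes with $V$ — this rigidity analysis is where the bulk of the work lies, and where I expect to lean most heavily on the continuity of $\phi$ from Theorem \ref{thm:krs regularity intro} and on an equivariant version of the Bando-Mabuchi/Berndtsson argument.
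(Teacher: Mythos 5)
Your proposal follows essentially the same route as the paper's proof: connect two solitons (continuous and of finite energy by Theorem \ref{thm:krs regularity intro}) by a bounded geodesic, exploit convexity of the modified Ding functional $\mathcal{D}_{V}$ (Berndtsson-type convexity of $\mathcal{L}$ together with affineness of $\mathcal{E}_{g_{V}}$ along geodesics) and the minimizing property of solitons to force affineness and hence a one-parameter family of automorphisms, check that these commute with $V$ via the Hamiltonian/normality argument, and finally reduce the case of two different vector fields to a fixed one by an Iwasawa conjugation and strict convexity of the Tian--Zhu functional on the Lie algebra of a torus. The only small correction is that no special ``$g_{V}$-weighted'' geodesics are needed: the ordinary bounded (Monge--Amp\`ere) geodesics used in the paper suffice precisely because $\mathcal{E}_{g_{V}}$ is affine along them (Proposition \ref{prop:energy along geodesics}).
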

The proof follows closely the proof of the smooth case in \cite{bern1}
extended to the singular Kähler-Einstein setting in \cite{bbegz},
which proceeds by connecting two given Kähler-Ricci solitons $\phi_{0}$
and $\phi_{1}$ by a weak geodesic curve $\phi_{t}$. The main issue
is that, in the case of a singular Fano variety, the intermediate
metrics $\phi$ are a priori not $C^{1}-$differentiable (even on
the regular locus of $X)$ and this is where the present pluripotential
theory of $g-$Monge-Ampère equations is needed. 

In view of the previous theorem it is natural to fix a holomorphic
vector field $V$ on the Fano variety $X,$ as above, and view the
pair $(X,V)$ as the given complex geometric data, denoting by $\mbox{Aut}(X,V)$
the corresponding automorphism group. If $(X,V)$ admits a Kähler-Ricci
soliton in the sense that $X$ admits a Kähler-Ricci soliton with
corresponding vector field $V,$ then it follows from the previous
theorem, just as in the Kähler-Einstein case considered in \cite{c-d-s}
III, that the group $\mbox{Aut}(X,V)_{0}$ is \emph{reductive }(see
Corollary \ref{cor:reduct}). 

As shown by Tian-Zhu \cite{t-z1b} another necessary condition for
the existence of a Kähler-Ricci soliton on $(X,V)$, in the smooth
case, is the vanishing of the \emph{modified Futaki invariant }introduced
in \cite{t-z1b}, which can be viewed as a functional $\mbox{Fut}_{V}$
on the Lie algebra of $\mbox{Aut}(X,V)_{0}$ and algebraically expressed
as 
\[
\mbox{Fut}_{V}(W)=-\lim_{k\rightarrow\infty}\frac{1}{kN_{k}}\sum_{l=1}^{N_{k}}\exp(v_{l}^{(k)}/k)w_{l}^{(k)},
\]
where $(v_{l}^{(k)},w_{l}^{(k)})$ are the joint eigenvalues for the
commuting action of the real parts of the holomorphic vector fields
$V$ and $W$ on the $N_{k}-$dimensional space $H^{0}(X,-kK_{X})$
of all holomorphic section with values in $-kK_{X}$ (see Proposition
\ref{prop:alg formula for fut}). More generally, in the case $V=0$
there is a notion of algebro-geometric stability of a Fano manifold
$X$ referred to as \emph{K-stability} (or sometimes \emph{K-polystability})
introduced by Tian in \cite{ti1} saying that $X$ is K-polystable
if for any $\C^{*}-$equivariant deformation $\mathcal{X}$ of $X$
the Futaki invariant $\mbox{Fut}(X_{0})$ of the central fiber $X_{0}$
(assumed to have log terminal singularities) satisfies $\mbox{Fut}(X_{0})\geq0$
with equality if and only if $X_{0}$ is biholomorphic to $X.$ Similarly,
in the general case of a pair $(X,V)$ we will say that \emph{$(X,V)$
is K-polystable} if for any $\C^{*}-$equivariant deformation $(\mathcal{X},\mathcal{V})$
of $(X,V)$ the modified Futaki invariant $\mbox{Fut}_{V_{0}}(X_{0})$
of the central fiber $X_{0}$ satisfies $F_{V_{0}}(X_{0})\geq0$ with
equality if and only if $(X_{0},V_{0})$ is isomorphic to $(X,V).$
\begin{thm}
\label{thm:krs implies k-stab intro}Assume that $(X,V)$ admits a
Kähler-Ricci soliton. Then $(X,V)$ is K-polystable.
\end{thm}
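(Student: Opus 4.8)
The plan is to prove that the existence of a K\"ahler-Ricci soliton $(\omega,V)$ on $X$ forces the modified Futaki invariant of the central fiber of any $\C^*$-equivariant degeneration to be non-negative, with equality characterizing the trivial degeneration. First I would reformulate the soliton as a minimizer: by Theorem \ref{thm:krs regularity intro} the soliton corresponds to a continuous $T$-invariant metric $\phi_{KRS}$ on $-K_X$ solving the weak $g_V$-Monge-Amp\`ere equation $MA_{g_V}(\phi)=\mu_\phi$, where $\mu_\phi$ is the measure with density $e^{-\phi}$ (normalized). Just as in the K\"ahler-Einstein case this solution is the unique minimizer (modulo $\mathrm{Aut}(X,V)_0$, by Theorem \ref{thm:A-K=0000E4hler-Ricci-soliton is unique intro}) of a \emph{modified Ding-type functional} $\mathcal{D}_V(\phi) = E_{g_V}(\phi) - L_V(\phi)$ on the space of finite-energy $T$-invariant metrics, where $E_{g_V}$ is the primitive of $\phi\mapsto MA_{g_V}(\phi)$ supplied by the energy formalism of sections \ref{sub:Energy-type-functionals and proj}--\ref{sub:Monge-Amp=0000E8re-equations-and}, and $L_V(\phi)=-\log\int_X e^{-\phi}$. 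The point is that $\mathcal{D}_V$ is \emph{geodesically convex} along the weak geodesics furnished by the pluripotential theory built in Theorems \ref{thm:g-ma intro} and \ref{thm:solv of gma eq intro}: convexity of $E_{g_V}$ is the modified analogue of the Berndtsson convexity of the Monge-Amp\`ere energy (here one uses that $g_V=e^{\langle\cdot,\xi\rangle}$ is log-affine, so the modified energy retains the subharmonicity properties), and convexity of $L_V$ is Berndtsson's theorem on plurisubharmonic variation of Bergman-type kernels, which survives on log terminal Fano varieties.

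Next I would pass to a $\C^*$-equivariant test configuration $(\mathcal{X},\mathcal{V})$ for $(X,V)$, with central fiber $(X_0,V_0)$ having log terminal singularities. Following the now-standard strategy (Berman's proof that K\"ahler-Einstein implies K-polystability, adapted to the modified setting by Datar-Sz\'ekelyhidi in the smooth case), the normalized Fubini-Study-type rays $\phi_t$ associated to the test configuration are weak subgeodesic rays emanating from a reference metric, and the key asymptotic identity reads
\[
\lim_{t\to\infty}\frac{\mathcal{D}_V(\phi_t)}{t} \;=\; \mathrm{Fut}_{V_0}(X_0),
\]
the right-hand side being exactly the algebraic modified Futaki invariant of the central fiber via the eigenvalue formula of Proposition \ref{prop:alg formula for fut}; this is where the "quantized" $g$-Monge-Amp\`ere formalism enters, since the slope of the modified energy $E_{g_V}$ along such a ray is computed by the Duistermaat-Heckman-type asymptotics of the $\C^*$-weights twisted by $e^{v_l^{(k)}/k}$, and the slope of $L_V$ is the log-canonical-threshold/Futaki contribution. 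Combining this with geodesic convexity of $\mathcal{D}_V$ and the fact that $\phi_{KRS}$ minimizes $\mathcal{D}_V$ gives $\mathrm{Fut}_{V_0}(X_0)=\lim_t \mathcal{D}_V(\phi_t)/t \geq 0$, since a geodesically convex functional bounded below has non-negative asymptotic slope along any ray.

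For the equality case I would argue that $\mathrm{Fut}_{V_0}(X_0)=0$ forces the ray $\phi_t$ to be (asymptotic to) a geodesic ray along which $\mathcal{D}_V$ is constant; by the strict convexity / uniqueness part (Theorem \ref{thm:A-K=0000E4hler-Ricci-soliton is unique intro}, together with the characterization of equality in Berndtsson's theorem) such a ray must come from a holomorphic vector field commuting with $V$, i.e. from an element of $\mathrm{Aut}(X,V)_0$, which means the test configuration is a product and $(X_0,V_0)\cong(X,V)$. The main obstacle, I expect, is \textbf{establishing the slope formula and the regularity needed to justify geodesic convexity in the singular, non-$C^1$ setting}: on a singular Fano variety the geodesic segments $\phi_t$ connecting the relevant metrics are only bounded (not differentiable even on $X_{reg}$), so making sense of $\tfrac{d}{dt}\mathcal{D}_V(\phi_t)$ and of the derivative-of-energy $=\int MA_{g_V}(\phi_t)\cdot\dot\phi_t$ requires precisely the continuity and locality properties of $MA_{g_V}$ from Theorem \ref{thm:g-ma intro}, together with a careful analysis of the modified energy pairing along decreasing approximations; controlling the error terms in the $k\to\infty$ quantization and matching them to the eigenvalue formula for $\mathrm{Fut}_{V_0}$ is the delicate bookkeeping that the rest of the paper's machinery is designed to handle.
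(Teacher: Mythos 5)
Your proposal follows essentially the same route as the paper: the soliton is realized as the minimizer of the modified Ding functional, one runs the weak geodesic ray attached to a special test configuration, the asymptotic slope of $\mathcal{D}_V$ is identified with the modified Futaki invariant of the central fiber via the quantized spectral-measure formula of Proposition \ref{prop:alg formula for fut} (the slope of $\mathcal{L}$ vanishing for special test configurations by the Lelong-number argument of \cite{be4}), and in the equality case the affineness of $\mathcal{D}_V$ along the ray produces automorphisms commuting with $V$ that yield $(X_{0},V_{0})\cong(X,V)$. Only minor bookkeeping differs: with the paper's convention $\mathcal{D}_{V}=-\mathcal{E}_{V}+\mathcal{L}$ is convex because the modified energy $\mathcal{E}_{V}$ is \emph{affine} along bounded geodesics (Proposition \ref{prop:energy along geodesics}) while Berndtsson's convexity handles $\mathcal{L}$, rather than both pieces being convex as you state, so your signs should be adjusted accordingly.
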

This results appears to be new even in the case when $X$ is smooth,
where it generalizes the result of Tian-Zhu in \cite{t-z1b} concerning
product deformations. The proof of the previous theorem builds on
\cite{be4} where the case $V=0$ was considered in the setting of
singular Fano varieties. In the case $V=0$ there is also a generalization
of Tian's notion of K-stability due to Donaldson \cite{d0}, which
involves more general polarized deformations of $X$ (called test
configurations) and which in the end turns out to be equivalent to
Tian's notion. Presumably there is a similar generalization of Donaldson's
notion of K-stability in the presence of a non-trivial vector field
$V,$ as in the setting of extremal Kähler metrics considered in \cite{s-s},
but we will not go further into this here. 

According to the fundamental Yau-Tian-Donaldson conjecture recently
settled in \cite{c-d-s} and \cite{ti} a Fano manifold $X$ is K-polystable
if and only if $X$ admits a Kähler-Einstein metric (the existence
problem in the singular case is still open). It seems natural to conjecture
that this correspondence can be extended to the case of pairs $(X,V),$
with $V$ non-trivial, using the notion of K-stability appearing in
the previous theorem (a different version of this conjecture involving
a notion of geodesic K-stability was recently formulated by He \cite{he}).
In this direction we will show the \emph{analytic} analog of K-polystability
 does imply the existence of a Kähler-Ricci soliton $\omega,$ which
moreover can be realized as the large time limit of the Kähler-Ricci
flow:
\begin{thm}
Let $X$ be a Fano variety and $V$ a holomorphic vector field on
$X$ generating an action on $X$ of a torus $T.$ If $(X,V)$ is
analytically K-polystable, in the sense that the modified Mabuchi
K-energy is proper modulo $\mbox{Aut}(X,V)_{0},$ then $(X,V)$ admits
a Kähler-Ricci soliton. Moreover, the Kähler-Ricci flow $\omega_{t}$
then converges in the weak topology of currents, modulo the action
of the group $\mbox{Aut}(X,V)_{0},$ to the Kähler-Ricci soliton $\omega.$
\end{thm}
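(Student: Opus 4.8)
The plan is to follow the now-standard variational strategy for the Kähler–Ricci flow, adapted to the modified (soliton) setting and to singular Fano varieties, using the pluripotential apparatus developed in the body of the paper. First I would recall that the normalized Kähler–Ricci flow $\omega_t$ on the Fano variety $X$ can be lifted to a flow of $T$-invariant metrics $\phi_t$ on $L=-K_X$, and that along this flow the modified Mabuchi K-energy $\mathcal{M}_V$ is monotonically non-increasing, with time-derivative controlled by (a modified version of) the Perelman-type entropy/energy functionals. The properness hypothesis --- $\mathcal{M}_V$ is proper modulo $\mathrm{Aut}(X,V)_0$ --- then bounds the orbit $\{\phi_t\}$ in the appropriate finite-energy space $\mathcal{E}^1_T(X,L)$ modulo the action of $\mathrm{Aut}(X,V)_0$: after applying a time-dependent automorphism $g_t\in\mathrm{Aut}(X,V)_0$ one obtains a curve $\tilde\phi_t:=g_t^*\phi_t$ with uniformly bounded modified energy. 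Here one uses that $\mathrm{Aut}(X,V)_0$ is reductive (Corollary \ref{cor:reduct}) so that the quotient makes sense and the relevant compactness in $\mathcal{E}^1/\mathrm{Aut}$ is available.

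Next I would extract a weak limit. By the energy bound and the compactness properties of $\mathcal{E}^1_T$, a subsequence $\tilde\phi_{t_j}$ converges (in $L^1$ and in energy) to a limiting $T$-invariant finite-energy metric $\phi_\infty$ with full Monge–Ampère mass. The key point is to identify $\phi_\infty$ as a weak solution of the $g_V$-Monge–Ampère equation \ref{eq:local krs eq intro}, i.e. a Kähler–Ricci soliton in the sense of Theorem \ref{thm:krs regularity intro}. This is where the continuity theory of $MA_g$ from Theorem \ref{thm:g-ma intro} enters decisively: the soliton equation is $MA_{g_V}(\phi)=e^{-\phi}$ (suitably normalized against the Duistermaat–Heckman measure), and one shows the limit is a critical point of the modified Ding/Mabuchi functional by combining the monotonicity of $\mathcal{M}_V$ along the flow (so that $\mathcal{M}_V(\tilde\phi_t)$ converges) with the fact that the ``derivative'' functional vanishes in the limit; the finite-energy solvability and uniqueness-modulo-$\mathrm{Aut}(X,V)_0$ from Theorem \ref{thm:solv of gma eq intro} and Theorem \ref{thm:A-K=0000E4hler-Ricci-soliton is unique intro} then pin down $\phi_\infty$. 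Its regularity (continuity, smoothness on $X_{reg}$) is automatic from Theorem \ref{thm:krs regularity intro}, so $\omega:=\omega_{\phi_\infty}$ is a genuine Kähler–Ricci soliton for $(X,V)$.

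Finally, having produced the soliton, convergence of the whole flow modulo $\mathrm{Aut}(X,V)_0$ follows by a Łojasiewicz-type or monotonicity-plus-uniqueness argument: uniqueness of the soliton modulo $\mathrm{Aut}(X,V)_0$ (Theorem \ref{thm:A-K=0000E4hler-Ricci-soliton is unique intro}) forces every subsequential limit of $\{\tilde\phi_t\}$ to lie in the same $\mathrm{Aut}(X,V)_0$-orbit, and the decreasing quantity $\mathcal{M}_V(\tilde\phi_t)-\mathcal{M}_V(\phi_\infty)\to 0$ upgrades subsequential convergence to convergence of the full trajectory in the weak topology of currents. I expect the main obstacle to be the second step --- controlling the flow in the singular setting and verifying that the weak limit actually solves the soliton equation rather than merely being a finite-energy accumulation point. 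Concretely, one must justify the monotonicity of the modified K-energy along the weak Kähler–Ricci flow on a possibly singular Fano variety and show that the energy bound coming from properness is genuinely preserved under the automorphisms $g_t$; this requires care because the intermediate metrics need not be $C^1$, so the differential-geometric computations of the Perelman-type functionals must be replaced by their pluripotential counterparts, precisely the reason the $g$-Monge–Ampère theory of Theorems \ref{thm:g-ma intro} and \ref{thm:solv of gma eq intro} was developed.
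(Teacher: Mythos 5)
Your overall strategy (properness gives compactness modulo $\mbox{Aut}(X,V)_{0}$, extract a finite-energy limit, identify it as a soliton via the variational characterization, then use uniqueness) is in the right spirit, but two of your crux steps are genuinely unjustified, and they are precisely where the paper does something different. First, you derive \emph{existence} from the flow by asserting that the limit of $\tilde{\phi}_{t}$ is a critical point ``because the derivative functional vanishes in the limit.'' Monotonicity of $\mathcal{M}_{V}$ (or $\mathcal{D}_{V}$) along the flow only gives $\mathcal{M}_{V}(\tilde{\phi}_{t})\downarrow c\geq\inf\mathcal{M}_{V}$; it does not show that the limit attains the infimum, and Theorem \ref{thm:variational prop of mod mab ding} characterizes solitons as \emph{global minimizers}, not as critical points of some weak derivative (which is not even defined at a finite-energy limit without further work). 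The paper sidesteps this entirely: existence is proved by direct minimization of the modified Ding/Mabuchi functional over an arbitrary minimizing sequence (Theorem \ref{thm:proper ding mab mod aut implies existence}), using upper semi-continuity of $\mathcal{E}_{V}$, continuity of $\mathcal{L}$ on sublevel sets, entropy semi-continuity, and the projection/differentiability machinery of Prop \ref{prop:diff of The-composed-function }; the flow plays no role in existence. For the flow statement the paper works with the modified \emph{Ding} functional (decreasing along the modified flow, proved by regularization -- no Perelman-type estimates, contrary to what you invoke) and imports the argument of \cite{bbegz} to show that subsequential limits are genuine minimizers, hence solitons.

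Second, your passage from subsequential convergence to convergence of the whole trajectory modulo $\mbox{Aut}(X,V)_{0}$ is a gap: knowing that all subsequential limits lie in a single $\mbox{Aut}(X,V)_{0}$-orbit does not fix the limit, since the gauge $g_{t}$ can drift along the orbit, and the L{}ojasiewicz-type argument you gesture at is not available in this singular finite-energy setting without substantial extra work. The paper's resolution is a concrete gauge-fixing: choose $F_{t}$ so that $F_{t}^{*}\phi_{t}$ minimizes $J$ on its $\mbox{Aut}(X,V)_{0}$-orbit; then every limit point minimizes $J$ on the space of solitons, which by Theorem \ref{thm:uniqueness text} is the symmetric space $\mbox{Aut}(X,V)_{0}\psi_{*}/K$, and the strict convexity of $J$ along one-parameter subgroups gives a \emph{unique} such minimizer, pinning down all limit points and yielding convergence of the full curve. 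Without either this gauge-fixing or a genuine L{}ojasiewicz inequality, your final step does not go through.
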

In the case when $X$ is smooth the previous existence result was
shown in \cite{c-t-z}, using a variant of Aubin's continuity method
and the convergence result was shown in \cite{t-z2}, using Perelman's
deep estimates \cite{pe}.

It seems reasonable to expect that the recent powerful techniques
developed for the existence problem of Kähler-Einstein metrics (see
\cite{c-d-s,ti,sz} ) - which combine Gromov-Hausdorff convergence
theory with $L^{2}-$estimates for $\bar{\partial}$ - can be extended
to prove the missing algebraic counterpart of the previous Theorem
by a deformation argument; either by using Tian-Zhu's modification
of Aubin's continuity path or the Kähler-Ricci flow (as in the three
dimensional situation considered in \cite{t-zhang}). For some recent
results in this direction see \cite{p-s-s,t-zhang,zw,z2}. As emphasized
in \cite{do stab bir,c-d-s}, in the ordinary case $V=0,$ the reductivity
of the automorphism group of the limiting singular object is an important
step in producing a suitable test configuration from the deformation
in question. Accordingly, we expect the reductivity established in
Corollary \ref{cor:reduct} (resulting from the uniqueness result
in Theorem \ref{thm:A-K=0000E4hler-Ricci-soliton is unique intro}
above) to play a similar role in the case when $V$ is non-trivial
(by replacing the Hilbert scheme used in \cite{c-d-s} with its $T_{c}-$invariant
counterpart).

\subsection{The quantized setting and balanced metrics}

In section \ref{sec:The-quantized-setting} we consider the quantization,
in the sense of Donaldson \cite{do3}, of the setup above. In other
words, given a line bundle $L\rightarrow X,$ the infinite dimensional
space $\mathcal{H}$ of all metrics $\phi$ on $L$ with (semi-)positive
curvature is replaced by the sequence $\mathcal{H}_{k}$ of finite
dimensional symmetric spaces of Hermitian metrics on the complex vector
spaces$H^{0}(X,kL)$ of all holomorphic sections with values in the
$k$ th tensor power of $L,$ written as $kL$ in additive notation.
For example, when $L=-K_{X}$ a \emph{quantized Kähler-Ricci soliton,}
$H_{k}\in\mathcal{H}_{k}$ may in this framework be defined as a fixed
point, modulo $\mbox{Aut}(X,V)_{0},$ of Donaldson's (anti-)canonical
iteration on the symmetric space $\mathcal{H}_{k}.$ As conjectured
in \cite{do3} and confirmed in \cite{be3} the latter iteration can
be viewed as the quantization of the Kähler-Ricci flow. We recall
that the bona fide fixed points in $\mathcal{H}_{k}$ are called (anti-)
canonically \emph{balanced metrics }and as conjectured by Donaldson
\cite{do3}, and shown in \cite{bbgz}, in the case when $X$ admits
a Kähler-Einstein $\omega_{KE}$ and $\mbox{Aut}(X)_{0}$ is trivial
(i.e. $\omega_{KE}$ is unique), such balanced metrics exist for $k$
sufficiently large and the corresponding Bergman metrics $\omega_{k}$
on $X$ converge, as $k\rightarrow\infty,$ weakly to the Kähler-Einstein
metric on $X.$ Here we will introduce a notion of $g-$balanced metrics
which in particular allows us to prove the following generalization
of the result in \cite{bbgz}:
\begin{thm}
\label{thm:strong polyk implies existence and conv of quantized intro}Let
$(X,V)$ be a Fano manifold equipped with a holomorphic vector field
$V.$ If $X$ is strongly analytically K-polystable and all the higher
order modified Futaki invariants of $(X,V)$ vanish, then there exist
quantized Kähler-Ricci solitons $H_{k}$ at any sufficiently large
level $k,$ which are unique modulo the action of $\mbox{Aut}(X,V)_{0}$
and as $k\rightarrow\infty$ the corresponding Bergman metrics $\omega_{k}$
on $X$ converge weakly, modulo automorphisms, to a Kähler-Ricci soliton
$\omega$ on $(X,V).$
\end{thm}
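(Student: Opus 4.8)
The plan is to reduce the statement to three pieces: (i) existence and uniqueness of $g$-balanced metrics at level $k$; (ii) the identification of a $g$-balanced metric with a quantized K\"ahler-Ricci soliton modulo $\mathrm{Aut}(X,V)_0$; and (iii) the weak convergence of the associated Bergman metrics. For (i) the natural approach, following Donaldson and \cite{bbgz}, is to realize $g$-balanced metrics as critical points (in fact minimizers, modulo the group action) of a functional $\mathcal{L}_k$ on the symmetric space $\mathcal{H}_k$ of Hermitian inner products on $H^0(X,-kK_X)$, obtained by composing the $\log$ of the $L^2$-norm type functional with the quantized analog of the $g$-Monge-Amp\`ere energy. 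The key is to show this functional is proper modulo $\mathrm{Aut}(X,V)_0$ for $k$ large. Here one uses the hypothesis that the modified Mabuchi K-energy is proper modulo $\mathrm{Aut}(X,V)_0$ (strong analytic K-polystability) together with the vanishing of all higher-order modified Futaki invariants: the latter guarantees that the $k$-th quantized energy functional differs from (a rescaling of) the continuous modified K-energy by an error that is $o(1)$ \emph{uniformly} on sublevel sets, so properness descends from the continuous level to the quantized level for $k\gg 0$. This is the step I expect to be the main obstacle, since it requires a quantitative comparison of the quantized and continuous $g$-functionals that is uniform in $k$, and the higher Futaki invariants are precisely the obstructions to such a uniform asymptotic expansion.

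Concretely, for step (i) I would argue as follows. The group $G:=\mathrm{Aut}(X,V)_0$ is reductive (Corollary \ref{cor:reduct}), and it acts on $\mathcal{H}_k$ with the functional $\mathcal{L}_k$ invariant. A standard compactness argument then yields a minimizer modulo $G$ provided $\mathcal{L}_k$ is proper on $\mathcal{H}_k/G$; the minimizer is a $g$-balanced metric by the Euler--Lagrange computation, which is exactly the fixed-point equation for Donaldson's $g$-twisted anti-canonical iteration. Uniqueness modulo $G$ follows from geodesic convexity of $\mathcal{L}_k$ along the Bergman geodesics in $\mathcal{H}_k$ (these are honest geodesics since $\mathcal{H}_k$ is a symmetric space), together with the fact that the only directions of degeneracy are those tangent to the $G$-orbit --- this is the quantized analog of the uniqueness argument behind Theorem \ref{thm:A-K=0000E4hler-Ricci-soliton is unique intro}, and is actually easier since everything is finite-dimensional.

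For step (iii), the convergence of the Bergman metrics $\omega_k$, I would invoke the general machinery relating quantization and pluripotential theory: the $g$-balanced condition forces the $L^2$-normalized Bergman density to match $MA_g$ up to the Bergman kernel asymptotics, so the functions $\phi_k:=k^{-1}\log(\text{Bergman kernel of }H_k)$ have uniformly bounded $g$-energy by construction. By the compactness properties of finite-energy metrics and Theorem \ref{thm:g-ma intro} (in particular the continuity of $MA_g$ along decreasing sequences and its behavior under full-mass limits), any weak limit point $\phi_\infty$ of a suitable subsequence of $\phi_k$ (after acting by $G$) satisfies the weak $g_V$-Monge-Amp\`ere equation \ref{eq:local krs eq intro}, hence by Theorem \ref{thm:krs regularity intro} is a K\"ahler-Ricci soliton on $(X,V)$; uniqueness modulo $G$ (Theorem \ref{thm:A-K=0000E4hler-Ricci-soliton is unique intro}) then upgrades subsequential convergence to convergence of the full sequence modulo $G$. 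The one subtlety here is ensuring the energy bound on $\phi_k$ is uniform in $k$ --- but this again follows from the uniform properness established in step (i), since the $g$-balanced $\phi_k$ minimizes $\mathcal{L}_k$ and the minimum value is bounded independently of $k$.
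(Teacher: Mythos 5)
Your overall strategy (transfer properness from the continuous modified functional to the quantized level, obtain the balanced metric as a minimizer modulo $\mbox{Aut}(X,V)_{0}$, uniqueness by geodesic convexity, then convergence of the Bergman metrics) is the same as the paper's, but you have misassigned the role of the hypothesis on the higher-order modified Futaki invariants, and this is a genuine gap. In the paper these invariants are \emph{not} used to produce a uniform asymptotic comparison between $\mathcal{E}^{(k)}_{g_V}$ and $\mathcal{E}_{g_V}$: the comparison inequality (the $g$-analog of Lemma 7.7 in \cite{bbgz}, with a multiplicative $(1+\delta_k)$ error coming from the uniform Bergman kernel asymptotics of Proposition \ref{prop:conv of g-bergman}, which is also where smoothness of $X$ enters) requires no Futaki hypothesis at all. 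What the vanishing of all the coefficients in the expansion \ref{eq:asymptotic expansion of mod quant fut invariant} gives is the exact identity $\mbox{Fut}_{V,k}(W)=0$ for \emph{every fixed} $k$, i.e., by Proposition \ref{prop:alg formula for fut}, the exact invariance of $\mathcal{E}^{(k)}_{g_V}$, hence of $\mathcal{D}^{(k)}_{g_V}$, under $\mbox{Aut}(X,V)_{0}$ at each level. This exact invariance is indispensable: $\mathcal{D}^{(k)}_{g_V}$ is convex along geodesics of $\mathcal{H}_k$ and affine along the geodesics given by one-parameter subgroups of $\mbox{Aut}(X,V)_{0}$, with slope essentially $\mbox{Fut}_{V,k}(W)$; if that slope is nonzero, however small, the functional is unbounded below along the orbit and has no critical point, so no balanced metric exists at that level. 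An $o(1)$ comparison with the continuous functional cannot repair this, and the paper's reference to the Futaki--Ono--Sano example \cite{f-o-s} shows that analytic K-polystability does not imply the vanishing even for toric $X$; so properness cannot simply ``descend'' by approximation, which is exactly why the vanishing is a separate hypothesis. With the invariance in hand, the paper gets quantized coercivity by minimizing the coercivity estimate for $\mathcal{D}_{g_V}$ over the group and transporting it through the comparison inequality, and then invokes Proposition \ref{prop:vartional prop of quant ding}.

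Your step (iii) also contains a step that would fail as stated: you propose to take an $L^1$/weak subsequential limit $\phi_\infty$ of the balanced potentials and pass to the limit in the balanced equation to conclude that $\phi_\infty$ solves the weak $g_V$-Monge-Amp\`ere equation, citing the continuity properties of $MA_g$. But Theorem \ref{thm:g-ma intro} gives continuity of $MA_g$ only along \emph{decreasing} sequences (of full mass); $MA_g$ is not continuous along $L^1$-convergent sequences, and nothing forces the $\phi_k$ to be monotone. The paper avoids this entirely by a variational argument: testing the minimizing property of $H_k$ against $\mbox{Hilb}(k\psi)$ for a fixed smooth $\psi$ and using Proposition \ref{prop:conv of l-functional} gives $\mathcal{D}^{(k)}_{g_V}(H_k)\leq\inf\mathcal{D}_{g_V}+o(1)$; combined with the quantized coercivity this bounds $J^{(k)}(H_k)$ uniformly, and the comparison lemma then shows that $\phi_k:=FS(H_k)$ is an asymptotically minimizing sequence for $\mathcal{D}_{g_V}$. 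Convergence in energy, modulo the group, to a minimizer then follows as in Theorem \ref{thm:proper ding mab mod aut implies existence}, and the limit is a K\"ahler-Ricci soliton by Theorem \ref{thm:variational prop of mod mab ding}. You should reroute your limit argument through minimization of the modified Ding functional in this way rather than through the equation itself; your ``uniform energy bound by construction'' is precisely the uniform bound on $J^{(k)}(H_k)$ that has to be extracted from the coercivity estimate together with the upper bound above, not an automatic consequence of balancedness.
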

The strong analytic K-polystability referred to above simply means
that the modified Mabuchi functional is coercive modulo $\mbox{Aut}(X,V)_{0},$
which in the case when $\mbox{Aut}(X)_{0}$ is trivial is well-known
to be equivalent to the existence of a unique Kähler-Einstein metric
on $X$ (as well as the properness of the Mabuchi functional). It
should be stressed that the previous theorem is new even in the case
when $V=0,$ if $\mbox{Aut}(X)_{0}$ is non-trivial. The condition
on the vanishing of the higher order Futaki invariants is then equivalent
to the vanishing of Futaki's higher order invariants $\mathcal{F}_{Td^{(m)}}$
\cite{fu} for all integers $m=1,..n,$ which is a necessary condition
for the existence of balanced metrics $H_{k}$ for $k$ large (and
the condition is not implied by the analytic K-polystability assumption
even in the case when $X$ is toric, by the example in \cite{f-o-s}).
Finally, it should be stressed that even in the simplest case $\mbox{Aut}(X)_{0}=0$
the extension of the previous theorem to singular Fano varieties seems
challenging. Indeed, the corresponding existence statement may then
even turn out to be false as indicated by an example in \cite{od},
which reveals that the K-stability of a singular polarized variety
$(X,L)$ does not imply the existence of balanced metrics in the (a
priori) different sense of \cite{d00}. This latter notion of a balanced
metric (or critical metric in the terminology of Zhang) is equivalent
to the asymptotic Chow polystability of $(X,L),$ in the sense of
Geometric Invariant Theory. As shown by Mabuchi \cite{m1} and Futaki
\cite{fu} an analog of the existence part of Theorem \ref{thm:strong polyk implies existence and conv of quantized intro}
holds in the latter setting of balanced metrics: more precisely, the
existence of a constant scalar curvature metric in $c_{1}(L)$ together
with the vanishing of the higher order (ordinary) Futaki invariants
of $(X,L)$ implies the asymptotic Chow polystability of $(X,L),$
but the convergence problem for the corresponding balanced metrics
seems to be open (except in the case case when $\mbox{Aut}(X,L)_{0}=0$
originally settled by Donaldson \cite{d00}).

\subsubsection*{Organization of the paper}

The technical core of the paper is section \ref{sec:The-pluripotential-theory}
where the pluripotential theory of $g-$Monge-Ampère measure $MA_{g}$
is developed. In particular, the continuity properties of $MA_{g}$
(stated in Theorem \ref{thm:g-ma intro}) as well as the continuity
and concavity/convexity properties of the corresponding energy functional
$\mathcal{E}_{g}$ are established. With these results in place the
variational approach to complex Monge-Ampère equations developed in
\cite{bbgz} is not hard to extend to the present setting and the
section is concluded with a proof of Theorem  \ref{thm:solv of gma eq intro}.
Then in section \ref{sec:K=0000E4hler-Ricci-solitons} the pluripotential
theory developed in the previous section is applied to Kähler-Ricci
solitons (by adapting the proofs in \cite{bbegz} concerning singular
Kähler-Einstein metrics) and finally extended to the quantized setting
in section \ref{sec:The-quantized-setting}.

\section{\label{sec:The-pluripotential-theory}The pluripotential theory of
moment maps and $g-$Monge-Ampère equations}

\subsection{Pluripotential preliminaries}

Let $L\rightarrow X$ be a line bundle over a compact complex manifold
$X$ and assume that $L$ is semi-positive and big, i.e. $L$ admits
a smooth metric $\phi_{0}$ with non-negative curvature form $\omega_{0}$
(representing the first Chern class $c_{1}(L)\in H^{2}(X,\Z)$ of
$L)$ and 
\[
c_{1}(L)^{n}:=\int_{X}\omega_{0}^{n}>0
\]
 We will use additive notation for a metric $\phi$ on $L,$ i.e.
given an open set $U\subset X$ and a holomorphic trivializing section
$s_{|U}$ of $L_{|U}$ the metric $\phi$ is represented by the local
function $\phi_{|U}:=\log\left|s\right|_{\phi}^{2},$ where $\left|s\right|_{\phi}$
denotes the length of $s$ wrt the metric $\phi.$ Then the (normalized)
curvature form of the metric $\phi$ may be locally represented as
\[
\omega_{\phi}:=dd^{c}\phi_{|U},\,\,\,\,\,\,\,\, dd^{c}:=\frac{i}{2\pi}\partial\bar{\partial}
\]
 (which, abusing notation slightly will occasionally be written as
$\omega_{\phi}=dd^{c}\phi).$ We will denote by $PSH(X,L)$ the space
of all (possibly singular) metrics $\phi$ on $L$ with positive curvature
current, i.e. $\phi$ is locally upper semi-continuous (usc) and integrable
and $\omega_{\phi}\geq0$ holds in the sense of currents (equivalently,
$\phi$ is locally plurisubharmonic, or psh for short). By the seminal
work of Beford-Taylor the Monge-Ampère (normalized) measure 
\[
MA(\phi):=\omega_{\phi}^{n}/c_{1}(L)^{n}
\]
 is well-defined for any metric $\phi$ in $PSH(X,L)$ which is locally
bounded and defines a probability measure on $X.$ More generally,
the (non-pluripolar) Monge-Ampère measure $MA(\phi)$ can be defined
for any metric $\phi$ in $PSH(X,L)$ by replacing the Bedford-Taylor
wedge products used in the previous formula with the non-pluripolar
product of positive currents introduced in \cite{begz}. This extension
is uniquely determined by the properties that $(i)$ $MA(\phi)$ does
not charge pluripolar subset of $X$ (i.e. sets locally contained
in the $-\infty-$locus of a psh function) and $(ii)$ $MA(\phi)$
is local with respect to the plurifine topology (i.e. the coursest
topology making all psh functions continuous). Concretely, this means
that, 
\begin{equation}
MA(\phi):=\lim_{k\rightarrow\infty}1_{\{\phi^{(k)}>\phi_{0}-k\}}MA(\phi^{(k)}),\,\,\,\,\,\phi^{(k)}:=\max\{\phi,\phi_{0}-k\}\label{eq:def of nonpluripol ma measure-1}
\end{equation}
(by the locality of $MA$ the sequence above is increasing and hence
the limit is indeed well-defined).
\begin{example}
If the metric $\phi$ in $PSH(X,L)$ is locally bounded on a Zariski
open subset $\Omega$ of $X,$ then $MA(\phi)=1_{\Omega}\omega_{\phi}^{n}/c_{1}(L)^{n}.$ 
\end{example}
The Monge-Ampère measure defined by formula \ref{eq:def of nonpluripol ma measure-1}
satisfies $\int_{X}MA(\phi)\leq1$ and accordingly, following \cite{begz},
a metric $\phi$ in $PSH(X,L)$ is said to be of\emph{ maximal Monge-Ampère
mass} if $\int_{X}MA(\phi)=1.$ The subspace of all such metrics is
denoted by $\mathcal{E}(X,L)$ and it has the crucial property that
the Monge-Ampère operator is continuous wrt decreasing $\phi_{j}$
converging to an element $\phi$ in $\mathcal{E}(X,L).$ Note however
that a metric as in the previous example, with non-empty singularity
locus $X-\Omega,$ never has full Monge-Ampère mass. Still we have
the following convergence result, which we will have great use for: 
\begin{prop}
\label{prop:cont of ma with sing}Assume that there exists a Zariski
open subset $\Omega$ such that the metrics $\phi_{j}$ and $\phi$
in $PSH(X,L)$ are locally bounded on $X-\Omega$ and have equivalent
singularities (i.e. $\phi_{j}-\phi$ is bounded, for any fixed $j).$
Then $\int MA(\phi_{j})=\int MA(\phi)$ and if $\phi_{j}$ decreases
to $\phi,$ then $MA(\phi_{j})$ converges weakly to $MA(\phi).$ \end{prop}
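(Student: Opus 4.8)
The plan is to push everything down to Bedford--Taylor theory on the Zariski open set $\Omega$. Since $X\setminus\Omega$ is a proper closed analytic subset it is pluripolar and contains the unbounded locus of each of the metrics involved (a ``small unbounded locus'' in the terminology of \cite{begz}); as $\phi$ and the $\phi_j$ are locally bounded on $\Omega$, the Example above identifies $MA(\phi)=1_\Omega\,\omega_\phi^n/c_1(L)^n$ and $MA(\phi_j)=1_\Omega\,\omega_{\phi_j}^n/c_1(L)^n$, so none of these measures charges $X\setminus\Omega$ and both assertions of the proposition concern genuine Bedford--Taylor masses and measures on $\Omega$. It therefore suffices to prove $\int_\Omega\omega_{\phi_j}^n=\int_\Omega\omega_\phi^n$ and, in the monotone case, $\omega_{\phi_j}^n\to\omega_\phi^n$ weakly \emph{on $X$}.

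First I would establish the mass identity by sandwiching. Fix $j$, put $C_j:=\sup_X|\phi_j-\phi|<\infty$, so that $\phi-C_j\le\phi_j\le\phi+C_j$. The one genuinely non-trivial input is the monotonicity of the non-pluripolar Monge--Amp\`ere mass under $\le$, valid for metrics with small unbounded locus and proved in \cite{begz} (its proof there rests on an integration-by-parts argument over $\Omega$ using the plurifine cut-offs $\{\phi>\phi_0-k\}$, the delicate point being that the boundary contributions near $X\setminus\Omega$ vanish in the limit by Chern--Levine--Nirenberg type estimates together with the bounds $\int_X MA(\phi^{(k)})=1$). Granting this, and using that adding a constant to a metric alters neither its curvature current nor $MA$, the inequalities $\phi-C_j\le\phi_j\le\phi+C_j$ give $\int_X MA(\phi)=\int_X MA(\phi-C_j)\le\int_X MA(\phi_j)\le\int_X MA(\phi+C_j)=\int_X MA(\phi)$, hence $\int_X MA(\phi_j)=\int_X MA(\phi)=:M_0$. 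In particular all the $MA(\phi_j)$, together with $MA(\phi)$, are positive measures on $X$ of the same total mass $M_0$, all supported on $\Omega$.

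Finally I would deduce the weak convergence from this mass identity together with the classical Bedford--Taylor continuity theorem \cite{b-t1}: when $\phi_j\downarrow\phi$ with all of them locally bounded on $\Omega$, one has $\int g\,MA(\phi_j)\to\int g\,MA(\phi)$ for every $g\in C^0(X)$ whose support is a compact subset of $\Omega$. Fix $f\in C^0(X)$ with $f\ge0$ and a cut-off $\chi\in C^0(X)$ with $0\le\chi\le1$, $\chi\equiv1$ near $X\setminus\Omega$, and $\operatorname{supp}(1-\chi)$ a compact subset of $\Omega$. From $f\chi\ge0$ we get $\int f\,MA(\phi_j)\ge\int f(1-\chi)\,MA(\phi_j)\to\int f(1-\chi)\,MA(\phi)$, so $\liminf_j\int f\,MA(\phi_j)\ge\int f(1-\chi)\,MA(\phi)$; while $\int f\,MA(\phi_j)\le\int f(1-\chi)\,MA(\phi_j)+(\max_X f)\int\chi\,MA(\phi_j)$ and $\int\chi\,MA(\phi_j)=M_0-\int(1-\chi)\,MA(\phi_j)\to M_0-\int(1-\chi)\,MA(\phi)=\int\chi\,MA(\phi)$ give $\limsup_j\int f\,MA(\phi_j)\le\int f(1-\chi)\,MA(\phi)+(\max_X f)\int\chi\,MA(\phi)$. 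Letting $\operatorname{supp}(1-\chi)$ increase to $\Omega$, i.e. $\chi\to1_{X\setminus\Omega}$ pointwise, and using that $MA(\phi)$ does not charge $X\setminus\Omega$, both bounds tend to $\int f\,MA(\phi)$; hence $\int f\,MA(\phi_j)\to\int f\,MA(\phi)$ for every nonnegative $f\in C^0(X)$, which is equivalent to $MA(\phi_j)\to MA(\phi)$ weakly on $X$. The only real obstacle in this scheme is the monotonicity of non-pluripolar masses used in the second step; everything else is soft manipulation of Bedford--Taylor limits.
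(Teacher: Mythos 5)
Your proposal is correct and follows essentially the same route as the paper: the mass identity comes from the monotonicity of non-pluripolar masses in \cite{begz} applied to the sandwich $\phi-C_{j}\leq\phi_{j}\leq\phi+C_{j}$, and the weak convergence comes from Bedford--Taylor continuity on $\Omega$ upgraded to all of $X$ using the equality of total masses. Your cut-off argument simply makes explicit the ``basic integration theory'' step that the paper leaves to the reader.
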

\begin{proof}
This is well-known and more generally holds as long as $\phi_{j}$
and $\phi$ have small unbounded locus (in the sense of \cite{begz}).
For completeness we recall the simple argument. First, the equality
of the total masses is a consequence of the fact that $\int MA(\psi)\leq\int MA(\phi)$
if $\psi\leq\phi+C$ and $\phi$ and $\psi$ have small unbounded
locus (see Theorem 1.16 in \cite{begz}.) In fact, in our case this
follows from a simple max construction interpolating between suitable
perturbations of $\phi$ and $\psi.$ Finally, since $MA(\phi_{j})\rightarrow MA(\phi)$
in the local weak topology on $\Omega$ (by Bedford-Taylor theory)
it then follows from the equalities of the total integrals and basic
integration theory that $1_{\Omega}MA(\phi_{j})\rightarrow1_{\Omega}MA(\phi),$
weakly on $X,$ as desired.
\end{proof}
We recall the following generalization (to semi-positive and big line
bundles) of Demailly's classical regularization result concerning
ample line bundles:
\begin{thm}
\label{thm:(regularization)--Any}(regularization) \cite{e-g-z} Any
metric in $PSH(X,L)$ can be written as a decreasing limit of \emph{smooth}
metrics $\phi_{j}$ in $PSH(X,L)$ (i.e. $\phi_{j}\in\mathcal{H}(X,L)).$
\end{thm}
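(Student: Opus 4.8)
The plan is to reduce the statement to Demailly's classical regularization theorem for closed positive currents, the only genuine work being the removal of the ``loss of positivity'' inherent in that result. Fix the smooth reference metric $\phi_{0}$ with $\omega_{0}:=dd^{c}\phi_{0}\ge 0$, so that a metric $\phi\in PSH(X,L)$ corresponds to an $\omega_{0}$-psh function $u:=\phi-\phi_{0}$. By Demailly's theorem there are \emph{smooth} functions $u_{j}\downarrow u$ with $\omega_{0}+dd^{c}u_{j}\ge-\epsilon_{j}\omega$, where $\omega$ is an auxiliary K\"ahler form and $\epsilon_{j}\downarrow 0$. If $c_{1}(L)$ is a K\"ahler class one may take $\omega_{0}$ K\"ahler, so that $\omega_{0}\ge\delta\omega$ for some $\delta>0$; then, after normalizing $u_{j}\le 0$, the rescaled functions $u_{j}/(1+\epsilon_{j})$ are already smooth and $\omega_{0}$-psh and still decrease to $u$ (the B\l ocki--Ko\l odziej trick). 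This breaks down when $c_{1}(L)$ is merely semi-positive and big, because a degenerate $\omega_{0}$ cannot dominate $\omega$, and repairing it is the whole content of Theorem~\ref{thm:(regularization)--Any}.

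To carry this out I would first invoke bigness: by Kodaira's lemma (equivalently, by Demailly's characterization of big classes) $c_{1}(L)$ contains a \emph{K\"ahler current} $S=\omega_{0}+dd^{c}\psi\ge\epsilon_{0}\omega$ with $\epsilon_{0}>0$, where $\psi$ is an $\omega_{0}$-psh, possibly singular, function, normalized so that $\sup_{X}\psi=0$. Since now $\omega\le\epsilon_{0}^{-1}S$, the bound $\omega_{0}+dd^{c}u_{j}\ge-\epsilon_{j}\omega$ gives $\omega_{0}+dd^{c}u_{j}\ge-\epsilon_{j}\epsilon_{0}^{-1}(\omega_{0}+dd^{c}\psi)$; hence, setting $t_{j}:=2\epsilon_{j}/\epsilon_{0}$ and $v_{j}:=(1+t_{j})^{-1}(u_{j}+t_{j}\psi)$, one gets $\omega_{0}+dd^{c}v_{j}\ge c_{j}\omega$ with $c_{j}:=\epsilon_{j}/(1+t_{j})>0$. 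Thus each $v_{j}$ is an $\omega_{0}$-psh function with a \emph{strictly} positive curvature current, whose singularities are those of $\psi$ damped by the factor $t_{j}/(1+t_{j})\to 0$, and $v_{j}\to u$ in $L^{1}$ (and pointwise off the polar set of $\psi$) as $j\to\infty$. Because the curvature of $v_{j}$ is strictly positive, a second application of Demailly's regularization smooths $v_{j}$ with no net loss: there are smooth $v_{j,l}\downarrow v_{j}$ with $\omega_{0}+dd^{c}v_{j,l}\ge 0$ for $l$ large, i.e.\ $\phi_{0}+v_{j,l}\in\mathcal{H}(X,L)$.

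It remains to collapse this double family into a \emph{single} sequence of smooth metrics in $\mathcal{H}(X,L)$ decreasing to $\phi$, and this I expect to be the main obstacle. The difficulty is that $v_{j}$ tends to $u$ only in $L^{1}$, the convergence degenerates on the polar locus of $\psi$, and $v_{j}$ is not monotone in $j$, so one cannot diagonalize naively. After passing to a subsequence along which $\epsilon_{j}\to 0$ sufficiently fast and normalizing $\sup_{X}u=\sup_{X}\psi=0$, one can squeeze well-chosen smooth approximants of the $v_{j}$ between $u$ and a sequence of functions decreasing to $u$, and then manufacture a decreasing sequence by passing to upper envelopes and regularizing once more; keeping every approximant semi-positively curved while the auxiliary singularities of $\psi$ are flushed out in the limit is the delicate part, and the complete argument is carried out in \cite{e-g-z}, whose statement we are quoting. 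Finally, when $L$ is ample one may take $\omega_{0}$ itself K\"ahler, so that the B\l ocki--Ko\l odziej trick of the first paragraph applies directly and nothing beyond Demailly's classical theorem is needed.
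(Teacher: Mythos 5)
The paper does not prove this statement at all: Theorem \ref{thm:(regularization)--Any} is quoted verbatim from \cite{e-g-z}, so there is no internal argument to compare with. Judged as a proof in its own right, your proposal has genuine gaps, and they sit exactly where the real difficulty of the semi-positive-and-big case lies. First, your opening step already assumes more than Demailly's theorem gives: for a quasi-psh $u$ in a non-K\"ahler class, Demailly's \emph{smooth} regularization comes with a loss of positivity of the order of the Lelong numbers, i.e. $\omega_{0}+dd^{c}u_{j}\geq-\bigl(C\lambda_{j}(x)+\epsilon_{j}\bigr)\omega$ with $\lambda_{j}(x)\downarrow\nu(u,x)$, not merely $-\epsilon_{j}\omega$; producing smooth decreasing approximants with arbitrarily small loss is essentially equivalent to the statement to be proven (in the K\"ahler case the B\l ocki--Ko\l odziej argument is precisely what removes this loss, and it uses strict positivity of $\omega_{0}$). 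Second, the same issue defeats your claim that $v_{j}$ can be smoothed ``with no net loss'': $v_{j}$ inherits the singularities of the K\"ahler current potential $\psi$, so its smooth Demailly approximants lose positivity of order $\tfrac{t_{j}}{1+t_{j}}\nu(\psi,x)\,\omega\approx\tfrac{2\epsilon_{j}}{\epsilon_{0}}\nu(\psi,x)\,\omega$, which is dominated by the gained positivity $c_{j}\omega\approx\epsilon_{j}\omega$ only if $\sup_{X}\nu(\psi,\cdot)$ is below a fixed multiple of $\epsilon_{0}$. You neither arrange this nor can you in general: at points of the null locus (augmented base locus) every K\"ahler current in the class is forced to have positive Lelong number, and rescaling $\psi$ scales the Lelong numbers and the positivity threshold together, so the ratio cannot be improved.

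Finally, even granting both steps, the assembly into a single \emph{decreasing} sequence converging to $\phi$ is the heart of the matter and you explicitly defer it to \cite{e-g-z} --- but that reference is the statement being proven, so the argument is circular as it stands. Note also that each $v_{j}$ equals $-\infty$ on the polar set of $\psi$ while $\phi$ may be finite there, so the $v_{j}$ (and their smoothings from below) are not even candidates for monotone approximation of $\phi$ without a max-type surgery such as $\max(v_{j},u_{j}-C_{j})$, which you only allude to. In short: the reduction to ``Demailly plus a K\"ahler current'' is a reasonable first move and is indeed how one begins in the ample case, but the two regularization steps are invoked in a stronger form than is available, and the decisive monotone-gluing step is missing; since the paper itself simply cites \cite{e-g-z}, the honest conclusion is that your write-up reproduces the reduction folklore but not a proof.
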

As explained in the introduction of the paper one of our main aims
is to define a modified version of the Monge-Ampère measure, in the
presence of a torus action.

\subsection{\label{sub:The-torus-setting}The torus setting and the $g-$Monge-Ampère
measure}

Let $T$ be an $m-$dimensional real torus acting holomorphically
on $(X,L).$ The complexified torus $T_{c}$ then also acts holomorphically
on $(X,L)$ and we denote by $\rho(\tau)$ the automorphism of $(X,L)$
corresponding to $\tau\in T_{c}.$ Writing $\tau:=(\tau_{1},...,\tau_{m})$
we identify $T_{c}$ with $\C^{*m}$ in the standard way and set $\tau_{j}=e^{t_{j}+i\theta_{j}},$
where $t_{j}$ are real coordinates on $(J$ times) the Lie algebra
$\mbox{Lie \ensuremath{(T),}}$ identified with $\R^{m},$ which we
equip with its standard partial order relation $\leq.$ Similarly,
we identify the dual $\mbox{Lie \ensuremath{(T)}}^{*}$ with $\R^{m}$
with the corresponding dual real coordinates, which will be denote
by $\lambda:=(\lambda_{1},...,\lambda_{m}).$ For $t:=(t_{1},...,t_{n})$
and $\phi\in PSH(X,L)$ we set 
\[
\phi_{t}:=\rho(\tau)^{*}\phi,
\]
 which gives a family of metrics $\phi_{t}$ parametrized by $\R^{n}.$
Next, for $\phi$ smooth we define the corresponding moment map $m_{\phi}$by
differentiating wrt $t:$
\[
m_{\phi}(x):=\nabla_{t}\phi_{t}(x)_{|t=0}:\,\,\,\,\,\, X\rightarrow P\subset\R^{m},
\]
 where $P$ is the defined as the image 
\[
P:=m_{\phi}(X)(\subset\R^{m})
\]
 which is well-known to be independent of $\phi$ (and compact, since
it is the image of the compact manifold $X$ under a continuous map).
In fact, $P$ is convex and coincides with the support of the corresponding
\emph{Duistermaat-Heckman measure} 
\[
\nu:=(m_{\phi})_{*}\frac{(\omega^{\phi}){}^{n}}{n!V}
\]
on $\R^{m},$ which is absolutely continuous with respect to Lesbegue
measure and independent of $\phi$ (see \cite{d-h} for the case when
$L$ is ample and \cite{a-b} for the semi-positive case); the independence
of $\nu$ wrt $\phi$ will also follow from Proposition \ref{prop:conv of spec meas}
(see also Remark \ref{ref:independence}). For $\lambda\in P$ and
$\phi$ in $PSH(X,L)^{T}$ given we define the following singular
metric on $L:$ 
\begin{equation}
P_{\lambda}\phi:=\psi_{\lambda}:=\inf_{t\geq0}\left(\phi_{t}-\left\langle t,\lambda\right\rangle \right).\label{eq:def of lower env metric}
\end{equation}

\begin{lem}
\label{lem:Plambdaphi is local bounded on}For any $\lambda$ such
that $\lambda$ is in the interior of $P$ the metric $\psi_{\lambda}$
is in $PSH(X,L)$ and if $\phi$ is locally bounded then $\psi_{\lambda}$
is locally bounded on a Zariski open subset $\Omega$ of $X.$ \end{lem}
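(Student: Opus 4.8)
The plan is to obtain plurisubharmonicity of $\psi_{\lambda}$ from Kiselman's minimum principle, and local boundedness from the fact that, since $\lambda$ lies in $\mathrm{int}(P)$, the convex function $t\mapsto\phi_{t}(x)-\langle t,\lambda\rangle$ attains its minimum over $\R^{m}$ at a finite point $t_{\ast}(x)$ for $x$ in a Zariski open set.

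First I would treat plurisubharmonicity. Fix a chart $U\subset X$ trivializing $L$; it is standard that, after the canonical identification $\rho(\tau)^{*}L\cong L$, the local potential $\phi_{t}(x)$ of $\phi_{t}$, regarded as a function of $(x,w)\in U\times\C^{m}$ with $t=\mathrm{Re}(w)$, is plurisubharmonic and independent of $\mathrm{Im}(w)$: it is the local potential of the pullback of $\phi$ under the holomorphic action map $(w,x)\mapsto\rho(e^{w})x$, which has positive curvature current, and by $T$-invariance it factors through $\mathrm{Re}(w)$. Subtracting the pluriharmonic function $\mathrm{Re}\langle w,\lambda\rangle$ and restricting to the pseudoconvex tube-type domain $U\times\{\mathrm{Re}(w)\geq 0\}$, Kiselman's minimum principle gives that $x\mapsto\inf_{t\geq 0}(\phi_{t}(x)-\langle t,\lambda\rangle)=\psi_{\lambda}(x)$ is plurisubharmonic on $U$ or $\equiv-\infty$ there; here I use that $t\mapsto\phi_{t}(x)-\langle t,\lambda\rangle$ is convex, hence continuous where finite, so that the infimum over the open orthant agrees with that over the closed one. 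Patching over charts, $\psi_{\lambda}\in PSH(X,L)$ unless $\psi_{\lambda}\equiv-\infty$.

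It remains to rule out $\psi_{\lambda}\equiv-\infty$ and to produce $\Omega$. For smooth $\phi$ the function $\xi_{x}(t):=\phi_{t}(x)-\langle t,\lambda\rangle$ is convex on $\R^{m}$ with $\nabla\xi_{x}(t)=m_{\phi}(\rho(e^{t})x)-\lambda$, and $t\mapsto\phi_{t}(x)$ is strictly convex whenever the $T_{c}$-orbit of $x$ has maximal dimension. Take $\Omega$ to be the $T_{c}$-invariant Zariski open locus of points whose orbit has maximal dimension and whose orbit closure $\overline{T_{c}\cdot x}$ meets every $T$-fixed component mapping to a vertex of $P$ (the complement of the non-dense Bialynicki--Birula strata); then for $x\in\Omega$ the orbit closure is a projective toric variety with moment polytope $P$, so $m_{\phi}$ maps the orbit $T_{c}\cdot x$ itself onto $\mathrm{int}(P)$. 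Since $\lambda\in\mathrm{int}(P)$ there is a unique $t_{\ast}(x)$ with $\nabla\xi_{x}(t_{\ast}(x))=0$, which by convexity is the global minimum of $\xi_{x}$, and $t_{\ast}(\cdot)$ is continuous on $\Omega$ (it inverts the diffeomorphism $t\mapsto m_{\phi}(\rho(e^{t})x)$ onto $\mathrm{int}(P)$). Hence $\xi_{x}(t_{\ast}(x))\leq\psi_{\lambda}(x)\leq\xi_{x}(0)=\phi(x)$ for $x\in\Omega$, with $\xi_{x}(t_{\ast}(x))=\phi(\rho(e^{t_{\ast}(x)})x)-\langle t_{\ast}(x),\lambda\rangle>-\infty$; thus $\psi_{\lambda}\not\equiv-\infty$, and $\psi_{\lambda}\in PSH(X,L)$ by the previous paragraph. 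If moreover $\phi$ is locally bounded, then on a compact subset of $\Omega$ the point $\rho(e^{t_{\ast}(x)})x$ stays in a compact set and $t_{\ast}(x)$ stays bounded, so $\xi_{x}(t_{\ast}(x))$ is locally bounded below; together with $\psi_{\lambda}\leq\phi$ this shows $\psi_{\lambda}$ is locally bounded on $\Omega$. For a general $\phi\in PSH(X,L)^{T}$ one writes $\phi$ as a decreasing limit of smooth $T$-invariant metrics (Theorem \ref{thm:(regularization)--Any}, averaged over $T$); since $P_{\lambda}$ commutes with decreasing limits, $\psi_{\lambda}=\lim_{j}P_{\lambda}\phi_{j}$ is a decreasing limit of members of $PSH(X,L)$, hence again in $PSH(X,L)$.

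The soft ingredients --- Kiselman's minimum principle and the convexity bookkeeping --- are routine. The real point, and the step I expect to be the main obstacle, is the torus-geometric input above: that the orbit closure of a generic point has moment image all of $P$, so that $\mathrm{int}(P)$ is contained in the moment image of the orbit itself, together with the verification that the exceptional locus is a genuine proper Zariski-closed subset. This rests on the Bialynicki--Birula decomposition of $X$ under the $T_{c}$-action and is where I expect to spend the most effort.
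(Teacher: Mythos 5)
Your first half (Kiselman's minimum principle applied on the tube domain, plus the observation that the infimum over the open and closed orthant agree) is exactly the paper's argument for plurisubharmonicity. The boundedness half, however, has a genuine gap, and it sits precisely at the step you yourself flag as the main obstacle. Your lower bound rests on (i) the assertion that the locus $\Omega$ of points whose orbit closure has moment polytope equal to all of $P$, with $\mathrm{int}(P)\subseteq m_{\phi}(T_{c}\cdot x)$, is a nonempty Zariski-open set -- this is never proved, only announced as future effort -- and (ii) the claim that $t\mapsto\phi_{t}(x)$ is strictly convex on maximal-dimensional orbits, so that $t_{\ast}(x)$ is a unique critical point varying continuously with $x$. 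Claim (ii) is false in the generality of this lemma: here $L$ is only semi-positive and big, and the smooth elements of $PSH(X,L)^{T}$ (in particular the regularizations furnished by Theorem \ref{thm:(regularization)--Any}) have merely semi-positive curvature, which can degenerate along the torus orbits even over a Euclidean-open set of points. Then $\xi_{x}$ need have no critical point, $t_{\ast}$ need not exist, be unique, or vary continuously -- and it is exactly the continuity of $t_{\ast}$ that you use to make the lower bound \emph{locally uniform}, which is the real content of local boundedness (pointwise finiteness of $\inf_{t\geq0}\xi_{x}(t)$ does not suffice, since an infimum of a continuous family is only upper semicontinuous in $x$). Two smaller points: for a general locally bounded, non-smooth $\phi$ you never return to the boundedness statement (this is a one-line fix via $|\phi-\phi_{0}|\leq C$ and Lemma \ref{lem:basic prop of lower env metric}), and "a decreasing limit of members of $PSH(X,L)$ is again in $PSH(X,L)$" needs a $j$-independent lower bound at some point to rule out the limit being identically $-\infty$.

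The paper replaces all of this orbit geometry by a single algebraic device, which is the idea your proposal is missing. Since $\lambda\in\mathrm{int}(P)$, the weight asymptotics of Proposition \ref{prop:conv of spec meas} give, for some large $k,$ a weight $\lambda^{(k)}$ of the $T_{c}$-action on $H^{0}(X,kL)$ with $\lambda^{(k)}/k\geq\lambda$ and an eigensection $s_{k}$ of that weight. Choosing $C_{k}$ so that $\phi\geq\frac{1}{k}\log|s_{k}|^{2}-C_{k}$ and pulling back by $\rho(\tau),$ equivariance yields (in the paper's normalization of weights), for every $t\geq0,$ $\phi_{t}-\left\langle t,\lambda\right\rangle \geq\frac{1}{k}\log|s_{k}|^{2}-C_{k}+\left\langle t,\lambda^{(k)}/k-\lambda\right\rangle \geq\frac{1}{k}\log|s_{k}|^{2}-C_{k},$ hence $\psi_{\lambda}\geq\frac{1}{k}\log|s_{k}|^{2}-C_{k}$ with $\Omega:=X\setminus\{s_{k}=0\}$ Zariski open; together with $\psi_{\lambda}\leq\phi$ this gives local boundedness on $\Omega,$ locally uniformly and for any locally bounded $\phi,$ with no non-degeneracy or genericity analysis. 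If you want to keep your moment-map picture, such a section is precisely an algebraic certificate that $\lambda$ lies below attainable slopes of $\phi_{t}$ on a Zariski-open set -- but the proof never needs the stronger statement that generic orbit closures have full moment polytope.
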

\begin{proof}
By Kiselman's minimum principle the metric $\psi_{\lambda}$ is psh
(note that $\psi_{\lambda}$ is automatically upper-semi continuities,
since it is the infimum of continuous metrics). Next, observe that
there exists an integer vector $\lambda^{(k)}$ in $P$ such that
$\lambda^{(k)}/k\geq\lambda$ and a holomorphic section $s_{k}$ in
$H^{0}(X,kL)$ (for some large $k)$ satisfying $\tau^{*}s_{k}=e^{\left\langle (i\theta+t),\lambda^{(k)}\right\rangle }s_{k}$
(as follows from Proposition \ref{prop:conv of spec meas}). Set $\phi_{k}:=\frac{1}{k}\log|s_{k}|^{2}-C_{k},$
where the constant has been chosen so that $\phi\geq\phi_{k}.$ In
particular $\tau^{*}\phi\geq\tau^{*}\phi_{k}$ and hence $\phi_{t}\geq\left\langle t,\lambda^{(k)}/k\right\rangle -\left\langle t,\lambda\right\rangle +\log|s_{k}|^{2}.$
Hence, letting $X-\Omega$ be the zero set $\{s_{k}=0\}$ and taking
the inf over all $t\geq0$ reveals that $\phi_{t}(x)$ is locally
bounded on $\Omega.$ 
\end{proof}
The operator $P_{\lambda}$ on $PSH(X,L)^{T}$ defined by formula
\ref{eq:def of lower env metric} has the following basic properties,
which follow immediately from its variational definition:
\begin{lem}
\label{lem:basic prop of lower env metric}In general $P_{\lambda}\phi\leq\phi$
and 
\begin{itemize}
\item If $\phi\leq\phi'$ then $P_{\lambda}\phi\leq P_{\lambda}\phi$ and
$P_{\lambda}(\phi+C)=P_{\lambda}\phi+C$ for any constant $C\in\R.$ 
\end{itemize}
In particular, if $\phi-\phi'$ is bounded then so is $P_{\lambda}(\phi)-P_{\lambda}\phi'.$
Moreover, if $\phi_{j}$ decreases to $\phi$ the metric $P_{\lambda}\phi_{j}$
decreases to $P_{\lambda}\phi.$ 
\end{lem}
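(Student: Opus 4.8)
The plan is to verify each assertion of Lemma~\ref{lem:basic prop of lower env metric} directly from the defining formula $P_\lambda\phi=\inf_{t\ge0}(\phi_t-\langle t,\lambda\rangle)$, treating the monotonicity, the cocycle-type behavior under addition of constants, the stability of bounded differences, and the continuity under decreasing limits in turn. The first three are elementary manipulations of infima; the only point requiring genuine argument is the last one, so I will spend most of the effort there.

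First I would record the monotonicity: if $\phi\le\phi'$ then $\phi_t=\rho(\tau)^*\phi\le\rho(\tau)^*\phi'=\phi'_t$ pointwise for every $t$ (pullback by a biholomorphism preserves pointwise inequalities of metrics), hence $\phi_t-\langle t,\lambda\rangle\le\phi'_t-\langle t,\lambda\rangle$ for each $t\ge0$, and taking the infimum over $t\ge0$ gives $P_\lambda\phi\le P_\lambda\phi'$. The inequality $P_\lambda\phi\le\phi$ is the special case $t=0$ of the infimum. For the constant: $(\phi+C)_t=\phi_t+C$ since $\rho(\tau)^*$ is linear and fixes constant functions, so $P_\lambda(\phi+C)=\inf_{t\ge0}(\phi_t+C-\langle t,\lambda\rangle)=C+\inf_{t\ge0}(\phi_t-\langle t,\lambda\rangle)=P_\lambda\phi+C$. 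Combining these two facts: if $|\phi-\phi'|\le C$, i.e. $\phi'-C\le\phi\le\phi'+C$, then $P_\lambda\phi'-C=P_\lambda(\phi'-C)\le P_\lambda\phi\le P_\lambda(\phi'+C)=P_\lambda\phi'+C$, so $|P_\lambda\phi-P_\lambda\phi'|\le C$; thus bounded differences are preserved.

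For the decreasing-limit statement, suppose $\phi_j\downarrow\phi$. By monotonicity $P_\lambda\phi_j$ is a decreasing sequence of metrics, each $\ge P_\lambda\phi$, so the limit $\psi:=\lim_j P_\lambda\phi_j$ exists and satisfies $\psi\ge P_\lambda\phi$; I must prove the reverse inequality $\psi\le P_\lambda\phi$. Here the delicate issue is that the infimum defining $P_\lambda$ is over the \emph{noncompact} set $\{t\ge0\}$, so one cannot immediately interchange $\inf_t$ and $\lim_j$. The key observation is that $t\mapsto\phi_t(x)-\langle t,\lambda\rangle$ is, for $\lambda$ in the interior of $P$, proper (tends to $+\infty$ as $|t|\to\infty$): indeed $\phi_t(x)$ is a convex function of $t$ whose gradient at $t$ is $m_{\phi_t}(x)$, which lies in $P$, so the supporting-slope estimate forces $\phi_t(x)-\langle t,\lambda\rangle\to+\infty$ when $\lambda\in\operatorname{int}P$; moreover the domain over which the infimum is effectively attained can be taken to lie in a fixed compact box $K$ once one fixes a reference metric, uniformly in $j$ (since $\phi_j\le\phi_0$ for $j$ large, say after reindexing, or using $\phi_j\le\phi_1$). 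On this fixed compact box $K$ the infimum of a decreasing sequence of continuous (in $t$) functions commutes with the limit by Dini-type reasoning at each point $x$: $\inf_{t\in K}\lim_j(\phi_{j,t}(x)-\langle t,\lambda\rangle)=\lim_j\inf_{t\in K}(\phi_{j,t}(x)-\langle t,\lambda\rangle)$, using compactness of $K$ and monotonicity in $j$. This yields $\psi(x)=\lim_j P_\lambda\phi_j(x)=\lim_j\inf_{t\in K}(\phi_{j,t}(x)-\langle t,\lambda\rangle)=\inf_{t\in K}(\phi_t(x)-\langle t,\lambda\rangle)=P_\lambda\phi(x)$, completing the argument.

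The main obstacle is thus the noncompactness of the parameter set in the infimum for the decreasing-limit claim; everything hinges on the coercivity of $t\mapsto\phi_t(x)-\langle t,\lambda\rangle$ for $\lambda$ interior to $P$, which is exactly the input supplied by the moment-polytope picture (the gradient of $\phi_t$ in $t$ is the moment map, valued in $P$) used already in the proof of Lemma~\ref{lem:Plambdaphi is local bounded on}. Once the effective domain is confined to a common compact box independent of $j$, the interchange of limit and infimum is routine. I would also remark that upper semicontinuity of $P_\lambda\phi_j$ (each is an infimum of continuous metrics) together with the decreasing convergence automatically gives that $\psi$ is usc, consistent with $\psi=P_\lambda\phi\in PSH(X,L)$ by Lemma~\ref{lem:Plambdaphi is local bounded on} applied to $\phi$.
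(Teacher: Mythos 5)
Your treatment of the first three assertions (monotonicity, behaviour under adding constants, preservation of bounded differences) is correct and is exactly the elementary manipulation of infima that the paper has in mind when it says the properties ``follow immediately from the variational definition''; the paper gives no further argument there.

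For the decreasing-limit statement, however, the step you lean on is flawed. The claimed coercivity of $t\mapsto\phi_t(x)-\langle t,\lambda\rangle$ for $\lambda\in\operatorname{int}P$ is false pointwise on $X$: at a point $x$ lying in a proper torus-invariant subvariety (e.g.\ a fixed point) the slopes of the convex function $t\mapsto\phi_t(x)$ only fill out a face of $P$, so for instance at a fixed point $\phi_t(x)$ is constant in $t$ and $\phi_t(x)-\langle t,\lambda\rangle\to-\infty$; this is precisely why $P_\lambda\phi$ is only locally bounded off a Zariski-closed set (Lemma \ref{lem:Plambdaphi is local bounded on}), and it also means the effective domain of the infimum cannot be confined to a compact box uniformly in $j$ (note too that for merely psh $\phi$ the moment map $m_{\phi_t}$ you invoke is not defined). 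Fortunately no compactness is needed at all: since $\phi_j$ decreases, the limit in $j$ is an infimum, and two infima always commute, so pointwise
\[
\inf_j P_\lambda\phi_j(x)=\inf_j\inf_{t\ge0}\bigl(\phi_{j,t}(x)-\langle t,\lambda\rangle\bigr)
=\inf_{t\ge0}\inf_j\bigl(\phi_{j,t}(x)-\langle t,\lambda\rangle\bigr)
=\inf_{t\ge0}\bigl(\phi_t(x)-\langle t,\lambda\rangle\bigr)=P_\lambda\phi(x),
\]
using only that $\phi_{j,t}(x)\downarrow\phi_t(x)$ for each fixed $t$ (equivalently: given $\epsilon>0$ pick $t_0$ nearly optimal for $\phi$ at $x$, then $\phi_{j,t_0}(x)\downarrow\phi_{t_0}(x)$ forces $\lim_jP_\lambda\phi_j(x)\le P_\lambda\phi(x)+\epsilon$). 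With this replacement the proof is complete and matches the paper's (unwritten) one-line justification; your concluding remark that the decreasing limit is automatically usc and psh is fine.
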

The relation to the moment map is given by the following 
\begin{lem}
Assume that $\phi$ is in $\mathcal{H}(X,L)^{T}.$ Then
\[
\left\{ m_{\phi}\geq\lambda\right\} =\left\{ P_{\lambda}\phi=\phi\right\} 
\]
and setting $\chi_{\lambda}(p):=1_{\{p\geq\lambda\}}$ on $\R^{m}$
the following identity holds: 

\begin{equation}
MA_{\chi_{\lambda}}(\phi)=MA(P_{\lambda}(\phi))\label{eq:relation for ma of chi in lemma}
\end{equation}
for almost any $\lambda$ in $P.$\end{lem}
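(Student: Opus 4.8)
The plan is to exploit the fact that for a smooth invariant metric $\phi$, everything can be read off the torus orbits. Fix $\phi\in\mathcal{H}(X,L)^{T}$. The first step is to establish the pointwise identity $\{m_{\phi}\geq\lambda\}=\{P_{\lambda}\phi=\phi\}$. For a fixed $x\in X$, consider the function $t\mapsto \phi_{t}(x)-\langle t,\lambda\rangle$ on $\R^{m}$. Since $\phi$ is smooth and $T$-invariant, restricting to the $T_{c}$-orbit through $x$ and pushing down to $(J\text{ times})\,\mathrm{Lie}(T)\cong\R^{m}$, the function $t\mapsto\phi_{t}(x)$ is convex (this is the standard convexity of a positively curved metric along orbit directions), with gradient at $t$ equal to $m_{\phi}$ evaluated at the point $\rho(\tau)x$. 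Hence $t\mapsto\phi_{t}(x)-\langle t,\lambda\rangle$ is convex, and its infimum over the orthant $\{t\geq 0\}$ equals its value at $t=0$ precisely when $t=0$ is a minimum of this convex function on the orthant, i.e. precisely when $\nabla_{t}|_{t=0}\bigl(\phi_{t}(x)-\langle t,\lambda\rangle\bigr)\geq 0$ componentwise, which is exactly $m_{\phi}(x)\geq\lambda$. This gives the set identity, and simultaneously shows that on $\{m_{\phi}\geq\lambda\}$ one has $P_{\lambda}\phi=\phi$, while on the complement $P_{\lambda}\phi<\phi$ strictly.

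The second step is the Monge-Amp\`ere identity \eqref{eq:relation for ma of chi in lemma}. Here I would argue as follows. By Lemma \ref{lem:Plambdaphi is local bounded on}, for $\lambda$ interior to $P$ the metric $\psi_{\lambda}=P_{\lambda}\phi$ lies in $PSH(X,L)$ and is locally bounded on a Zariski open $\Omega$, so $MA(P_{\lambda}\phi)=1_{\Omega}\,\omega_{\psi_{\lambda}}^{n}/c_{1}(L)^{n}$ by the Example. On the plurifine-open set $\{P_{\lambda}\phi=\phi\}$ the two metrics agree, hence by locality of the non-pluripolar Monge-Amp\`ere operator in the plurifine topology, $1_{\{P_{\lambda}\phi=\phi\}}MA(P_{\lambda}\phi)=1_{\{P_{\lambda}\phi=\phi\}}MA(\phi)=1_{\{m_{\phi}\geq\lambda\}}MA(\phi)$, and the right-hand side is precisely $MA_{\chi_{\lambda}}(\phi)$ by the defining formula \eqref{eq:gma for smooth intro}. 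It remains to see that $MA(P_{\lambda}\phi)$ puts no mass on $\{P_{\lambda}\phi<\phi\}$, i.e. on $\Omega\setminus\{m_{\phi}\geq\lambda\}$ (up to the pluripolar set $X\setminus\Omega$, which carries no mass). On this open set $P_{\lambda}\phi$ is an infimum of the smooth metrics $\phi_{t}-\langle t,\lambda\rangle$ that is ``extremal'' in the orbit directions: one expects $\omega_{P_{\lambda}\phi}^{n}=0$ there because the envelope is, along the $m$ orbit directions, constant on level sets of $m_{\phi}$ once the infimum is attained at an interior $t>0$ — so $\omega_{\psi_{\lambda}}$ is degenerate in those directions. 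The cleanest way to make this rigorous is to restrict to a $T_{c}$-orbit: on $\{P_{\lambda}\phi<\phi\}$, the infimum defining $\psi_{\lambda}$ is attained at some $t(x)>0$ (entrywise positive where $m_{\phi}<\lambda$), and there $\psi_{\lambda}$ coincides, in a neighborhood adapted to the orbit foliation, with $\phi_{t(x)}-\langle t(x),\lambda\rangle$ minimized over $t$, making $dd^{c}\psi_{\lambda}$ annihilate the orbit directions; hence its $n$-th power vanishes. Summing over $\lambda$-components handled separately, this gives $MA(P_{\lambda}\phi)=1_{\{m_{\phi}\geq\lambda\}}MA(\phi)=MA_{\chi_{\lambda}}(\phi)$ for almost every $\lambda$ (the ``almost every'' absorbing the measure-zero set of $\lambda$ for which $\{m_{\phi}=\lambda\}$ has positive $MA(\phi)$-mass, so that the indicator $1_{\{m_{\phi}\geq\lambda\}}$ may be replaced by the open condition without changing the measure, and also handling the boundary values where Lemma \ref{lem:Plambdaphi is local bounded on} does not directly apply).

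The main obstacle I anticipate is precisely this vanishing of $MA(P_{\lambda}\phi)$ on the region where the envelope is strictly below $\phi$. For ordinary psh envelopes $P_{\lambda}\phi=\sup\{\,\psi\leq\phi : \psi\in PSH,\ \text{some constraint}\,\}$ one has the familiar principle that the Monge-Amp\`ere measure is supported on the contact set, but our $P_{\lambda}$ is defined by an infimum over orbit translates rather than a supremum, so the standard ``orthogonality'' argument does not apply verbatim. The resolution is to reinterpret $P_{\lambda}\phi$ correctly: by Kiselman's minimum principle it is psh, and one can check it is in fact the \emph{largest} $T$-invariant psh metric $\psi\leq\phi$ with the property that $\psi_{t}-\langle t,\lambda\rangle$ is non-decreasing in $t\geq 0$ (equivalently, $m_{\psi}\geq\lambda$ in a weak sense). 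With this characterization the vanishing of the Monge-Amp\`ere mass off the contact set becomes an instance of the orthogonality relation for envelopes, and the argument in the second paragraph goes through. Integrating the resulting identity over $\lambda\in P$ against $g$ will later recover $MA_{g}(\phi)=\int_{P} MA(P_{\lambda}\phi)\,dg(\lambda)$-type formulas, which is the point of the lemma; but that is beyond the present statement.
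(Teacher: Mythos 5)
Your first step (the identity $\{m_{\phi}\geq\lambda\}=\{P_{\lambda}\phi=\phi\}$ via convexity of $t\mapsto\phi_{t}(x)$ and first-order conditions on the orthant) matches the paper, and your vanishing claim $MA(P_{\lambda}\phi)=0$ on $\{P_{\lambda}\phi<\phi\}$ is also in line with the paper's justification (there it is deduced from $P_{\lambda}\phi$ being harmonic along the $T_{c}$-orbits on that set, or from maximality properties of envelopes); your reinterpretation of $P_{\lambda}\phi$ as the largest $T$-invariant psh metric $\psi\leq\phi$ with $\psi_{t}-\left\langle t,\lambda\right\rangle$ non-decreasing is correct and is a reasonable way to organize that step. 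The genuine gap is in the contact-set equality. The set $\{P_{\lambda}\phi=\phi\}$ is \emph{not} plurifine open: its complement $\{P_{\lambda}\phi<\phi\}$ is open (even in the ordinary topology, since $P_{\lambda}\phi-\phi$ is usc), so the contact set is closed, and locality of the Monge-Amp\`ere operator -- ordinary or plurifine -- only gives $MA(P_{\lambda}\phi)=MA(\phi)$ on the \emph{interior} of the contact set. What is left unaddressed is the possible mass of $MA(P_{\lambda}\phi)$ on the topological boundary of the contact set; that boundary lies inside $\{P_{\lambda}\phi=\phi\}$, so your vanishing statement on $\{P_{\lambda}\phi<\phi\}$ says nothing about it, and your ``almost every $\lambda$'' remark only controls the $MA(\phi)$-mass of the boundary, not the $MA(P_{\lambda}\phi)$-mass. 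This cannot be dismissed as negligible: the non-pluripolar Monge-Amp\`ere measure of a locally bounded psh metric can perfectly well charge Lebesgue-null closed sets (think of $\max(\log|z|,0)$, whose Monge-Amp\`ere measure is concentrated on $|z|=1$), and $P_{\lambda}\phi$ is only an infimum of translates of $\phi$, hence a priori no better than locally bounded psh on a Zariski open set.

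The paper closes exactly this hole with two separate inequalities: the bound $1_{\{\psi_{\lambda}=\phi\}}MA(\psi_{\lambda})\leq1_{\{\psi_{\lambda}=\phi\}}MA(\phi)$ is obtained from the comparison principle applied to $\psi_{\lambda}:=P_{\lambda}\phi$ and $\phi-\epsilon$ (admissible since $\psi_{\lambda}$ is more singular than $\phi$; see Remark 2.4 in \cite{begz}), namely $\int_{\{\psi_{\lambda}>\phi-\epsilon\}}MA(\psi_{\lambda})\leq\int_{\{\psi_{\lambda}>\phi-\epsilon\}}MA(\phi)$, followed by letting $\epsilon\rightarrow0$; the reverse bound uses Sard's theorem, which guarantees that for a.e. $\lambda$ the contact set is the closure of an open set whose boundary is $MA(\phi)$-null, so that locality on the open part suffices. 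You need to add an argument of this type (or some substitute controlling the boundary mass of $MA(P_{\lambda}\phi)$) before the chain $MA(P_{\lambda}\phi)=1_{\{m_{\phi}\geq\lambda\}}MA(\phi)=MA_{\chi_{\lambda}}(\phi)$ is justified; the rest of your outline then goes through.
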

\begin{proof}
The first relation is an immediate consequence of the convexity of
$t\mapsto\phi_{t}$ on $[0,\infty[^{n}$ (and the fact that $\phi_{0}=\phi).$
Next, we observe that $1_{\{\psi_{\lambda}=\phi\}}MA(\phi)\leq1_{\{\psi_{\lambda}=\phi\}}MA(\psi_{\lambda}),$
as follows immediately from the fact that the set $\{\psi_{\lambda}=\phi\}$
is, for a.e. $\lambda,$ the closure of an open domain of $X$ (by
Sard's theorem). To prove the reversed inequality we apply the comparison
principle for the MA-operator to $\psi_{\lambda}$ and $\phi-\epsilon$
(which indeed applies since $\psi_{\lambda}$ is more singular than
$\phi;$ see Remark 2.4 in \cite{begz}) to get 
\[
\int_{\{\psi_{\lambda}>\phi-\epsilon\}}MA(\phi)\geq\int_{\{\psi_{\lambda}>\phi-\epsilon\}}MA(\psi_{\lambda}).
\]
 Letting $\epsilon\rightarrow0$ thus gives $1_{\{\psi_{\lambda}=\phi\}}MA(\phi)\geq1_{\{\psi_{\lambda}=\phi\}}MA(\psi_{\lambda}).$
Finally, the relation \ref{eq:relation for ma of chi in lemma} follows
from the fact that $MA(\psi_{\lambda})=0$ on the complement of the
set where $\psi_{\lambda}=\phi,$ i.e.ion the open set where $\psi_{\lambda}<\phi.$
This follows from general maximality properties of envelopes (compare
\cite{b-b}), but in the present setting it also follows directly
from the fact that $\psi_{\lambda}$ harmonic along the orbits of
$T_{c}$ in the open subset $\psi_{\lambda}<\phi$ or from the proof
of Proposition \ref{prop:conv of g-bergman}.
\end{proof}

\subsubsection{\label{sub:The g-Monge-Amp=0000E8re-measure}The $g-$Monge-Ampère
measure}

Given a continuous function $g$ on $P$ which will be assumed to
be continuous (and usually normalized so that $g\nu$ is a probability
measure) we define, for $\phi$ in $\mathcal{H}(X,L)^{T},$ the measure

\begin{equation}
MA_{g}(\phi):=MA(\phi)g(m_{\phi}),\label{eq:def of mag for phi smooth}
\end{equation}
on $X,$ which will be referred to as the\emph{ $g-$Monge-Ampère
measure} (or the \emph{$g-$modified Monge-Ampère measure}). In order
to define $MA_{g}(\phi)$ for $\phi$ merely locally bounded first
consider the (non-continuous) case when $g$ is of the form $g=\chi_{\lambda}:=1_{\{p\geq\lambda\}}(p)$
and define $MA_{\chi_{\lambda}}(\phi)$ by the relation \ref{eq:relation for ma of chi in lemma}.
Next, by imposing linearity wrt $g$ this defines $MA_{g}(\phi)$
for any step function $g$ (where the ``step'' is a cube in $\R^{m}).$
In the case of a continuous $g$ we set

\[
MA_{g}(\phi):=\lim_{j\rightarrow\infty}MA_{g_{j}}(\phi)
\]
where $g_{j}$ is any sequence of step functions converging uniformly
to $g$ on $P.$ Finally, for $\phi$ a general (possibly singular)
metric in $PSH(X,L)$ we set 
\begin{equation}
MA_{g}(\phi):=\lim_{k\rightarrow\infty}1_{\{\phi^{(k)}>\phi-k_{0}\}}MA_{g}(\phi^{(k)}),\,\,\,\,\,\phi^{(k)}:=\max\{\phi,\phi_{0}-k\}\label{eq:def of nonpluripol g ma measure}
\end{equation}

\begin{thm}
\label{thm:g-ma is well-def and cont}The measure $MA_{g}(\phi)$
is a well-defined measure on $X,$ not charging pluripolar subset
and 

\begin{equation}
\int_{X}MA_{g}(\phi)=\int_{P}g\nu\label{eq:total mass of g ma in theorem}
\end{equation}
 if $\phi$ has full Monge-Ampère mass (with an inequality $\leq$
for $\phi$ a general metric in $PSH(X,L)^{T}).$ Moreover, if $\phi_{j}$
is a sequence of metrics with full Monge-Ampère mass decreasing to
$\phi$ with full Monge-Ampère mass, then 
\[
MA_{g}(\phi_{j})\rightarrow MA_{g}(\phi)
\]
in the weak topology of measures on $X$ and more generally: 
\[
(\psi_{j}-\phi_{0})MA_{g}(\phi_{j})\rightarrow(\psi-\phi_{0})MA_{g}(\phi)
\]
 if $\psi_{j}$ decreases to a locally bounded metric $\psi$ in $PSH(X,L)^{T}.$
In particular, the uniqueness statement in Theorem \ref{thm:g-ma intro}
also holds.\end{thm}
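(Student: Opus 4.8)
\textbf{Proof plan for Theorem \ref{thm:g-ma is well-def and cont}.}

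The plan is to establish the assertions in the order in which the definition \eqref{eq:def of nonpluripol g ma measure} is built up, so that the deep continuity statement is reduced, via standard pluripotential machinery, to the one genuinely new input of the construction. First I would verify that $MA_{g}(\phi)$ is a well-defined \emph{positive} measure for $\phi$ locally bounded. For $g=\chi_{\lambda}$ this is Lemma \ref{lem:Plambdaphi is local bounded on} together with the identity \eqref{eq:relation for ma of chi in lemma}; positivity and the bound $MA_{\chi_\lambda}(\phi)\le MA(\phi)$ are immediate since $MA_{\chi_\lambda}(\phi)=1_{\{m_\phi\ge\lambda\}}MA(\phi)$ for smooth $\phi$ and this identity passes to locally bounded $\phi$ by monotone approximation (Theorem \ref{thm:(regularization)--Any} and Lemma \ref{lem:basic prop of lower env metric}, using that $P_\lambda\phi_j\downarrow P_\lambda\phi$ and the continuity of $MA$ along decreasing sequences of locally bounded metrics). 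Imposing linearity in $g$ over step functions is harmless because the collection $\{\chi_\lambda\}$ generates the step functions and the resulting signed measures $MA_{g_1}(\phi)-MA_{g_2}(\phi)=MA_{g_1-g_2}(\phi)$ are positive whenever $g_1\ge g_2$ (write $g_1-g_2$ as a nonnegative combination of $\chi$'s on cubes). For continuous $g$, take step functions $g_j\to g$ uniformly; then $|MA_{g_j}(\phi)-MA_{g_k}(\phi)|\le \|g_j-g_k\|_\infty MA(\phi)$, so the limit exists in total variation and is independent of the approximating sequence, and $0\le MA_g(\phi)\le \|g\|_\infty MA(\phi)$, with $\int_X MA_g(\phi)=\int_P g\,d\nu$ by applying the same limiting procedure to the smooth case and using the definition of $\nu$ as the Duistermaat--Heckman measure. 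That $MA_g(\phi)$ puts no mass on pluripolar sets for locally bounded $\phi$ is inherited from $MA(\phi)$ via the domination $MA_g(\phi)\le\|g\|_\infty MA(\phi)$.

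Next I would treat a general $\phi\in PSH(X,L)^T$ via \eqref{eq:def of nonpluripol g ma measure}. The truncations $\phi^{(k)}=\max\{\phi,\phi_0-k\}$ are locally bounded and $T$-invariant, and by the plurifine locality of $MA$ the measures $1_{\{\phi^{(k)}>\phi_0-k_0\}}MA_g(\phi^{(k)})$ are increasing in $k$ (here one checks that $g\mapsto MA_g$ respects the plurifine locality, which again follows by approximating $g$ by step functions and using that $P_\lambda$ commutes with the $\max$ operations off the contact set); hence the limit exists as a positive measure, dominated by $\|g\|_\infty MA(\phi)$ and so not charging pluripolar sets, and $\int_X MA_g(\phi)\le\int_P g\,d\nu$ with equality exactly when $\int_X MA(\phi)=1$, by comparison with the analogous statement for $MA$.

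The heart of the matter is the continuity statement: if $\phi_j\downarrow\phi$ with all $\phi_j,\phi$ of full Monge--Amp\`ere mass, then $MA_g(\phi_j)\rightarrow MA_g(\phi)$ weakly, and the stronger weighted convergence with a test potential $\psi_j\downarrow\psi$. The strategy is to reduce to $g=\chi_\lambda$ by the uniform approximation argument above (the error is controlled uniformly in $j$ by $\|g-g_j\|_\infty$ times the total masses, which are uniformly bounded), so it suffices to prove $MA(P_\lambda\phi_j)\to MA(P_\lambda\phi)$ for a.e. $\lambda$. Now Lemma \ref{lem:basic prop of lower env metric} gives $P_\lambda\phi_j\downarrow P_\lambda\phi$, and the key point is that $P_\lambda\phi$ again has full Monge--Amp\`ere mass on the Zariski open set where it is locally bounded, with total mass $\nu(\{p\ge\lambda\})$ independent of the representative: this is exactly the content one gets from combining \eqref{eq:relation for ma of chi in lemma}, the full-mass hypothesis on $\phi$, and Proposition \ref{prop:cont of ma with sing} (the envelopes $P_\lambda\phi_j$ have small unbounded locus and equivalent singularities to $P_\lambda\phi$ for a.e.\ $\lambda$ by Lemma \ref{lem:Plambdaphi is local bounded on} and Sard's theorem). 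Thus Proposition \ref{prop:cont of ma with sing} applies to the sequence $P_\lambda\phi_j$ and yields $MA(P_\lambda\phi_j)\to MA(P_\lambda\phi)$; integrating in $\lambda$ against the step-function approximants of $g$ and passing to the limit gives the weak convergence of $MA_{g}(\phi_j)$. For the weighted statement one runs the same reduction and invokes the corresponding weighted continuity already available for $MA$ (the quantity $(\psi_j-\phi_0)MA(\cdot)$ behaves well under decreasing limits with full mass, cf.\ \cite{begz}), combined with the uniform-in-$j$ control of the $g$-approximation. Finally, uniqueness of the extension in Theorem \ref{thm:g-ma intro} is immediate: any extension satisfying the decreasing-continuity property must agree with \eqref{eq:def of mag for phi smooth} on smooth metrics, hence on locally bounded ones by Theorem \ref{thm:(regularization)--Any}, hence on all of $PSH(X,L)^T$ by the defining formula \eqref{eq:def of nonpluripol g ma measure}.

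\textbf{Main obstacle.} The subtle step is showing that for a.e.\ $\lambda$ the envelopes $P_\lambda\phi_j$ actually have the good structure (locally bounded off a Zariski open set, with equivalent singularities along the sequence, and full mass there with the expected total) so that Proposition \ref{prop:cont of ma with sing} is legitimately applicable; the measure-zero exceptional set of $\lambda$ must be handled by a Fubini-type argument when integrating back against $g$, and one must make sure the full-mass hypothesis on $\phi$ really does propagate to $P_\lambda\phi$ — this is where \eqref{eq:relation for ma of chi in lemma}, Sard's theorem, and the mass formula \eqref{eq:total mass of g ma in theorem} have to be orchestrated carefully rather than invoked formally.
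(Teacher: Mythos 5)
Your treatment of the locally bounded case (well-definedness via step functions, the uniform bound $0\leq MA_{g}(\phi)\leq\sup_{P}|g|\,MA(\phi)$, continuity via $P_{\lambda}\phi_{j}\downarrow P_{\lambda}\phi$ with bounded differences and Proposition \ref{prop:cont of ma with sing}, and the sandwich argument in $g$) matches the paper's Step one, and your monotonicity-via-plurifine-locality argument for defining the limit \ref{eq:def of nonpluripol g ma measure} is also the paper's. But the step you yourself flag as the ``main obstacle'' is a genuine gap, and it is precisely where the paper takes a different route. For $\phi_{j}\downarrow\phi$ with $\phi$ merely of full Monge--Amp\`ere mass (hence possibly unbounded), you propose to apply Proposition \ref{prop:cont of ma with sing} to the envelopes $P_{\lambda}\phi_{j}\downarrow P_{\lambda}\phi$. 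This cannot be made to work as stated: Lemma \ref{lem:Plambdaphi is local bounded on} gives local boundedness of $P_{\lambda}\phi$ on a Zariski open set only when $\phi$ is locally bounded (its proof needs $\phi\geq\frac{1}{k}\log|s_{k}|^{2}-C_{k}$, which fails for unbounded $\phi$), and ``equivalent singularities'' along the sequence requires $\phi_{j}-\phi$ bounded, which full Monge--Amp\`ere mass does not provide (zero Lelong numbers do not imply bounded differences). Sard's theorem and an a.e.\ choice of $\lambda$ do not repair either hypothesis. Moreover, your reduction presupposes the identity $MA_{\chi_{\lambda}}(\phi)=MA(P_{\lambda}\phi)$ for general full-mass $\phi$, whereas formula \ref{eq:relation for ma of chi in lemma} is only established for smooth (hence, by the construction, locally bounded) metrics; for singular $\phi$ the object $MA_{g}(\phi)$ is \emph{defined} by the truncation formula \ref{eq:def of nonpluripol g ma measure}, so this identification is itself something that would have to be proved.

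The paper's Step two avoids the envelopes of singular metrics altogether: it reduces to the locally bounded case by showing that, for any continuous $h$ on $X$, the error $\int_{X}h\bigl(MA_{g}(\psi^{(k)})-MA_{g}(\psi)\bigr)$ tends to zero \emph{uniformly} over all $\psi\geq\phi$, since its absolute value is bounded by $C\int_{\{\psi\leq\phi_{0}-k\}}\bigl(MA(\psi^{(k)})+MA(\psi)\bigr)$ with $C$ depending only on $\sup_{X}|h|$ and $\sup_{P}|g|$, and this last quantity tends to zero when $\phi$ has full Monge--Amp\`ere mass by the capacity estimate in the proof of Theorem 2.17 of \cite{begz}. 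Combining this uniform tail estimate with the continuity already proved for the locally bounded truncations $\phi_{j}^{(k)}\downarrow\phi^{(k)}$ yields the convergence (and, by regularization, the mass identity \ref{eq:total mass of g ma in theorem} in the full-mass case); the weighted statement with $\psi_{j}\downarrow\psi$ is handled the same way. To complete your plan you would need to replace the appeal to Proposition \ref{prop:cont of ma with sing} in the singular case by an estimate of this uniform type, or else prove from scratch that full Monge--Amp\`ere mass propagates the required structure to $P_{\lambda}\phi$ --- which, as the paper's remark about metrics with non-empty singularity locus never having full mass already suggests, is not a formal consequence of the lemmas you cite.
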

\begin{proof}
\emph{Step one: the case of $\phi$ locally bounded. }

By the very definition of $MA_{g},$ for $\phi$ in $\mathcal{H}(X,L)^{T}$
the equation \ref{eq:total mass of g ma in theorem} holds for all
functions $g$ iff $(m_{\phi})_{*}MA(\phi)=\nu,$ where $\nu$ is
the canonical D-H measure attached to $T.$ As pointed out above the
latter push-forward relation indeed holds for  smooth (a proof in
the spirit of the present paper is explained in the remark below which
also applies to metrics $\phi$ which are merely locally bounded).
In particular, if $g_{j}$ and $g'_{j}$ are two sequences of step
functions converging to the same continuous function $g,$ then $\int|MA_{g_{j}}(\phi)-MA_{g'_{j}}(\phi)|$
tends to zero when $j\rightarrow\infty$ which shows that $MA_{g}(\phi)$
is well-defined for $\phi$ locally bounded. To prove the continuity
statement when $\phi$ is locally bounded first observe that the continuity
with respect to decreasing sequences holds for $g$mof the form $g=\chi_{\lambda}$
and hence, by linearity as long as $g$ is a step function. Indeed,
if $\phi_{j}$ decreases to $\phi,$ then $P_{\lambda}\phi_{j}$ decreases
to $P_{\lambda}\phi$ and $P_{\lambda}\phi-P_{\lambda}\phi_{j}$ is
bounded (by Lemma \ref{lem:basic prop of lower env metric}). But
then the desired continuity follows immediately from Prop \ref{prop:cont of ma with sing}.
To handle the case of a general $g$ we take two sequences of step
functions $g_{k}^{\pm}$ satisfying $g_{k}^{-}\leq g\leq g_{k}^{+}$
and converging uniformly to $g.$ Fixing a smooth positive function
$u$ on $X$ we then have 
\[
\int MA_{g_{k}^{-}}(\phi_{j})u\leq\int MA_{g}(\phi_{j})u\leq\int MA_{g_{k}^{+}}(\phi_{j})u
\]
for $j$ and $k$ fixed. Letting $j\rightarrow\infty$ for $k$ fixed
and using the continuity property of $MA_{g_{k}^{-}}$ established
above thus gives 
\[
\int MA_{g_{k}^{-}}(\phi)u\leq\lim_{j\rightarrow\infty}\int MA_{g}(\phi_{j})u\leq\int MA_{g_{k}^{+}}(\phi)u
\]
Finally, letting $k\rightarrow\infty$ and using the definition of
$MA_{g}(\phi)$ reveals that the sequences $\int MA_{g_{k}^{-}}(\phi)u$
and $\int MA_{g_{k}^{+}}(\phi)u$ both converge to $\int MA_{g}(\phi)u,$
which concludes the proof in the locally bounded case. 

\emph{Step two: the case of $\phi$ with full Monge-Ampère mass}

By the definition  of $MA_{g}(\phi)$ and the previous case it will
be enough to show that, for any continuous function $h$ on $X,$
\[
\int_{X}h\left(MA_{g}(\psi^{(k)})-MA_{g}(\psi)\right)
\]
tends to zero uniformly for all $\psi\geq\phi.$ But the absolute
value of the integral above may be estimated by a constant $C$ times
\[
\int_{\{\psi\leq\phi_{0}-k\}}\left(MA(\psi^{(k)})+MA(\psi)\right),
\]
 where the constant $C$ only depends $\sup_{X}|h|$ and $\sup_{P}|g|.$
But, as shown in the proof of Theorem 2.17 in \cite{begz} the latter
integral tends to zero if $\phi$ has full Monge-Ampère mass, which
thus proves the general continuity statement. In particular, regularizing
$\phi$ and applying the previous step also gives equality in equation
\ref{eq:total mass of g ma in theorem} for $\phi$ of full Monge-Ampère
mass (and inequality in general). The proof of the last convergence
statement is proved in a similar manner.Finally, the fact that the
limit \ref{eq:def of nonpluripol g ma measure} is well-defined for
any $\phi\in PSH(X,L)^{T}$ follows from the $T-$plurifine locality
of $MA_{g}$ (acting on locally bounded $\phi;$ see section \ref{sub:Fine-locality-and}
below). Indeed, setting $E_{k}:=\{\phi^{(k)}>\phi-k_{0}\}$ gives,
by locality, that $1_{\{\phi^{(k)}>\phi-k_{0}\}}MA_{g}(\phi^{(k)})=1_{E_{k}}MA_{g}(\phi_{|E_{k}})$
which is increasing in $k,$ with uniformly bounded mass, which ensure
the existence of the limit. Finally, since $MA(\phi)$ does not charge
pluripolar subsets so doesn't $MA_{g}(\phi)$ and this argument also
proves the uniqueness statement in Theorem \ref{thm:g-ma intro} (since
$\{\phi=\infty\}$ is pluripolar).\end{proof}
\begin{rem}
\label{ref:independence}The arguments above can be used to give a
(non-standard) proof of the independence of the measure $\nu.$ The
point is that if $|\phi_{1}-\phi_{2}|\leq C$ then $|P_{\lambda}(\phi_{1})-P_{\lambda}(\phi_{2})|\leq C$
(compare Lemma \ref{lem:basic prop of lower env metric}) and hence,
by Prop \ref{prop:cont of ma with sing}, the total Monge-Ampère mass
$\int MA(P_{\lambda}\phi)$ is the same for any locally bounded metric
$\phi.$ But then it follows (just as above) that, for any $g,$ $\int MA_{g}(\phi)(=\int_{P}g\nu_{\phi})$
is independent of $\phi,$mas desired.
\end{rem}
An immediate consequence of the previous theorem, combined with the
regularization result in Theorem \ref{thm:(regularization)--Any}
together with the compactness of $P,$ is the following
\begin{cor}
\label{cor:comp of g ma and ma}The following inequalities hold: 
\[
\inf_{P}g_{X}MA(\phi)\leq MA_{g}(\phi)\leq\sup_{P}g_{X}MA(\phi)
\]

\end{cor}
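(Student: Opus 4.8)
The plan is to establish the pointwise inequalities $\inf_{P} g \cdot MA(\phi) \le MA_{g}(\phi) \le \sup_{P} g \cdot MA(\phi)$ first for smooth metrics, then for locally bounded metrics by regularization, and finally for arbitrary $\phi \in PSH(X,L)^{T}$ via the non-pluripolar (increasing-limit) definitions. For $\phi \in \mathcal{H}(X,L)^{T}$ there is nothing to do: by \ref{eq:def of mag for phi smooth} one has $MA_{g}(\phi) = g(m_{\phi})\, MA(\phi)$, and since $m_{\phi}$ takes values in the compact polytope $P$ the density $g(m_{\phi})$ is pinched between the finite constants $\inf_{P} g$ and $\sup_{P} g$, so multiplying the positive measure $MA(\phi)$ by it gives the two inequalities at once.

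For a general locally bounded $\phi \in PSH(X,L)^{T}$ I would pick, using the regularization Theorem \ref{thm:(regularization)--Any} and averaging the approximants over the compact group $T$ (which preserves smoothness and non-negativity of the curvature, while keeping the sequence decreasing to the $T$-invariant $\phi$), a sequence $\phi_{j} \in \mathcal{H}(X,L)^{T}$ decreasing to $\phi$. Then $MA(\phi_{j}) \to MA(\phi)$ weakly by Bedford--Taylor theory (Proposition \ref{prop:cont of ma with sing}) and $MA_{g}(\phi_{j}) \to MA_{g}(\phi)$ weakly by the locally bounded case of the continuity statement in Theorem \ref{thm:g-ma is well-def and cont}. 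Pairing the inequalities for $\phi_{j}$ with an arbitrary non-negative $u \in C^{0}(X)$ and letting $j \to \infty$ transfers them to $\phi$, since weak limits of positive measures respect such integral bounds.

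For arbitrary $\phi \in PSH(X,L)^{T}$ I would use the definitions \ref{eq:def of nonpluripol ma measure-1} and \ref{eq:def of nonpluripol g ma measure}, which present both $MA(\phi)$ and $MA_{g}(\phi)$ as increasing limits of the locally bounded contributions $1_{E_{k}}\, MA(\phi^{(k)})$ and $1_{E_{k}}\, MA_{g}(\phi^{(k)})$ over a common $T$-plurifine truncation set $E_{k}$, exactly as in the closing paragraph of the proof of Theorem \ref{thm:g-ma is well-def and cont}. At each level $k$ the inequalities hold by the locally bounded case, and multiplying by the non-negative function $1_{E_{k}}$ preserves them, so they pass to the limit. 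I do not anticipate a genuine obstacle here; the only point demanding a little care is precisely the compatibility of the truncation sets appearing in the two definitions, which is supplied by the $T$-plurifine locality of $MA_{g}$ established in Section \ref{sub:Fine-locality-and}, together with the observation that the regularizing sequence may be taken $T$-invariant.
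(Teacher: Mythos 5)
Your argument is correct and follows essentially the same route the paper intends: the inequality is pointwise for smooth $T$-invariant metrics since $m_{\phi}$ takes values in the compact polytope $P$, and it is then transferred to locally bounded and finally to arbitrary metrics in $PSH(X,L)^{T}$ by combining the regularization theorem (with $T$-averaging) with the convergence statements of Theorem \ref{thm:g-ma is well-def and cont} and the increasing-limit definition of the non-pluripolar measures. The details you supply (pairing with non-negative test functions, compatibility of the truncation sets) are exactly the ones the paper leaves implicit when it calls the corollary an immediate consequence.
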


\subsubsection{\label{sub:Fine-locality-and}Fine locality and consequences}

By definition, the $T-$plurifine topology on $X$ is the coursest
topology making all sets of the form $O:=\{\phi<\psi\}$ open, for
$\phi$ and $\psi$ metrics in $PSH(X,L)^{T}$ (which by a max construction
may be assumed locally bounded). 
\begin{prop}
The operator $\phi\mapsto MA_{g}(\phi)$ on $PSH(X,L)^{T}$ is local
with respect to the $T-$mlurifine topology, i.e. if i.e. if $\phi=\phi'$
on a $T-$plurifine open set $O$ then $1_{O}MA_{g}(\phi)=1_{O}MA_{g}(\phi').$
In particular, 
\begin{equation}
1_{\{\phi>\psi\}}MA_{g}(\phi)=1_{\{\phi>\psi\}}MA_{g}(\max(\phi,\psi))\label{eq:locality for ma of max}
\end{equation}
\end{prop}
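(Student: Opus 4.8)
The plan is to reduce the statement to the $T$-plurifine locality of the single operator $\phi\mapsto MA(P_{\lambda}\phi)$ and then to control the possible discrepancy by a comparison of contact sets of the moment map. \emph{First, reduce to locally bounded metrics.} Let $\phi=\phi'$ on a $T$-plurifine open set $O$. Since $\max$ is a pointwise operation, the canonical truncations $\phi^{(k)}=\max\{\phi,\phi_{0}-k\}$ and $(\phi')^{(k)}$ still agree on $O$, and the sets $E_{k}=\{\phi^{(k)}>\phi-k_{0}\}$ appearing in formula \ref{eq:def of nonpluripol g ma measure} are themselves $T$-plurifine open; hence, once the proposition is known for locally bounded metrics, formula \ref{eq:def of nonpluripol g ma measure} together with the Bedford--Taylor plurifine locality of the ordinary Monge--Amp\`ere operator (which applies since the $T$-plurifine topology is coarser than the plurifine one) gives the general case, exactly as in the last paragraph of the proof of Theorem \ref{thm:g-ma is well-def and cont}. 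So assume from now on that $\phi,\phi'$ are locally bounded.

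\emph{Second, reduce to $g=\chi_{\lambda}$ and to a comparison of contact sets.} The assignment $g\mapsto MA_{g}(\phi)$ is linear, and by Step one of the proof of Theorem \ref{thm:g-ma is well-def and cont} one has $MA_{g_{j}}(\phi)\to MA_{g}(\phi)$ in total variation whenever step functions $g_{j}$ converge uniformly to $g$ on $P$; hence it suffices to prove that $\phi\mapsto MA_{\chi_{\lambda}}(\phi)=MA(P_{\lambda}\phi)$ is $T$-plurifine local for $\lambda$ interior to $P$, i.e. that $1_{O}MA(P_{\lambda}\phi)=1_{O}MA(P_{\lambda}\phi')$. Now the locally bounded analogue of the identity \ref{eq:relation for ma of chi in lemma}, namely $MA(P_{\lambda}\phi)=1_{\{P_{\lambda}\phi=\phi\}}MA(\phi)$, holds with the same proof (the comparison principle applied to $P_{\lambda}\phi$ and $\phi-\epsilon$ --- legitimate since $P_{\lambda}\phi$ is more singular than $\phi$ by Lemmas \ref{lem:Plambdaphi is local bounded on} and \ref{lem:basic prop of lower env metric} --- together with the vanishing of $MA(P_{\lambda}\phi)$ off the contact set). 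Combining this with the Bedford--Taylor identity $1_{O}MA(\phi)=1_{O}MA(\phi')$, we are reduced to showing that the contact sets $\{P_{\lambda}\phi=\phi\}$ and $\{P_{\lambda}\phi'=\phi'\}$ carry the same restriction of $MA(\phi)$ to $O$.

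\emph{The main obstacle.} This last point is the heart of the proof and the step I expect to be hardest, because the contact set $\{P_{\lambda}\phi=\phi\}=\{x:\phi_{t}(x)-\phi_{0}(x)\geq\langle t,\lambda\rangle\text{ for all }t\geq0\}$ is a priori \emph{not} a local object: membership of $x$ depends on $\phi$ along the whole positive orbit $\{e^{t}\cdot x:t\geq0\}$, which need not stay inside $O$. I would resolve this by replacing that set, up to an $MA(\phi)$-null set, by the first-order description $\{m_{\phi}\geq\lambda\}$, where the moment map $m_{\phi}$ is defined $MA(\phi)$-a.e. through the differentiability at $t=0$ of the convex function $t\mapsto\phi_{t}(x)$ (Sard's theorem, as in the smooth case, ensuring that for a.e. $\lambda$ the boundary $\partial\{m_{\phi}\geq\lambda\}$ is $MA(\phi)$-null); and then by invoking the $T$-plurifine locality of $m_{\phi}$ itself. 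The latter I would derive from the Hamiltonian relation $d\langle m_{\phi},\xi\rangle=-\iota_{v_{\xi}}dd^{c}\phi$ (for $\xi$ in the Lie algebra of $T$ and $v_{\xi}$ the generating real vector field): since $dd^{c}\phi=dd^{c}\phi'$ on $O$ by Bedford--Taylor, the difference $\langle m_{\phi}-m_{\phi'},\xi\rangle$ is $T$-plurifine-locally constant on $O$, hence zero; the delicate passage from ``Lebesgue-a.e.'' to ``$MA(\phi)$-a.e.'' on $O$ I would handle by phrasing this identity at the level of the mixed Monge--Amp\`ere currents $\langle m_{\phi},\xi\rangle MA(\phi)$, which are plurifine local by Bedford--Taylor theory. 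Finally, the displayed identity \ref{eq:locality for ma of max} is just the special case $\phi'=\max(\phi,\psi)$, $O=\{\phi>\psi\}$ (which is $T$-plurifine open, with $\max(\phi,\psi)=\phi$ there).
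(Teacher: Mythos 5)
Your first reduction (to locally bounded metrics) and the final specialization to $\phi'=\max(\phi,\psi)$, $O=\{\phi>\psi\}$ are fine and agree with the paper, but the core of your argument has genuine gaps. First, you assert that the identity $MA(P_{\lambda}\phi)=1_{\{P_{\lambda}\phi=\phi\}}MA(\phi)$ extends from smooth to merely locally bounded $\phi$ ``with the same proof'': the proof of \ref{eq:relation for ma of chi in lemma} uses Sard's theorem applied to the moment map $m_{\phi}$ to see that $\{P_{\lambda}\phi=\phi\}$ is (for a.e.\ $\lambda$) the closure of an open set, and this is simply not available when $\phi$ is only bounded. Second, and more seriously, your resolution of the ``main obstacle'' rests on objects and claims that are not established and would themselves require substantial work: a moment map $m_{\phi}$ defined $MA(\phi)$-a.e.\ for bounded $\phi$ via differentiability of $t\mapsto\phi_{t}(x)$ at $t=0$; the assertion that ``$dd^{c}\phi=dd^{c}\phi'$ on $O$ by Bedford--Taylor'' (Bedford--Taylor plurifine locality concerns the Monge--Amp\`ere \emph{measures}, not the $(1,1)$-current $dd^{c}\phi$, whose restriction to a plurifine open set is not even a well-defined local statement); and the step ``$\left\langle m_{\phi}-m_{\phi'},\xi\right\rangle$ is $T$-plurifine-locally constant on $O$, hence zero,'' which has no justification --- a plurifine open set carries no unique-continuation or connectedness structure that would let you pass from a vanishing differential to a vanishing function, and pointwise equality of $\phi$ and $\phi'$ on $O$ does not control derivatives along torus orbits, since the orbits need not stay in $O$. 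As written, this part is a program rather than a proof.

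The paper avoids the whole difficulty by never localizing the envelope description at all: by the definition of $MA_{g}$ one reduces to $\phi,\phi'$ locally bounded, then regularizes by decreasing sequences of smooth metrics (Theorem \ref{thm:(regularization)--Any}) and uses the convergence statement of Theorem \ref{thm:g-ma is well-def and cont} to reduce --- exactly as in the classical case $g=1$ in \cite{b-t2} --- to smooth metrics and a set $O$ open in the standard topology. In the smooth case locality is immediate from formula \ref{eq:def of mag for phi smooth}, since both $MA(\phi)$ and $m_{\phi}$ are pointwise differential expressions in $\phi$. If you want to salvage your route through $g=\chi_{\lambda}$ and $MA(P_{\lambda}\phi)$, you would still need to run it through such a regularization/convergence argument, at which point the detour through contact sets and an a.e.-defined moment map becomes unnecessary.
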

\begin{proof}
By the definition of $MA_{g}$ we may as well assume that $\phi$
and $\phi'$ are locally bounded. Regularizing $\phi$ and $\phi'$
and invoking the convergence result in Theorem \ref{thm:g-ma is well-def and cont},
the proof is reduced - precisely as in the classical case when $g=1$
\cite{b-t2} - to the case when $\phi$ and $\phi'$ are smooth and
$O$ is open with respect to the standard topology. But then the locality
in question follows trivially from formula \ref{eq:def of mag for phi smooth}. 
\end{proof}
Just as in the case $g=1$ formula \ref{eq:locality for ma of max}
implies that the comparison principle holds (also using that the total
mass of $MA_{g}(\phi)$ is independent of $\phi$ when $\phi$ has
maximal Monge-Ampère mass). 
\begin{prop}
\label{prop:(the-comparison-principle).}(the comparison principle).
Assume that $\phi$ and $\psi$ are in $PSH(X,L)$ and of full Monge-Ampère
mass. Then

\[
\int_{\{\psi>\phi\}}MA_{g}(\psi)\leq\int_{\{\psi>\phi\}}MA_{g}(\phi)
\]

\end{prop}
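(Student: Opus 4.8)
The plan is to carry over, essentially word for word, the classical Bedford--Taylor proof of the comparison principle, with the ordinary Monge-Amp\`ere operator replaced by $MA_{g}$ and the plurifine topology replaced by the $T$-plurifine topology. The only two ingredients I intend to use are the $T$-plurifine locality of $MA_{g}$, in the form of formula \ref{eq:locality for ma of max}, and the fact, part of Theorem \ref{thm:g-ma is well-def and cont}, that $\int_{X}MA_{g}(\cdot)=\int_{P}g\,\nu$ is one and the same constant for every metric of full Monge-Amp\`ere mass.

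First I would fix $\epsilon>0$ and put $u_{\epsilon}:=\max(\psi-\epsilon,\phi)\in PSH(X,L)^{T}$; since a constant shift changes neither the curvature current nor the moment map, $MA_{g}(\psi-\epsilon)=MA_{g}(\psi)$. The set $\{\psi>\phi+\epsilon\}=\{\phi+\epsilon<\psi\}$ is $T$-plurifine open and on it $u_{\epsilon}=\psi-\epsilon$, while on the $T$-plurifine open set $\{\psi<\phi+\epsilon\}$ one has $u_{\epsilon}=\phi$; so the locality proposition gives
\[
1_{\{\psi>\phi+\epsilon\}}MA_{g}(u_{\epsilon})=1_{\{\psi>\phi+\epsilon\}}MA_{g}(\psi),\qquad 1_{\{\psi<\phi+\epsilon\}}MA_{g}(u_{\epsilon})=1_{\{\psi<\phi+\epsilon\}}MA_{g}(\phi).
\]
Moreover $u_{\epsilon}\geq\phi$ and $\phi$ has full Monge-Amp\`ere mass, so $u_{\epsilon}\in\mathcal{E}(X,L)$ as well and hence $\int_{X}MA_{g}(u_{\epsilon})=\int_{P}g\,\nu=\int_{X}MA_{g}(\phi)$. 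Combining the two localities with this equality of total masses and with the inclusions $\{\psi<\phi+\epsilon\}\subset\{\psi\leq\phi+\epsilon\}$ and $\{\psi\geq\phi+\epsilon\}\subset\{\psi>\phi\}$ yields
\[
\int_{\{\psi>\phi+\epsilon\}}MA_{g}(\psi)=\int_{X}MA_{g}(u_{\epsilon})-\int_{\{\psi\leq\phi+\epsilon\}}MA_{g}(u_{\epsilon})\leq\int_{X}MA_{g}(\phi)-\int_{\{\psi<\phi+\epsilon\}}MA_{g}(\phi)=\int_{\{\psi\geq\phi+\epsilon\}}MA_{g}(\phi)\leq\int_{\{\psi>\phi\}}MA_{g}(\phi).
\]
Letting $\epsilon\downarrow0$, with monotone convergence for the positive measure $MA_{g}(\psi)$ along $\{\psi>\phi+\epsilon\}\uparrow\{\psi>\phi\}$, then produces the claimed inequality.

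I expect the only step needing real care --- and the one place where the full-mass hypothesis genuinely intervenes --- to be the assertion that $u_{\epsilon}=\max(\psi-\epsilon,\phi)$ again has full Monge-Amp\`ere mass, so that its total $g$-mass equals $\int_{P}g\,\nu$; I would deduce this from the monotonicity of the total non-pluripolar Monge-Amp\`ere mass under $\leq$, exactly as used in the proof of Proposition \ref{prop:cont of ma with sing}. If one prefers to avoid invoking that for possibly unbounded potentials, the alternative is to run the displayed chain first for $\phi,\psi$ locally bounded, where $u_{\epsilon}$ is locally bounded and the mass identity is immediate, and then to pass to the general full-mass case by applying the convergence statement of Theorem \ref{thm:g-ma is well-def and cont} to the canonical decreasing truncations $\max(\phi,\phi_{0}-k)$ and $\max(\psi,\phi_{0}-k)$ together with their associated cut-off sets --- routine bookkeeping.
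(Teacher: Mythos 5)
Your proof is correct and follows essentially the same route the paper intends: the paper merely asserts that the comparison principle follows "just as in the case $g=1$" from the $T$-plurifine locality of $MA_{g}$ together with the invariance of the total mass for metrics of full Monge-Amp\`ere mass, and your argument with $u_{\epsilon}=\max(\psi-\epsilon,\phi)$, the two locality identities, the mass identity, and the limit $\epsilon\downarrow0$ is exactly the standard argument (as in \cite{begz}) being alluded to. The point you flag --- that $u_{\epsilon}\geq\phi$ has full mass by monotonicity of the non-pluripolar mass --- is indeed the only place the full-mass hypothesis enters, and your treatment of it is fine.
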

Another useful consequence of the formula \ref{eq:locality for ma of max}
is the following inequality (compare \cite{begz} for the case $g=1$): 
\begin{prop}
\label{prop:max of max}Assume that $\phi$ and $\psi$ are in $PSH(X,L)^{T}$cmnd
that there exists a positive measure $\mu$ such that $MA_{g}(\phi)\geq\mu$
and $MA_{g}(\psi)\geq\mu.$ Then

\[
MA_{g}(\max(\phi,\psi))\geq\mu
\]

\end{prop}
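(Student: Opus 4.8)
The plan is to reduce everything to the classical case $g=1$, where the analogous statement ``if $MA(\phi)\geq\mu$ and $MA(\psi)\geq\mu$ then $MA(\max(\phi,\psi))\geq\mu$'' is known (this is exactly the inequality referred to in \cite{begz}). The mechanism that makes this reduction work is the locality formula \ref{eq:locality for ma of max} together with its symmetric companion obtained by interchanging the roles of $\phi$ and $\psi$. Concretely, write $O_1:=\{\phi>\psi\}$, $O_2:=\{\psi>\phi\}$ and $C:=\{\phi=\psi\}$; these are $T$-plurifine Borel sets partitioning $X$ (up to the usual measure-theoretic care on $C$). On $O_1$ we have $\max(\phi,\psi)=\phi$, so by $T$-plurifine locality of $MA_g$, $1_{O_1}MA_g(\max(\phi,\psi))=1_{O_1}MA_g(\phi)\geq 1_{O_1}\mu$; symmetrically on $O_2$. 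So the only genuine issue is the behaviour on the contact set $C=\{\phi=\psi\}$.

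On the contact set the two metrics agree, so again by locality $1_C MA_g(\max(\phi,\psi))=1_C MA_g(\phi)=1_C MA_g(\psi)$, and one wants to conclude $1_C MA_g(\phi)\geq 1_C\mu$. Here is where the hypothesis on \emph{both} $\phi$ and $\psi$ is used rather than just one of them: the point is the standard trick of replacing $\psi$ by $\psi+\epsilon$ and $\phi$ by $\phi+\epsilon$ and letting $\epsilon\downarrow 0$, combined with the comparison-principle Proposition \ref{prop:(the-comparison-principle).}. More precisely I would argue: for a Borel set $E\subset C$, using the locality formula on $\{\phi>\psi-\epsilon\}\supset C$ gives $1_{\{\phi>\psi-\epsilon\}}MA_g(\max(\phi,\psi-\epsilon))=1_{\{\phi>\psi-\epsilon\}}MA_g(\phi)$, while $\max(\phi,\psi-\epsilon)$ still has full Monge-Amp\`ere mass and dominates both $\phi-\epsilon$ and $\psi-\epsilon$ fine-locally; then on the region where $\psi-\epsilon>\phi$ it equals $\psi-\epsilon$. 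Passing $\epsilon\to 0$ and using the continuity statement in Theorem \ref{thm:g-ma is well-def and cont} (applicable since all the metrics involved have full Monge-Amp\`ere mass) lets one transfer the lower bound $MA_g(\psi)\geq\mu$ valid on $O_2\cup C$ to $\max(\phi,\psi)$ on that region, and symmetrically for $\phi$ on $O_1\cup C$; adding the two contributions and subtracting the overlap on $C$ (where the two right-hand sides coincide) yields $MA_g(\max(\phi,\psi))\geq\mu$ on all of $X$.

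The cleanest route, and the one I would actually write, is to mimic verbatim the $g=1$ proof in \cite{begz}: one sets $u:=\max(\phi,\psi)$, observes $\{u>\phi-\epsilon\}$ and $\{u>\psi-\epsilon\}$ cover $X$, applies \ref{eq:locality for ma of max} on each, and uses $MA_g(u)\geq MA_g(\phi)$-type bounds coming from the comparison principle to glue. Since $MA_g=MA\cdot g(m_\bullet)$ on smooth metrics and the whole construction of $MA_g$ commutes with the operations used (decreasing limits, restriction to $T$-plurifine open sets, max), every step in the classical argument has an exact $g$-analogue already established earlier in this section — locality (the Proposition preceding this one), the comparison principle (Proposition \ref{prop:(the-comparison-principle).}), and mass conservation under maximal mass (Theorem \ref{thm:g-ma is well-def and cont}).

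The main obstacle I anticipate is purely the bookkeeping on the contact set $\{\phi=\psi\}$: one must be careful that $MA_g$ is a genuine (non-pluripolar) measure that may well charge this set, so the inequality there is not vacuous and really does require invoking the comparison principle with the $\epsilon$-perturbation, not just naive additivity over the partition $O_1\sqcup O_2\sqcup C$. Everything else is a transcription of \cite{begz} with $MA$ replaced by $MA_g$, and no new pluripotential input beyond what Theorem \ref{thm:g-ma is well-def and cont} and the locality proposition already supply.
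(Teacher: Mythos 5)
Your overall strategy is the intended one: the paper gives no argument beyond invoking the locality formula \ref{eq:locality for ma of max} and pointing to \cite{begz} for $g=1$, and the standard $g=1$ proof is exactly the locality-plus-$\epsilon$-perturbation argument you sketch in your second and third paragraphs. However, one intermediate claim in your write-up is genuinely false and should be deleted: plurifine locality does \emph{not} give $1_{C}MA_{g}(\max(\phi,\psi))=1_{C}MA_{g}(\phi)$ on the contact set $C=\{\phi=\psi\},$ because $C$ is plurifine \emph{closed}, not open, and locality only applies to plurifine open sets. The equality can actually fail: already in one variable, for $\phi=\log|z|$ and $\psi=\log|z-a|$ the measure $dd^{c}\max(\phi,\psi)$ is carried by the contact set while $1_{C}dd^{c}\phi=1_{C}dd^{c}\psi=0.$ So the contact set cannot be dispatched by locality alone; it is precisely why the perturbation is needed, and why the hypothesis on both $\phi$ and $\psi$ enters.

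The $\epsilon$-argument you then describe is correct in substance and only needs tightening. Work with $u_{\epsilon}:=\max(\phi,\psi-\epsilon)$ (equivalently $\max(\phi+\epsilon,\psi),$ which differs by the constant $\epsilon$ and hence has the same $MA_{g}$): on the plurifine open set $\{\phi>\psi-\epsilon\}\supset\{\phi\geq\psi\}$ locality gives $MA_{g}(u_{\epsilon})=MA_{g}(\phi)\geq\mu,$ and on the disjoint plurifine open set $\{\psi-\epsilon>\phi\}\subset\{\psi>\phi\}$ it gives $MA_{g}(u_{\epsilon})=MA_{g}(\psi)\geq\mu;$ adding these two disjoint contributions yields $MA_{g}(u_{\epsilon})\geq1_{\{\phi\geq\psi\}}\mu+1_{\{\psi-\epsilon_{0}>\phi\}}\mu$ for all $\epsilon<\epsilon_{0}.$ Since $\max(\phi+\epsilon,\psi)$ \emph{decreases} to $\max(\phi,\psi)$ as $\epsilon\downarrow0,$ the decreasing-continuity statement of Theorem \ref{thm:g-ma is well-def and cont} (not an increasing limit, which that theorem does not cover) lets you pass to the weak limit against the fixed minorant, and then $\epsilon_{0}\downarrow0$ gives $MA_{g}(\max(\phi,\psi))\geq\mu.$ Note also that the final gluing is "take the larger of the two minorants" (their pointwise maximum is $\mu$ on all of $X$), not "add and subtract the overlap", which is not a legitimate operation on one-sided bounds. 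Finally, the continuity theorem as stated requires locally bounded metrics or full Monge--Amp\`ere mass, whereas the proposition is asserted for all of $PSH(X,L)^{T};$ in the general case one should first run the argument for the truncations $\max(\phi,\phi_{0}-k)$ entering the definition \ref{eq:def of nonpluripol g ma measure} and conclude by plurifine locality, a routine but necessary extra step.
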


\subsection{\label{sub:Interlude-I:-the vector}Vector fields generating torus
actions on singular varieties }

In this section we assume that $L$ is the pull-back $L$ of an ample
line bundle $A$ on a (possible singular) projective variety $Y.$
Let $V$ be a complex holomorphic vector field on $X$ (i.e. $V\in H^{0}(X,T^{1,0}X))$
with a fixed holomorphic lift to $L.$ Denote by $\mathcal{H}(X,L)^{V}$
the space of all smooth metrics on $L$ with semi-positive curvature
form such that $\phi$ is invariant under the flow $\mbox{\ensuremath{\exp}(\ensuremath{t\mbox{Im}V))}}$
defined by the imaginary part of $V$ (equivalently: $L_{\mbox{Im}V}\phi=0,$
where $L_{\mbox{Im}V}$ denotes the Lie derivative along the imaginary
part of $V)$ and define $PSH(X,L)^{V}$ etc in a similar manner. 
\begin{lem}
\label{lem:vector field torus}Let $V$ be a holomorphic vector field
on $X$ with admits a holomorphic lift to a line bundle $L\rightarrow X,$
where $L$ is assumed to be the pull-back of an ample line bundle
$A$ on a (possible singular) projective variety $Y.$ If there exists
a metric $\phi$ on $L$ with positive curvature current which is
invariant under the flow of the imaginary part of $V$ and such that
$\phi$ is locally bounded, then there exists a complex torus $T_{c}$
acting holomorphically on $(X,L)$ such that the imaginary (as well
as the real) part of $V$ may be identified (in the standard way)
with an element $\xi$ in the Lie algebra of the corresponding real
torus $T\subset T_{c}$ and $PSH(X,L)^{V}=PSH(X,L)^{T}$. Moreover,
in the in the case when $A=-K_{Y}$ for $Y$ a Fano variety with log
terminal singularities the boundedness assumption on $\phi$ may be
replaced by the assumption that $\phi$ has full Monge-Ampère mass.\end{lem}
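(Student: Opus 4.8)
The plan is to exhibit $V$ as (the holomorphic vector field attached to) an element of the Lie algebra of a compact torus by showing that the closure of the one--parameter group generated by $\mathrm{Im}\,V$ inside $\mathrm{Aut}(X,L)_{0}$ is compact; once that is done the rest is essentially formal. Since $X$ is compact, $\mathrm{Im}\,V$ is complete and generates a one--parameter subgroup $\rho_{t}:=\exp(t\,\mathrm{Im}\,V)$, $t\in\R$, of $\mathrm{Aut}(X,L)_{0}$ (using the given lift of $V$ to $L$). The first step is to linearize. Because $L=\pi^{*}A$ with $A$ ample --- and, in the applications, $\pi\colon X\to Y$ is a resolution, hence birational --- for $k\gg0$ the system $|kL|$ is base--point free and factors through the embedding $Y\hookrightarrow\P(E_{k}^{*})$, where $E_{k}:=H^{0}(X,kL)=H^{0}(Y,kA)$, and the induced representation of the group $\mathrm{Aut}(X,L)_{0}$ (a linear algebraic group, as $L$ is big) on $E_{k}$ is faithful: an automorphism acting trivially on $E_{k}$ descends to the identity on $Y$, hence, by birationality, is the identity on $X$ and then on $L$. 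Thus it suffices to show that the image of $\{\rho_{t}\}$ in $GL(E_{k})$ has compact closure.

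This is the heart of the matter, and the only place where the hypothesis on $\phi$ enters. I would use $\phi$ to manufacture an $\mathrm{Im}\,V$--invariant Hermitian inner product on $E_{k}$, namely $\langle s,s'\rangle_{\phi}:=\int_{X}\langle s,s'\rangle_{k\phi}\,MA(\phi)$. This form is finite, since $|s|^{2}_{k\phi}$ is an upper semicontinuous, hence bounded, function on the compact manifold $X$ and $MA(\phi)$ is a probability measure --- here one uses that $\phi$ has full Monge--Amp\`ere mass, which is automatic when $\phi$ is locally bounded. It is positive definite, because $MA(\phi)$ charges no pluripolar set: if $\langle s,s\rangle_{\phi}=0$ for some $s\ne0$, then $MA(\phi)$ would be carried by $\{s=0\}\cup\{\phi=-\infty\}$, a pluripolar set, contradicting that $MA(\phi)$ has total mass $1$. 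And it is $\rho_{t}$--invariant, because $MA$ commutes with biholomorphisms and $\rho_{t}^{*}\phi=\phi$, so the substitution $x\mapsto\rho_{t}^{-1}x$ identifies the two integrals. Hence the $\rho_{t}$ act unitarily on $(E_{k},\langle\cdot,\cdot\rangle_{\phi})$, so $\{\rho_{t}\}$ lies in a compact group, and its closure, transported back through the faithful closed embedding $\mathrm{Aut}(X,L)_{0}\hookrightarrow GL(E_{k})$, is a compact connected abelian subgroup $T$ of $\mathrm{Aut}(X,L)_{0}$ --- a torus, with $\mathrm{Im}\,V\in\mathrm{Lie}(T)$. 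For $T_{c}$ I would take the connected complex subgroup of $\mathrm{Aut}(X,L)_{0}$ with Lie algebra $\mathrm{Lie}(T)\otimes\C$ (equivalently, the Zariski closure of $T$ in $GL(E_{k})$, an algebraic torus preserving $Y\subset\P(E_{k}^{*})$); it acts holomorphically on $(X,L)$, and the real and imaginary parts of $V$ are the vector fields attached to the generator $\xi\in\mathrm{Lie}(T)\subset\mathrm{Lie}(T_{c})$ of $\rho_{t}$, in the sense of Section~\ref{sub:The-torus-setting}.

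The identity $PSH(X,L)^{V}=PSH(X,L)^{T}$ is then immediate: for fixed $\psi$ the assignment $g\mapsto g^{*}\psi$ is continuous from $\mathrm{Aut}(X,L)_{0}$ to $PSH(X,L)$ (pull--backs of psh functions converge in $L^{1}_{loc}$), so the stabilizer of $\psi$ is closed; if $\psi$ is $\mathrm{Im}\,V$--invariant it contains $\{\rho_{t}\}$ and hence $T$, while the reverse inclusion is trivial. For the final assertion, observe that the construction above used nothing about $\phi$ beyond $MA(\phi)$ being a non--pluripolar probability measure, i.e.\ that $\phi$ has full Monge--Amp\`ere mass; so when $A=-K_{Y}$ with $Y$ Fano and log terminal one may simply drop the local boundedness hypothesis. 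The main obstacle, and the whole analytic content, is the compactness step --- ruling out a non--compact (unipotent or $\R$--split) part in the closure of the flow; the invariant metric is precisely what excludes it, by conjugating the flow into a compact unitary group. The remaining points --- faithfulness of the $E_{k}$--action, positive--definiteness of $\langle\cdot,\cdot\rangle_{\phi}$ for singular $\phi$, and the passage from $T$ to $T_{c}$ --- are routine, using the pluripotential facts recalled earlier and, for faithfulness and the action of $T_{c}$ on $X$, the birationality of $\pi$.
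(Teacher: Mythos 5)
For a locally bounded $\phi$ your argument is essentially the paper's own: you build the invariant Hermitian form $\left\Vert s\right\Vert ^{2}:=\int_{X}|s|^{2}e^{-k\phi}MA(\phi)$ on $H^{0}(X,kL)$, observe that the flow of $\mathrm{Im}\,V$ acts unitarily, take the closure to get a compact connected abelian (hence toral) subgroup, and complexify; this is exactly the proof in the text (one small slip: $|s|^{2}e^{-k\phi}$ is \emph{lower}, not upper, semicontinuous, since $\phi$ is usc, and the boundedness you need comes from the local boundedness of $\phi$, not from semicontinuity). The genuine gap is in the final assertion. You claim the construction uses nothing about $\phi$ beyond $MA(\phi)$ being a non-pluripolar probability measure, so that local boundedness may simply be dropped when $A=-K_{Y}$. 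That is not true: for an unbounded $\phi$ of full Monge-Amp\`ere mass the weight $e^{-k\phi}$ blows up along $\{\phi=-\infty\}$, and the fact that $MA(\phi)$ gives no mass to this pluripolar set does not make $e^{-k\phi}$ integrable against $MA(\phi)$. Already on $\P^{1}$ the metric on $\mathcal{O}(1)$ whose potential near $z=0$ is $-\left(\log\frac{1}{|z|}\right)^{1/2}$ has zero Lelong numbers and full Monge-Amp\`ere mass, yet $\int e^{-ku}\, dd^{c}u=\infty$ for every $k>0$; so your Hermitian form need not be finite, and with it the whole compactness step collapses precisely in the case the last assertion is about.

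The paper's proof handles this case by changing the measure, not by reusing $MA(\phi)$: when $A=-K_{Y}$ one replaces $MA(\phi)$ by the canonical measure $\mu_{\phi}$ with local density $e^{-\phi}$ (formula \ref{eq:def of mu phi}), so the norm becomes $\left\Vert s\right\Vert ^{2}=\int_{Y}|s|^{2}e^{-(k+1)\phi}$. Its finiteness is exactly where the two unused hypotheses enter: full Monge-Amp\`ere mass forces the Lelong numbers of $\phi$ to vanish, and the log terminal condition on $Y$ then yields the integrability of $e^{-(k+1)\phi}$ against the klt reference measure (the integrability statement from the appendix of \cite{bbegz} that the paper cites). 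Invariance under the flow and positive definiteness (via non-pluripolarity of the zero set of a section) go through as before, and the remainder of your argument -- faithfulness on $H^{0}(X,kL),$ closure of the flow, passage to $T_{c},$ and $PSH(X,L)^{V}=PSH(X,L)^{T}$ -- is unaffected. So the repair is to substitute $\mu_{\phi}$ for $MA(\phi)$ in the Fano case and quote the zero-Lelong-number/klt integrability; as written, the sentence asserting that only full mass is needed is where the proof fails.
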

\begin{proof}
Let $K$ be the subgroup of $\mbox{Aut \ensuremath{(X,L)_{0}}}$ fixing
the metric $\phi$ and denote by $G$ the one-parameter subgroup $G$
of $K$ defined by the flow of the imaginary part of $V.$ First observe
that $K$ is a compact Lie group. Indeed,  induces a $K-$invariant
Hermitian norm on $H^{0}(X,kL).$ In the case that $\phi$ is locally
bounded the norm may be defined by $\left\Vert s_{k}\right\Vert ^{2}:=\int|s_{k}|^{2}e^{-k\phi}MA(\phi)$
and in the case $A=-K_{Y}$ we can replace $MA(\phi)$ with the pull-back
to $X$ of the measure $\mu_{\phi}$ on $Y$ with local density $e^{-\phi}$
defined by the metric $\phi$ (compare formula \ref{eq:def of mu phi}).
Since $\phi$ is assumed to have full MA-mass it has no Lelong numbers
and hence $\left\Vert s_{k}\right\Vert ^{2}:=\int_{Y}|s|_{k}^{2}e^{-(k+1)\phi}<\infty$
for any $k>0$ (see the appendix in \cite{bbegz}). The $K-$invariant
norm induces a Kodaira embedding of $X$ into $\P H^{0}(X,kL)$ and
hence $K$ may be identified with the subgroup of $U(N+1,\C)$ preserving
the image of $X,$ which is clearly a compact Lie group Next, denote
by $\bar{G}$ the topological closure of $G$ in $K,$ which defines
a topological compact and connected abelian subgroup of the compact
Lie group $K.$ But it is well-known that any such subgroup is a Lie
subgroup and hence it follows from the standard classification of
such Lie groups that $\bar{G}$ is a real torus. Finally, by general
principles (analytic continuation) any holomorphic action of a real
torus on $(X,L)$ can be extended holomorphically to give an action
of the corresponding complex torus, which concludes the proof. 
\end{proof}
Now, given a metric $\phi$ in $\mathcal{H}(X,L)^{V}$ we define the
following smooth function on $X$ 
\[
f_{\phi}:=\left\langle m_{\phi},\zeta\right\rangle ,
\]
 which in (in the case when $\omega_{\phi}>0)$ is a \emph{Hamiltonian
function }for the $\omega^{\phi}-$symplectic real vector field $\mbox{Im\ensuremath{V}and}$

\[
\omega^{\phi}(\mbox{Im}V,\cdot)=dh_{\phi},
\]
or equivalently: 
\[
\nabla^{\phi}h_{\phi}=\mbox{Re}V
\]
where $\nabla^{\phi}$ is the gradient defined with respect to $\omega^{\phi}$
(the previous relation holds where it is well-defined, i.e. where
$\omega^{\phi}>0$). In particular, it follows from the compactness
of $P\subset\R^{m}$ that there exists a universal constant $C$ such
that 
\[
\sup_{X}|f_{\phi}|\leq C
\]
(in the case when $L$ is ample this was shown by a different argument
in \cite{z}). Now given a continuous function $G(v)$ on $\R$ we
set $g(\lambda):=G(\left\langle \lambda,\zeta\right\rangle )$ and
obtain, by the construction in section \ref{sub:The g-Monge-Amp=0000E8re-measure},
a modified Monge-Ampère measure
\[
MA_{(G,V)}(\phi):=MA_{g}(\phi),
\]
 defined for any metric $\phi$ in $PSH(X,L)^{V}$ with the property
that 
\[
MA_{(G,V)}(\phi):=MA(\phi)G(h_{\phi})
\]
 for $\phi$ smooth. In particular, in the setting of Kähler-Ricci
solitons considered in section \ref{sec:K=0000E4hler-Ricci-solitons}
we will take $G(v)=e^{v}/C$ and write $g_{V}$ for the corresponding
function on the Lie algebra of $T.$ Here the normalizing constant
$C$ is chosen so that $g_{V}\nu$ is a probability measure.

\subsubsection{The case of a singular Fano variety}

By normality it is enough to define the holomorphic vector field $V,$
appearing in Lemma \ref{lem:vector field torus}, on the regular part
of a singular normal variety $Y.$ In particular, if $Y$ is a normal
Fano variety, i.e. $-K_{Y}$ is ample (see section \ref{sec:K=0000E4hler-Ricci-solitons}),
then any holomorphic vector field $V_{0}$ defined on the regular
locus $Y_{0}$ of $Y$ admits a canonical lift to $-K_{Y_{0}}$ (since
$V_{0}$ naturally acts on the tangent bundle of $Y_{0}).$ Hence,
if $\pi:\, X\rightarrow Y$ is a resolution such that $\pi$ is an
isomorphism over $Y_{0}$ then, by normality, the vector field $V_{0}$
admits a unique extension $V_{X}$ to $X$ such that $V_{X}$ lifts
to the line bundle $L:=\pi^{*}(-K_{X}).$ As a consequence the corresponding
$g-$Monge-Ampère measure $MA_{g_{V}}(\phi)$ can be defined on the
singular variety $Y,$ by passing to the resolution $X.$ The measure
$MA_{g_{V}}(\phi)$ thus defined is independent of the resolution
$X,$ since the $g-$Monge-Ampère measure on $X$ does not charge
pluripolar sets and in particular it does not charge the exceptional
locus of $\pi.$

\subsection{\label{sub:Energy-type-functionals and proj}Energy type functionals
and psh projections }

As is well-known the classical Monge-Ampère measure $MA$ viewed as
a one-form on the convex space $\mathcal{H}(X,L)$ is closed and hence
exact. In particular, $MA$ admits a primitive, i.e. a functional
$\mathcal{E}$ on $\mathcal{H}(X,L)$ such that $d\mathcal{E}{}_{|\phi}=MA(\phi),$
where $d\mathcal{E}$ denotes the differential of $\mathcal{E},$
which is uniquely determined by the normalization condition $\mathcal{E}(\phi_{0})=0.$
Integrating along affine segments in $\mathcal{H}(X,L)$ one arrives
at the following explicit energy type expression: 
\begin{equation}
\mathcal{E}(\phi):=\frac{1}{(n+1)c_{1}(L)^{n}}\int_{X}(\phi-\phi_{0})\sum_{j=0}^{n}(dd^{c}\phi)^{n-j}\wedge(dd^{c}\phi_{0})^{j}\label{eq:energy e beautiful explicit}
\end{equation}
Following \cite{begz,bbgz} the functional $\mathcal{E}(\phi)$ admits
a unique increasing and usc extension to all of $PSH(X,L)$ given
by 
\[
\mathcal{E}(\phi):=\inf_{\psi\geq\phi}\mathcal{E}(\psi),
\]
 where $\psi$ ranges over all elements of $\mathcal{H}(X,L)$ bounded
from below by $\phi.$ A metric $\phi$ is said to have finite (pluricomplex)
energy if $\mathcal{E}(\phi)>-\infty$ and the space of all finite
energy metrics is denoted by $\mathcal{E}^{1}(X,L).$ As shown in
\cite{begz} any finite energy metric has full Monge-Ampère mass,
i.e. 
\[
\mathcal{H}(X,L)\subset\mathcal{E}^{1}(X,L)\subset\mathcal{E}(X,L)
\]
Let us also recall the definition of the psh projection operator $P$
from the space $C(X,L)$ of all continuous metrics on $L$ to the
space of all continuous and psh (i.e. positively curved) metrics defined
by the following point-wise upper envelope: 
\[
P\phi:=\sup\left\{ \psi:\,\,\psi\leq\phi\,\,\,\,\psi\in PSH(X,L)\right\} 
\]
Next, turning to the present torus setting we start by observing that
the one-form $MA_{g}$ on $\mathcal{H}(X,L)^{T}$ defined by the $g-$Monge-Ampère
measure is also closed and hence exact (this observation is essentially
due to Zhu \cite{z}):
\begin{lem}
There exists a functional $\mathcal{E}_{g}$man $\mathcal{H}(X,L)^{T}$
such that $(d\mathcal{E}_{g})_{|\phi}=MA_{g}(\phi).$ The functional
$\mathcal{E}_{g}$ is uniquely determined by the normalization condition
$\mathcal{E}_{g}(\phi_{0})=0.$\end{lem}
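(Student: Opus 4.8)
The plan is to carry over to the equivariant setting the construction of the Monge-Amp\`ere energy $\mathcal{E}$ recalled in formula \ref{eq:energy e beautiful explicit}: integrate the one-form $MA_{g}$ along affine segments and check that the result is an honest primitive. Since $\mathcal{H}(X,L)^{T}$ is convex (hence connected), set
\[
\mathcal{E}_{g}(\phi):=\int_{0}^{1}\big\langle MA_{g}\big((1-t)\phi_{0}+t\phi\big),\ \phi-\phi_{0}\big\rangle\,dt,
\]
so that automatically $\mathcal{E}_{g}(\phi_{0})=0$; and since any two primitives of $MA_{g}$ differ by a constant on the connected set $\mathcal{H}(X,L)^{T}$, this normalization determines $\mathcal{E}_{g}$ uniquely. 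Thus everything reduces to the assertion $d\mathcal{E}_{g}=MA_{g}$, and a routine differentiation under the integral sign shows that this holds precisely when the one-form $MA_{g}$ is \emph{closed}, i.e. when for every $\phi\in\mathcal{H}(X,L)^{T}$ the bilinear form $(u,v)\mapsto\langle D_{u}MA_{g}(\phi),v\rangle$ on the tangent space $C^{\infty}(X)^{T}$ is symmetric, where $D_{u}MA_{g}(\phi):=\frac{d}{ds}\big|_{s=0}MA_{g}(\phi+su)$.

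For the symmetry, take $g$ smooth first. Since $m_{\phi}$ is linear in $\phi$ (as $\rho(\tau)^{*}$ is), $D_{u}m_{\phi}=\mu(u)$, the infinitesimal moment map of the $T$-invariant function $u$, defined by $\mu(u)(x):=\nabla_{t}(\rho(e^{t})^{*}u)(x)|_{t=0}$; and with $V:=c_{1}(L)^{n}$ one has $D_{u}MA(\phi)=\frac{n}{V}dd^{c}u\wedge\omega_{\phi}^{n-1}$, so by the product rule
\[
\langle D_{u}MA_{g}(\phi),v\rangle=\frac{n}{V}\int_{X}v\,g(m_{\phi})\,dd^{c}u\wedge\omega_{\phi}^{n-1}+\int_{X}v\,\big\langle(\nabla g)(m_{\phi}),\mu(u)\big\rangle\,MA(\phi).
\]
Integrating the first integral by parts twice (no boundary terms, $X$ compact; $\omega_{\phi}^{n-1}$ closed) and using the elementary fact that $da\wedge d^{c}b\wedge\omega_{\phi}^{n-1}$ is symmetric under $a\leftrightarrow b$, the antisymmetric-in-$(u,v)$ part of that integral becomes concentrated in a term built from $d^{c}g(m_{\phi})$. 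One then feeds in the moment-map identities $\iota_{\xi_{j}}\omega_{\phi}=dm_{\phi}^{(j)}$ (for the circle generators $\xi_{j}$) and their linearizations $d\,\mu(u)^{(j)}=\iota_{\xi_{j}}(dd^{c}u)$, together with the chain rule $d^{c}g(m_{\phi})=\sum_{j}(\partial_{j}g)(m_{\phi})\,d^{c}m_{\phi}^{(j)}$, and checks that this leftover term cancels exactly against the antisymmetric part of $\int_{X}v\,\langle(\nabla g)(m_{\phi}),\mu(u)\rangle\,MA(\phi)$; hence the bilinear form is symmetric. This is essentially the second-variation computation of Zhu \cite{z} (carried out there in the ample K\"ahler-Ricci soliton case); the semi-positive big case is the same computation, or can be reduced to the ample one using the regularization result, Theorem \ref{thm:(regularization)--Any}.

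It remains to treat a general continuous $g$. Approximating $g$ uniformly on $P$ by smooth functions $g_{j}$, Corollary \ref{cor:comp of g ma and ma} and $\int_{X}MA(\phi)\le1$ give $\|MA_{g_{j}}(\phi)-MA_{g_{j'}}(\phi)\|_{TV}\le\|g_{j}-g_{j'}\|_{C^{0}(P)}$ for every $\phi$ of full Monge-Amp\`ere mass, in particular on $\mathcal{H}(X,L)^{T}$; consequently the functionals $\mathcal{E}_{g_{j}}$ and their differentials $d\mathcal{E}_{g_{j}}=MA_{g_{j}}$ converge, uniformly on bounded subsets of $\mathcal{H}(X,L)^{T}$, to $\mathcal{E}_{g}$ (defined by the same segment integral) and to $MA_{g}$ respectively, so $d\mathcal{E}_{g}=MA_{g}$. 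As a consistency check, in the toric case $T=T^{n}$ --- where $\phi$ corresponds to a convex function $\varphi$ on $\R^{n}$ with $m_{\phi}=\nabla\varphi$ --- this construction returns, up to a constant, $\mathcal{E}_{g}(\phi)=-\frac{1}{|P|}\int_{P}\varphi^{*}(p)\,g(p)\,dp$ with $\varphi^{*}$ the Legendre transform of $\varphi$, i.e. the optimal-transport cost functional alluded to in the introduction. I expect the main obstacle to be the bookkeeping in the smooth-$g$ computation: isolating the antisymmetric contribution of $\int_{X}v\,g(m_{\phi})\,dd^{c}u\wedge\omega_{\phi}^{n-1}$ after integration by parts and matching it term by term, via the moment-map identities and their linearizations, against the $\nabla g$-integral.
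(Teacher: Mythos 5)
Your proposal is correct and follows essentially the same route as the paper, which simply observes that the closedness of the one-form $MA_{g}$ (equivalently, the symmetry of its second variation, which you verify via the moment-map identities and integration by parts) was established by Zhu \cite{z} for exponential $g$ and extends to general $g$ \emph{verbatim} (see also \cite{bern1}), the paper's only additional remark being that existence alternatively follows from the quantized functional $\mathcal{L}_{g}^{(k)}$ of Prop \ref{prop:conv of l-functional}. Your uniform-approximation step for merely continuous $g$ and the reduction to the ample case by Theorem \ref{thm:(regularization)--Any} fill in details the paper leaves implicit, and are sound.
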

\begin{proof}
This was essentially shown by Zhu \cite{z} in the vector field setting
for $g(p)$ the exponential function (compare section \ref{sub:Interlude-I:-the vector})
and the proof in the general case is essentially the same (see also
\cite{bern1}). Alternatively, the existence of the functional $\mathcal{E}_{g}$
also follows from the existence of its quantized version $\mathcal{L}_{g}$
(see Prop \ref{prop:conv of l-functional}).
\end{proof}
Proceeding as before we may now extend $\mathcal{E}_{g}$ to all of
$PSH(X,L)^{T}$ by setting 
\[
\mathcal{E}_{g}(\phi):=\inf_{\psi\geq\phi}\mathcal{E}_{g}(\psi),
\]
where $\psi$ ranges over all elements of $\mathcal{H}(X,L)^{T}$
bounded from below by $\phi.$ It follows immediately from the definition
of $\mathcal{E}_{g}(\phi)$ and Cor \ref{cor:comp of g ma and ma}
that there exists a constant $C$ such that 
\[
\mathcal{E}_{g}(\phi)\geq C\mathcal{E}(\phi)
\]
 under the normalizing condition $\sup_{X}(\phi-\phi_{0})=0.$ In
particular, $\mathcal{E}_{g}(\phi)$ is finite for any metric $\phi$
of finite energy. 
\begin{prop}
\label{prop:cont prop of g-energy}The functional $\mathcal{E}_{g}(\phi)$
on $PSH(X,L)^{T}$ has the following properties:
\begin{itemize}
\item $\mathcal{E}_{g}$ is increasing and satisfies $\mathcal{E}_{g}(\phi+c)=\mathcal{E}_{g}(\phi)+c$
for any constant $c\in\R$ iff $gd\nu$ is a probability measure. 
\item $\mathcal{E}_{g}$ is upper semi-continuous with respect to the $L^{1}-$topology
on $PSH(X,L)^{T}$
\item $\mathcal{E}_{g}$ is continuous along decreasing sequences in $PSH(X,L)^{T}$
\end{itemize}
\end{prop}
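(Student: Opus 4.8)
The plan is to establish the three properties in the order: monotonicity together with the translation identity, then continuity along decreasing sequences, and finally $L^1$-upper semicontinuity, the last being deduced from the second. For the first item I argue on $\mathcal{H}(X,L)^T$ and then extend through the infimum defining $\mathcal{E}_g$ on $PSH(X,L)^T$. Differentiating $\mathcal{E}_g$ along the affine segment joining two smooth metrics $\phi\le\phi'$ and using $(d\mathcal{E}_g)_{|\psi}=MA_g(\psi)$ gives $\frac{d}{ds}\mathcal{E}_g((1-s)\phi+s\phi')=\int_X(\phi'-\phi)\,MA_g((1-s)\phi+s\phi')\ge 0$ since $MA_g$ is a positive measure; monotonicity and the relation $\mathcal{E}_g(\cdot+c)=\mathcal{E}_g(\cdot)+c$ then pass to the infimum-extension once known on smooth metrics (the inf is over a smaller set for a larger metric, and $\psi\mapsto\psi-c$ is a bijection of $\mathcal{H}(X,L)^T$). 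For the translation identity, differentiate along $\phi_s=\phi+sc$: adding a constant changes neither $\omega_\phi$ nor the moment map $m_\phi$, so $MA_g(\phi+sc)=MA_g(\phi)$ and $\frac{d}{ds}\mathcal{E}_g(\phi_s)=c\int_X MA_g(\phi)=c\int_P g\,d\nu$ by formula \ref{eq:total mass of g ma in theorem}; hence $\mathcal{E}_g(\phi+c)-\mathcal{E}_g(\phi)=c\int_P g\,d\nu$, which equals $c$ for all $c$ exactly when $g\,d\nu$ is a probability measure.

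For the second item I first treat locally bounded metrics. On $\mathcal{H}(X,L)^T$ the defining relation yields the cocycle/integral formula $\mathcal{E}_g(\phi_2)-\mathcal{E}_g(\phi_1)=\int_0^1\int_X(\phi_2-\phi_1)\,MA_g((1-s)\phi_1+s\phi_2)\,ds$, and in particular $\mathcal{E}_g(\phi)=\int_0^1\int_X(\phi-\phi_0)\,MA_g((1-s)\phi_0+s\phi)\,ds$. By the regularization theorem (Theorem \ref{thm:(regularization)--Any}) together with the convergence statements of Theorem \ref{thm:g-ma is well-def and cont} and dominated convergence in $s$, this formula persists for every locally bounded $\phi\in PSH(X,L)^T$; the right-hand side is monotone in $\phi$ (since $MA_g\ge 0$), and regularizing from above shows it agrees with the infimum-extension $\mathcal{E}_g$. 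Continuity along a bounded decreasing sequence $\phi_j\downarrow\phi$ now follows from the same formula: for fixed $s$ both $(1-s)\phi_0+s\phi_j\downarrow(1-s)\phi_0+s\phi$ and $\phi_j-\phi_0\downarrow\phi-\phi_0$, so Theorem \ref{thm:g-ma is well-def and cont} gives $(\phi_j-\phi_0)\,MA_g((1-s)\phi_0+s\phi_j)\to(\phi-\phi_0)\,MA_g((1-s)\phi_0+s\phi)$, and a further dominated convergence in $s$ finishes. (Equivalently, for $g=\chi_\lambda$ one may use $\mathcal{E}_{\chi_\lambda}=\mathcal{E}\circ P_\lambda$ up to an additive constant — valid because $MA(P_\lambda\phi)$ is carried by $\{P_\lambda\phi=\phi\}$, where $P_\lambda$ is differentiable in the sense of \cite{b-b} — and reduce to $g=1$ via Lemma \ref{lem:basic prop of lower env metric}, passing to continuous $g$ by uniform approximation with cube step functions.)

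To remove the boundedness hypothesis, use the truncations $\phi_j^{(k)}=\max(\phi_j,\phi_0-k)$. First, $\mathcal{E}_g(\phi)=\lim_k\mathcal{E}_g(\phi^{(k)})$: any $\psi\in\mathcal{H}(X,L)^T$ with $\psi\ge\phi$ is bounded below, hence $\psi\ge\phi^{(k)}$ for $k$ large, so $\mathcal{E}_g(\phi^{(k)})\le\mathcal{E}_g(\psi)$; taking the infimum and using monotonicity gives the claim. The array $a_{j,k}:=\mathcal{E}_g(\phi_j^{(k)})$ is decreasing in $j$ and in $k$, with $\lim_k a_{j,k}=\mathcal{E}_g(\phi_j)$ and, by the bounded case at each fixed $k$, $\lim_j a_{j,k}=\mathcal{E}_g(\phi^{(k)})$; the identity $\lim_j\lim_k a_{j,k}=\inf_{j,k}a_{j,k}=\lim_k\lim_j a_{j,k}$ for monotone arrays yields $\lim_j\mathcal{E}_g(\phi_j)=\mathcal{E}_g(\phi)$. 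Finally, for upper semicontinuity let $\phi_j\to\phi$ in $L^1$ with $\phi_j\in PSH(X,L)^T$, and pick $T$-invariant continuous metrics $w_\varepsilon$ on $L$ with $w_\varepsilon\downarrow\phi$ as $\varepsilon\downarrow 0$. By Hartogs' lemma $\phi_j\le w_\varepsilon+\varepsilon$ on $X$ for $j$ large, so $\phi_j\le P(w_\varepsilon+\varepsilon)$ and, by monotonicity, $\mathcal{E}_g(\phi_j)\le\mathcal{E}_g(P(w_\varepsilon+\varepsilon))$; since $P(w_\varepsilon+\varepsilon)$ is a continuous metric decreasing to $\phi$, continuity along decreasing sequences gives $\limsup_j\mathcal{E}_g(\phi_j)\le\mathcal{E}_g(\phi)$.

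I expect the main obstacle to be the locally bounded case of the second item: one must check that the integral formula for $\mathcal{E}_g$ survives the passage from smooth to merely bounded metrics and that the resulting functional coincides with the infimum-extension. This is precisely where the strong convergence statements of Theorem \ref{thm:g-ma is well-def and cont} — convergence of $(\psi_j-\phi_0)\,MA_g(\phi_j)$, and not just weak convergence of $MA_g(\phi_j)$ — are indispensable, since the density $\phi-\phi_0$ occurring in the energy integral is only bounded, not continuous. Granting this step, the first item and, via the argument above, the upper semicontinuity are routine.
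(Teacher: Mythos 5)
Your proposal is correct, and its core is the same as the paper's: the first item comes from the total mass formula \ref{eq:total mass of g ma in theorem} applied to smooth metrics, and the locally bounded case of the third item rests on the explicit formula $\mathcal{E}_{g}(\phi)=\int_{0}^{1}\int_{X}(\phi-\phi_{0})MA_{g}((1-t)\phi_{0}+t\phi)\,dt$ together with the strong convergence $(\psi_{j}-\phi_{0})MA_{g}(\phi_{j})\rightarrow(\psi-\phi_{0})MA_{g}(\phi)$ of Theorem \ref{thm:g-ma is well-def and cont} -- precisely the two ingredients the paper uses, and you correctly identify the second as the indispensable one. Where you genuinely differ is in the bootstrap to unbounded metrics and in the order of the last two items: the paper passes from the bounded case to upper semicontinuity by a max construction and then obtains continuity along arbitrary decreasing sequences from usc plus monotonicity (as in Prop 2.10 of \cite{begz}), whereas you prove general decreasing continuity directly -- via the truncations $\phi_{j}^{(k)}=\max(\phi_{j},\phi_{0}-k)$, the identity $\mathcal{E}_{g}(\phi)=\lim_{k}\mathcal{E}_{g}(\phi^{(k)})$ (which does follow from the infimum definition, since any competitor $\psi\in\mathcal{H}(X,L)^{T}$ dominates $\phi^{(k)}$ for $k$ large) and a monotone double-limit interchange -- and only then deduce usc from Hartogs' lemma. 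Both routes work; yours is more self-contained, at the price of the truncation bookkeeping, while the paper's is shorter by leaning on the cited argument. Two minor remarks: the decreasing family $w_{\varepsilon}$ in your usc step should be taken from the regularization Theorem \ref{thm:(regularization)--Any} (made $T$-invariant by averaging over $T$); since those regularizations are already psh you can drop the projection $P$ and simply use $\phi_{j}\leq w_{\varepsilon}+\varepsilon$, hence $\mathcal{E}_{g}(\phi_{j})\leq\mathcal{E}_{g}(w_{\varepsilon})+\varepsilon\int_{P}g\,d\nu$. Also, your parenthetical identification $\mathcal{E}_{\chi_{\lambda}}=\mathcal{E}\circ P_{\lambda}$ (up to a constant) would need a differentiability statement for $P_{\lambda}$ in the spirit of \cite{b-b} that the paper does not establish, so it is rightly kept as an aside rather than part of the argument.
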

\begin{proof}
The first point follows immediately from from the fact that $MA_{g}(\phi)$
is a probability measure when $\phi\in\mathcal{H}(X,L)^{T}.$ In the
case when $\phi$ is locally bounded the continuity of $\mathcal{E}_{g}$
wrt decreasing sequences follows from the convergence in Theorem \ref{thm:g-ma is well-def and cont},
by writing $\mathcal{E}_{g}(\phi)=\int_{X\times[0,1]}(\phi-\phi_{0})MA_{g}(t\phi+(1-t)\phi_{0})dt.$
By a simple max construction this implies the general upper-semicontinuity
in the second point and hence, since $\mathcal{E}_{g}$ is increasing,
it also implies the general continuity wrt decreasing sequences in
the third point (just as in the proof of Prop 2.10 in \cite{begz}
)
\end{proof}
We also have the following generalization of the differentiability
theorem in \cite{b-b} to the torus setting, which plays a key role
in the variational approach to complex Monge-Ampère equations \cite{bbgz}:
\begin{prop}
\label{prop:diff of The-composed-function }The composed functional
$\mathcal{E}_{g}\circ P$ is Gateaux differentiable on the affine
space $C(X,L)^{T}$ of all $T-$ invariant continuous metrics on $L$
and 
\begin{equation}
d(\mathcal{E}_{g}\circ P)_{|\phi}=MA_{g}(P\phi)\label{eq:diff of composed energy proj}
\end{equation}
More generally, if $\phi$ is in $\mathcal{E}^{1}(X,L)^{T}$ and $u$
is a continuous function on $X,$ then 
\[
\frac{d\left(\mathcal{E}_{g}(P(\phi+tu)\right)}{dt}_{|t=0}=\int MA_{g}(P\phi)u
\]
\end{prop}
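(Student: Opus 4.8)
The plan is to transcribe the proof of the Berman--Boucksom differentiability theorem \cite{b-b} (see also \cite{bbgz}) to the weighted setting, the two genuinely new ingredients being the continuity of $MA_{g}$ along decreasing sequences (Theorem \ref{thm:g-ma is well-def and cont}) and the concavity of $\mathcal{E}_{g}$ along affine segments, which I would establish first. For $\phi\in\mathcal{H}(X,L)^{T}$ and $v$ a smooth $T$-invariant function, set $h(t):=\mathcal{E}_{g}(\phi+tv)$; since $d\mathcal{E}_{g}=MA_{g}$ on $\mathcal{H}(X,L)^{T}$ we have $h'(t)=\int v\,MA_{g}(\phi+tv)$, and a second differentiation at $t=0$ produces one term from varying the volume form $\omega_{\phi_{t}}^{n}$ and one from varying $g$ through its argument $m_{\phi_{t}}$. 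Integrating the first by parts leaves, besides the manifestly non-positive $-n\,c_{1}(L)^{-n}\int g(m_{\phi})\,dv\wedge d^{c}v\wedge\omega_{\phi}^{n-1}$, a cross term involving $\nabla g(m_{\phi})$; using that $m_{\phi}^{(j)}$ is the Hamiltonian of the $j$-th generating vector field $X_{j}$ (so $dm_{\phi}^{(j)}=\iota_{X_{j}}\omega_{\phi}$) together with the identity $(\iota_{X_{j}}\omega_{\phi})\wedge d^{c}v\wedge\omega_{\phi}^{n-1}=\tfrac1n\,m_{v}^{(j)}\,\omega_{\phi}^{n}$ (here $m_{v}$ is the derivative of $v$ along the torus directions, the analogue of $m_{\phi}$), this cross term cancels exactly against the term coming from $\tfrac{d}{dt}g(m_{\phi_{t}})$. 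Hence $h''\le0$ (this is essentially Zhu's computation in \cite{z}), and by regularization together with the decreasing-continuity of $MA_{g}$ and $\mathcal{E}_{g}$ (Theorem \ref{thm:g-ma is well-def and cont}, Proposition \ref{prop:cont prop of g-energy}) the concavity extends to $\mathcal{E}^{1}(X,L)^{T}$. In particular one gets the two-sided estimate
\[
\int(\phi_{1}-\phi_{0})\,MA_{g}(\phi_{1})\ \le\ \mathcal{E}_{g}(\phi_{1})-\mathcal{E}_{g}(\phi_{0})\ \le\ \int(\phi_{1}-\phi_{0})\,MA_{g}(\phi_{0})
\]
for $\phi_{0},\phi_{1}\in\mathcal{E}^{1}(X,L)^{T}$ differing by a bounded function, exactly as in the case $g=1$.

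I would then record the orthogonality relation $\int(\phi-P\phi)\,MA_{g}(P\phi)=0$, valid whenever $P\phi\in\mathcal{E}^{1}$ (in particular for $\phi\in C(X,L)^{T}$, since $P\phi$ is then a bounded psh metric, hence of full mass): as $\phi-P\phi\ge0$ and $MA_{g}(P\phi)\le\sup_{P}g\cdot MA(P\phi)$ by Corollary \ref{cor:comp of g ma and ma}, it follows at once from the classical relation $\int(\phi-P\phi)\,MA(P\phi)=0$ of \cite{b-b}. Moreover, using $\mathcal{E}_{g}(\psi+c)=\mathcal{E}_{g}(\psi)+c$ (Proposition \ref{prop:cont prop of g-energy}), $P(\psi+c)=P\psi+c$ and $\int MA_{g}(P\phi)=1$, one checks that replacing $u$ by $u+\|u\|_{\infty}$ shifts $\tfrac{d}{dt}|_{t=0}\mathcal{E}_{g}(P(\phi+tu))$ by $\|u\|_{\infty}\bigl(1-\int MA_{g}(P\phi)\bigr)=0$, so it suffices to prove the formula for $u\ge0$.

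With $u\ge0$ and $t>0$ we have $P\phi\le P(\phi+tu)\le\phi+tu$. Applying the two-sided estimate with $\phi_{0}=P\phi$, $\phi_{1}=P(\phi+tu)$, and converting the two bounding integrals by orthogonality ($MA_{g}(P\phi)$ is carried by $\{P\phi=\phi\}$, where $P(\phi+tu)-P\phi\le tu$, and $MA_{g}(P(\phi+tu))$ by $\{P(\phi+tu)=\phi+tu\}$, where $P(\phi+tu)-P\phi\ge tu$), one obtains after dividing by $t$
\[
\int u\,MA_{g}(P(\phi+tu))\ \le\ \frac{\mathcal{E}_{g}(P(\phi+tu))-\mathcal{E}_{g}(P\phi)}{t}\ \le\ \int u\,MA_{g}(P\phi).
\]
As $t\searrow0$ the metrics $P(\phi+tu)$ decrease to $P\phi$ (monotonicity of $P$ and its continuity along decreasing sequences), so the left-hand side tends to $\int u\,MA_{g}(P\phi)$ by Theorem \ref{thm:g-ma is well-def and cont}; hence the right derivative of $\mathcal{E}_{g}\circ P$ at $\phi$ along $u$ equals $\int u\,MA_{g}(P\phi)$. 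Running the argument with $-u$ in place of $u$ (and the same translation trick) gives the matching left derivative, so $d(\mathcal{E}_{g}\circ P)_{|\phi}=MA_{g}(P\phi)$. The statement for $\phi\in\mathcal{E}^{1}(X,L)^{T}$ is proved identically, noting that there $P\phi=\phi$ and that $P(\phi+tu)-\phi$ is bounded, so $P(\phi+tu)$ again lies in $\mathcal{E}^{1}$.

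The step I expect to be the main obstacle is the concavity of $\mathcal{E}_{g}$ — concretely, verifying that the $\nabla g$ cross term really cancels, which is where the symplectic structure of the torus action enters and which has no analogue when $g=1$ — together with the routine but somewhat delicate bookkeeping needed to pass the two-sided estimate and the orthogonality relation from smooth metrics to the merely continuous, respectively finite-energy, envelopes $P\phi$ and $P(\phi+tu)$, carried out by regularization and the decreasing-continuity statements above.
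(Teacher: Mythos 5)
Your argument for the main statement is essentially the paper's: the orthogonality relation $\int(\phi-P\phi)\,MA_{g}(P\phi)=0$ is obtained exactly as in the text (the $g=1$ case of \cite{b-b} plus the upper bound in Corollary \ref{cor:comp of g ma and ma}), and the differentiability then follows the \cite{b-b} sandwich scheme. The only cosmetic difference is which tool closes the sandwich: the paper's sketch invokes the comparison principle (Proposition \ref{prop:(the-comparison-principle).}), while you use the two-sided estimate coming from concavity of $\mathcal{E}_{g}$ along affine segments together with the decreasing-continuity of $MA_{g}$ (Theorem \ref{thm:g-ma is well-def and cont}); note that the concavity you propose to re-derive via the Zhu-type symplectic cancellation is already available as the third point of Proposition \ref{prop:energy along geodesics}, so you could simply cite it. Your reduction to directions $u\geq0$ by adding $\|u\|_{\infty}$ (legitimate because $P\phi$ has full Monge--Amp\`ere mass, so the shifts on both sides agree) is a clean way to stay within decreasing families for both one-sided derivatives.

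The one soft spot is the final claim that the $\mathcal{E}^{1}$ case is ``proved identically''. The upper half of your sandwich indeed survives verbatim (it only uses $P(\phi+tu)\leq\phi+tu$ and $P\phi=\phi$), but the lower half requires that $MA_{g}(P(\phi+tu))$ be carried by the contact set $\{P(\phi+tu)=\phi+tu\}$, i.e. the orthogonality relation for the obstacle $\phi+tu$. Your justification of orthogonality goes through \cite{b-b}, where the obstacle is continuous, so that $\{P(\phi+tu)<\phi+tu\}$ is open and the envelope is maximal there; for $\phi\in\mathcal{E}^{1}$ the obstacle is merely usc, this set need not be open, and the argument does not transfer as stated. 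This is precisely why the paper treats the finite-energy statement by approximation, as in Lemma 4.2 of \cite{bbgz}: approximate $\phi$ by a decreasing sequence of continuous (e.g. smooth, via Theorem \ref{thm:(regularization)--Any}) metrics $\phi_{j}$, note $P(\phi_{j}+tu)\searrow P(\phi+tu)$, and pass to the limit using the convergence statements of Theorem \ref{thm:g-ma is well-def and cont}; alternatively, run your sandwich at the level of $\phi_{j}$ and let $j\to\infty$ before $t\to0$. With that one step supplied, your proof is complete and coincides in substance with the paper's.
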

\begin{proof}
First observe that the following ``orthogonality relation'' holds
for $\phi$ a metric in $\mathcal{C}(X,L)^{T}:$
\[
\int MA_{g}(P\phi)(\phi-P\phi)=0
\]
Indeed, for $g=1$ this is well-known (compare \cite{b-b}) and the
general case then follows immediately from the upper bound in Cor
\ref{cor:comp of g ma and ma}. Combining the orthogonality relation
above with the comparison principle in Prop \ref{prop:(the-comparison-principle).}
the proof of formula \ref{eq:diff of composed energy proj} then follows
using the corresponding comparison principle (Prop \ref{prop:(the-comparison-principle).}),
just as in \cite{b-b}. Finally, the proof of the last formula in
the proposition is reduced to the previous case by approximation,
just as in the proof of Lemma 4.2 in \cite{bbgz}.
\end{proof}

\subsection{Energy along $T-$invariant bounded geodesics }

As noted by Mabuchi, Semmes and Donaldson, independently, there is
a natural Riemannian metric on the infinite dimensional space $\mathcal{H}_{\infty}$
of all smooth positively curved metrics on a given line bundle $L,$
such that corresponding geodesics $\phi_{t}$ - when then they exist
- are solutions to a complex Monge-Ampère equation. Here we will only
use the pluripotential version of this setup where a notion of ``weak
geodesic'' or more precisely ``bounded geodesic'' is defined directly
without any reference to a Riemannian structure \cite{bbgz,b-d,bern1}:
given two elements $\phi_{0}$ and $\phi_{1}$ in $PSH(X,L)\cap L_{loc}^{\infty}$
there is a unique ``bounded geodesic'' $\phi_{t}$ defining a curve
in $PSH(X,L)\cap L_{loc}^{\infty}$ connecting $\phi_{0}$ and $\phi_{1}.$
The curve $\phi_{t}$ may be defined as follows: first we set $t:=\log|\tau|,$
for $\tau\in\C$ and identify $\phi_{t}$ with an $S^{1}-$metric
$\Phi$ on $\pi^{*}L\rightarrow X\times A,$ where $A$ is the annulus
$\{\tau\in\C:\,\,0\leq\log|\tau|\leq1\}$ in $\C$ equipped with its
standard $S^{1}-$action and $\pi$ is the standard projection from
$X\times A$ to $X.$ The restriction $\Phi_{|\partial(X\times A)}$
to the boundary of $X\times A$ is determined by the given metrics
$\phi_{0}$ and $\phi_{1}$ and the extension to all of $X\times A$
of $\Phi$ is given by the following envelope:

\[
\Phi:=\sup_{\Psi}\left\{ \Psi:\,\,\,\,\Psi_{|\partial(X\times A)}\leq\Phi_{|\partial(X\times A)}\right\} 
\]
where $\Psi$ ranges of all $S^{1}-$invariant bounded metrics on
$\pi^{*}L\rightarrow X\times A$ with positive curvature current (the
corresponding curve $\psi_{t}$ will be called a bounded\emph{ subgeodesic}).
As observed in \cite{bern1} a simple barrier argument reveals that
$\Phi$ is continuous at the boundary in the sense that $\phi_{t}\rightarrow\phi_{0}$
uniformly as $t\rightarrow0$ and similarly for $t=1.$ Indeed, this
follows directly from the fact that the continuous metric 
\begin{equation}
\Psi:=\max\{\Phi_{0}-A\log|\tau|,\Phi_{1}-A(1-\log|\tau|)\}\label{eq:barrier}
\end{equation}
 is, for $A$ sufficiently large a candidate for the sup defining
 Moreover, by standard properties of free envelopes 
\[
(dd^{c}\Phi)^{n+1}=0
\]
on the interior of $X\times A.$ In other words, the extension $\Phi$
is a solution of the corresponding Dirichlet problem for the Monge-Ampère
operator on the domain $X\times A$ (as shown in \cite{b-d} $\Phi$
is continuous on all of $X\times A$ if the given boundary data is).
It follows immediately from the envelope construction above that if
$\phi_{0}$ and $\phi_{1}$ are $T-$invariant, then so is $\phi_{t}$
for any $t\in[0,1].$ 
\begin{prop}
\label{prop:energy along geodesics}Let $\phi_{t}$ be a curve in
$\mathcal{E}^{1}(X,L)^{T}$ defined for $t\in[0,1]$ and set $f(t):=\mathcal{E}_{g}(\phi_{t})$
\begin{itemize}
\item If $\phi_{t}$ is a bounded subgeodesic, then the function $f(t)$
is convex on $]0,1[.$ 
\item If $\phi_{t}$ is a bounded geodesic then $f(t)$ is affine on $[0,1]$
(and continuous up to the boundary)
\item If $\phi_{t}$ is affine wrt $t,$ then $f(t)$ is concave, i.e. the
functional $\mathcal{E}_{g}$ is concave on the space $\mathcal{E}^{1}(X,L)^{T}$
equipped with its affine structure.
\end{itemize}
\end{prop}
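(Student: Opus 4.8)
The plan is to reduce all three assertions to the case of a \emph{smooth} curve by a regularization argument, and then to carry out an explicit computation of $f''(t)$ in the smooth case; compared with the classical situation $g\equiv 1$ treated in \cite{bbgz,bern1,b-d}, the only new feature will be an exact cancellation of the two terms that involve the differential of $g$ along the moment map.

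First I would treat the smooth case, i.e.\ assume $\phi_{t}\in\mathcal{H}(X,L)^{T}$ for every $t$, depending smoothly on $t$ as well. Since $\mathcal{E}_{g}$ is a primitive of the one-form $MA_{g}$, one has
\[
f'(t)=\int_{X}\dot{\phi}_{t}\,MA_{g}(\phi_{t})=\int_{X}\dot{\phi}_{t}\,g(m_{\phi_{t}})\,MA(\phi_{t}).
\]
Differentiating once more, using $\tfrac{d}{dt}\omega_{\phi_{t}}^{n}=n\,dd^{c}\dot{\phi}_{t}\wedge\omega_{\phi_{t}}^{n-1}$ and integrating by parts in the $X$-directions, one obtains four contributions: the term $\int_{X}\ddot{\phi}_{t}\,g(m_{\phi_{t}})\,MA(\phi_{t})$; the term $-\tfrac{n}{c_{1}(L)^{n}}\int_{X}g(m_{\phi_{t}})\,d\dot{\phi}_{t}\wedge d^{c}\dot{\phi}_{t}\wedge\omega_{\phi_{t}}^{n-1}$; and two "$g$-differential'' terms, namely $\int_{X}\dot{\phi}_{t}\bigl(\tfrac{d}{dt}g(m_{\phi_{t}})\bigr)MA(\phi_{t})$ and $-\tfrac{n}{c_{1}(L)^{n}}\int_{X}\dot{\phi}_{t}\,d\bigl(g(m_{\phi_{t}})\bigr)\wedge d^{c}\dot{\phi}_{t}\wedge\omega_{\phi_{t}}^{n-1}$. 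The crucial point is that the last two cancel. This uses, on one hand, that $\tfrac{d}{dt}m_{\phi_{t}}$ equals the derivative of $\dot{\phi}_{t}$ along the vector fields generating the $T$-action (which follows by commuting $\tfrac{d}{dt}$ with the torus translation in the definition of the moment map) and, on the other hand, the standard identity expressing the differential of each component of $m_{\phi_{t}}$ as the contraction of $\omega_{\phi_{t}}$ with the corresponding generating vector field. One is thus left with
\[
f''(t)=\frac{1}{c_{1}(L)^{n}}\int_{X}g(m_{\phi_{t}})\,\Bigl(\ddot{\phi}_{t}\,\omega_{\phi_{t}}^{n}-n\,d\dot{\phi}_{t}\wedge d^{c}\dot{\phi}_{t}\wedge\omega_{\phi_{t}}^{n-1}\Bigr),
\]
and the factor in parentheses is, up to a positive multiple, the restriction to the slice $X\times\{t\}$ of $(dd^{c}\Phi)^{n+1}$, where $\Phi$ is the $S^{1}$-invariant metric on $\pi^{*}L\to X\times A$ associated with the curve $\phi_{t}$. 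Since $g\ge 0$, this gives $f''\ge 0$ when $\phi_{t}$ is a smooth subgeodesic (then $(dd^{c}\Phi)^{n+1}\ge 0$), $f''\equiv 0$ when $\phi_{t}$ is a smooth geodesic (then $(dd^{c}\Phi)^{n+1}=0$), and $f''\le 0$ when $\ddot{\phi}_{t}\equiv 0$, i.e.\ when $\phi_{t}$ is affine.

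Next I would remove the smoothness assumption. For an affine curve $\phi_{t}=(1-t)\phi_{0}+t\phi_{1}$ with $\phi_{0},\phi_{1}\in\mathcal{E}^{1}(X,L)^{T}$, I would regularize by Theorem \ref{thm:(regularization)--Any}, averaging over $T$ to keep invariance, getting $\phi_{0}^{(j)}\searrow\phi_{0}$ and $\phi_{1}^{(j)}\searrow\phi_{1}$ with $\phi_{i}^{(j)}$ smooth and $T$-invariant; then $\phi_{t}^{(j)}:=(1-t)\phi_{0}^{(j)}+t\phi_{1}^{(j)}$ lies in $\mathcal{H}(X,L)^{T}$ for each $t$ and decreases in $j$ to $\phi_{t}$, so by Proposition \ref{prop:cont prop of g-energy} each concave function $t\mapsto\mathcal{E}_{g}(\phi_{t}^{(j)})$ converges pointwise to $f(t)$, whence $f$ is concave; the stated concavity of $\mathcal{E}_{g}$ on $\mathcal{E}^{1}(X,L)^{T}$ with its affine structure is a reformulation of this. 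For a bounded subgeodesic $\phi_{t}$ I would identify it with the associated $S^{1}$-invariant metric $\Phi$ on $\pi^{*}L\to X\times A$ and regularize $\Phi$ on relatively compact subannuli by decreasing sequences of smooth $S^{1}\times T$-invariant subgeodesics (Demailly-type regularization, averaged over $S^{1}\times T$, in the spirit of \cite{bbgz,b-d}); applying the smooth case, Proposition \ref{prop:cont prop of g-energy}, and then exhausting $A$, one gets convexity of $f$ on $]0,1[$. Finally, a bounded geodesic is in particular a bounded subgeodesic, so $f$ is convex on $]0,1[$, and it is continuous up to $t=0$ and $t=1$ because $\phi_{t}$ tends uniformly to $\phi_{0}$ (resp.\ $\phi_{1}$) there while $\mathcal{E}_{g}$ is continuous along monotone families; affineness then follows by approximating $\phi_{0},\phi_{1}$ from above by smooth $T$-invariant metrics, replacing $\phi_{t}$ by the bounded geodesics joining the approximants (which are $C^{1,1}$, a regularity at which the computation above remains valid since all the wedge products are Bedford--Taylor products of bounded potentials), applying the geodesic part of the smooth computation, and letting the approximation decrease; alternatively, one invokes the affineness along bounded geodesics for $g\equiv 1$ from \cite{bbgz,bern1,b-d}, whose proof carries over here since the extra $g$-dependent terms are absent exactly by the cancellation noted above.

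The hard part will be the cancellation of the two $g$-differential terms in the smooth step: carrying it out rigorously requires matching the symplectic-geometric normalization of the moment map $m_{\phi}$ (so that the differential of its components is the contraction of $\omega_{\phi}$ with the generating vector fields, and so that differentiating $m_{\phi_{t}}$ in the curve parameter reproduces the same derivative of $\dot{\phi}_{t}$) against the $dd^{c}$-normalization used throughout for $MA$. A secondary technical point is to check that the Demailly-type regularizations can be taken $T$-invariant — handled by averaging over the compact torus — and that the energy identity is legitimate at the $C^{1,1}$ regularity available for bounded geodesics.
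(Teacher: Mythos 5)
Your smooth-case computation and your treatment of the first and third bullets run essentially parallel to the paper: the identity you arrive at for $f''(t)$ is precisely the paper's formula \ref{eq:ddc of energy}, namely that $dd^{c}$ of the energy along the family equals the fiber integral of $g(m_{\Phi})\,(dd^{c}\Phi)^{n+1}$ (the paper obtains it from Berndtsson's appendix for $g(v)=e^{av}$ plus a Fourier-transform reduction, and your direct derivation with the cancellation of the two $g$-differential terms is a legitimate alternative, modulo the normalization bookkeeping you yourself flag); the concavity along affine curves and the convexity along subgeodesics are then deduced, exactly as in the paper, by decreasing $T$-invariant regularization combined with the continuity of $\mathcal{E}_{g}$ along decreasing sequences (Proposition \ref{prop:cont prop of g-energy}).

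The genuine gap is in the second bullet, the affineness along bounded geodesics, which is the part the soliton-uniqueness application actually needs. Your route via $C^{1,1}$ geodesics joining smooth approximants of the endpoints is not available here: in this paper $L$ is only semi-positive and big (typically the pull-back of $-K_{Y}$ under a resolution of a singular Fano variety), and no $C^{1,1}$, or even $C^{1}$, regularity for such geodesics is known -- indeed the introduction stresses that this lack of regularity is exactly what forces the pluripotential theory of $MA_{g}$; moreover, even granting $C^{1,1}$, your pointwise computation differentiates $g\circ m_{\phi_{t}}$ and is not justified at that regularity without extra argument. Your alternative, that the $g\equiv1$ affineness proof ``carries over since the extra terms cancel,'' is not a proof at the bounded regularity of a geodesic: there is no pointwise moment-map computation in which terms could cancel. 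What is needed is the identity $dd^{c}_{\tau}F=\int_{X}MA_{g}(\Phi)$ for merely bounded $\Phi$, together with $MA_{g}(\Phi)\leq(\sup_{P}g)\,MA(\Phi)=0$ in the interior, and establishing the former is exactly where the paper works: it extends the restriction of $\Phi$ to a subannulus to a $T\times S^{1}$-invariant positively curved metric on $\pi^{*}L\otimes\mathcal{O}(m)\rightarrow X\times\P^{1}$ by a max/barrier construction, regularizes this extension by a decreasing sequence of smooth metrics, and then invokes the weak continuity of $MA_{g}$ along decreasing sequences (Theorem \ref{thm:g-ma is well-def and cont}) on the compact manifold $X\times\P^{1}$ to pass the vanishing of $dd^{c}F_{j}$-integrals to the limit; since that continuity theorem is a global statement on a compact manifold, some compactification of $X\times A$ of this kind is unavoidable. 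Without this step, or a substitute for it, your argument for affineness does not close.
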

\begin{proof}
A direct calculation reveals that, if the corresponding metric $\Phi$
on $\pi^{*}L$ is smooth, then the function $F(\tau):=\mathcal{E}_{g}(\Phi(\cdot,\tau))$
on $A$ satisfies 
\begin{equation}
dd^{c}F=\int_{X}MA(\Phi)g(m_{\Phi(\cdot,\tau)}),\label{eq:ddc of energy}
\end{equation}
 where $\int_{X}\cdot$ denotes the push-forward to $X$ (see the
appendix in \cite{bern1} for the special case when $g$ is a function
on $\R$ of the form $g(v)=e^{av}$ for $a\in\R$; the general case
reduces to this special case by expressing $g$ in terms of its Fourier
transform). Note that the integrand may be written as the $g-$Monge-Ampère
measure $MA_{g}(\Phi)$ on $X\times A,$ defined with respect to the
induced $T\times S^{1}-$action on $(X\times A,\pi^{*}L)$, identifying
$g$ with a function on $\R^{m+1}.$ Let now $U$ be a $S^{1}-$invariant
open set whose closure is contained in the interior of $A.$ Then
it follows from a variant of Theorem \ref{thm:(regularization)--Any}
(or the extension argument in the claim below) that $\phi_{t}$ can
be written as a decreasing sequence $\phi_{t}^{(j)}$ of smooth subgeodesics
defined for $\tau\in U.$ By Prop \ref{prop:cont prop of g-energy}
$\mathcal{E}_{g}(\phi_{t}^{(j)})\rightarrow\mathcal{E}_{g}(\phi_{t}),$
as $j\rightarrow\infty$ and since $\mathcal{E}_{g}(\phi_{t}^{(j)})$
is convex wrt $t$ (by formula \ref{eq:ddc of energy}) so is $\mathcal{E}_{g}(\phi_{t}),$
which proves the first point. 

To prove the second point we will use the following claim which furnishes
approximations with good extension properties: \emph{the restriction
$\Phi_{|U}$ of the given bounded $T\times S^{1}-$invariant metric
on $\pi^{*}L\rightarrow X\times A$ defining the geodesic $\phi_{t}$
extends to a metric $T\times S^{1}-$invariant metric $\tilde{\Phi}$
with positive curvature current on the line bundle $L_{m}:=\pi^{*}L\times\mathcal{O}(m)$
over $X\times\P^{1},$ for some positive integer $m.$ }Accepting
this for the moment we can, by regularization, take smooth $\tilde{\Phi}_{j}$
such metrics on $L_{m}$ decreasing to $\tilde{\Phi}.$ In particular,
by Theorem \ref{thm:g-ma is well-def and cont}, 
\begin{equation}
\int_{X\times\P^{1}}MA_{g}(\tilde{\Phi}_{j})u\rightarrow\int_{X\times\P^{1}}MA_{g}(\tilde{\Phi})u\label{eq:proof of cont of energy along geod}
\end{equation}
 for any smooth function $u$ on $X\times\P^{1}.$ Now, setting $F_{j}(\tau):=\mathcal{E}_{g}(\Phi_{j}(\cdot,\tau))$
we have, as before, that $F_{j}(\tau)\rightarrow F(\tau)$ for any
fixed $\tau$ and since $dd^{c}F_{j}\geq0$ this shows that $dd^{c}F(\tau)\geq0,$
i.e. $f(t)$ is convex on $]0,1[$ as desired. Moreover, since $\Phi$
defines a geodesic we have $MA(\Phi)=0$ in the interior of $X\times A$
and in particular $MA(\tilde{\Phi})=0$ on $X\times U.$ Hence, applying
the convergence result in formula \ref{eq:proof of cont of energy along geod}
to a function $u$ which is the pull-back to $X\times U$ of a smooth
function compactly supported in $U$ reveals that $\int_{A}udd^{c}F_{j}\rightarrow\int_{X\times U}MA_{g}(\tilde{\Phi})u,$
which thus vanishes (using Cor \ref{cor:comp of g ma and ma}). But
then it follows that $f(t)$ is affine on $]0,1[,$ as desired. Finally,
the continuity up to the boundary on $[0,1]$ follows from the corresponding
continuity property of the curve $\phi_{t}$ together with Prop \ref{prop:cont prop of g-energy}.
This concludes the proof of the second point up to the claimed extensionmatoperty
of $\Phi_{|U}$, which is obtained by setting 
\[
\tilde{\Phi}:=\max\{\Psi+B\rho+1/B,\Phi_{|U}\}
\]
 for $B$ sufficiently large, $\Psi$ is the metric defined by formula
\ref{eq:barrier} and $\rho$ is the psh function $\rho:=\max\{\log|\tau|,(\log|\tau|-1\})$
on $X\times A,$ which has the property that $\rho\leq0$ and $\rho<1$
in the interior. Then $\tilde{\Phi}$ is a positively curved extension
of $\Phi$ from $U$ coinciding with $\Psi+B\rho+1/B$ close to the
boundary of $X\times A$ and since $2\log|\tau|$ extends to define
a metric on $\mathcal{O}(1)\rightarrow\P^{1}$ it follows from the
definition \ref{eq:barrier} of $\Psi$ that $\Psi$ extends to a
metric on $L_{m}$ for $m$ a sufficiently large integer (depending
on $A),$ which concludes the proof of the claim.

Finally, to prove the last point it is, by approximation, enough to
consider the case when $\phi_{t}$ is an affine curve in $\mathcal{H}(X,L)^{T}.$
But then the concavity follows immediately from the fact that $MA(\Phi)\leq0$
for $\Phi$ corresponding to an affine curve in $\mathcal{H}(X,L)^{T}$
(and using that $g$ is non-negative). 
\end{proof}

\subsection{\label{sub:Monge-Amp=0000E8re-equations-and}Monge-Ampère equations
and variational principles}

Recall that the pluricomplex energy of a measure $\mu,$ that we shall
denote by $E(\mu),$ may be defined by 
\[
E(\mu):=\sup_{\phi\in PSH(X,L)}-J_{\mu}(\phi)
\]
 where $J_{\mu}$is the following $\R-$invariant functional 
\begin{equation}
J_{\mu}(\phi)=-\mathcal{E}(\phi)+\mathcal{L}_{\mu}(\phi),\,\,\,\,\,\,\,\mathcal{L}_{\mu}(\phi):=\int_{X}(\phi-\phi_{0})\mu\label{eq:def of J and L mu}
\end{equation}
(see \cite{bbgz}). Similarly, in the presence of a torus $T$ we
define, for any given function $g$ on the corresponding moment polytope
$P,$  the $g-$modified energy $E_{g}(\mu)$ be replacing the functional
$\mathcal{E}$ with $\mathcal{E}_{g}$ and the space $PSH(X,L)$ with
its $T-$invariant subspace $PSH(X,L)^{T}.$
\begin{thm}
\label{thm:existence uni ma eq energy}Let $\mu$ be a probability
measure on $X$ which is $T-$invariant and assume that $gd\nu$ is
a probability measure on the moment polytope $P$ such that $1/C\leq g\leq C$
for some positive constant $C.$ Then there exists a finite energy
$T-$invariant solution $\phi$ to the equation 
\begin{equation}
MA_{g}(\phi)=\mu\label{eq:g monge am eq in thm energy}
\end{equation}
iff $\mu$ has finite (pluricomplex) energy. Moreover, the solution
is unique modulo constants and can be characterized as the unique
(mod $\R)$ minimizer of the functional $J_{\mu,g}.$\end{thm}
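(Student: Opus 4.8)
The plan is to follow the variational strategy of \cite{bbgz}, now carried out in the $T$-invariant category using the modified energy $\mathcal{E}_g$ and the results established above. The first observation is that $J_{\mu,g}:=-\mathcal{E}_g+\mathcal{L}_\mu$ is, by Proposition \ref{prop:cont prop of g-energy}, upper semi-continuous for $\mathcal{E}_g$ (hence lower semi-continuous as a whole) on $PSH(X,L)^T$ with respect to the $L^1$-topology, and it is $\R$-invariant since $gd\nu$ is a probability measure. Moreover the coercivity estimate $\mathcal{E}_g(\phi)\geq C\mathcal{E}(\phi)$ (under $\sup_X(\phi-\phi_0)=0$), combined with the hypothesis $1/C\leq g\leq C$, shows that $J_{\mu,g}$ is comparable to the unmodified $J_\mu$ up to bounded multiplicative/additive errors; hence, by the compactness of the set $\{\phi\in PSH(X,L)^T:\sup_X(\phi-\phi_0)=0\}$ in $L^1$ together with the standard fact that finite energy of $\mu$ forces $\mathcal{L}_\mu$ to be finite and continuous along such sequences, a minimizer $\phi$ of $J_{\mu,g}$ exists precisely when $E(\mu)<\infty$. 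The normalization can be fixed by $\sup_X(\phi-\phi_0)=0$, and finiteness of $E(\mu)$ is equivalent to $\mathcal{E}_g(\phi)>-\infty$ at the minimizer by the same comparison.

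Next I would show that the minimizer solves the Euler--Lagrange equation $MA_g(\phi)=\mu$. This is where Proposition \ref{prop:diff of The-composed-function } does the work: since $\mathcal{E}_g$ is increasing and $\mathcal{E}_g(P\psi)=\mathcal{E}_g(\psi)$ would fail in general, one argues instead — as in \cite{bbgz} — that the minimizer of $J_{\mu,g}$ coincides with the minimizer of $\phi\mapsto -\mathcal{E}_g(P\phi)+\mathcal{L}_\mu(\phi)$ over $C(X,L)^T$ (using that $\mathcal{L}_\mu$ is monotone since $\mu\geq0$, so replacing $\phi$ by $P\phi$ only decreases the functional), and then perturb: for a continuous $T$-invariant $u$, differentiate $t\mapsto -\mathcal{E}_g(P(\phi+tu))+\mathcal{L}_\mu(\phi+tu)$ at $t=0$. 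By the last formula in Proposition \ref{prop:diff of The-composed-function }, the derivative equals $-\int MA_g(P\phi)u+\int u\,\mu$, and stationarity for all such $u$ gives $MA_g(P\phi)=\mu$; since $P\phi=\phi$ $\mu$-a.e. one concludes $MA_g(\phi)=\mu$ with $\phi$ of finite energy. The $T$-invariance of the solution is automatic because the whole variational problem is set up in the $T$-invariant space, and one should note that $u$ ranging over $T$-invariant test functions suffices because $\mu$ is $T$-invariant.

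For uniqueness modulo constants I would use the comparison principle of Proposition \ref{prop:(the-comparison-principle).} together with convexity of the energy along bounded geodesics (Proposition \ref{prop:energy along geodesics}): given two finite-energy $T$-invariant solutions $\phi_0,\phi_1$, connect them by the bounded $T$-invariant geodesic $\phi_t$; then $t\mapsto\mathcal{E}_g(\phi_t)$ is affine while $t\mapsto\mathcal{L}_\mu(\phi_t)$ is convex (it is affine along affine curves and the geodesic lies below the segment), so $J_{\mu,g}(\phi_t)$ is convex and attains its minimum at both endpoints, forcing it to be affine; a second-derivative computation of $\mathcal{E}_g$ along the geodesic — using that $MA(\Phi)=0$ in the interior and the lower bound $g\geq 1/C$ — then shows $\Phi$ is in fact a trivial geodesic, i.e. $\phi_1=\phi_0+c$. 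This argument is essentially the one in \cite{bbgz,bbegz}, and the only genuinely new input is that all the functional-analytic ingredients ($MA_g$ continuity, concavity/convexity of $\mathcal{E}_g$, the projection differentiability) have been re-established in the $g$-modified torus setting by Theorem \ref{thm:g-ma is well-def and cont} and Propositions \ref{prop:cont prop of g-energy}, \ref{prop:diff of The-composed-function }, \ref{prop:energy along geodesics}.

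The main obstacle I anticipate is not the abstract variational machinery but the regularity/continuity addendum: showing that a solution $\phi$ is continuous when $\mu=f\,dV$ with $f\in L^p$, $p>1$. Here one cannot simply quote Ko\l odziej's estimate because the equation is $MA(\phi)g(m_\phi)=f\,dV$ rather than $MA(\phi)=f\,dV$; however, since $1/C\le g(m_\phi)\le C$ pointwise (Corollary \ref{cor:comp of g ma and ma}), the solution $\phi$ satisfies $MA(\phi)=(f/g(m_\phi))\,dV$ with the right-hand density still in $L^p$, so the standard $L^\infty$/continuity estimates of \cite{ko1,g-z,begz,e-g-z} apply directly; the only subtlety is that $g(m_\phi)$ depends on $\phi$, but this is harmless because one treats $g(m_\phi)$ as a given bounded measurable function once $\phi$ is fixed. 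I would therefore close by invoking this a posteriori bootstrap to upgrade the finite-energy solution to a continuous one, completing the proof.
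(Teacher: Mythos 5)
Your existence argument follows the paper's route almost verbatim: coercivity of $J_{\mu,g}$ inherited from $J_{\mu}$ via $1/C\leq g\leq C$, $L^{1}$-compactness of the normalized sublevel sets, upper semi-continuity of $\mathcal{E}_{g}$, continuity of $\mathcal{L}_{\mu}$ on $\{-\mathcal{E}\leq C\}$, and then the Euler--Lagrange step through the differentiability of $\mathcal{E}_{g}\circ P$ (Proposition \ref{prop:diff of The-composed-function }); your remark that $T$-invariant test functions $u$ suffice because $\mu$ is $T$-invariant is a correct (and worth making explicit) refinement. The $L^{p}$ bootstrap at the end, treating $g(m_{\phi})$ as a fixed bounded density once $\phi$ is fixed, is also exactly what the paper does via Corollary \ref{cor:comp of g ma and ma} and \cite{e-g-z}.

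The uniqueness paragraph, however, has a genuine gap, and it is precisely the point where the paper takes a different path. First, you connect the two solutions by a \emph{bounded} geodesic, but the solutions are only assumed to be of finite energy: for a general finite-energy measure $\mu$ they need not be locally bounded, and the bounded-geodesic construction (and the second bullet of Proposition \ref{prop:energy along geodesics}) is only available for locally bounded endpoints. Second, and more seriously, your concluding step does not close: along a geodesic $\mathcal{E}_{g}(\phi_{t})$ is \emph{affine}, so "a second-derivative computation of $\mathcal{E}_{g}$ along the geodesic using $MA(\Phi)=0$ and $g\geq1/C$" yields $0\geq0$ and no information; the affinity of $J_{\mu,g}(\phi_{t})$ only forces $\mathcal{L}_{\mu}(\phi_{t})$ to be affine, and for a general $\mu$ there is no rigidity statement turning that into $\phi_{1}=\phi_{0}+c$ (the Berndtsson-type rigidity used elsewhere in the paper concerns $\mathcal{L}(\phi)=-\log\int e^{-\phi}$, not $\mathcal{L}_{\mu}$). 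The paper avoids both problems by connecting the solutions with the \emph{affine} segment $\psi_{t}=(1-t)\psi_{0}+t\psi_{1}$, normalized by $\mathcal{L}_{\mu}(\psi)=0$: there $MA(\Psi)\leq0$ is nontrivial, giving the key domination $\frac{d^{2}}{dt^{2}}(-\mathcal{E}_{g})\geq a\,\frac{d^{2}}{dt^{2}}(-\mathcal{E})\geq0$ with $a=\inf_{P}g>0$; since both endpoints minimize, $-\mathcal{E}_{g}(\psi_{t})$ is affine, hence so is $-\mathcal{E}(\psi_{t})$, the endpoint derivatives vanish, and one gets $I(\psi_{0},\psi_{1})=\int(\psi_{0}-\psi_{1})(MA(\psi_{1})-MA(\psi_{0}))=0$, whence $\psi_{0}-\psi_{1}$ is constant by the Blocki-type argument. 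You should replace your geodesic argument by this affine-segment one (or supply the missing finite-energy geodesic theory plus a rigidity statement for $\mathcal{L}_{\mu}$, neither of which is established in the paper).
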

\begin{proof}
\emph{Existence:} Let $PSH(X,L)_{0}$ be the subspace of all metrics
$\phi$ such that $\sup_{X}(\phi-\phi_{0})=0,$ which is compact in
the $L^{1}-$topology. In particular, $-\mathcal{E}(\phi)\geq0$ on
$PSH(X,L)_{0}.$ Let us first assume that $\mu$ has finite energy,
i.e. $J_{\mu}$ is bounded from below. Then, as shown in \cite{bbgz},
the functional $J_{\mu}$ is even coercive in the sense
\[
J_{\mu}(\phi)\geq A(-\mathcal{E}(\phi))-B
\]
 for some positive constants $A$ and $B.$ In particular, by the
assumption on $g$ the corresponding $g-$modified functional $J_{\mu,g}$
is also coercive. Hence, taking a sequence $\phi_{j}$ in $PSH(X,L)_{0}$
such that $J_{\mu,g}(\phi_{j})\rightarrow\inf J_{\mu,g}$ the coercivity
implies that $-\mathcal{E}(\phi)(\phi_{j})\leq C$ for some constant
$C.$ Accordingly, after perhaps passing to a subsequence, $\phi_{j}$
converges in $L^{1}$ to a metric $\phi_{*}$ in $\{-\mathcal{E}\leq C\}$
(using that $\mathcal{E}$ is usc, i.e. the previous sublevel sets
are compact). Next, as shown in \cite{bbgz} $L_{\mu}$ is finite
and even continuous on the compact sublevel sets $\{-\mathcal{E}\leq C\}$
and since $\mathcal{E}_{g}(\phi)$ is also upper semi-continuous (by
Prop \ref{prop:cont prop of g-energy}) it follows that 
\[
J_{\mu,g}(\phi_{*})=\lim_{j\rightarrow\infty}J_{\mu,g}(\phi_{j}):=\inf J_{\mu,g},
\]
 i.e. $\phi_{*}$ is a minimizer of $J_{\mu,g}.$ Finally, to show
that $\phi_{*}$ satisfies the equation \ref{eq:g monge am eq in thm energy},
we set $f(t):=-\mathcal{E}_{g}(P(\phi+tu))+\mathcal{L}_{\mu}(\phi)$
for a given smooth function $u$ on $X,$ which defines a function
on $\R$ with an absolute minimum at $t=0$ (using that $P$ is decreasing)
and which is differentiable (by Prop \ref{prop:diff of The-composed-function }).
In particular, the derivative of $f$ vanishes at $t=0$ which by
Prop \ref{prop:diff of The-composed-function } implies (since $u$
was arbitrary) that the equation \ref{eq:g monge am eq in thm energy}
holds, as desired.

Conversely, if $\phi$ is a finite energy solution of the equation
\ref{eq:g monge am eq in thm energy}, then it follows from the affine
concavity of $\mathcal{E}_{g}$ (Prop \ref{prop:energy along geodesics})
that $\phi$ is a global minimizer of the convex functional $J_{\mu,g}.$ 

\emph{Uniqueness:} Let now $\psi_{0}$ and $\psi_{1}$ be two finite
energy solutions to the equation \ref{eq:g monge am eq in thm energy}
in the convex space $V$ of all finite energy metrics $\psi$ normalized
so that $L_{\mu}(\psi)=0.$ By the affine concavity of $\mathcal{E}_{g}$
on $V$ this means that $\psi_{i}$ are minimizers of the functional
$-\mathcal{E}_{g}$ on $V.$ Let now $\psi_{t}$ be the affine curve
connecting $\psi_{0}$ and $\psi_{1}$ and set $f_{g}(t):=-\mathcal{E}_{g}(\phi_{t})$
and $f(t):=-\mathcal{E}(\phi_{t}).$ We claim that there exists a
positive constant $a$ such that 
\begin{equation}
\frac{d^{2}}{d^{2}t}f_{g}\geq a\frac{d^{2}}{d^{2}t}f\geq0\label{eq:claimed inequality proof of uniq of finite energy}
\end{equation}
 weakly on $[0,1].$ Accepting this for the moment it follows that
$\frac{d^{2}}{d^{2}t}f_{g}=0$ on $]0,1[$ (since $\frac{d}{dt}f_{g}(0)=\frac{d}{dt}f_{g}(1)=0)$
and hence $f(t)$ is also affine on $[0,1].$ But this means that
$\frac{d}{dt}f(0)=\frac{d}{dt}f(1)=0,$ i.e. $\int(\psi_{0}-\psi_{1})MA(\psi_{0})=\int(\psi_{0}-\psi_{1})MA(\psi_{1})=0.$
In particular, $I(\phi_{0},\phi_{1}):=\int(\psi_{0}-\psi_{1})(MA(\psi_{1})-MA(\psi_{0}))=0.$
But as is well-known this implies by an argument essentially due to
Blocki (see \cite{bbgz,begz} and references therein) that $\psi_{0}-\psi_{1}$
is constant and thus zero, by our normalization assumption. Finally,
the claimed inequality \ref{eq:claimed inequality proof of uniq of finite energy}
follows from formula  \ref{eq:ddc of energy} which gives that for
$\psi_{i}$ smooth 
\[
dd^{c}f_{g}=-\int_{X}MA_{g}(\Psi),\,\,\,\, dd^{c}f=-\int_{X}MA(\Psi)
\]
Moreover, for any affine path $\psi_{t}$ we have (by a direct calculation)
that $MA(\Psi)\leq0$ and hence, by definition, $-MA_{g}(\Psi)\geq-aMA(\Psi)$
proving the inequality \ref{eq:claimed inequality proof of uniq of finite energy}
in the smooth case. The general case then follows by regularization
using that $f_{g}^{(j)}\rightarrow f_{g}$ etc as in the proof of
Prop \ref{prop:energy along geodesics}.
\end{proof}

\subsubsection{Completion of the proof of Theorem \ref{thm:existence uni ma eq energy}}

First observe that since, by assumption $g\geq1/C>0,$ Cor \ref{cor:comp of g ma and ma}
implies that any solution $\phi$ satisfies $MA(\phi)\leq C\mu.$
In particular, if $\mu$ has a density in $L_{loc}^{p}$ for some
$p>1$ then so has $MA(\phi)$ and thus it follows from the generalizations
of Kolodziej's results in \cite{e-g-z} that $\phi$ is continuous.
More generally, this is the case as long as $\mu\leq A\mbox{Cap}^{p}$
for some $p>1,$ where $\mbox{Cap }$denotes the set functional defined
by the global Bedford-Taylor capacity on $X.$ Finally, to prove the
existence of a solution $\phi$ for any $T-$invariant pluripolar
probability measure $\mu$ one uses, just as in \cite{bbgz}, a decomposition
argument to reduce the problem (using the continuity properties for
$MA_{g}$ established in section above) to the case when $\mu$ has
finite energy, or more precisely to the special case when $\mu$ is
in the weakly compact subset $\mathcal{A}$ of all probability measures
satisfying $\mu\leq A\mbox{Cap}^{p}$ for some fixed positive constants
$A$ and $p$ such that $p>1.$

\section{\label{sec:K=0000E4hler-Ricci-solitons}Kähler-Ricci solitons}

\subsection{\label{sub:Setup krs}Setup}

Our basic complex geometric data in this section will be a pair $(X,V)$
consisting of a Fano variety $X$ (i.e. $X$ is a projective normal
variety with log terminal singularities with the property that the
$\Q-$line bundle $-K_{X}$ is ample) and $V$ a complex holomorphic
vector field on the regular locus $X_{reg}$ (see \cite{bbegz} and
references therein for further background on Fano varieties). We will
denote by $\mbox{Aut}(X,V)$ the group of all biholomorphisms of $X$
which commute with the flow of $V$ (or equivalently: all automorphisms
$F$ such that $F_{*}V=V)$ and by $\mbox{Aut}(X,V)_{0}$ the connected
component of the identity.

Any given metric $\phi\in PSH(X,-K_{X})$ induces a measure $\mu_{\phi}$
on $X,$ which may be defined as follows: if $U$ is a coordinate
chart in $X_{reg}$ with local holomorphic coordinates $z_{1},...,z_{n}$
we let $\phi_{U}$ be the representation of $\phi$ with respect to
the local trivialization of $-K_{X}$ which is dual to $dz_{1}\wedge\cdots\wedge dz_{n}.$
Then we define the restriction of $\mu_{\phi}$ to $U$ as 
\begin{equation}
\mu_{\phi}=e^{-\phi_{U}}i^{n}dz_{1}\wedge d\bar{z}_{1}\cdots\wedge dz_{n}\wedge d\bar{z}_{n}\label{eq:def of mu phi}
\end{equation}
This expression is readily verified to be independent of the local
coordinates $z$ and hence defines a measure $\mu_{\phi}$ on $X_{reg}$
which we then extend by zero to all of $X.$ The Fano variety $X$
has\emph{ log terminal (klt) singularities} precisely when the total
mass of $\mu_{\phi}$ is finite for some $\phi\in PSH(X,L)$ (or equivalently:
the finiteness holds for any locally bounded metric $\phi).$ More
precisely, if $X$ has log terminal singularities then the the pull-back
of $\mu_{\phi}$ to any given resolution $X'$ of $X$ has has local
densities in $L_{loc}^{p}$ for some $p>1$ (see \cite{bbegz} for
the equivalence with the usual algebraic definition involving discrepancies
on $X').$

We will say that a Kähler metric $\omega$ defined on the regular
part $X_{reg}$ of a normal Fano variety $X$ is a \emph{(singular)
Kähler-Ricci soliton} on $X$ if the metric $\omega$ solves the equation
\begin{equation}
\mbox{Ric \ensuremath{\omega=\omega}}+L_{V}\omega,\label{eq:k-r soliton eq for metric omega}
\end{equation}
 for some complex holomorphic vector field $V$ on $X_{reg}$ (where
$L_{V}$ denotes the Lie derivative along $V$) and moreover the volume
of the metric $\omega$ on $X_{reg}$ is maximal in the sense that
it coincides with the global algebraic top intersection number $c_{1}(-K_{X})^{n}$
(then the pair $(\omega,V)$ will also be called a Kähler-Ricci soliton).
In the case when $X$ is smooth it is well-known that $(\omega,V)$
is a Kähler-Ricci soliton precisely when $\omega=\omega_{\phi}$ for
a unique smooth metric $\phi$ in $\mathcal{H}(-K_{X}),$ which is
invariant under $\mbox{Im}V,$ satisfying the equation 
\begin{equation}
MA_{g_{V}}(\phi)=\mu_{\phi}\label{eq:krs eq for phi}
\end{equation}
where $g_{V}$ is the normalized exponential function $g_{V}$ on
the Lie algebra of the real torus $T$ acting on $(X,-K_{X})$ generated
by $V,$ i.e. $g_{V}(m_{\phi})=e^{f_{\phi}}/C,$ where $f_{\phi}$
is the Hamiltonian function on $X$ determined by the canonical lift
of $V$ to $-K_{X}$ (compare section \ref{sub:Interlude-I:-the vector}).
Indeed, denoting by $\psi_{\omega}$ the metric on $-K_{X}$ determined
by the volume form of $\omega$ we have, by definition, that $\mbox{Ric \ensuremath{\omega_{\phi}}}$is
the curvature form of $\psi_{\omega}$ and since $L_{V}\omega_{\phi}=dd^{c}f_{\phi}$
the equation \ref{eq:k-r soliton eq for metric omega} holds iff $\psi_{\omega}-(\phi+h_{\omega})$
is constant, i.e. iff the equation \ref{eq:krs eq for phi} holds. 

As a final matter of notation we will say that a (singular) metric
$\phi$ in $PSH(-K_{X})^{T}$ is a \emph{weak Kähler-Ricci soliton
}(wrt $V$ generating an action of $T)$ if it has full Monge-Ampère
mass and the equation \ref{eq:krs eq for phi} holds on $X.$

\subsection{Tian-Zhu's modified functionals and Futaki invariant }

Recall that in the smooth case the Mabuchi K-energy functional $\mathcal{M}$
of a Fano manifold $X$ equipped with its standard polarization $L=-K_{X}$
is, up to an additive constant, defined by the property that its differential
on $\mathcal{H}(X,-K_{X})$ is given by

\[
d\mathcal{M}_{|\phi}=-(\mbox{Ric \ensuremath{\omega_{\phi}}}-\omega_{\phi})\wedge\omega_{\phi}^{n}
\]
In particular its critical points are Kähler-Einstein metrics on $X.$
Similarly, in the presence of a vector field $V$ the modified K-energy
functional of Tian-Zhu \cite{t-z1b}, that we shall denote by $\mathcal{M}_{V},$
is obtained by replacing $\mbox{Ric}\ensuremath{\omega_{\phi}}$ with
the modified Ricci curvature $\mbox{Ric}\ensuremath{\omega_{\phi}}-L_{V}\omega_{\phi}$
(compare \cite{t-z1b}). In order to deal with singular metrics varieties
we will modify the singular setup for Kähler-Einstein metrics introduced
in \cite{bbegz}, by defining the\emph{ modified Mabuchi functional}
$\mathcal{M}_{V}(\phi)$ on the space $\mathcal{E}(-K_{X})^{T}$ by
the formula 
\[
\mathcal{M}_{V}(\phi):=F_{V}(MA(\phi)),
\]
 where $F_{V}$ is the \emph{modified free energy functional} on the
space of all $T-$invariant probability measure with finite energy,
defined by 
\[
F_{V}(\mu):=-E_{V}(\mu)+H\ensuremath{(\mu,\mu_{\phi_{0}}),}
\]
where $E_{V}(\mu):=E_{g_{V}}(\mu)$ is the $g-$energy defined in
the beginning of section \ref{sub:Monge-Amp=0000E8re-equations-and}
and $H(\mu,\mu_{\phi_{0}})$ is the \emph{entropy of $\mu$ relative
to $\mu_{\phi_{0}},$} i.e. 
\[
H(\mu,\mu')=\int_{X}\log(d\mu/d\mu')d\mu
\]
 if $\mu$ is absolutely continuous wrt $\mu'$ and $H(\mu,\mu')=\infty$
otherwise. Next, we define the \emph{modified Ding functional }$\mathcal{D}_{V}$
by 

\[
\mathcal{D}_{V}(\phi)=-\mathcal{E}_{V}(\phi)+\mathcal{L}(\phi),\,\,\,\,\mathcal{L}(\phi)=-\log\int_{X}e^{-\phi}
\]
(compare \cite{z,t-z1} for the smooth case). We will have great use
for the convexity properties of the functional $\mathcal{L}(\phi),$
originating in the work of Berndtsson \cite{bern1}, which combined
with Prop \ref{prop:energy along geodesics} gives the following
\begin{prop}
\label{prop:conv of mod ding}Let $\phi_{t}$ be a bounded geodesic.
Then the function $t\mapsto\mathcal{L}(\phi_{t})$ is convex on $]0,1[.$
Moreover, if the function is affine, then there exists a family of
automorphisms $F_{t}$ in $\mbox{Aut}(X)_{0}$ such that $F_{t}^{*}\phi_{t}=\phi_{t}.$
As a consequence, by Prop \ref{prop:energy along geodesics}, the
function $t\mapsto\mathcal{D}_{V}(\phi_{t})$ is also convex with
the same necessary conditions to be affine.
\end{prop}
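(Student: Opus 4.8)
The plan is to obtain the convexity of $t\mapsto\mathcal{L}(\phi_{t})$ from Berndtsson's plurisubharmonicity theorem for anticanonical fibre integrals $\int e^{-\Phi}$ (\cite{bern1}, extended to the klt case in \cite{bbegz}), and the rigidity clause from its equality case. First I would lift the bounded geodesic $(\phi_{t})_{t\in[0,1]}$, as in the envelope construction recalled above, to the bounded $S^{1}$-invariant metric $\Phi$ on $\pi^{*}L\rightarrow X\times A$ with positive curvature current and $(dd^{c}\Phi)^{n+1}=0$ in the interior. Since $L=-K_{X}$ and $X$ has log terminal singularities, each $\Phi(\cdot,\tau)$ is locally bounded, hence $\int_{X}e^{-\Phi(\cdot,\tau)}<\infty$, so the function $\tau\mapsto-\log\int_{X}e^{-\Phi(\cdot,\tau)}$ --- which for $\tau=e^{t}$ is precisely $t\mapsto\mathcal{L}(\phi_{t})$ --- is well defined and finite on the annulus $A$. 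Berndtsson's theorem says this function is plurisubharmonic on $A$; being $S^{1}$-invariant it depends only on $t=\log|\tau|$, and a plurisubharmonic function of $\tau$ depending only on $\log|\tau|$ is convex in $t$. This gives the convexity on $]0,1[$, with continuity up to the boundary coming from the uniform convergence of $\phi_{t}$ to $\phi_{0}$ and $\phi_{1}$ as $t\to0,1$.

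For the rigidity clause, suppose $t\mapsto\mathcal{L}(\phi_{t})$ is affine, i.e. the above plurisubharmonic function is harmonic on the interior of $A$. Here I would invoke the equality case of Berndtsson's theorem, again in the form valid on a resolution of a klt Fano variety (cf. \cite{bbegz}): harmonicity forces the family $\Phi(\cdot,\tau)$ to be trivialised along a holomorphic flow, i.e. there is a holomorphic one-parameter family $F_{t}\in\mbox{Aut}(X)_{0}$ with $F_{t}^{*}\phi_{t}$ independent of $t$ modulo constants, which is the automorphism family asserted in the statement; that the $F_{t}$ may be taken in the identity component and parametrised over the real locus $\{\tau=e^{t}\}$ is automatic from the $S^{1}$-invariance of $\Phi$.

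The statement for $\mathcal{D}_{V}$ is then immediate: by the second item of Proposition~\ref{prop:energy along geodesics} the function $t\mapsto\mathcal{E}_{V}(\phi_{t})=\mathcal{E}_{g_{V}}(\phi_{t})$ is affine along the bounded geodesic, so $t\mapsto\mathcal{D}_{V}(\phi_{t})=-\mathcal{E}_{V}(\phi_{t})+\mathcal{L}(\phi_{t})$ is the sum of an affine and a convex function, hence convex; and it is affine exactly when $t\mapsto\mathcal{L}(\phi_{t})$ is, i.e. exactly when the automorphisms $F_{t}$ exist.

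I expect the main obstacle to be transferring Berndtsson's curvature computation, and especially its equality case, to the singular variety $X$: it is naturally formulated for an honest positively curved metric on $-K_{X}$, with $\int e^{-\Phi}$ playing the role of an $L^{2}$-metric on the trivial direct image, whereas on a log resolution $\pi:X'\rightarrow X$ one has $-K_{X'}=\pi^{*}(-K_{X})-\sum_{i}a_{i}E_{i}$ with discrepancies $a_{i}>-1$. One therefore has to run the positivity (and equality) argument for the corresponding log pair and verify that the twisting weights stay in the admissible klt range uniformly along the geodesic --- this is exactly where the log terminal hypothesis and the boundedness of $\phi_{t}$ are used, and the point at which I would rely on the analysis already carried out in \cite{bbegz}.
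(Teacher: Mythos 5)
Your proposal is correct and follows essentially the same route as the paper, which derives the convexity of $t\mapsto\mathcal{L}(\phi_{t})$ and its equality case from Berndtsson's theorem in \cite{bern1} (extended to the log terminal setting in \cite{bbegz}) and then combines it with the affineness of $\mathcal{E}_{g_{V}}$ along bounded geodesics from Prop \ref{prop:energy along geodesics} to handle $\mathcal{D}_{V}$. Your closing remark about running the argument on a log resolution with klt discrepancies is precisely the point the paper delegates to \cite{bbegz}.
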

Next, using the thermodynamic formalism in \cite{be2} we also have
the following
\begin{prop}
\label{prop:relation between mod ding and mab}The functional $\mathcal{D}_{V}$
is bounded iff the functional $\mathcal{M}_{V}$ is bounded and \emph{their
infima coincide.} Moreover, in general, $\mathcal{M}_{V}(\phi)\geq\mathcal{D}_{V}(\phi)$
with equality iff $\phi$ is a Kähler-Ricci soliton.\end{prop}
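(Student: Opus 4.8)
The plan is to recognize Proposition \ref{prop:relation between mod ding and mab} as an instance of the Legendre-duality (``thermodynamic formalism'') between entropy and energy developed in \cite{be2}, adapted to the $g$-modified setting. The starting point is the elementary Gibbs variational principle: for a fixed reference measure $\mu_0:=\mu_{\phi_0}$ and any continuous metric $\phi$, one has
\[
-\log\int_X e^{-\phi}\,d\mu_0=\inf_{\mu}\left(\int_X(\phi-\phi_0)\,\mu+H(\mu,\mu_0)\right),
\]
the infimum over $T$-invariant probability measures $\mu$, with the unique minimizer being the normalized measure $e^{-(\phi-\phi_0)}\mu_0/\int e^{-(\phi-\phi_0)}\mu_0$. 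Combining this with the definitions $\mathcal{D}_V(\phi)=-\mathcal{E}_V(\phi)+\mathcal{L}(\phi)$ and $F_V(\mu)=-E_V(\mu)+H(\mu,\mu_0)$, and recalling from section \ref{sub:Monge-Amp=0000E8re-equations-and} that $E_V(\mu)=\sup_{\psi}(-\mathcal{E}_V(\psi)-\mathcal{L}_\mu(\psi))$ so that $-E_V(\mu)=\inf_\psi(\mathcal{E}_V(\psi)+\mathcal{L}_\mu(\psi))$, one obtains by interchanging the two infima that
\[
\inf_\phi \mathcal{D}_V(\phi)=\inf_\phi\inf_\mu\Bigl(-\mathcal{E}_V(\phi)+\mathcal{L}_\mu(\phi)+H(\mu,\mu_0)\Bigr)=\inf_\mu F_V(\mu).
\]
Since $\mathcal{M}_V(\phi)=F_V(MA(\phi))$ and, by Theorem \ref{thm:existence uni ma eq energy}, every $T$-invariant finite-energy measure is of the form $MA_g(\phi)$ (and in particular every such measure that can serve as a near-minimizer of $F_V$ is attained), the infimum of $\mathcal{M}_V$ over $\mathcal{E}(-K_X)^T$ coincides with the infimum of $F_V$ over finite-energy measures; chasing the $\pm\infty$ cases shows one functional is bounded below precisely when the other is, which gives the first assertion.

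For the pointwise inequality $\mathcal{M}_V(\phi)\ge\mathcal{D}_V(\phi)$, I would feed $\mu:=MA(\phi)$ into the Gibbs principle above: this yields
\[
\mathcal{L}(\phi)=-\log\int_X e^{-\phi}\le \int_X(\phi-\phi_0)\,MA(\phi)+H(MA(\phi),\mu_0),
\]
while the comparison $-E_V(MA(\phi))\le \mathcal{E}_V(\phi)+\int_X(\phi-\phi_0)MA(\phi)$ — no, more carefully: I would instead use the sup-definition of $E_V$ directly, testing with $\psi=\phi$, to get $-E_V(MA(\phi))\le \mathcal{E}_V(\phi)+\int_X(\phi-\phi_0)\,MA(\phi)$ is the wrong sign; rather $E_V(MA(\phi))\ge -\mathcal{E}_V(\phi)-\int(\phi-\phi_0)MA(\phi)$, hence $-E_V(MA(\phi))\le \mathcal{E}_V(\phi)+\int(\phi-\phi_0)MA(\phi)$. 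Combining,
\[
\mathcal{M}_V(\phi)=-E_V(MA(\phi))+H(MA(\phi),\mu_0)\ge -\mathcal{L}(\phi)+\mathcal{E}_V(\phi)+\bigl(\mathcal{L}(\phi)-\int(\phi-\phi_0)MA(\phi)\bigr)+\text{(slack)},
\]
and after cancellation the inequality $\mathcal{M}_V(\phi)\ge \mathcal{D}_V(\phi)$ drops out once the two uses of the variational inequalities are aligned on the same test object. Equality forces both inequalities to be saturated simultaneously: the Gibbs inequality is an equality exactly when $MA(\phi)$ equals its own Gibbs measure $e^{-(\phi-\phi_0)}\mu_0/Z$, i.e. (after absorbing the constant) $MA(\phi)\propto e^{-\phi}$; and the $E_V$-inequality is saturated exactly when $\phi$ realizes the sup defining $E_V(MA(\phi))$, which by the characterization in Theorem \ref{thm:existence uni ma eq energy} means $MA_{g_V}(\phi)=MA(\phi)$ up to the constant. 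Together these two conditions are precisely the weak Kähler-Ricci soliton equation $MA_{g_V}(\phi)=\mu_\phi$, i.e. that $\phi$ is a Kähler-Ricci soliton.

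The main obstacle I expect is bookkeeping of normalizing constants and the $g$-dependence: the functional $\mathcal{E}_V=\mathcal{E}_{g_V}$ has the cocycle/translation property $\mathcal{E}_V(\phi+c)=\mathcal{E}_V(\phi)+c$ only because $g_V\nu$ is normalized to a probability measure (Prop \ref{prop:cont prop of g-energy}), and the exponential $g_V(m_\phi)=e^{f_\phi}/C$ carries the constant $C$; one must check that this same $C$ is the one making $MA_{g_V}(\phi)$ a probability measure and that it cancels consistently between the soliton equation \ref{eq:krs eq for phi} and the Legendre transforms, so that ``$MA(\phi)\propto e^{-\phi}$ and $MA_{g_V}(\phi)=MA(\phi)$'' really does collapse to $MA_{g_V}(\phi)=\mu_\phi$ with the right constant. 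A secondary technical point is justifying the interchange of the two infima and the passage from metrics to measures at the level of the \emph{infima} rather than just pointwise — but this is exactly what the thermodynamic formalism of \cite{be2} provides, together with the surjectivity of $MA_{g_V}$ onto finite-energy measures from Theorem \ref{thm:existence uni ma eq energy}, so I would cite those rather than redo the duality from scratch. Everything else (continuity of $\mathcal{L}$, upper semicontinuity of $\mathcal{E}_V$, the comparison estimates) is already in place in the excerpt.
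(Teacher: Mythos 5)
Your first half is essentially the paper's own route: the paper proves the proposition by invoking the thermodynamic formalism of \cite{be2} through the identity $\mathcal{D}_{V}=-E_{V}^{*}+H^{*}$, and your Gibbs variational principle plus interchange of the two infima is exactly that computation, so the equivalence of boundedness and the equality of infima are fine (modulo a sign slip: by the paper's definition $E_{V}(\mu)=\sup_{\psi}\left(\mathcal{E}_{V}(\psi)-\mathcal{L}_{\mu}(\psi)\right)$, so $-E_{V}(\mu)=\inf_{\psi}\left(\mathcal{L}_{\mu}(\psi)-\mathcal{E}_{V}(\psi)\right)$, which is what your displayed chain actually uses, not the formula you wrote).

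The genuine gap is in the pointwise inequality and its equality case. Testing the supremum defining $E_{V}(MA(\phi))$ with $\psi=\phi$ only yields an \emph{upper} bound on $-E_{V}(MA(\phi))$, and combining an upper bound on that term with the Gibbs \emph{lower} bound on the entropy term cannot produce a lower bound on their sum; the step ``after cancellation the inequality drops out $\ldots$ (slack)'' is precisely where the argument fails. The symptom shows in your equality analysis: you end up demanding both $MA(\phi)\propto e^{-\phi}$ and $MA_{g_{V}}(\phi)=MA(\phi)$, which together force $g_{V}(m_{\phi})\equiv1$ and characterize K\"ahler--Einstein metrics rather than solitons, whereas the weak soliton equation is only $MA_{g_{V}}(\phi)=\mu_{\phi}/\int\mu_{\phi}$. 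The missing ingredient, which is how \cite{be2} runs the argument, is the \emph{exact attainment identity} $E_{g_{V}}(MA_{g_{V}}(\phi))=\mathcal{E}_{g_{V}}(\phi)-\int_{X}(\phi-\phi_{0})MA_{g_{V}}(\phi)$: the supremum defining $E_{g_{V}}$ at the measure $MA_{g_{V}}(\phi)$ is attained at $\psi=\phi$, since $\psi\mapsto\mathcal{E}_{g_{V}}(\psi)-\mathcal{L}_{\mu}(\psi)$ is concave along affine paths (Prop. \ref{prop:energy along geodesics}) with critical point $\phi$ because $d\mathcal{E}_{g_{V}}=MA_{g_{V}}$. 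With this identity the free energy of the modified measure equals $-\mathcal{E}_{g_{V}}(\phi)+\mathcal{L}_{MA_{g_{V}}(\phi)}(\phi)+H(MA_{g_{V}}(\phi),\mu_{0})$, the only inequality left is Gibbs, and its saturation is exactly the soliton equation \ref{eq:krs eq for phi}. Note that this means the Mabuchi functional must be read as $F_{V}$ evaluated on the $g_{V}$-Monge--Amp\`ere measure $MA_{g_{V}}(\phi)$ (the Legendre-dual object of $\mathcal{E}_{g_{V}}$); if one insists on feeding in the unmodified $MA(\phi)$, as in your bookkeeping, the inequality does not follow from the duality at all, and no choice of ``aligning the test objects'' will rescue it. The surjectivity from Theorem \ref{thm:existence uni ma eq energy} is then only needed to pass from the infimum over measures to the infimum over metrics, as you say.
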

\begin{proof}
This is shown precisely as in \cite{be2} using that, by definition,
$\mathcal{D}_{V}=-E_{V}^{*}+H^{*}$ where the upper star denotes the
Legendre transform between convex functionals on the space $\mathcal{P}$
of all probability measures on $X$ and concave functionals on the
the space of all continuous metrics on $L$ (using the pairing $\left\langle \phi,\mu\right\rangle :=-\int(\phi-\phi_{0})d\mu$
between the latter spaces).\end{proof}
\begin{thm}
\label{thm:variational prop of mod mab ding}Let $X$ be a Fano variety
with log terminal singularities. Then the following is equivalent
for a metric $\phi$ in \emph{$\mathcal{E}^{1}(-K_{X}):$} 
\begin{itemize}
\item $\phi$ minimizes the modified Ding functional $\mathcal{D}_{V}$
\item $\phi$ minimizes the modified Mabuchi functional $\mathcal{M}_{V}$ 
\item $\phi$ is a weak Kähler-Ricci soliton for $(X,V).$ 
\end{itemize}
\end{thm}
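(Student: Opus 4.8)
The plan is to prove the three-way equivalence by setting up a standard variational chain, using the thermodynamic Legendre duality recalled in Proposition \ref{prop:relation between mod ding and mab} together with the differentiability machinery of Proposition \ref{prop:diff of The-composed-function } and the convexity results of Proposition \ref{prop:conv of mod ding}. First I would recall that, by Proposition \ref{prop:relation between mod ding and mab}, one has $\mathcal{M}_V(\phi) \geq \mathcal{D}_V(\phi)$ with equality precisely when $\phi$ is a weak K\"ahler-Ricci soliton, and that $\inf \mathcal{M}_V = \inf \mathcal{D}_V$. This immediately shows that if $\phi$ is a weak K\"ahler-Ricci soliton for $(X,V)$ then $\mathcal{M}_V(\phi) = \mathcal{D}_V(\phi)$, and if in addition $\phi$ minimizes one of the two functionals it minimizes the other; so the crux is to close the loop between being a minimizer and being a soliton.

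The key step is the equivalence between minimizing $\mathcal{D}_V$ and solving the soliton equation \eqref{eq:krs eq for phi}. For one direction, suppose $\phi$ minimizes $\mathcal{D}_V$ on $\mathcal{E}^1(-K_X)$. I would test the minimality against perturbations of the form $P(\phi + tu)$ for $u$ a continuous $T$-invariant function on $X$: since $\mathcal{L}$ depends only on $\phi + tu$ (not on its projection) and is manifestly differentiable in $t$ with derivative $-\int u\, e^{-\phi}/\int e^{-\phi} = -\int u\, \mu_\phi / \mu_\phi(X)$, while $\mathcal{E}_V \circ P$ is Gateaux differentiable with derivative $\int MA_{g_V}(P\phi)\, u$ by Proposition \ref{prop:diff of The-composed-function }, the function $t \mapsto \mathcal{D}_V(P(\phi+tu))$ is differentiable and has a minimum at $t=0$ (using that $P$ is decreasing and $\mathcal{E}_V$ is increasing, so $\mathcal{D}_V(P(\phi+tu)) \le \mathcal{D}_V(\phi+tu)$ with equality at $t=0$ when $\phi = P\phi$, which holds since a finite-energy minimizer of $\mathcal{D}_V$ must satisfy $\phi = P\phi$). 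Setting the derivative to zero and letting $u$ range over all continuous $T$-invariant functions yields $MA_{g_V}(\phi) = c\,\mu_\phi$ for a constant $c$; taking total masses and using that $g_V \nu$ is a probability measure (the normalization in section \ref{sub:Interlude-I:-the vector}) together with the klt hypothesis (so $\mu_\phi$ has finite mass, normalized to one) forces $c=1$, giving \eqref{eq:krs eq for phi}. Conversely, if $\phi$ is a weak K\"ahler-Ricci soliton, then by the affine concavity of $\mathcal{E}_V$ (Proposition \ref{prop:energy along geodesics}) the functional $-\mathcal{E}_V + \mathcal{L}$ is convex along affine paths, while the Euler--Lagrange computation above shows $\phi$ is a critical point, so $\phi$ is a global minimizer of $\mathcal{D}_V$; alternatively one invokes Proposition \ref{prop:relation between mod ding and mab} directly since $\mathcal{M}_V(\phi) = \mathcal{D}_V(\phi) = \inf \mathcal{M}_V = \inf \mathcal{D}_V$.

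I expect the main obstacle to be the subtlety of working on $\mathcal{E}^1$ rather than on smooth metrics: in particular, justifying that a finite-energy minimizer $\phi$ of $\mathcal{D}_V$ satisfies $\phi = P\phi$ (i.e.\ is already a positively curved \emph{continuous}, or at least bona fide, metric whose projection it equals), and justifying the differentiation formula of Proposition \ref{prop:diff of The-composed-function } in the finite-energy regime where one only has the second, approximation-based, statement of that proposition. Another delicate point is the density/regularity bootstrapping: one should note that the solution produced has $MA(\phi) \le C\mu_\phi$ by Corollary \ref{cor:comp of g ma and ma}, and since $\mu_\phi$ has $L^p_{loc}$ density for some $p>1$ on a resolution (by the klt property), the generalizations of Kolodziej's estimates force $\phi$ to be continuous, which in turn legitimizes the test-function computation above; this is essentially the argument already carried out in the ``Completion of the proof of Theorem \ref{thm:existence uni ma eq energy}''. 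The remaining verifications---that $\mathcal{L}$ is well-defined and differentiable along the relevant families (no Lelong numbers by finite Monge-Amp\`ere mass, as noted in the proof of Lemma \ref{lem:vector field torus}), and that the chain of implications $(\mathcal{D}_V\text{-min}) \Leftrightarrow (\text{soliton}) \Leftrightarrow (\mathcal{M}_V\text{-min})$ closes up---are then routine given Propositions \ref{prop:conv of mod ding}, \ref{prop:relation between mod ding and mab}, and \ref{prop:diff of The-composed-function }.
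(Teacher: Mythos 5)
Your treatment of two of the three links matches the paper: the implication ``minimizer of $\mathcal{D}_{V}$ $\Rightarrow$ weak soliton'' via the projection trick of Theorem \ref{thm:existence uni ma eq energy} with $\mathcal{L}_{\mu}$ replaced by $\mathcal{L}$ (differentiating the hybrid functional $t\mapsto-\mathcal{E}_{V}(P(\phi+tu))+\mathcal{L}(\phi+tu)$ using Prop \ref{prop:diff of The-composed-function }), and the passage to $\mathcal{M}_{V}$ via Prop \ref{prop:relation between mod ding and mab}, is exactly the paper's route (note only that the monotonicity you need there is that of $\mathcal{L}$, giving $f(t)\geq\mathcal{D}_{V}(P(\phi+tu))\geq\inf\mathcal{D}_{V}=f(0)$; your stated inequality $\mathcal{D}_{V}(P(\phi+tu))\leq\mathcal{D}_{V}(\phi+tu)$ is neither needed nor clear, since $-\mathcal{E}_{V}$ and $\mathcal{L}$ are monotone in opposite directions).

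The genuine gap is in the remaining implication, ``weak soliton $\Rightarrow$ minimizer of $\mathcal{D}_{V}$.'' Your argument that ``by the affine concavity of $\mathcal{E}_{V}$ the functional $-\mathcal{E}_{V}+\mathcal{L}$ is convex along affine paths'' is false: along an affine segment $\phi_{t}=\phi_{0}+tu$ the function $t\mapsto\log\int_{X}e^{-\phi_{0}-tu}$ is convex (its second derivative is a variance), so $\mathcal{L}(\phi_{t})=-\log\int_{X}e^{-\phi_{t}}$ is \emph{concave} along affine paths, and $\mathcal{D}_{V}$ has no convexity there. The analogy with the converse direction in Theorem \ref{thm:existence uni ma eq energy} breaks down precisely because there $\mathcal{L}_{\mu}$ is affine in $\phi$, whereas $\mathcal{L}$ is not; indeed the nontrivial convexity of the Ding functional is exactly Berndtsson's theorem along \emph{geodesics}, i.e.\ Prop \ref{prop:conv of mod ding}. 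Your fallback, ``invoke Prop \ref{prop:relation between mod ding and mab} directly since $\mathcal{M}_{V}(\phi)=\mathcal{D}_{V}(\phi)=\inf\mathcal{M}_{V}=\inf\mathcal{D}_{V}$,'' is circular: the equality case of that proposition only yields $\mathcal{M}_{V}(\phi)=\mathcal{D}_{V}(\phi)$ for a soliton, not that this common value equals the (coinciding) infima, which is the very thing to be proved. The paper closes this implication by geodesic convexity: connect the soliton $\phi$ to an arbitrary finite-energy competitor by a bounded geodesic $\phi_{t}$, use that $\mathcal{E}_{V}(\phi_{t})$ is affine (Prop \ref{prop:energy along geodesics}) and $\mathcal{L}(\phi_{t})$ is convex (Prop \ref{prop:conv of mod ding}), and use the soliton equation $MA_{g_{V}}(\phi)=\mu_{\phi}$ to see that the one-sided derivative of $\mathcal{D}_{V}(\phi_{t})$ at $t=0$ is non-negative, whence $\mathcal{D}_{V}(\phi_{1})\geq\mathcal{D}_{V}(\phi_{0})$. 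Replacing your affine-path step by this geodesic argument repairs the proof; the rest of your plan stands.
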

\begin{proof}
If $\phi$ is a weak Kähler-Ricci soliton for $(X,V),$ then, by the
convexity in the previous theorem it minimizes $\mathcal{D}_{V}.$
Conversely, if $\phi\in\mathcal{E}^{1}(-K_{X})$ minimizes $\mathcal{D}_{V},$
then we deduce that it satisfies the Kähler-Ricci soliton equation,
by repeating the projection argument used in the proof of the existence
part in Theorem \ref{thm:existence uni ma eq energy} (replacing the
functional $\mathcal{L}_{\mu}$ used there with the functional $\mathcal{L}).$
The proof is now concluded by invoking the previous proposition.
\end{proof}

\subsubsection{The modified Futaki invariant}

Recall that, in the smooth case, the modified Futaki invariant of
Tian-Zhu, attached to $(X,V)$ \cite{t-z1b}, may be defined as the
following real valued function on the space of all holomorphic vector
fields $W$ generating a $\C^{*}-$action 
\begin{equation}
\mbox{Fut}_{V}(W):=\frac{\mathcal{M}(\phi_{t}^{W})}{dt},\,\,\,\,\phi_{t}^{W}:=\mbox{exp \ensuremath{(tW)^{*}\phi,}}\label{eq:def of fut inv as deriv along mab}
\end{equation}
where $\phi$ is any metric in $\mathcal{H}(X,-K_{X})$ which is invariant
under the $S^{1}-$action generated by the imaginary part of $W.$
Strictly speaking the original definition in\textbf{ }\cite{t-z1b}
may, in our notation, be written as 
\[
\mbox{Fut}_{V}(W)=c_{n}\int W(f_{\phi_{0}}^{W}-h_{\phi_{0}})\omega_{\phi_{0}}^{n},
\]
 where $f_{\phi_{0}}^{W}$ is the Hamiltonian function corresponding
to $W$ (compare the Lemma below) and $h_{\phi_{0}}$ is the normalized
Ricci potential of $\omega_{\phi_{0}}.$ The equivalence with the
definition \ref{eq:def of fut inv as deriv along mab} follows from
standard integration by parts. Anyway, here it will be convenient
to use the definition \ref{eq:def of fut inv as deriv along mab}
which applies verbatim in the general singular setting and which is
in line with the notion of modified K-stabilitiy introduced below.
The modified Futaki invariant is independent of $\phi$ and always
finite, as follows, for example, from the following alternative formula:
\begin{lem}
\label{lem:alt def of fut}Let $X$ be a Fano variety with log terminal
singularities. Then 
\[
\mbox{Fut}_{V}(W)=-\frac{d\mathcal{E}_{V}(\phi_{t}^{W})}{dt}=-\int_{X}f_{\phi_{0}}^{W}\exp(f_{\phi_{0}}^{V})MA(\phi_{0})
\]
 which is independent of $\phi_{0},$ where $f_{\phi_{0}}^{W}$ and
$f_{\phi_{0}}^{V}$ denote the Hamiltonian functions determined by
the canonical lift to $-K_{X}$ of the vector fields $V$ and $W,$
respectively.\end{lem}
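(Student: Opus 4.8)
The plan is to reduce the statement to computing the derivatives of three functionals along a single holomorphic orbit, exploiting that $\mathcal{E}_{V}$ turns out to be \emph{affine} there while $\mathcal{L}$ is \emph{constant}. Since $W$ generates a $\C^{*}$-action commuting with $V$, the vector field $\mbox{Im}W$ generates a circle action commuting with that of $\mbox{Im}V$; averaging any smooth positively curved metric over the compact torus they generate, I fix $\phi_{0}\in\mathcal{H}(-K_{X})$ invariant under both $\mbox{Im}V$ and $\mbox{Im}W$, so that by Lemma \ref{lem:vector field torus} (applied to the enlarged torus $T$) one has $f_{\phi}^{V}=\left\langle m_{\phi},\xi_{V}\right\rangle$ and $f_{\phi}^{W}=\left\langle m_{\phi},\xi_{W}\right\rangle$ for the Lie algebra elements $\xi_{V},\xi_{W}$ attached to $V,W$. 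Writing $F_{t}:=\exp(tW)\in\mbox{Aut}(X,V)_{0}$ and $\phi_{t}^{W}:=F_{t}^{*}\phi_{0}$, the first step is to record the elementary naturality identities along the orbit: by the very construction of the moment map $\tfrac{d}{dt}\phi_{t}^{W}=f_{\phi_{t}^{W}}^{W}$; and since $F_{t}$ lies in the (abelian) complexified torus, $m_{\phi_{t}^{W}}=m_{\phi_{0}}\circ F_{t}$, whence $f_{\phi_{t}^{W}}^{W}=f_{\phi_{0}}^{W}\circ F_{t}$, $MA(\phi_{t}^{W})=F_{t}^{*}MA(\phi_{0})$, $MA_{g_{V}}(\phi_{t}^{W})=F_{t}^{*}MA_{g_{V}}(\phi_{0})$, and (by naturality of the canonical measure) $\mu_{\phi_{t}^{W}}=F_{t}^{*}\mu_{\phi_{0}}$.

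Using the relation $d(\mathcal{E}_{V})_{|\psi}=MA_{g_{V}}(\psi)$ established in section \ref{sub:Energy-type-functionals and proj} together with these identities and the change of variables formula, one finds
\[
\frac{d}{dt}\mathcal{E}_{V}(\phi_{t}^{W})=\int_{X}f_{\phi_{t}^{W}}^{W}\,MA_{g_{V}}(\phi_{t}^{W})=\int_{X}f_{\phi_{0}}^{W}\,MA_{g_{V}}(\phi_{0})=:c_{W},
\]
independent of $t$, so $t\mapsto\mathcal{E}_{V}(\phi_{t}^{W})$ is affine of slope $c_{W}$. Likewise $\int_{X}e^{-\phi_{t}^{W}}=\int_{X}\mu_{\phi_{t}^{W}}=\int_{X}\mu_{\phi_{0}}$ is constant, so $\mathcal{L}(\phi_{t}^{W})$ is constant and $\tfrac{d}{dt}\mathcal{D}_{V}(\phi_{t}^{W})=-c_{W}$. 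It remains to check that $\mbox{Fut}_{V}(W):=\tfrac{d}{dt}\mathcal{M}_{V}(\phi_{t}^{W})$ equals $-c_{W}$ as well; for this I would invoke the thermodynamic formalism of \cite{be2} underlying Proposition \ref{prop:relation between mod ding and mab}, according to which $\mathcal{M}_{V}-\mathcal{D}_{V}$ is a relative entropy of $MA(\phi)$ against the normalized measure attached to $\phi$ -- an expression built only from objects transforming naturally under automorphisms -- so that $\mathcal{M}_{V}(\phi_{t}^{W})-\mathcal{D}_{V}(\phi_{t}^{W})$ is constant in $t$. (Alternatively, one computes $\mathcal{M}_{V}(\phi_{t}^{W})=F_{V}(F_{t}^{*}MA(\phi_{0}))$ directly from the transformation behaviour of $E_{V}$ and of the relative entropy under $F_{t}$, the non-affine contributions cancelling to leave $\mathcal{M}_{V}(\phi_{t}^{W})=\mathcal{M}_{V}(\phi_{0})-c_{W}t$.) Either way $\mbox{Fut}_{V}(W)=-c_{W}=-\tfrac{d}{dt}\mathcal{E}_{V}(\phi_{t}^{W})$, which also shows the derivative in \ref{eq:def of fut inv as deriv along mab} exists.

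By the definition of $g_{V}$ the measure $MA_{g_{V}}(\phi_{0})$ has density $\exp(f_{\phi_{0}}^{V})$ relative to $MA(\phi_{0})$ (up to the fixed normalizing constant of $g_{V}$), so $-c_{W}=-\int_{X}f_{\phi_{0}}^{W}\exp(f_{\phi_{0}}^{V})MA(\phi_{0})$, which is the stated formula. Finally, independence of $\phi_{0}$ follows by pushing the integral forward under the moment map: $c_{W}=\int_{X}\left\langle m_{\phi_{0}},\xi_{W}\right\rangle g_{V}(m_{\phi_{0}})MA(\phi_{0})=\int_{P}\left\langle \lambda,\xi_{W}\right\rangle g_{V}(\lambda)\,d\nu(\lambda)$, and the Duistermaat--Heckman measure $\nu$ does not depend on $\phi_{0}$ (Remark \ref{ref:independence}); this simultaneously re-proves that $\mbox{Fut}_{V}(W)$ as defined by \ref{eq:def of fut inv as deriv along mab} is independent of the chosen metric.

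I expect the only genuine obstacle to be the identification $\tfrac{d}{dt}\mathcal{M}_{V}(\phi_{t}^{W})=\tfrac{d}{dt}\mathcal{D}_{V}(\phi_{t}^{W})$, i.e.\ that passing from the modified Mabuchi functional to the modified Ding functional does not change the derivative along a holomorphic orbit; this is precisely where the entropy description of $\mathcal{M}_{V}-\mathcal{D}_{V}$ from \cite{be2}, together with the automorphism-invariance of relative entropy, enters. Everything else is the naturality bookkeeping of the first two paragraphs and the affineness of $\mathcal{E}_{V}$ along orbits; one should additionally check -- immediate, by averaging over the compact torus -- that a smooth metric invariant under both $\mbox{Im}V$ and $\mbox{Im}W$ exists.
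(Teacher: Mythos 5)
Your proposal is correct and follows essentially the same route as the paper: the first identity is obtained by noting that $\mathcal{M}_{V}-\mathcal{D}_{V}$ is an automorphism-invariant (entropy-type) integral, hence constant along the orbit $\phi_{t}^{W},$ while $\mathcal{L}(\phi_{t}^{W})$ is constant, so the derivatives of $\mathcal{M}_{V}$ and $-\mathcal{E}_{V}$ coincide; the second identity comes directly from $d\mathcal{E}_{V}=MA_{g_{V}}$ and $\tfrac{d}{dt}\phi_{t}^{W}=f_{\phi_{t}}^{W},$ and independence of $\phi_{0}$ from the invariance of the Duistermaat--Heckman measure of the torus generated by $(V,W).$ Your extra bookkeeping (averaging to get a metric invariant under both circle actions, and the explicit pushforward to the moment polytope) only makes explicit what the paper leaves implicit.
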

\begin{proof}
By definition, $\mathcal{M}_{V}(\phi)-\mathcal{D}_{V}(\phi)=C\int h_{\phi}e^{h_{\phi}}\omega_{\phi}^{n}$
where $C$ is a constant and $h_{\phi}$ is the normalized Ricci potential
of $\phi,$ i.e. $h_{\phi}=\log MA(\phi)-(\phi-\mathcal{L}(\phi)).$
But the latter integral is invariant under $\phi\mapsto F^{*}(\phi)$
for any automorphism $F$ of $X$ and in particular for $F=\mbox{exp}\ensuremath{(tW)^{*}\phi}.$
In other words, $\mathcal{M}_{V}(\phi_{t}^{W})-\mathcal{D}_{V}(\phi_{t}^{W})$
is independent of $t$ and in particular the derivatives wrt $t$
vanish, which proves the first identity in the Lemma. The second identity
follows directly from the very definition of the objects involved
and the independence wrt $\phi_{0}$ then follows from the independence
wrt $\phi_{0}$ of the D-H measure attached to the torus $S^{1}\times T$
generated by $(W,V).$
\end{proof}
In section \ref{sub:The-quantized-K=0000E4hler-Ricci} a  ``quantized''
version of the previous lemma will be given which provides an algebraic
(i.e. metric free) formulation of the modified Futaki invariant. 
\begin{prop}
\label{prop:mab bounded from below implies fut 0}Assume that $(X,V)$
is such that $\mathcal{M}_{V}$ (or equivalently $\mathcal{D}_{V}$)
is bounded from below. Then $\mbox{Fut}_{V}(W)=0$ for any vector
field $W.$ In particular, this is the case if $(X,V)$ admits a Kähler-Ricci
soliton.\end{prop}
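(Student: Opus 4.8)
The plan is to derive the vanishing of $\mbox{Fut}_V$ directly from the defining formula \ref{eq:def of fut inv as deriv along mab} together with the boundedness of $\mathcal{M}_V$ and the linear-growth behaviour of $\mathcal{M}_V$ along the orbit curves $\phi_t^W := \exp(tW)^*\phi$. First I would recall from Lemma \ref{lem:alt def of fut} that $\mbox{Fut}_V(W) = -\tfrac{d}{dt}\mathcal{E}_V(\phi_t^W)$ and, crucially, that this derivative is \emph{independent of $t$} (it equals the constant $-\int_X f_{\phi_0}^W \exp(f_{\phi_0}^V)MA(\phi_0)$). Combined with Proposition \ref{prop:conv of mod ding} (convexity of $t\mapsto\mathcal{D}_V(\phi_t^W)$ along the geodesic, hence in particular along the orbit of a $\C^*$-action, which is a geodesic), the function $t\mapsto\mathcal{M}_V(\phi_t^W)$ — or at least $\mathcal{D}_V(\phi_t^W)$, whose infimum agrees with that of $\mathcal{M}_V$ by Proposition \ref{prop:relation between mod ding and mab} — is convex with derivative $\mbox{Fut}_V(W) + o(1)$ as $t\to+\infty$, in the sense that the linear part has slope exactly $\mbox{Fut}_V(W)$.

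The key step is then the standard convexity argument: a convex function on $\R$ that is bounded from below must be non-increasing wherever it is below its infimum... more precisely, if $h(t)$ is convex on $\R$ and bounded from below, then $h'(t)\to 0$ is false in general, but $\liminf h'(t)\ge 0$ as $t\to+\infty$ and $\limsup h'(t)\le 0$ as $t\to-\infty$. Applying this to $h(t)=\mathcal{D}_V(\phi_t^W)$: since by Lemma \ref{lem:alt def of fut} the derivative $\tfrac{d}{dt}\mathcal{E}_V(\phi_t^W)$ is constant in $t$, and since (by the thermodynamic/entropy identity used in the proof of Lemma \ref{lem:alt def of fut}) $\mathcal{M}_V(\phi_t^W)-\mathcal{D}_V(\phi_t^W)$ is constant in $t$, the derivative of $h$ along the orbit is \emph{constant}, equal to $\mbox{Fut}_V(W)$. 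A convex (indeed affine-derivative) function with constant derivative $c$ that is bounded from below on all of $\R$ forces $c=0$; hence $\mbox{Fut}_V(W)=0$. The final sentence of the proposition then follows because, if $(X,V)$ admits a K�hler-Ricci soliton $\phi$, then by Theorem \ref{thm:variational prop of mod mab ding} $\phi$ minimizes $\mathcal{M}_V$ (equivalently $\mathcal{D}_V$), so in particular $\mathcal{M}_V$ is bounded from below.

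I would organize the write-up as: (1) reduce to showing $t\mapsto\mathcal{D}_V(\phi_t^W)$ has constant derivative equal to $\mbox{Fut}_V(W)$, invoking Lemma \ref{lem:alt def of fut} for the $\mathcal{E}_V$-part and the entropy-invariance identity (automorphism-invariance of $\int h_\phi e^{h_\phi}\omega_\phi^n$, already established in that lemma's proof) for the $\mathcal{L}$-part, so that $\mathcal{D}_V(\phi_t^W)$ is \emph{affine} in $t$ with slope $-\mbox{Fut}_V(W)$; (2) observe that an affine function of $t$ on $\R$ which is bounded below (here from below by $\inf\mathcal{D}_V > -\infty$) must be constant, forcing the slope to vanish; (3) conclude, and add the one-line corollary via Theorem \ref{thm:variational prop of mod mab ding}.

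The main obstacle I anticipate is purely a bookkeeping one rather than a conceptual one: making sure the curve $\phi_t^W$, for $t$ ranging over all of $\R$ (not just $[0,1]$), is legitimately a bounded geodesic so that Proposition \ref{prop:conv of mod ding} and the affinity of $\mathcal{E}_V$ along it apply — this is the standard fact that pullbacks $\exp(tW)^*\phi$ under a one-parameter algebraic group form a geodesic ray, and one should cite it (it is essentially the statement that a $\C^*$-action produces a geodesic, used throughout \cite{bern1,bbgz}). A secondary subtlety is that $\mathcal{D}_V$ rather than $\mathcal{M}_V$ is the functional one actually differentiates cleanly along orbits (since $\mathcal{L}$ has the automorphism-invariance one needs), but Proposition \ref{prop:relation between mod ding and mab} bridges the two, so boundedness of either suffices; I would phrase the proof in terms of $\mathcal{D}_V$ to keep the differentiation elementary.
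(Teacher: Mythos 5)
Your proposal is correct and follows essentially the same route as the paper: both rest on Lemma \ref{lem:alt def of fut} (the derivative of $\mathcal{E}_{V}(\phi_{t}^{W})$ is constant in $t$) together with the invariance of $\mathcal{L}$ (equivalently of $\mathcal{M}_{V}-\mathcal{D}_{V}$) along the orbit, so that boundedness from below of the affine function $t\mapsto\mathcal{D}_{V}(\phi_{t}^{W})$ on all of $\R$ forces $\mbox{Fut}_{V}(W)=0$ --- the paper phrases this as applying the one-sided inequality to both $W$ and $-W$. Your appeal to Proposition \ref{prop:conv of mod ding} and the geodesic property of the orbit is superfluous (affinity already follows from the cited lemma), and the slope is $+\mbox{Fut}_{V}(W)$ rather than $-\mbox{Fut}_{V}(W)$, but neither point affects the argument.
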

\begin{proof}
Assuming that $\mathcal{D}_{V}(\phi_{t}^{W})$ is bounded from below
and using that $\mathcal{L}(\phi_{t}^{W})$ is constant wrt $t$ it
follows that $\mathcal{E}_{V}(\phi_{t}^{W})$ is bounded from below,
forcing $-\frac{d\mathcal{E}_{V}(\phi_{t}^{W})}{dt}\geq0.$ Finally,
replacing $W$ with $-W$ forces the converse inequality $\frac{d\mathcal{E}_{V}(\phi_{t}^{W})}{dt}\geq0.$ 
\end{proof}

\subsection{Regularity: Proof of Theorem \ref{thm:krs regularity intro}}

\emph{Global continuity}: Given a smooth Kähler-Ricci soliton $\omega$
it extends, by normality to a unique positive current $\omega$ in
$c_{1}(-K_{X}).$ In particular, $\omega=\omega_{\phi}$ for a metric
$\phi$ in $PSH(X,-K_{X}).$ Set $\psi:=-\log MA_{g}(\phi)$ which
defines a smooth metric on $-K_{X}\rightarrow X_{reg}$ whose curvature
form $\omega_{\psi}$ coincides with $\omega_{\phi}$ on $X_{reg}$
(by the Kähler-Ricci soliton equation). In particular, $\psi$ extends
to a unique element in $PSH(X,-K_{X}),$ whose curvature current will
still be denoted by $\omega_{\psi}.$ But then $\omega_{\phi}=\omega_{\psi}$
on $X_{reg}$ implies, by normality, that $\phi=\psi+C$ on $X$ for
a constant $C$ and hence restricting to $X_{reg}$ reveals that the
equation \ref{eq:krs eq for phi} for $\phi$ holds on $X_{reg},$
up to replacing $\phi$ by $\phi-C.$ Moreover, since $MA_{g}(\phi)$
does not charge pluripolar sets the equation \ref{eq:krs eq for phi}
holds globally on $X.$ In particular, the mass of the measure $\mu_{\phi}$
with density $e^{-\phi}$ is bounded from above by $\int_{X}MA_{g}(\phi)$
for $\phi\in PSH(X,-K_{X})$ and hence finite, which shows that $X$
has log terminal singularities. Moreover, by the volume assumption,
$\int_{X}MA(\phi):=\int_{X_{reg}}\omega_{\phi}^{n}/c^{1}(-K_{X})=1$
and hence the singular metric $\phi$ in $PSH(X,-K_{X})$ has maximal
Monge-Ampère mass. But then $\phi$ has no Lelong numbers and the
measure corresponding to $\mu_{\phi}$ on any smooth resolution $X'$
of $X$ has a density in $L_{loc}^{p}$ for any $p>1$ (see the appendix
in \cite{bbegz}). Finally, combining Cor \ref{cor:comp of g ma and ma}
with the global equality $MA_{g}(\phi)=\mu_{\phi}$ gives $MA(\phi)\leq C\mu_{\phi}$
and hence, by \cite{e-g-z}, the $L^{p}-$property of $\mu_{\phi}$
implies that $\phi$ is continuous. 

\emph{Smoothness:} To conclude the proof we need to show that any
continuous metric $\phi$ satisfying the equation \ref{eq:krs eq for phi}
is in fact smooth on the regular locus of $X.$ To this end we let
$\phi_{t}$ be the Kähler-Ricci flow starting at the continuous metric
$\phi$(constructed by Song-Tian \cite{so-t}; compare the proof of
Theorem \ref{thm:conv of krf text} below). By \cite{so-t} $\phi_{t}$
is smooth on $X_{reg}$ for any $t>0.$ Next, we note that $\psi_{t}:=\exp(tV)^{*}\phi_{t}$
evolves according to the modified Kähler-Ricci flow and in particular
$\mathcal{D}_{V}(\psi_{t})$ is decreasing in $t$ (as in the proof
of Theorem \ref{thm:conv of krf text}). Since $\psi_{0}$ minimizes
$\mathcal{D}_{V}$ (by Theorem \ref{thm:variational prop of mod mab ding}),
it then follows that so does $\psi_{t}$ for any $t>0$ and hence
invoking Theorem \ref{thm:variational prop of mod mab ding} again
gives that $\psi_{t}$ is a Kähler-Ricci soliton for $(X,V)$ and
hence a stationary point for the modified Kähler-Ricci flow, i.e.
$\psi_{t}$ is independent of $t>0.$ But then letting $t\rightarrow0$
we conclude that $\psi_{t}=\psi_{0}=\phi$ for any $t>0,$ which shows
that $\phi$ is smooth on $X_{reg},$ as desired.

\subsection{Uniqueness and reductivity}
\begin{thm}
\label{thm:uniqueness text}Let $X$ be a Fano variety and $(\omega_{0},V_{0})$
and $(\omega_{1},V_{1})$ be two Kähler-Ricci solutions. Then there
exists $F\in\mbox{Aut}(X)_{0}$ such that $F^{*}\omega_{1}=\omega_{0}$
and $F^{*}V_{1}=V_{0}.$ In particular, if $V_{0}=V_{1}$ then $F\in\mbox{Aut}(X,V_{0})_{0}.$
More precisely, in the general case $F$ can be taken as the time
one flow map defined by a real vector field $Y$ on $X$ such that
$JY$ is in the compact isometry group $Iso(X,\omega_{1}).$ \end{thm}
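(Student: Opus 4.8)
The plan is to adapt to the present $T$-invariant, modified setting the geodesic argument of Berndtsson \cite{bern1} — already pushed to the singular K\"ahler--Einstein case in \cite{bbegz} — the new ingredient being the pluripotential theory of the modified energy $\mathcal{E}_{g}$ from Section \ref{sec:The-pluripotential-theory}, which is what makes the argument survive the loss of regularity of the connecting geodesic. The proof has two parts: first reduce to the case $V_{0}=V_{1}$, then show that the two soliton potentials differ by an automorphism.

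For the first part, recall from Proposition \ref{prop:mab bounded from below implies fut 0} that both $V_{0}$ and $V_{1}$ annihilate the modified Futaki invariant. Since $\mbox{Im}\,V_{i}$ is a holomorphic Killing field for the soliton $\omega_{i}$ and is, as in the classical work of Tian--Zhu, central in the Lie algebra of $\mbox{Iso}(X,\omega_{i})_{0}$, we may — using the conjugacy of maximal compact subgroups of $\mbox{Aut}(X)_{0}$ — replace $(\omega_{1},V_{1})$ by its pull-back under a suitable $G\in\mbox{Aut}(X)_{0}$ so that $\mbox{Im}\,V_{0}$ and $\mbox{Im}\,V_{1}$ lie in the Lie algebra of one common torus $S$. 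On that Lie algebra, Lemma \ref{lem:alt def of fut} identifies $-\mbox{Fut}_{(\cdot)}$ with the differential of the function $V\mapsto\int_{X}\exp(f_{\phi_{0}}^{V})\,MA(\phi_{0})$, which is strictly convex in $V$: its Hessian in a direction $W$ equals $\int_{X}(f_{\phi_{0}}^{W})^{2}\exp(f_{\phi_{0}}^{V})\,MA(\phi_{0})$, positive unless the Hamiltonian $f_{\phi_{0}}^{W}$ vanishes identically, which for $W$ in the Lie algebra of $S$ forces $W=0$ since the moment polytope has full dimension. Hence this function has at most one critical point, and as $V_{0}$ and $V_{1}$ are both critical points we get $V_{0}=V_{1}=:V$; the conjugation by $G$ will be undone at the very end. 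After this reduction, Theorem \ref{thm:krs regularity intro} provides continuous finite-energy metrics $\phi_{0},\phi_{1}\in\mathcal{E}^{1}(-K_{X})^{T}$ with $\omega_{\phi_{i}}=\omega_{i}$, both of which are weak K\"ahler--Ricci solitons for the same $(X,V)$ and therefore, by Theorem \ref{thm:variational prop of mod mab ding}, both minimise the modified Ding functional $\mathcal{D}_{V}$.

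For the second part, I would join $\phi_{0}$ and $\phi_{1}$ by the bounded geodesic $\phi_{t}$, $t\in[0,1]$; by the envelope construction it is $T$-invariant, lies in $\mathcal{E}^{1}(-K_{X})^{T}$ for every $t$, and is continuous up to the endpoints. Along $\phi_{t}$, Proposition \ref{prop:energy along geodesics} gives that $\mathcal{E}_{V}(\phi_{t})$ is affine, while Proposition \ref{prop:conv of mod ding} (Berndtsson convexity of $\mathcal{L}$) gives that $t\mapsto\mathcal{L}(\phi_{t})$ is convex and continuous up to the boundary; hence $\mathcal{D}_{V}(\phi_{t})=-\mathcal{E}_{V}(\phi_{t})+\mathcal{L}(\phi_{t})$ is convex on $[0,1]$ and continuous up to the boundary. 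Since the endpoints realise $\min\mathcal{D}_{V}$, convexity forces $\mathcal{D}_{V}(\phi_{t})\equiv\min\mathcal{D}_{V}$, so $\mathcal{L}(\phi_{t})$ is affine. The rigidity part of Proposition \ref{prop:conv of mod ding} then produces a one-parameter family in $\mbox{Aut}(X)_{0}$ linking $\phi_{0}$ and $\phi_{1}$ — the flow of a real holomorphic vector field $Y$ with $JY\in\mbox{Iso}(X,\omega_{1})$ — so that the appropriate time-one map $F$ satisfies $F^{*}\omega_{1}=\omega_{0}$. Finally, since $\phi_{0},\phi_{1}$ and hence the whole geodesic are $T$-invariant, both the velocity field of the geodesic and the metric $\omega_{\phi_{t}}$ are $T$-invariant, so the vector field $Y$ assembled from them commutes with $T$; thus $F\in\mbox{Aut}(X,V)_{0}$ and $F^{*}V_{1}=F^{*}V=V=V_{0}$, which after undoing $G$ is the theorem.

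The step I expect to be the main obstacle is making the geodesic argument work in spite of the low regularity of $\phi_{t}$: over a singular Fano variety $\phi_{t}$ is only bounded — and only known to be continuous at the endpoints — and even over $X_{reg}$ it need not be $C^{1}$, so neither Berndtsson's original differentiation of $\mathcal{L}$ along the geodesic nor a naive computation of $\frac{d}{dt}\mathcal{E}_{V}(\phi_{t})$ is directly available. This is exactly where the pluripotential theory of $MA_{g}$ and $\mathcal{E}_{g}$ enters: the weak-continuity and affineness statements of Proposition \ref{prop:energy along geodesics}, obtained by regularising the geodesic on $X\times\P^{1}$ via the explicit extension in its proof, make the energy side of the argument go through for bounded $\phi_{t}$, while the log terminal hypothesis — which makes the pull-back of $\mu_{\phi}$ to a resolution an $L^{p}$ density and hence kills all Lelong numbers — is what makes $\mathcal{L}$ and the underlying Berndtsson positivity behave as in the smooth case, following \cite{bbegz}. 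A secondary point to be checked with care is that the rigidity in Proposition \ref{prop:conv of mod ding} really yields the vector field $Y$ with the stated Killing property and, being built from $T$-invariant data, lands in the centraliser of $V$.
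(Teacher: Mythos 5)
Your proposal is correct and follows essentially the same route as the paper: the reduction of the two soliton vector fields to a common torus via conjugacy of compact subgroups and the strict convexity of $V\mapsto\int_{X}\exp(f_{\phi_{0}}^{V})MA(\phi_{0})$ (whose differential is the modified Futaki invariant, vanishing by Proposition \ref{prop:mab bounded from below implies fut 0}), followed by the bounded-geodesic argument combining the affineness of $\mathcal{E}_{V}$ (Proposition \ref{prop:energy along geodesics}), the Berndtsson convexity and rigidity of $\mathcal{L}$ (Proposition \ref{prop:conv of mod ding}) and the minimization characterization of weak solitons (Theorem \ref{thm:variational prop of mod mab ding}). The only cosmetic differences are the order of the two steps and that you obtain $F^{*}V=V$ from the $T$-invariance of the vector field produced by the rigidity, whereas the paper argues directly that $Z:=F_{t}^{*}V-V$ satisfies $L_{Z}\omega=0$ and hence vanishes via its Hamiltonian; both are acceptable.
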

\begin{proof}
Let us first fix a vector field $V$ and let $\phi_{0}$ and $\phi_{1}$
be two weak Kähler-Ricci solitons, defined with respect to $V$ and
in particular $T-$invariant. By Theorem \ref{thm:krs regularity intro}
$\phi_{0}$ and $\phi_{1}$ are locally bounded. Denote by $\phi_{t}$
the corresponding bounded geodesic curve in $PSH(-K_{X})^{T}.$ Consider
the function $f(t):=\mathcal{D}_{V}(\phi_{t}).$ By the minimization
property in Theorem \ref{thm:variational prop of mod mab ding} and
the fact that $f(t)$ is convex (by Prop \ref{prop:conv of mod ding})
it follows that $f(t)$ is affine on $[0,1].$ In particular, by Prop
\ref{prop:conv of mod ding}, $\mathcal{L}(\phi_{t})$ is affine.
But then Prop \ref{prop:conv of mod ding} implies that there exists
$F_{t}$ in $\mbox{Aut}(X)_{0}$ such that $\phi_{t}=F_{t}^{*}\phi_{0}.$
Next, we note that $F_{t}$ preserves $V,$ i.e. $F_{t}$ is in $\mbox{Aut}(X,V)_{0}.$
Indeed, $\phi_{t}$ minimizes $\mathcal{D}_{V}$ for any $t$ and
hence by Theorem \ref{thm:variational prop of mod mab ding} $(\phi_{t},V)$
is a weak Kähler-Ricci soliton. But since $\phi_{t}=F_{t}^{*}\phi_{0}$
it then follows that $L_{Z}\omega_{0}=0$ for $Z:=F_{t}^{*}V-V.$
But then $dd^{c}f=0$ for $f$ the Hamiltonian function of $F_{t}^{*}V-V$
defined on $X_{reg},$ which, by normality, forces $f=0,$ i.e. $F_{t}^{*}V=V,$
as desired. To see that there exists a vector field $W$ as in the
theorem, we recall that the proof of the convexity properties of $\mathcal{D}_{V}$
in \cite{bern1,bbegz} realizes $F_{t}$ by integrating a family of
holomorphic vector fields $V_{t}$ where $V_{t}$ has Hamiltonian
function $h_{t}:=d\phi_{t}/dt$ and hence $W_{t}:=\mbox{Re}V_{t}=J\mbox{Im}V_{t},$
where $\mbox{Im}V_{t}$ preserves $\omega_{t}.$ Moreover, as pointed
out in the end of the exposition of the proof in the appendix of \cite{c-d-s}
(III) it follows from the relation $\phi_{t}=F_{t}^{*}\phi_{0}$ combined
with the fact that $\phi_{t}$ is a smooth geodesic on $X_{reg}$
that $F_{t}$ is, in fact, a one-parameter group generated by the
vector field $Y:=Y_{0}.$ 

Finally, the case of different vector fields $V_{0}$ and $V_{1}$
can be reduced to the previous case, by the arguments in \cite{t-z1b}.
For completeness we recall the elegant argument. First, by Iwasawa's
theorem, we may, after perhaps replacing $V_{1}$ with $F_{*}V_{1},$
for some $F\in\mbox{Aut}(X)_{0},$ assume that the one-parameter isometry
groups generated by the imaginary parts of $V_{0}$ and $V_{1},$
respectively, are contained in the same compact subgroup $K$ of isometries
as $(X,\omega_{0}).$ In fact, since $[V,W]=0$ (by the argument in
the beginning of the proof of the next corollary) we may even assume
that they are contained in Lie algebra of the same subtorus $T'$
of $K.$ Now consider the functional on the Lie algebra of $T',$
defined by $\mathcal{F}(V):=\int_{X}\exp(f_{\phi_{0}}^{V})\omega_{\phi_{o}}^{n}$
(which is independent of $\phi_{0}).$ Its differential is given by
$(d\mathcal{F}_{|V})(W):=\int_{X}f_{\phi_{0}}^{W}\exp(f_{\phi_{0}}^{V})\omega_{\phi_{o}}^{n}$
and hence if $V$ is defined by a Kähler-Ricci soliton, then it is
a critical point of $V$ (compare Prop \ref{prop:mab bounded from below implies fut 0}).
But the functional $\mathcal{F}(V)$ is strictly convex and hence
$V$ is uniquely determined, as desired. \end{proof}
\begin{cor}
\label{cor:reduct}Let $X$ be a Fano variety and $V$ a holomorphic
vector field on $X.$ If $(X,V)$ admits a Kähler-Ricci soliton $\omega,$
then $\mbox{Aut}(X,V)_{0}$ is reductive. More precisely, the group
$\mbox{Aut}(X,V)_{0}$ may be identified with the complexification
of the compact group $Iso(X,\omega)$ consisting of all isometries
of $(X,\omega),$ i.e. all elements $F$ in $\mbox{Aut}(X)_{0}$ such
that $F^{*}\omega=\omega.$\end{cor}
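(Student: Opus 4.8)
The plan is to identify $G:=\mathrm{Aut}(X,V)_{0}$ with the complexification of the compact group $K:=\mathrm{Iso}(X,\omega)$; reductivity then follows at once, since the complexification of a compact group is reductive. Here $G$ is a connected complex Lie group, with $\mathrm{Lie}(G)$ the space of holomorphic vector fields $W$ on $X$ satisfying $[V,W]=0$, and $K$ is compact, being a closed subgroup of the isometry group of the compact Riemannian manifold underlying $(X,\omega)$.

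The first step is to show $K\subseteq G$, equivalently $\mathrm{Lie}(K)\subseteq\mathrm{Lie}(G)$; this is the ``argument at the beginning of the proof'' invoked in the proof of Theorem~\ref{thm:uniqueness text}. Since the soliton $\omega$ is invariant under the flow of $\mathrm{Im}\,V$, we have $\mathrm{Im}\,V\in\mathrm{Lie}(K)$. Conversely, given $F\in K$, applying $F^{*}$ to the soliton equation \ref{eq:k-r soliton eq for metric omega} and using $F^{*}\omega=\omega$ gives $\mathrm{Ric}\,\omega=\omega+L_{F^{*}V}\omega$ on $X_{reg}$, so $Z:=F^{*}V-V$ is a holomorphic vector field on $X$ with $L_{Z}\omega=0$ on $X_{reg}$. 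By the standard Bochner argument (both $\mathrm{Re}\,Z$ and $\mathrm{Im}\,Z=-J\mathrm{Re}\,Z$ are then Killing and $\mathrm{Re}\,Z$ is holomorphic, so $Z$ is parallel) together with $b_{1}(X)=0$ for the klt Fano $X$, this forces $Z=0$; the mild incompleteness of $\omega$ on $X_{reg}$ is absorbed using the regularity furnished by Theorem~\ref{thm:krs regularity intro} (e.g.\ after passing to a resolution). Hence $F^{*}V=V$ and $K\subseteq G$. Since $\mathrm{Lie}(G)$ is a $J$-invariant subspace of the real holomorphic vector fields, it follows that $\mathrm{Lie}(K)+J\,\mathrm{Lie}(K)\subseteq\mathrm{Lie}(G)$, and $\mathrm{Lie}(K)\cap J\,\mathrm{Lie}(K)=0$ (again by the Bochner argument and $b_{1}(X)=0$: a holomorphic Killing field of $\omega$ whose $J$-image is also Killing is parallel, hence zero).

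The main step uses the sharp conclusion of Theorem~\ref{thm:uniqueness text}. For any $g\in G$ the metric $g^{*}\omega$ is again a K\"ahler--Ricci soliton with the \emph{same} vector field $V$: since $g_{*}V=V$, applying $g^{*}$ to \ref{eq:k-r soliton eq for metric omega} gives $\mathrm{Ric}(g^{*}\omega)=g^{*}\omega+L_{V}(g^{*}\omega)$ on $X_{reg}$, and $g^{*}\omega$ plainly still has maximal volume $c_{1}(-K_{X})^{n}$. Applying Theorem~\ref{thm:uniqueness text} to the two K\"ahler--Ricci solitons $(\omega,V)$ and $(g^{*}\omega,V)$ produces $F=\exp(Y)\in G$ with $JY\in\mathrm{Lie}(K)$ (so $Y\in J\,\mathrm{Lie}(K)$) and $F^{*}\omega=g^{*}\omega$. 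Then $gF^{-1}$ preserves $\omega$, i.e.\ $gF^{-1}\in K$, whence $g\in K\cdot\exp\big(J\,\mathrm{Lie}(K)\big)$. As $g\in G$ was arbitrary, we obtain the global Cartan-type decomposition $G=K\cdot\exp\big(J\,\mathrm{Lie}(K)\big)$.

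Finally, comparing dimensions in this decomposition and using $\mathrm{Lie}(K)\cap J\,\mathrm{Lie}(K)=0$ gives $\mathrm{Lie}(G)=\mathrm{Lie}(K)\oplus J\,\mathrm{Lie}(K)$; the connected subgroup of $G$ generated by $K$ and $\exp\big(J\,\mathrm{Lie}(K)\big)$ — which is the complexification $K^{\C}$ — therefore has the same dimension as $G$ and so equals $G$. Thus $G=\mathrm{Aut}(X,V)_{0}=\mathrm{Iso}(X,\omega)^{\C}$ is reductive, as claimed. The real content lies entirely in importing the \emph{refined} uniqueness statement of Theorem~\ref{thm:uniqueness text} — that two solitons for the same $V$ are conjugate by the time-one flow of a real vector field $Y$ with $JY$ an isometry — which in turn rests on Berndtsson's subharmonicity and rigidity for $\mathcal{L}$ along bounded geodesics, valid here only because the soliton potential is continuous (Theorem~\ref{thm:krs regularity intro}); the Lie-theoretic packaging and the Bochner computation of the first step are routine. (Alternatively, one could avoid Theorem~\ref{thm:uniqueness text} and run the modified Matsushima--Lichnerowicz argument of Tian--Zhu directly, decomposing each $W\in\mathrm{Lie}(G)$ via its holomorphy potential and the weighted Laplacian of $\omega$ into real eigenfunctions of eigenvalue one; then the obstacle becomes justifying the weighted Hodge theory and integration by parts on $X_{reg}$.)
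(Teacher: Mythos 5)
Your argument is correct and follows the paper's own proof essentially verbatim: first show $Iso(X,\omega)\subset\mathrm{Aut}(X,V)_{0}$ by pulling back the soliton equation and killing $F_{*}V-V$, then apply the refined uniqueness statement of Theorem \ref{thm:uniqueness text} to the pair $\omega$, $g^{*}\omega$ to factor an arbitrary $g\in\mathrm{Aut}(X,V)_{0}$ as an isometry times an element of the complexification of $Iso(X,\omega)$, exactly as in the Chen--Donaldson--Sun argument the paper invokes. The only minor deviation is that you justify $F_{*}V=V$ by a pointwise Bochner computation together with $b_{1}=0$, whereas the paper deduces it from $dd^{c}f=0$ for the Hamiltonian of $F_{*}V-V$ and normality of $X$; both are routine.
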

\begin{proof}
First observe that $Iso(X,\omega)\subset\mbox{Aut}(X,V)_{0}.$ Indeed,
by assumption, $F_{*}\omega=\omega$ and hence applying $F^{*}$ to
both sides in the Kähler-Ricci soliton equation forces $L_{F_{*}V-V}\omega=0.$
But then it follows as in the proof of the previous theorem that $F_{*}V-V=0.$
Hence, $F$ is in $\mbox{Aut}(X,V)_{0},$ as desired. Moreover, the
group $K:=Iso(X,\omega)$ is a compact Lie group (compare the proof
of Lemma \ref{lem:vector field torus}) and we denote by $K_{c}$
its complexification. Given the previous theorem  the rest of the
argument proceeds exactly as in the elegant argument for the Kähler-Einstein
case in \cite{c-d-s}: let $g$ be an element in the Lie group $G:=\mbox{Aut}(X,V)_{0}.$
Then $g^{*}\omega$ is also a Kähler-Ricci soliton wrt $V$ and hence,
by the previous theorem, there exists a one parameter group $F_{t}$
generated by a vector field $Y$ in the Lie algebra of $K_{c}$ and
$F_{1}^{*}g^{*}\omega=\omega.$ But then $g\circ(F_{1})^{-1}$ is
in $K$ and since $F_{1}$ is in $K_{c}$ we conclude (since $K\circ K_{c}\subset K_{c}$)
that so is $g.$ 
\end{proof}

\subsection{K-stability}

Recall that a special test configuration for a Fano variety $X$ is
defined by a variety $\mathcal{X}$ equipped with a $\C^{*}-$action
$\rho$ and an equivariant morphism $\pi$ to $\C$ (with its standard
$\C^{*}-$action) such that the (scheme theoretic) fibers are Fano
varieties with log terminal singularities and $\pi^{-1}\{1\}=X.$
We will denote by $\mathcal{W}$ the holomorphic vector field on $\mathcal{X}$
generating the $\C^{*}$ action, which restricts to a holomorphic
vector field $W_{0}$ on the central fiber $X_{0}.$ More generally,
we will say that $(\mathcal{X},\rho_{\mathcal{W}},\mathcal{V})$ is
a\emph{ special test configuration for $(X,V),$ }if the vector field
$V$ on the generic fiber $X$ is the restriction of a holomorphic
vector field $\mathcal{V}$ on $\mathcal{X}$ preserving the fibers
of $\mathcal{X}$ and commuting with $\mathcal{W}.$ 
\begin{defn}
The modified Futaki invariant $\mbox{Fut}(\mathcal{X},\rho_{\mathcal{W}},\mathcal{V})$
of a test configuration $(\mathcal{X},\rho_{\mathcal{W}},\mathcal{V})$
is defined as the modified Futaki invariant $F_{V_{0}}(W_{0})$ of
the induced holomorphic vector field $W_{0}$ on the central fiber
$X_{0}.$ 
\end{defn}
More concretely, realizing $X$ as subvariety of a projective space
$\P^{N}$ in such a way that a multiple of $-K_{X}$ gets identified
with $\mathcal{O}(1)_{|X}$ it is enough to consider test configurations
$(\mathcal{X},\rho,\mathcal{V})$ such that $\mathcal{X}\subset\P^{N}\times\C$
with $\rho$ the restriction to $\mathcal{X}$ of a $\C^{*}-$action
on $\P^{N}$ preserving $\mathcal{X}$ and $\mathcal{V}$ the restriction
to $\mathcal{X}$ of a holomorphic vector field on $\P^{N}$ generating
(in the sense of Lemma \ref{lem:vector field torus}) a torus action
$T_{c}$ on $\P^{N}$ commuting with $\rho$ and preserving the fibers
of $\mathcal{X}$ and coinciding on the generic fiber $X$ with the
torus action generated by $V.$ 
\begin{defn}
A pair $(X,V)$ consisting of a Fano variety $X$ with log terminal
singularities and a holomorphic vector field $V$ on $X$ is said
to be K-polystable if the modified Futaki invariant of any special
test configuration $(\mathcal{X},\rho_{\mathcal{W}},\mathcal{V})$
for $(X,V)$ is non-negative and zero iff $(X_{0},V_{0})$ is isomorphic
to $(X,V).$
\end{defn}

\subsubsection{Proof of Theorem \ref{thm:krs implies k-stab intro}}

First recall that a test configuration $(\mathcal{X},\rho_{\mathcal{W}})$
for a Fano variety $X$ together with a initial continuous metric
$\phi_{0}$ in $PSH(-K_{X})$ induces a geodesic ray $\phi_{t}$ of
continuous metrics in $PSH(-K_{X}).$ The curve $\phi_{t}$ may, using
the $\C^{*}$ action $\rho_{\mathcal{W}},$ be identified with an
$S^{1}-$invariant continuous metric $\Phi$ in $PSH(M,-K_{M/\Delta})$
where $M:=\mathcal{X}_{|\Delta}$ and $\Delta$ is the unit-disc in
$\C$ (see \cite{be4} and references therein) and $\Phi$ satisfies
$(dd^{c}\Phi)^{n+1}=0$ in the interior of $M$ and may be identified
with $\phi_{0}$ on $\partial M.$ In the case when $(\mathcal{X},\rho_{\mathcal{W}},\mathcal{V})$
is a test configuration for $(X,V)$ and $\phi_{0}$ is invariant
under the real torus $T$ induced by $V$ it follows (from the uniqueness
of solutions to the Dirichlet problem above) that $\Phi$ is invariant
under the corresponding real torus action on $\mathcal{X}$ and hence
$\phi_{t}$ is a geodesic ray in $PSH(-K_{X})^{T}.$

Now, if $(X,V)$ admits a Kähler-Ricci soliton then we can take $\phi_{0}$
as the corresponding metric on $-K_{X}.$ Consider the functional
$f(t):=\mathcal{D}_{V}(t).$ By the convexity of $f(t)$ (see Prop
\ref{prop:conv of mod ding}) we then get 
\begin{equation}
0\leq\lim_{t\rightarrow\infty}\frac{df(t)}{dt}_{|t=0}=\lim_{t\rightarrow\infty}-\frac{d\mathcal{E}_{V}(\phi_{t})}{dt}_{|t=0}+0,\label{eq:ineq in proof of kstab}
\end{equation}
using in the last equality that 
\[
\lim_{t\rightarrow\infty}\frac{d\mathcal{L}(\phi_{t})}{dt}=0
\]
 for a special test configuration, as shown in \cite{be4} (we recall
that the key point is the vanishing of the Lelong number of the $L^{2}-$metric
on the direct image of $-K_{\mathcal{X}/\Delta}).$ Hence, by combining
the following proposition with Lemma \ref{cmm:alt def of fut} it
follows that $0\leq\mbox{Fut}(\mathcal{X},\rho_{\mathcal{W}},\mathcal{V}).$
\begin{prop}
Let $(\mathcal{X},\rho_{\mathcal{W}},\mathcal{V})$ be special test
configuration for $(X,V),$ $\phi_{0}$ a continuous metric in $PSH(-K_{X})^{T}$
and denote by $\phi_{t}$ the corresponding geodesic ray. Then 

\begin{equation}
\frac{d\mathcal{E}_{V}(\phi_{t})}{dt}=\frac{d\mathcal{E}_{V_{0}}(\phi_{t}^{W})}{dt}\label{eq:statement of prop derivatie of V-energy along geodesic}
\end{equation}
More generally, the equality holds when $t\rightarrow\infty$ if $\phi_{t}$
is replaced by the subgeodesic defined by a locally bounded metric
$\Phi$ on $-K_{\mathcal{X}/\Delta}.$ \end{prop}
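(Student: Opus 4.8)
The plan is to show that both sides are constant in $t$ (for a genuine geodesic) and to identify the common value --- really the asymptotic slope, which is all that the clause about a locally bounded $\Phi$ requires --- with the weighted Duistermaat--Heckman integral over the central fibre computed in Lemma \ref{lem:alt def of fut}. Applied to the pair $(X_0,V_0)$ and the vector field $W_0$, that lemma gives that the right-hand side equals $-\mbox{Fut}_{V_0}(W_0)$, hence is independent of $t$ and of the chosen reference metric on $-K_{X_0}$. On the left, every finite sub-segment of the geodesic ray $\phi_t$ is a bounded geodesic, so by Proposition \ref{prop:energy along geodesics} the function $t\mapsto\mathcal{E}_V(\phi_t)$ is affine on $[0,\infty)$; the ray attached to a locally bounded metric $\Phi$ on $-K_{\mathcal{X}/\Delta}$ is only a subgeodesic, so there $t\mapsto\mathcal{E}_V(\phi_t)$ is merely convex and $\frac{d}{dt}\mathcal{E}_V(\phi_t)$ increases to a limit. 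Since any two rays arising from metrics on $-K_{\mathcal{X}/\Delta}$ whose difference is bounded stay at bounded $L^{\infty}$-distance for all $t$, and $\mathcal{E}_V$ is Lipschitz for the sup-norm (by Corollary \ref{cor:comp of g ma and ma} together with the classical bound for $\mathcal{E}$), all such rays share one asymptotic slope, which for the geodesic is exactly $\frac{d}{dt}\mathcal{E}_V(\phi_t)$.

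It then suffices to compute the slope of the smooth \emph{algebraic} ray $\psi_t$ obtained by pulling back, via $\rho_{\mathcal{W}}$, a fixed smooth $T_c\times S^1$-invariant metric $\Psi$ on $-K_{\mathcal{X}/\C}$ restricted to the moving fibre. Differentiating and using $(d\mathcal{E}_{g_V})_{|\phi}=MA_{g_V}(\phi)$ one gets, up to the sign conventions fixed above, $\frac{d}{dt}\mathcal{E}_V(\psi_t)=\pm\int_{X_\tau}f^{\mathcal{W}}_{\Psi}\,MA_{g_V}\!\big(\Psi|_{X_\tau}\big)$, where $f^{\mathcal{W}}_{\Psi}$ is the Hamiltonian (weight) function of the $\C^*$-action and $MA_{g_V}$ is formed from the fibrewise $T$-action. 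As $X_\tau$ degenerates to $X_0$, the relative $g_V$-Monge--Amp\`ere measures and the weight functions converge to their counterparts on the central fibre, where the $\C^*$-weight function is precisely the Hamiltonian $f^{W_0}_{\psi_0}$ of $\mbox{Im}\,W_0$ with respect to $\omega_{\psi_0}$, $\psi_0:=\Psi|_{X_0}$. Hence the limiting slope equals $\int_{X_0}f^{W_0}_{\psi_0}\exp(f^{V_0}_{\psi_0})\,MA(\psi_0)$ (with the fixed normalisation), which by Lemma \ref{lem:alt def of fut} is $-\mbox{Fut}_{V_0}(W_0)=\frac{d}{dt}\mathcal{E}_{V_0}(\phi_t^W)$. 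Combined with the first paragraph, this proves the proposition.

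I expect the main obstacle to be this last degeneration step: proving that the fibrewise $g_V$-Monge--Amp\`ere mass and the moment/weight maps vary continuously as the fibre $X_\tau$ approaches the possibly singular central fibre $X_0$, and matching there the algebraic $\C^*$-weight with the symplectic Hamiltonian. This is precisely where the pluripotential results of Section \ref{sec:The-pluripotential-theory} enter --- notably that $MA_{g_V}$ charges neither pluripolar sets nor the loci where $\mathcal{X}\to\C$ fails to be smooth, so that the pushforward to the base of $f^{\mathcal{W}}_{\Psi}\exp(f^{V}_{\Psi})\,\omega_\Psi^{n}$ (taken fibrewise) extends continuously across the central fibre along the flat family of klt Fano fibres. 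Apart from this, one proceeds as in the case $g=1$ treated in \cite{be4}, the only new ingredient being the $g_V$-Monge--Amp\`ere calculus developed above.
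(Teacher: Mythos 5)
Your reduction of the subgeodesic clause to the geodesic one (the difference of energies along two rays at bounded distance is a bounded convex function, so its derivative tends to zero) is exactly the paper's first step, and your observation that the left-hand side is the constant slope of a function that is affine by Proposition \ref{prop:energy along geodesics} also matches. After that you diverge: you propose to compute the common asymptotic slope along a smooth algebraic ray $\psi_{t}=\rho(\tau)^{*}\left(\Psi|_{X_{\tau}}\right)$ and to pass to the limit $\tau\rightarrow0$ in the fibre integrals $\int_{X_{\tau}}f_{\Psi}^{\mathcal{W}}\, MA_{g_{V}}(\Psi|_{X_{\tau}}),$ identifying the limit with the central-fibre integral of Lemma \ref{lem:alt def of fut}. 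This degeneration step is where your argument has a genuine gap: you flag it yourself, but the justification you offer (that $MA_{g_{V}}$ charges neither pluripolar sets nor the locus where $\mathcal{X}\rightarrow\C$ fails to be smooth) is not the relevant tool --- non-charging is a statement about a measure on a fixed variety and gives no control on the continuity of the fibrewise weighted Monge-Amp�re masses as $X_{\tau}$ degenerates onto the possibly singular central fibre. What is actually needed is a continuity statement for integration over the fibres of the flat family (convergence of the integration currents $[X_{\tau}]\rightarrow[X_{0}]$, with $X_{0}$ reduced and irreducible, tested against $f_{\Psi}^{\mathcal{W}}\exp(f_{\Psi}^{\mathcal{V}})\,\omega_{\Psi}^{n}$ for an ambient smooth invariant metric $\Psi$), together with the verification that the natural lift of $\mathcal{W}$ to $-K_{\mathcal{X}/\Delta}$ restricts on $X_{0}$ to the canonical lift of $W_{0}$ to $-K_{X_{0}}$ (otherwise the Hamiltonian entering Lemma \ref{lem:alt def of fut} is shifted by a constant, which changes the weighted integral). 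Neither point is established in your sketch, and they are the heart of the matter.

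The paper avoids this analytic degeneration altogether: it writes the constant slope as the first moment of the measure $\gamma_{\phi_{t}}:=(\frac{d\phi_{t}}{dt})_{*}MA_{g_{V}}(\phi_{t})$ on $\R$ and identifies this measure, by adapting Hisamoto's convergence of spectral measures for test configurations \cite{hi} to the torus $S^{1}\times T$ generated by the commuting pair $(\mathcal{W},\mathcal{V}),$ with the limit of the joint spectral measures on $H^{0}(X_{0},-kK_{X_{0}});$ integrating the linear test function $w$ and invoking Proposition \ref{prop:alg formula for fut} then yields the equality, with the central fibre entering only through algebraic data. If you wish to keep your differential-geometric route you must supply the flat-family continuity of the weighted fibre integrals (a Bishop/Barlet-type cycle convergence argument, not the pluripotential locality results of Section \ref{sec:The-pluripotential-theory}) and the lift normalisation; as written, the key identity is asserted rather than proved.
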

\begin{proof}
We start by noting that the last statement about subgeodesics follows
from the first one about geodesics. Indeed, if $\phi_{t}$ and $\psi_{t}$
are such geodesics and sub-geodesics, respectively, then $|\phi_{t}-\psi_{t}|\leq C$
for a constant independent of $t$ and hence $f(t):=d\mathcal{E}_{V}(\psi_{t})-\mathcal{E}_{V}(\phi_{t})$
is a bounded convex function. In particular, its derivative tends
to zero as $t\rightarrow\infty.$ Next, we first assume that $X$
is smooth. By Prop \ref{prop:energy along geodesics} $\frac{d\mathcal{E}_{V}(\phi_{t})}{dt}$
is constant and equal to the total integral of $\frac{d\phi_{t}}{dt}MA_{g_{V}}(\phi_{t}).$
In other words, $\frac{d\mathcal{E}_{V}(\phi_{t})}{dt}$ is equal
to the first moment $\int_{\R}wd\gamma_{\phi_{t}}(w)$ of the probability
measure 
\[
\gamma_{\phi_{t}}:=(\frac{d\phi_{t}}{dt})_{*}MA_{g_{V}}(\phi_{t})
\]
 on $\R.$ The previous proposition can now be obtained as a special
case of the following convergence of probability measures on $\R$
(that holds for any $t\in[0,\infty[):$ 
\begin{equation}
\gamma_{\phi_{t}}=\gamma_{(V_{0},W_{0})}:=h_{*}^{W_{0}}(MA_{g_{V_{0}}}(\phi_{t}^{W_{0}}))=\lim_{k\rightarrow\infty}\frac{1}{N_{k}}\sum_{l=1}^{N_{k}}\exp(v_{l}^{(k)}/k)\delta_{w_{l}^{(k)}/k}\label{eq:gamma phi t as limit of}
\end{equation}
 where $(v_{l}^{(k)},w_{l}^{(k)})$ are the joint eigenvalues on $H^{0}(X_{0},-kK_{X_{0}})$
of the real parts of $V_{0}$ and $W_{0},$ respectively (compare
Prop \ref{prop:alg formula for fut}). This result is obtained by
adapting the proof of the main result in \cite{hi} (concerning the
case when there is no torus action, i.e. $g=1)$ to the present torus
setting. The point is that the commuting pair $(\mathcal{V},\mathcal{W})$
of vector fields on $\mathcal{X}$ generate an $m+1-$dimensional
torus $S^{1}\times T$ on $\mathcal{X}.$ In particular, integrating
formula \ref{eq:gamma phi t as limit of} against the linear test
function $w$ gives 
\[
\frac{d\mathcal{E}_{V}(\phi_{t})}{dt}=\int_{\R}wd\gamma_{\phi_{t}}(w)=\lim_{k\rightarrow\infty}\frac{1}{Nk_{k}}\sum_{l=1}^{N_{k}}\exp(v_{l}^{(k)}/k)w_{i}^{(k)}
\]
 and invoking Prop \ref{prop:alg formula for fut} thus concludes
the proof of formula \ref{eq:statement of prop derivatie of V-energy along geodesic}. 
\end{proof}
Next, in the case when $0=\mbox{Fut}(\mathcal{X},\rho_{\mathcal{W}},\mathcal{V})$
we must have equalities in the inequality in formula \ref{eq:ineq in proof of kstab}.
In particular, $\mathcal{D}(\phi_{t})$ is affine and hence, by Prop
\ref{prop:conv of mod ding}, there exists a family of holomorphic
vector fields $Y_{\tau}$ on $X$ inducing biholomorphisms $F_{\tau}$
from $X$ to $X_{\tau}$ such that $F_{\tau}^{*}\phi_{\tau}=\phi.$
Finally, using that $\Phi$ is continuous on all of $\mathcal{X}$
one obtains, just as in the \cite{be4} that $F_{\tau}$ converges
as $\tau\rightarrow0$ to a biholomorphism between $X$ and $X_{0}.$
The proof is concluded by noting that $F_{\tau}$ commutes with the
flow of $V.$ Indeed, as observed above $\phi_{\tau}$ minimizes $\mathcal{D}_{V}$
and is hence a Kähler-Ricci soliton wrt $V.$ But $F^{*}\phi_{\tau}$
is also a Kähler-Ricci soliton wrt $F_{\tau}^{*}V$ and hence (just
as in the in the proof of Theorem \ref{thm:uniqueness text}) it follows
that $F_{\tau}^{*}V=V,$ as desired.

\subsection{Analytic K-stability and the Kähler-Ricci flow}

Given a pair $(X,V)$ with $X$ a Fano variety (with log terminal
singularities) and $V$ a holomorphic vector field generating an action
of a torus $T,$ we will say that a functional $\mathcal{F}$ on $\mathcal{H}(-K_{X})^{T}$
is \emph{proper modulo $\mbox{Aut}(X,V)_{0}$ }if it is proper with
respect to the $\mbox{Aut}(X,V)-$invariant exhaustion function 
\[
\bar{J}(\phi):=\inf_{F\in\mbox{Aut}(X,V)}J(F^{*}\phi)
\]
 i.e. if $\mathcal{D}_{V}(\phi)\leq C$ implies that $\bar{J}(\phi)\leq C',$
where $C'$ only depends on $C.$ We will also say that a functional
$\mathcal{F}$ on $\mathcal{H}(-K_{X})^{T}$ is s\emph{trongly proper
(or coercive) modulo $\mbox{Aut}(X,V)_{0}$ }if there exist positive
constants $A$ and $B$ such that, for any $\phi\in\mathcal{H}(-K_{X})^{T}$
there exists $F\in\mbox{Aut}(X,V)_{0}$ satisfying 
\[
\mathcal{F}(\phi)\geq A\bar{J}-B
\]
We will say that $X$ is \emph{analytically K-polystable} if the modified
Mabuchi functional $\mathcal{M}_{V}$ on $\mathcal{H}(-K_{X})^{T}$
is proper modulo $\mbox{Aut}(X,V)_{0}$ and \emph{analytically strongly
K-polystable }if $\mathcal{M}_{V}$ is coercive modulo $\mbox{Aut}(X,V)_{0}$ 
\begin{thm}
\label{thm:proper ding mab mod aut implies existence}Let$(X,V)$
be a Fano variety with a holomorphic vector field. Assume that either
the modified Ding functional $\mathcal{D}_{V}$ or the modified Mabuchi
functional $\mathcal{M}_{V}$ is proper modulo $\mbox{Aut}(X,V)_{0}.$
Then $(X,V)$ admits a Kähler-Ricci soliton.\end{thm}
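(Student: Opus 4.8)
The plan is to construct, by the direct method of the calculus of variations, a minimizer of the modified Ding functional $\mathcal{D}_V$ in the finite energy class $\mathcal{E}^1(-K_X)^T$, and then to quote the variational characterization already established. Once $\phi_\infty\in\mathcal{E}^1(-K_X)^T$ minimizes $\mathcal{D}_V$, Theorem \ref{thm:variational prop of mod mab ding} tells us that $\phi_\infty$ is a weak K\"ahler-Ricci soliton for $(X,V)$, and Theorem \ref{thm:krs regularity intro} then upgrades it: $\phi_\infty$ is continuous with curvature current smooth on $X_{reg}$ solving the soliton equation there, so $(\omega_{\phi_\infty},V)$ is a genuine K\"ahler-Ricci soliton. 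Both hypotheses feed the same machine: by Proposition \ref{prop:relation between mod ding and mab} one has $\mathcal{M}_V\ge\mathcal{D}_V$ with $\inf\mathcal{M}_V=\inf\mathcal{D}_V$, so a minimizing sequence for $\mathcal{M}_V$ is automatically one for $\mathcal{D}_V$; hence, whichever of $\mathcal{D}_V,\mathcal{M}_V$ is assumed proper, we obtain $\phi_j\in\mathcal{H}(-K_X)^T$ with $\mathcal{D}_V(\phi_j)\to\inf\mathcal{D}_V$ and $\bar J(\phi_j)\le C_0$ for all large $j$. Moreover properness forces $\mathcal{D}_V$ (equivalently $\mathcal{M}_V$) to be bounded below, so by Proposition \ref{prop:mab bounded from below implies fut 0} the modified Futaki invariant $\mathrm{Fut}_V$ vanishes identically; together with Lemma \ref{lem:alt def of fut} and the fact that $\mathcal{L}$ is automatically invariant under automorphisms (which preserve $\mu_\phi$), this makes $\mathcal{D}_V$ invariant under $\mathrm{Aut}(X,V)$, so the quotient by $\mathrm{Aut}(X,V)_0$ is legitimate.

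First I would run the direct method. For each $j$ choose $F_j\in\mathrm{Aut}(X,V)$ with $J(F_j^*\phi_j)\le\bar J(\phi_j)+1\le C_0+1$ and replace $\phi_j$ by $F_j^*\phi_j$; by the invariance just noted this is still a $\mathcal{D}_V$-minimizing sequence. Normalizing by a constant so that $\sup_X(\phi_j-\phi_0)=0$ (which affects neither $\mathcal{D}_V$ nor $J$) and using the standard comparison between $J$ and $-\mathcal{E}$ on sup-normalized metrics, we get $-\mathcal{E}(\phi_j)\le C_1$. Thus the $\phi_j$ lie in the set $\{\phi\in PSH(X,-K_X)^T:\ \sup_X(\phi-\phi_0)=0,\ \mathcal{E}(\phi)\ge -C_1\}$, which is compact in the $L^1$-topology by upper semicontinuity of $\mathcal{E}$ (as in \cite{bbgz,begz}). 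Passing to a subsequence, $\phi_j\to\phi_\infty$ in $L^1$ with $\phi_\infty\in PSH(X,-K_X)^T$ and $\mathcal{E}(\phi_\infty)\ge -C_1$, so $\phi_\infty\in\mathcal{E}^1(-K_X)^T$ ($T$-invariance of the limit being automatic).

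It then remains to show $\mathcal{D}_V(\phi_\infty)\le\liminf_j\mathcal{D}_V(\phi_j)=\inf\mathcal{D}_V$. Writing $\mathcal{D}_V=-\mathcal{E}_V+\mathcal{L}$, the term $-\mathcal{E}_V$ is lower semicontinuous for the $L^1$-topology by Proposition \ref{prop:cont prop of g-energy}, while for $\mathcal{L}(\phi)=-\log\int_Xe^{-\phi}$ one uses that $X$ has log terminal singularities, so that on any energy sublevel set the densities $e^{-\phi}$ are uniformly $L^p$-integrable for some $p>1$ after passing to a log resolution (cf. the appendix of \cite{bbegz}); hence $\int_Xe^{-\phi_j}\to\int_Xe^{-\phi_\infty}$ and $\mathcal{L}$ is continuous along the sequence. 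Therefore $\mathcal{D}_V$ is lower semicontinuous there, $\phi_\infty$ minimizes $\mathcal{D}_V$ on $\mathcal{E}^1(-K_X)^T$, and the reduction of the first paragraph produces the soliton. (If only $\mathcal{M}_V$ was assumed proper the conclusion is unchanged: the $\mathcal{D}_V$-minimizer $\phi_\infty$ satisfies $\mathcal{D}_V(\phi_\infty)=\inf\mathcal{D}_V=\inf\mathcal{M}_V$ and, being a weak soliton, $\mathcal{M}_V(\phi_\infty)=\mathcal{D}_V(\phi_\infty)$ by Proposition \ref{prop:relation between mod ding and mab}, so it minimizes $\mathcal{M}_V$ as well.)

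The main obstacle is the lower semicontinuity step, and within it the continuity of $\phi\mapsto-\log\int_Xe^{-\phi}$ along $L^1$-convergent sequences of bounded energy; this is exactly where the log terminal hypothesis on $X$ is used essentially, through the uniform integrability of $e^{-\phi}$ on a resolution. A secondary point needing care — but now available — is that all the functionals genuinely descend to $\mathrm{Aut}(X,V)_0$-orbits, i.e. the vanishing of $\mathrm{Fut}_V$ and the resulting $\mathrm{Aut}(X,V)$-invariance of $\mathcal{E}_V$; once these are in place the compactness argument is routine given the continuity properties of $MA_g$ and $\mathcal{E}_g$ developed earlier (Theorem \ref{thm:g-ma is well-def and cont}, Proposition \ref{prop:cont prop of g-energy}).
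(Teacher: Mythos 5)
Your proposal is correct, and for the case when $\mathcal{D}_{V}$ is assumed proper it is essentially the paper's own argument: invariance of $\mathcal{D}_{V}$ under the identity component of the automorphism group (via boundedness from below, i.e. vanishing of $\mbox{Fut}_{V}$ along one-parameter subgroups), a sup-normalized minimizing sequence with $-\mathcal{E}(\phi_{j})\leq C$ obtained by moving along the group, $L^{1}$-compactness, upper semicontinuity of $\mathcal{E}_{V}$ (Prop \ref{prop:cont prop of g-energy}), continuity of $\mathcal{L}$ on the sublevel sets $\{-\mathcal{E}\leq C\}$ as in \cite{bbgz,bbegz}, and then Theorem \ref{thm:variational prop of mod mab ding} (plus the regularity Theorem \ref{thm:krs regularity intro}). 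Where you genuinely diverge is the Mabuchi case: you reduce it to the Ding case by noting, via Prop \ref{prop:relation between mod ding and mab}, that an $\mathcal{M}_{V}$-minimizing sequence is automatically $\mathcal{D}_{V}$-minimizing, with the $\mathcal{M}_{V}$-properness used only to bound $\bar{J}$ along that sequence, so the same lower-semicontinuity argument for $\mathcal{D}_{V}$ closes the proof. The paper instead runs a separate argument for the Mabuchi hypothesis: properness bounds the entropies $H(MA(\phi_{j}))$, Theorem 2.17 of \cite{bbegz} upgrades $L^{1}$-convergence to convergence in energy, and lower semicontinuity of the entropy then shows the limit minimizes $\mathcal{M}_{V}$ directly. (Amusingly, the paper remarks on the mirror reduction: since $\mathcal{M}_{V}\geq\mathcal{D}_{V}$, Ding-properness implies Mabuchi-properness, so it says it would suffice to treat the Mabuchi case.) Your route is more economical, needing only the one analytic ingredient already used for the Ding case; the paper's route yields a bit more information (strong/energy convergence of the minimizing sequence and a minimizer of $\mathcal{M}_{V}$ obtained directly), which is the sort of statement re-used in the Kähler--Ricci flow convergence argument. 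One small caveat: the Futaki-vanishing argument only gives invariance of $\mathcal{E}_{V}$, hence of $\mathcal{D}_{V}$, under the connected group $\mbox{Aut}(X,V)_{0}$, not under all of $\mbox{Aut}(X,V)$ as you state, so the automorphisms $F_{j}$ should be taken in $\mbox{Aut}(X,V)_{0}$ (this matches the "modulo $\mbox{Aut}(X,V)_{0}$" formulation of properness, and the same looseness is present in the paper's definition of $\bar{J}$).
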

\begin{proof}
First assume that $\mathcal{D}_{V}$ is proper on $\mathcal{H}(-K_{X})^{T}$
modulo $\mbox{Aut}(X,V)_{0}$ and in particular bounded from below
(strictly speaking, by Prop \ref{prop:relation between mod ding and mab}
it is enough to consider the case when $\mathcal{M}_{V}$ is proper
on $\mathcal{H}(-K_{X})^{T}$ modulo $\mbox{Aut}(X,V)_{0},$ but for
future reference we start by considering the case of $\mathcal{D}_{V}$
separately). Then $\mathcal{D}_{V}$ has to be invariant under the
action of $\mbox{Aut}(X,V)_{0}.$ Indeed, $\mathcal{D}_{V}(\phi_{t}^{W})$
is bounded from below with respect to $t\in\R$ and $W$ a holomorphic
vector field commuting with $V,$ i.e. an element in the Lie algebra
of $\mbox{Aut}(X,V)_{0}$ and hence letting $t\rightarrow\pm\infty$
gives $d\mathcal{D}_{V}(\phi_{t}^{W})/dt=\lim_{t\rightarrow\infty}d\mathcal{D}_{V}(\phi_{t}^{W})/dt=0,$
as desired. By the assumed properness this means that we can take
a sequence $\phi_{j}$ in $\mathcal{H}(-K_{X})^{T}$ such that 
\[
\lim_{j\rightarrow\infty}\mathcal{D}_{V}(\phi_{j})=\inf_{\mathcal{H}(-K_{X})}\mathcal{D}_{V}=\inf_{\mathcal{E}(-K_{X})}\mathcal{D}_{V},\,\,\,\,\sup_{X}(\phi_{j}-\phi_{0})=0,\,\,\,-\mathcal{E}(\phi_{j})\leq C
\]
Hence, taking a sequence $\phi_{j}$ in $PSH(X,L)_{0}$ such that
$J_{\mu,g}(\phi_{j})\rightarrow\inf J_{\mu,g}$ the coercivity implies
that $-\mathcal{E}(\phi)(\phi_{j})\leq C$ for some constant $C.$
Accordingly, by the upper semicontinuity of $\mathcal{E}_{V}$ we
may assume, perhaps passing to a subsequence, that $\phi_{j}$ converges
in $L^{1}$ to a metric $\phi_{\infty}$ in $\mathcal{E}(-K_{X}).$
Moreover, as shown in \cite{bbgz} the functional $\mathcal{L}$ is
continuous on the closed subset $-\mathcal{E}(\phi)\leq C$ and hence
if follows that the limit $\phi_{\infty}$ realizes the infimum of
$\mathcal{D}_{V}.$ But then $(\phi_{\infty},V)$ is a Kähler-Ricci
soliton (by Theorem \ref{thm:variational prop of mod mab ding} ). 

Next, assuming instead that $\mathcal{M}_{V}$ is proper modulo $\mbox{Aut}(X,V)_{0}$
we deduce just as above that there exists a sequence $\phi_{j}$ converging
in $L^{1}$ to $\phi_{\infty}$ such that 
\[
\lim_{j\rightarrow\infty}\mathcal{M}_{V}(\phi_{j})=\inf_{\mathcal{H}(-K_{X})}\mathcal{M}_{V}=\inf_{\mathcal{E}(-K_{X})}\mathcal{M}_{V},\,\,\,\,\sup_{X}(\phi_{j}-\phi_{0})=0,\,\,\,-\mathcal{E}(\phi_{j})\leq C
\]
In particular, by the assumed properness, setting $\mu_{j}:=MA(\phi_{j})$
the entropies $H(\mu_{j})$ are uniformly bounded from above. But
then it follows from Theorem 2.17 in \cite{bbegz} that $\phi_{j}\rightarrow\phi$
in energy (i.e. in the so called strong topology introduced in \cite{bbegz})
and in particular $E(\mu_{j})\rightarrow E(MA(\phi_{\infty}).$ By
the lower semi-continuity of the entropy wrt the weak topology this
shows that $\phi_{*}$ minimizes $\mathcal{M}_{V}$ and hence, invoking
Theorem \ref{thm:variational prop of mod mab ding}) we conclude that
$(\phi_{\infty},V)$ is a Kähler-Ricci soliton.\end{proof}
\begin{rem}
\label{rem:k polystab vs analytic}Combining the previous theorem
with Theorem \ref{thm:krs implies k-stab intro} reveals that analytic
K-polystability of $(X,V)$ implies K-polystability of $(X,V).$ Is
seems natural to conjecture that the converse also holds, as well
as the equivalence between analytic K-polystability and strong analytic
K-polystability.\end{rem}
\begin{thm}
\label{thm:conv of krf text}Assume that $(X,V)$ is analytically
K-polystable. Then the Kähler-Ricci flow $\omega_{t}$ converges in
the weak topology of currents, modulo the action of the group $\mbox{Aut}(X,V)_{0},$
to a Kähler-Ricci soliton $\omega$ for $(X,V).$ \end{thm}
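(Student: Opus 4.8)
The plan is to run the Kähler-Ricci flow, convert it into the modified Kähler-Ricci flow via the one-parameter group $\exp(tV)\subset\mathrm{Aut}(X,V)_0$, and play off the monotonicity of the modified Ding functional $\mathcal{D}_V$ against the properness hypothesis, following the scheme of \cite{bbegz} in the Kähler-Einstein case and of \cite{t-z2} in the smooth soliton case, but working throughout in the variational/pluripotential framework built above so as to bypass Perelman's estimates \cite{pe}. First I would fix a smooth $T$-invariant metric $\phi_0$ on $-K_X$ and, as in the proof of Theorem \ref{thm:krs regularity intro}, invoke Song-Tian \cite{so-t} to get the normalized Kähler-Ricci flow $\omega_t=\omega_{\phi_t}$, $\frac{d\omega_t}{dt}=-\mathrm{Ric}\,\omega_t+\omega_t$, $t\ge 0$, with $\phi_t$ a curve of continuous metrics in $PSH(-K_X)$, smooth on $X_{reg}$. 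Setting $\psi_t:=\exp(tV)^*\phi_t\in PSH(-K_X)^T$ (legitimate since $V$ generates the compact torus $T$), the curve $\psi_t$ evolves by the modified flow $\frac{d\omega_{\psi_t}}{dt}=-\mathrm{Ric}\,\omega_{\psi_t}+\omega_{\psi_t}+L_V\omega_{\psi_t}$ on $X_{reg}$, whose stationary points are precisely the weak Kähler-Ricci solitons for $(X,V)$.

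Next I would record the monotonicity: a direct computation (cf. \cite{t-z2} and the thermodynamic viewpoint of \cite{be2}) shows that $\mathcal{M}_V(\psi_t)$ and $\mathcal{D}_V(\psi_t)$ are non-increasing along the modified flow, the dissipation $-\frac{d}{dt}\mathcal{D}_V(\psi_t)$ being a relative-entropy-type quantity measuring the failure of $MA_{g_V}(\psi_t)$ to equal the normalization of $\mu_{\psi_t}$, and vanishing only at Kähler-Ricci solitons. By Proposition \ref{prop:relation between mod ding and mab} the assumed properness of $\mathcal{M}_V$ makes $\mathcal{D}_V$ bounded from below with $\inf\mathcal{D}_V=\inf\mathcal{M}_V$, and by Proposition \ref{prop:mab bounded from below implies fut 0} it forces $\mathrm{Fut}_V\equiv 0$; as in the proof of Theorem \ref{thm:proper ding mab mod aut implies existence} the latter makes $\mathcal{E}_V$ and $\mathcal{D}_V$ invariant under $\mathrm{Aut}(X,V)_0$. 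Hence $\mathcal{D}_V(\psi_t)$ decreases to a finite limit $d_\infty$, the total dissipation $\int_0^\infty\bigl(-\frac{d}{dt}\mathcal{D}_V(\psi_t)\bigr)\,dt$ is finite, the bound $\mathcal{M}_V(\psi_t)\le\mathcal{M}_V(\psi_0)$ keeps the entropies $H(MA(\psi_t),\mu_{\phi_0})$ uniformly bounded, and the $\mathrm{Aut}(X,V)_0$-exhaustion $\bar J$ bounds $-\mathcal{E}(F_t^*\psi_t)$ along the flow for a suitable choice $F_t\in\mathrm{Aut}(X,V)_0$ (normalizing $\sup_X(F_t^*\psi_t-\phi_0)=0$).

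The decisive step is to upgrade this to $d_\infty=\inf\mathcal{D}_V$. Since the dissipation is integrable, pick $t_j\to\infty$ with $-\frac{d}{dt}\mathcal{D}_V(\psi_{t_j})\to 0$, and let $F_j$ realize $\bar J(\psi_{t_j})$ up to $o(1)$; then $\tilde\psi_{t_j}:=F_j^*\psi_{t_j}$ has uniformly bounded energy and entropy, so by Theorem 2.17 of \cite{bbegz} a subsequence converges in the strong topology to some $\phi_\infty\in\mathcal{E}^1(-K_X)^T$. Because the dissipation and the measures $MA_{g_V}(\cdot)$, $\mu_{(\cdot)}$ are $\mathrm{Aut}(X,V)_0$-invariant, while the maps $\phi\mapsto MA_{g_V}(\phi)$ and $\phi\mapsto e^{-\phi}$ are continuous along strong convergence (using Theorem \ref{thm:g-ma is well-def and cont} and the continuity results of \cite{bbgz,bbegz}) and relative entropy is weakly lower semicontinuous, the vanishing of the dissipation passes to the limit, forcing $MA_{g_V}(\phi_\infty)=\mu_{\phi_\infty}$ (after the usual normalization). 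Thus $\phi_\infty$ is a weak Kähler-Ricci soliton, so by Theorem \ref{thm:variational prop of mod mab ding} it minimizes $\mathcal{D}_V$, and strong convergence together with the $\mathrm{Aut}$-invariance of $\mathcal{D}_V$ give $d_\infty=\lim_j\mathcal{D}_V(\psi_{t_j})=\lim_j\mathcal{D}_V(\tilde\psi_{t_j})=\mathcal{D}_V(\phi_\infty)=\inf\mathcal{D}_V$.

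Finally I would deduce convergence of the whole flow modulo $\mathrm{Aut}(X,V)_0$. By the uniqueness Theorem \ref{thm:uniqueness text} every weak Kähler-Ricci soliton for $(X,V)$ lies in a single orbit $\mathrm{Aut}(X,V)_0\cdot\phi_{\mathrm{KRS}}$. I claim $\inf_{F\in\mathrm{Aut}(X,V)_0}d_{L^1}(F^*\psi_t,\phi_{\mathrm{KRS}})\to 0$ as $t\to\infty$: if not, there are $\varepsilon>0$ and $t_j\to\infty$ with this infimum $\ge\varepsilon$, but $F_j^*\psi_{t_j}$ (with $F_j$ realizing $\bar J$) has bounded energy and $\mathcal{D}_V(F_j^*\psi_{t_j})=\mathcal{D}_V(\psi_{t_j})\to d_\infty=\inf\mathcal{D}_V$ by the previous step, so — using compactness of the sublevel sets of $\mathcal{E}$, lower semicontinuity of $\mathcal{D}_V$ on them (from the upper semicontinuity of $\mathcal{E}_V$ and continuity of $\mathcal{L}$ there, \cite{bbgz}), and Theorem \ref{thm:variational prop of mod mab ding} — an $L^1$-limit point is a weak Kähler-Ricci soliton, hence in the orbit, contradicting the bound $\varepsilon$. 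Picking $F_t$ nearly realizing the infimum and unwinding $\psi_t=\exp(tV)^*\phi_t$ gives $H_t:=\exp(tV)\circ F_t\in\mathrm{Aut}(X,V)_0$ with $H_t^*\phi_t\to\phi_{\mathrm{KRS}}$ in $L^1$, hence $H_t^*\omega_t\to\omega:=\omega_{\phi_{\mathrm{KRS}}}$ weakly as currents. I expect the main obstacle to be in the first three steps carried out in the \emph{singular} setting: constructing the (modified) Kähler-Ricci flow with continuous potentials on a Fano variety with log terminal singularities and proving the monotonicity of $\mathcal{D}_V$ and $\mathcal{M}_V$ with the attendant semicontinuity and entropy estimates, and above all transferring the vanishing of the flow's dissipation to the strong subsequential limit — precisely where the continuity of $MA_{g_V}$ from Theorem \ref{thm:g-ma is well-def and cont} and the strong-topology machinery of \cite{bbgz,bbegz} are indispensable.
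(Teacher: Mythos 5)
Your proposal is correct and, for its main body, runs along the same lines as the paper's proof: Song--Tian's weak flow, the substitution $\psi_t=\exp(tV)^*\phi_t$ turning it into the modified flow, monotonicity of $\mathcal{D}_V$ (extended to the singular setting by regularization), properness modulo $\mbox{Aut}(X,V)_0$ to produce $F_t$ with bounded $J$, strong/energy compactness from \cite{bbegz}, and Theorem \ref{thm:variational prop of mod mab ding} to recognize limit points as weak K\"ahler-Ricci solitons. You deviate in two places. First, where the paper simply invokes the $V=0$ argument of \cite{bbegz} ``mutatis mutandis'' to conclude that subsequential limits minimize $\mathcal{D}_V$, you make this explicit via an integrated-dissipation argument (choose $t_j$ with vanishing dissipation, pass the soliton equation to the strong limit using the continuity of $MA_{g_V}$ from Theorem \ref{thm:g-ma is well-def and cont}, joint lower semicontinuity of relative entropy, and continuity of $\phi\mapsto\mu_\phi$ in energy); this is a legitimate unpacking of the same machinery, and you correctly identify it as the technically delicate point in the singular setting. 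Second, and more substantively, the endgame differs: the paper removes the subsequence-dependence of the limit by taking $F_t$ to minimize $J$ on the orbit $\mbox{Aut}(X,V)_0\phi_t$ and exploiting strict convexity of $J$ on the soliton orbit $\mbox{Aut}(X,V)_0\psi_*/K$ (geodesics being one-parameter subgroups), which yields convergence of the whole normalized curve to a canonical minimizer; you instead argue by contradiction, using Theorem \ref{thm:uniqueness text} (all solitons for $(X,V)$ form a single orbit) together with $L^1$-compactness of $\{-\mathcal{E}\le C\}$ and lower semicontinuity of $\mathcal{D}_V$ there, to conclude $\inf_{F}d_{L^1}(F^*\psi_t,\phi_{KRS})\to0$ and then choose $H_t$ nearly realizing the infimum. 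Your route is softer and suffices for the statement as phrased (convergence modulo $\mbox{Aut}(X,V)_0$), while the paper's choice of a canonical $J$-minimizing normalization buys the stronger conclusion that the normalized flow itself converges (in energy) to a specific soliton independent of subsequences; keep in mind also the minor points that the entropy bound you use requires the monotonicity of $\mathcal{M}_V$ (not just $\mathcal{D}_V$) along the weak flow, and that the soliton equation fixes the additive constant, so the normalization bookkeeping in your last step should be made explicit.
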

\begin{proof}
Let us denote by $\phi_{t}$ the Kähler-Ricci flow on the level of
metrics on $-K_{X},$ defined by 
\begin{equation}
\frac{d\phi_{t}}{dt}=\log\frac{MA(\phi_{t})}{\mu_{\phi_{t}}},\,\,\,\,\phi_{|t=0}=\phi_{0}\label{eq:krf for phi}
\end{equation}
where the initial data $\phi_{0}$ is assumed to be a continuous metric
in $PSH(X-K_{X})^{T}.$ In the singular case one demands that the
metric $\phi_{t}(x)$ be smooth on $]0,\infty[\times X_{reg}$ and
continuous on $X;$ the existence and uniqueness of such a flow was
established in \cite{so-t} (see also the exposition in \cite{b-g}).
Next, setting $\psi_{t}:=\exp(tV)^{*}\phi_{t}$ gives a solution to
the modified Kähler-Ricci flow $\psi_{t}$ obtained by replacing $MA$
with $MA_{g_{V}}.$ As is well-known, at least in the smooth case,
the modified Ding functional $\mathcal{D}_{V}$ is decreasing along
the modified Kähler-Ricci flow. In the singular case this is shown
by regularization, just as in the case $V=0$ considered in \cite{bbegz}
(using, in the case $V\neq0,$ the continuity properties of $\mathcal{E}_{V}$
in Prop \ref{prop:cont prop of g-energy}). In particular, by the
assumed properness mod $\mbox{Aut}(X,V)_{0}$ of $\mathcal{D}_{V},$
there exist $F_{t}$ in $\mbox{Aut}(X,V)_{0}$ such that $J(F_{t}(\psi_{t}))\leq C$
and $\mathcal{D}_{V}(F_{t}^{*}\psi_{t})=\mathcal{D}_{V}(\psi_{t}).$
The proof in the case $V=0$ in \cite{bbegz} can now be repeated,
mutis mutandis, to get that any subsequence $\psi_{t_{j}}$ converges
in energy to a minimizer $\psi_{*}$ of $\mathcal{D}_{V}$ and hence
(by Theorem \ref{thm:variational prop of mod mab ding}) to a Kähler-Ricci
soliton $\psi_{*}$ (wrt $V).$ However, a priori $\psi_{*}$ depends
on the subsequence. To get around this issue we observe that $F_{t}$
above may be taken so that $F_{t}\phi_{t}$ minimizes the functional
$J$ on the orbit $\mbox{Aut}(X,V)_{0}\phi_{t}$ and, in particular,
$J(F_{t}^{*}\phi_{t})\leq J(F^{*}\phi_{t})$ for any $F$ in $\mbox{Aut}(X,V)_{0}.$
But then, letting $t\rightarrow\infty$ reveals that any limit point
$\psi_{*}$ minimizes the functional $J$ on the space $\mathcal{H}_{KRS}$
of all KR-solitons on $(X,V),$ which by Theorem \ref{thm:uniqueness text}
may be identified with quotient $\mbox{Aut}(X,V)_{0}\psi_{*}/K,$
where $K$ is the stabilizer of $\psi_{0}.$ Finally, since $J$ is
strictly convex on $\mbox{Aut}(X,V)_{0}\psi_{*}/K$ equipped with
its natural Riemannian structure (where the geodesics are one parameter
subgroups) there is a unique such minimizer, which must hence correspond
to $\psi_{*}.$ Hence, the whole curve $\psi_{t}$ converges (up to
normalization) to $\psi_{*}$ in energy, which concludes the proof. 
\end{proof}

\section{\label{sec:The-quantized-setting}The quantized setting and semi-classical
asymptotics}

We continue with the setup in sections \ref{sub:The-torus-setting}
involving the holomorphic action of a real torus $T$ (and its complexification
$T_{c})$ on $(X,L),$ where the line bundle $L$ is assumed semi-positive
and big, i.e. it admits a smooth and semi-positively curved metric
$\phi_{0}$ whose curvature is strictly positive at some point. We
will fix such a $T-$invariant reference metric $\phi_{0}.$ Moreover,
we will denote by $V$ a fixed holomorphic vector field on $X$ generating
the action of $T$ in the sense of Lemma \ref{lem:vector field torus}
. In order to be consistent with the setup in Section \ref{sec:K=0000E4hler-Ricci-solitons}
we do allow $X$ to be singular (with log terminal singularities),
but in the proofs we may as well assume that $X$ is smooth by passing
to a resolution.

\subsection{Convergence of the spectral measure towards the Duistermaat-Heckman
measure}

Let $P_{k}:=\{\lambda_{i}^{(k)}\}\subset\Z^{m}$ be the set of all
weights for the action of the complex torus $T_{c}$ on the $N_{k}-$dimensional
vector space $H^{0}(X,kL)$ of all holomorphic sections on $X$ with
values in $kL,$ i.e. there is a decomposition

\[
H^{0}(X,kL)=\oplus_{\lambda_{i}^{(k)}\in P_{k}}E_{\lambda_{i}^{(k)}},\,\,\,\,\, s\in E_{\lambda_{i}^{(k)}}\iff\rho(\tau)^{*}s=\tau_{1}^{\lambda_{1}^{(k)}}\cdots\tau_{m}^{\lambda_{m}^{(k)}}s
\]
 Equivalently, we can view $\lambda_{i}^{(k)}$ as the joint eigenvalues,
counted with multiplicity (=the dimension of $E_{\lambda_{i}^{(k)}}),$
of the commuting infinitasimal actions of the real parts of the holomorphic
vector fields $V_{i}$ generating the action of $T_{c}.$ Here the
real part of a vector field $W$ on $X$ (with a fixed lift to $L)$
acts on $H^{0}(X,kL)$ by $\mbox{(Re}V)s:=\frac{d}{dt}_{|t=0}\exp(t\mbox{Re}V)^{*}s.$
We let 

\[
\nu_{k}:=\frac{1}{N_{k}}\sum_{i=1}^{N_{k}}\delta_{\lambda_{i}^{(k)}/k}
\]
be the corresponding normalized spectral measure on $\R^{m},$ supported
on the joint spectrum $P_{k}.$ Similarly, we denote by $\nu_{k}^{V}$
the spectral measure on $\R$ attached to the infinitesimal action
of the real part of $V$ on $H^{0}(X,kL):$ 
\[
\nu_{k}^{V}=\frac{1}{N_{k}}\sum_{i=1}^{N_{k}}\delta_{v_{i}^{(k)}/k}
\]
Hence, identifying $\mbox{Re}V$ with the corresponding element $\xi$
in $\R^{m},$ i.e. writing $\mbox{Re}V=\sum_{i=1}^{m}\xi_{i}\mbox{Re}V_{i},$
we have $v_{i}^{(k)}=\left\langle \lambda_{i}^{(k)},\xi\right\rangle .$ 
\begin{prop}
\label{prop:conv of spec meas} Assume that $L$ is semi-positive
and big. Then the spectral measures $\nu_{k}$ of the\textbf{ }infinitesimal
action on $H^{0}(X,kL)$ of the real torus $T$ converge weakly, as
$k\rightarrow\infty,$ to the D-H measure $\nu^{T}:=(m_{\phi})_{*}MA(\phi).$
In particular, if the torus $T$ is generated by a vector field $V$
corresponding to the element $\xi$ in $\R^{m},$ then the corresponding
spectral measures $\nu_{k}^{V}$ converge weakly to $\nu^{V}=f_{\phi*}^{V}MA(\phi),$
where $f_{\phi}^{V}$ is the Hamiltonian function determined by $V.$\end{prop}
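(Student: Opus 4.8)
The plan is to deduce the convergence of spectral measures from the known semiclassical asymptotics of Bergman kernels together with the equivariant nature of the torus action. First I would reduce to the ample case by a perturbation argument: replacing $L$ with $L+\epsilon A$ for an ample $\Q$-line bundle $A$ carrying a $T$-linearized metric, one gets ample line bundles whose D-H measures converge to $\nu^T$ as $\epsilon\to 0$ (by Remark \ref{ref:independence} and continuity of $\nu$ under variation of the class), and whose spectral measures are uniformly close to those of $L$ for $k$ large; alternatively one passes to a resolution and works with a big semi-positive class directly, using that the relevant asymptotics hold in that generality. So assume $L$ ample and fix a smooth positively curved $T$-invariant metric $\phi$ with moment map $m_\phi$ and D-H measure $\nu=(m_\phi)_*MA(\phi)$.

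The key step is to test $\nu_k$ against a polynomial (equivalently, against an exponential character $e^{\langle t,\cdot\rangle}$) and recognize the result as an equivariant Hilbert/Riemann-Roch quantity. Concretely, for $\tau=e^t\in T_c$ acting on $H^0(X,kL)$, the weighted dimension $\sum_i e^{\langle t,\lambda_i^{(k)}/k\rangle}$ is, after the rescaling, governed by the equivariant Riemann-Roch theorem or — more in the spirit of this paper — by the fact that $\int_X B_k(\phi)\, MA(\phi)$, where $B_k(\phi)$ is the Bergman function for the $L^2$-norm $\|s\|^2=\int|s|^2e^{-k\phi}MA(\phi)$, satisfies the pointwise asymptotic expansion $B_k(\phi)(x)\to 1$ together with the equivariant refinement that the $\tau$-twisted Bergman function concentrates, to leading order, according to the moment map. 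Thus $\frac{1}{N_k}\sum_i e^{\langle t,\lambda_i^{(k)}\rangle/k}\to \int_X e^{\langle t,m_\phi\rangle}MA(\phi)=\int_{\R^m} e^{\langle t,\lambda\rangle}\,d\nu(\lambda)$, which is exactly $(m_\phi)_*MA(\phi)$ tested against $e^{\langle t,\cdot\rangle}$. Since the supports $P_k$ lie in a fixed compact set (the dilated moment polytope $P$, up to $O(1/k)$), the family $\{\nu_k\}$ is tight, and convergence of all exponential moments forces weak convergence to $\nu^T$. The statement about $\nu_k^V$ then follows immediately by pushing forward under the linear map $\lambda\mapsto\langle\lambda,\xi\rangle$, which sends $\nu^T$ to $(f_\phi^V)_*MA(\phi)$ since $f_\phi^V=\langle m_\phi,\xi\rangle$.

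For the existence of the integer weight $\lambda^{(k)}$ and section $s_k$ used in Lemma \ref{lem:Plambdaphi is local bounded on} — which is really the only place a quantitative input beyond weak convergence is needed — I would observe that the convergence $\nu_k\to\nu^T$ together with the fact that $\nu^T$ has support equal to $P$ implies that for any $\lambda$ in the interior of $P$ there are weights $\lambda_i^{(k)}$ with $\lambda_i^{(k)}/k$ arbitrarily close to any prescribed point of $P$, in particular $\geq\lambda$ componentwise for $k$ large, and the corresponding eigenspace supplies $s_k$; this circularity is only apparent, since that lemma is invoked later and the present proposition can be proved first.

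The main obstacle I anticipate is making the equivariant Bergman kernel asymptotics rigorous in the semi-positive and big (and possibly singular) setting rather than the strictly positive smooth setting where they are classical: one must either invoke the known extension of Bergman kernel asymptotics to big line bundles (where mass concentrates on the ample locus) or circumvent it by the perturbation/resolution reduction sketched above, taking care that the $T$-linearization is preserved throughout. A secondary technical point is uniform control of the supports $P_k$, i.e. that $\frac1k P_k$ stays within a fixed neighborhood of $P$; this follows from the standard fact that $\frac1k P_k\to P$ in the Hausdorff sense, itself a consequence of the weight description of $H^0(X,kL)$ and the identification of $P$ with the moment polytope, but it should be cited or argued carefully to justify tightness.
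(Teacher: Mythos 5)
Your proposal is essentially sound and, at its core, rests on the same input as the paper's proof: the paper, following the argument of \cite{w}, reduces the statement to semiclassical spectral asymptotics for a Toeplitz operator with smooth symbol (here $f_{\phi}^{V}$), reduces the torus case to $T=S^{1}$ using that the generators commute, and then handles the semi-positive and big case by citing \cite{be1}, where those asymptotics are established in exactly that generality. Your variant --- testing the joint spectral measure against exponential characters, getting tightness from the uniform boundedness of $\frac{1}{k}P_{k}$, and pushing forward along $\lambda\mapsto\left\langle \lambda,\xi\right\rangle $ for the statement about $\nu_{k}^{V}$ --- is a legitimate repackaging of the same circle of ideas (it replaces the reduction to rank one by a Cram\'er--Wold/Laplace-transform argument), and your observation that the proposition is logically prior to Lemma \ref{lem:Plambdaphi is local bounded on}, whose proof invokes it, is correct and consistent with the paper's order of deduction.

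The one place where your write-up would not survive as stated is the reduction you propose as primary: perturbing $L$ to $L+\epsilon A$ and asserting that the spectral measures of $H^{0}(X,k(L+\epsilon A))$ are ``uniformly close'' to those of $H^{0}(X,kL).$ There is no natural equivariant comparison between these spaces (no inclusion in general; tensoring with an invariant section of $k\epsilon A,$ when one exists, gives only a one-sided, weight-shifted injection), so this closeness is a nontrivial claim needing its own proof, not a routine continuity statement. The paper sidesteps this entirely: no perturbation is needed, because the Toeplitz/Bergman asymptotics of \cite{be1} hold directly for $L$ semi-positive and big with a smooth non-negatively curved $T-$invariant metric $\phi.$ Your own fallback --- invoking the asymptotics in the big semi-positive setting --- is precisely the paper's route and should be made primary. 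Likewise, the identification of the rescaled infinitesimal generator of the torus action on $H^{0}(X,kL)$ with the Toeplitz operator of symbol $m_{\phi}$ up to $O(1/k),$ which you gloss as ``equivariant Riemann--Roch or twisted Bergman concentration,'' is the step carried out in \cite{w} and should be cited or argued rather than treated as self-evident.
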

\begin{proof}
For $L$ ample and $T=S^{1}$ this was shown in \cite{w} (but it
also follows from general results on torus actions in symplectic geometry).
The case of a torus clearly reduces, using that the generators commute,
to the case of $T=S^{1}.$ In order to deal with the case of $L$
semi-positive and big we fix a $T-$invariant metric $\phi$ which
is smooth and of non-negative curvature and note that, following the
argument in \cite{w}, the problem is reduced to establishing the
corresponding spectral asymptotics for a Toeplitz operator with smooth
symbol $f$ (in this case equal to $f_{\phi}^{V}).$ In the case of
$L$ semi-positive and big the asymptotics in question were obtained
in \cite{be1}. \end{proof}
\begin{cor}
\label{cor:number of eigenv}For $\lambda$ an interior point in $P$
denote by $N_{k}(\lambda)$ the number of eigensections in the $N_{k}-$
dimenstional space $H^{0}(X,kL)$ with joint eigenvalues $\lambda^{(k)}$
such that $\lambda^{(k)}/k\geq\lambda$ as vectors in $\R^{m}.$ Then,
for almost any $\lambda,$ 
\[
\lim_{k\rightarrow\infty}\frac{N_{k}(\lambda)}{N_{k}}=\int_{X}MA(P_{\lambda}\phi)
\]
where $\phi$ is any locally bounded $T-$invariant metric on $L.$\end{cor}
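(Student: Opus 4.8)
The plan is to reinterpret the ratio $N_k(\lambda)/N_k$ as the mass assigned by the normalized spectral measure $\nu_k$ to the closed orthant $Q_\lambda:=\{p\in\R^m:\ p\geq\lambda\}$, to pass to the limit via the weak convergence $\nu_k\to\nu^T$ of Proposition~\ref{prop:conv of spec meas}, and finally to identify $\nu^T(Q_\lambda)$ with $\int_X MA(P_\lambda\phi)$ using the Lemma establishing formula~(\ref{eq:relation for ma of chi in lemma}). Observe first that, $\lambda$ being interior to $P$, the envelope $P_\lambda\phi$ lies in $PSH(X,L)^T$ by Lemma~\ref{lem:Plambdaphi is local bounded on}, so that $\int_X MA(P_\lambda\phi)$ is well defined.

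Directly from the definitions of $N_k(\lambda)$ and of $\nu_k$ one has
\[
\frac{N_k(\lambda)}{N_k}=\frac{1}{N_k}\#\{i:\ \lambda_i^{(k)}/k\geq\lambda\}=\nu_k(Q_\lambda).
\]
By Proposition~\ref{prop:conv of spec meas} the measures $\nu_k$ converge weakly to $\nu^T=(m_\phi)_*MA(\phi)$, which is absolutely continuous with respect to Lebesgue measure on $\R^m$. The topological boundary of $Q_\lambda$ is contained in $\bigcup_{j=1}^{m}\{p_j=\lambda_j\}$, hence is Lebesgue-null and therefore $\nu^T$-null; so $Q_\lambda$ is a $\nu^T$-continuity set and the portmanteau theorem yields $\nu_k(Q_\lambda)\to\nu^T(Q_\lambda)$ for every interior point $\lambda$ of $P$.

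It remains to check that $\nu^T(Q_\lambda)=\int_X MA(P_\lambda\phi)$. For $\phi$ smooth this is immediate: by the Lemma establishing formula~(\ref{eq:relation for ma of chi in lemma}) one has, for almost every $\lambda$, the identity $MA(P_\lambda\phi)=1_{\{m_\phi\geq\lambda\}}MA(\phi)$ (combining that formula with the relation $\{m_\phi\geq\lambda\}=\{P_\lambda\phi=\phi\}$), whose total mass is $\bigl((m_\phi)_*MA(\phi)\bigr)(Q_\lambda)=\nu^T(Q_\lambda)$. For an arbitrary locally bounded $T$-invariant metric $\phi$ we have $\int_X MA(P_\lambda\phi)=\int_X MA_{\chi_\lambda}(\phi)$ by the very definition of $MA_{\chi_\lambda}$, and by the argument of Remark~\ref{ref:independence}---using that $|P_\lambda\phi_1-P_\lambda\phi_2|$ is bounded whenever $|\phi_1-\phi_2|$ is (Lemma~\ref{lem:basic prop of lower env metric}), together with Proposition~\ref{prop:cont of ma with sing}---this integral is independent of the locally bounded metric chosen, so it equals the value just computed for a smooth $\phi$. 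Combining the two steps gives $\lim_{k\to\infty}N_k(\lambda)/N_k=\int_X MA(P_\lambda\phi)$ for almost every interior point $\lambda\in P$, as claimed.

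The only genuinely delicate point is the ``almost every $\lambda$'' restriction: it comes solely from the appeal to Sard's theorem in the cited Lemma, needed so that $\{P_\lambda\phi=\phi\}$ is the closure of an open subset of $X$ and hence that the identity $MA(P_\lambda\phi)=1_{\{m_\phi\geq\lambda\}}MA(\phi)$ holds exactly. The weak-convergence half of the argument, and the verification that $Q_\lambda$ is a continuity set, are valid for every interior $\lambda$, so no additional loss occurs there.
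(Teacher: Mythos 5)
Your proposal is correct and follows essentially the same route as the paper: the ratio $N_k(\lambda)/N_k$ is the $\nu_k$-mass of the orthant, one passes to the limit via Proposition \ref{prop:conv of spec meas} tested against $\chi_\lambda$, identifies the limit with $\int_X MA(P_\lambda\phi)$ through formula \ref{eq:relation for ma of chi in lemma} in the smooth case, and then removes the smoothness assumption via Lemma \ref{lem:basic prop of lower env metric} and Proposition \ref{prop:cont of ma with sing}. Your explicit portmanteau/continuity-set justification (using absolute continuity of the Duistermaat-Heckman measure) merely spells out what the paper leaves implicit in the phrase ``using $\chi_\lambda$ as a test function.''
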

\begin{proof}
For $\phi$ smooth and of non-negative curvature this follows from
combing the previous proposition with formula \ref{eq:relation for ma of chi in lemma}
using $\chi_{\lambda}$ as a test function. But, by Prop \ref{prop:cont of ma with sing},
the mass of $MA(P_{\lambda}\phi)$ is the same if $\phi$ is replaced
with a locally bounded metric (since the action of the operator $P_{\lambda}$
only changes with a bounded term). \end{proof}
\begin{rem}
Conversely, if one takes the latter corollary as granted then it implies
the previous proposition by a standard measure theory argument (compare
the approach used in \cite{hi}).
\end{rem}

\subsection{The quantized energy functionals and their asymptotics}

Recall that for any positive integer $k$ the \emph{quantization at
level $k$ }of the space $\mathcal{H}(L)$ is the space $\mathcal{H}_{k}$
of all Hermitian metrics $H$ on the $N_{k}-$dimensional complex
vector space $H^{0}(X,kL)$ (see \cite{do3}). Fixing a reference
element $H_{0}$ the space $\mathcal{H}_{k}$ may be identified with
the symmetric space $GL(N_{k},\C)/U(N_{k})).$ We will equip $\mathcal{H}_{k}$
with the corresponding symmetric Riemannian metric and denote by $\mathcal{H}_{k}^{T}$
the corresponding $T-$invariant subspace. This means that the geodesics
in $\mathcal{H}_{k}$ corresponds to one-parameter subgroups in $GL(N_{k},\C).$
Recall also that there is a map, the ``Fubini-Study map'' 
\[
FS_{k}:\,\,\mathcal{H}_{k}\rightarrow\mathcal{H}(L),\,\,\, FS_{k}(H):=\sup_{s\in H_{0}(X,kL)}\log\left(\frac{|s|^{2}}{H(s,s)}\right)
\]
which is compatible with the torus action. Next, recall that $E_{\lambda^{(k)}}$
denotes the joint eigenspace in $H^{0}(X,kL)$ corresponding to the
joint eigenvalue $\lambda^{(k)}$ in $\R^{m}$ attached to the torus
$T.$ To any triple $(\phi,\mu,g)$ consisting of a $T-$invariant
smooth metric $\phi$ on $L$ with semi-positive curvature, a $T-$invariant
probability measure $\mu$ on $X$ (of finite energy) and a bounded
function $g$ on $P,$ we attach a $T-$invariant Hilbert norm (metric)
denoted by $\mbox{Hilb}(\phi,\mu,g),$ defined by 
\[
\mbox{Hilb}(\phi,\mu,g)(s_{i},s_{i})=g(\lambda_{i}^{(k)}/k)^{-1}\int_{X}|s_{i}|^{2}e^{-k\phi}d\mu\,\,\,\,\mbox{for \ensuremath{s_{i}\in E_{\lambda^{(k)}}\subset H^{0}(X,kL)}}
\]
and declaring that the different subspace $E_{\lambda^{(k)}}$mae
mutually orthogonal wrt $\mbox{Hilb}(\phi,\mu,g).$ For $g$ strictly
positive this defines a Hilbert norm on $H^{0}(X,kL),$ i.e. an element
in $\mathcal{H}_{k}^{T}$ and in general it defines a Hilbert norm
on the subspace of $H^{0}(X,kL)$ spanned by all eigensections such
that $g(\lambda_{i}^{(k)})>0.$

\subsubsection{The $g-$Bergman measure}

To a triple $\mbox{Hilb}(\phi,\mu,g)$ we associate the following
Bergman type function at level $k:$ 
\[
B_{(k\phi,\mu,g)}:=\sup_{s\in H_{0}(X,kL)}\frac{|s|_{k\phi}^{2}}{\mbox{Hilb}(k\phi,\mu,g)(s,s)}
\]
coinciding with the classical Bergman function for $g=1,$ also called
the density of states (see \cite{ze}). In the case when $\mu=dV$
for a fixed volume form $dV$ on $X$ we will drop the explicit dependence
of $\mu$ from the notation and simply write $B_{k\phi,g}:=B_{(k\phi,\mu,g)}$
which can thus be written as 

\[
B_{k\phi,g}:=\sum_{\lambda_{i}^{(k)}\in P_{k}}g(\lambda_{i}^{(k)}/k)B_{k\phi,\lambda^{(k)}}
\]
where $B_{k\phi,\lambda^{(k)}}$ is the ordinary Bergman function
of the subspace $E_{\lambda^{(k)}}$ of $H^{0}(X,kL).$ According
to the following proposition one can view the\emph{ $g-$Bergman measure}
$B_{k\phi,g}dV/N_{k}$ as the quantization, at level $k,$ of the
$g-$Monge-Ampère measure: 
\begin{prop}
\label{prop:conv of g-bergman}Assume that $\phi$ is in $\mathcal{H}(L)^{T}.$
Then the following convergence holds in the weak topology of measures
on $X$ 
\[
\frac{B_{k\phi,g}dV}{N_{k}}\rightarrow MA_{g}(\phi)
\]

Moreover, if $L$ is ample, $g$ is smooth and $\phi$ has positive
curvature, then the convergence above holds in the uniform topology
(on the level of densities). \end{prop}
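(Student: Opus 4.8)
The plan is to quantize the very construction of $MA_{g}$: first handle the characteristic functions $g=\chi_{\lambda}$, then pass to general continuous $g$ by uniform approximation by step functions. Since $B_{k\phi,g}=\sum_{\lambda^{(k)}}g(\lambda^{(k)}/k)B_{k\phi,\lambda^{(k)}}$ (as already noted), the map $g\mapsto B_{k\phi,g}$ is linear and order preserving, so if $g_{j}^{\pm}$ are step functions with $g_{j}^{-}\le g\le g_{j}^{+}$ converging uniformly to $g$, with all their ``steps'' placed at generic levels interior to $P$, then for any continuous $u\ge0$ on $X$
\[
\int_{X}u\,\frac{B_{k\phi,g_{j}^{-}}\,dV}{N_{k}}\le\int_{X}u\,\frac{B_{k\phi,g}\,dV}{N_{k}}\le\int_{X}u\,\frac{B_{k\phi,g_{j}^{+}}\,dV}{N_{k}}.
\]
Letting $k\to\infty$ and then $j\to\infty$, and using the continuity of $g\mapsto MA_{g}(\phi)$ (Corollary~\ref{cor:comp of g ma and ma} together with the definition of $MA_{g}$), this reduces the weak convergence statement to the case $g=\chi_{\lambda}$ with $\lambda$ interior to $P$ --- exactly as in Step one of the proof of Theorem~\ref{thm:g-ma is well-def and cont}.

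For $g=\chi_{\lambda}$ the measure $\mu_{k}:=B_{k\phi,\chi_{\lambda}}\,dV/N_{k}$ is the normalized Bergman measure of the subspace $W_{k}^{\lambda}:=\bigoplus_{\lambda^{(k)}/k\ge\lambda}E_{\lambda^{(k)}}$ of $H^{0}(X,kL)$, and its total mass $\dim W_{k}^{\lambda}/N_{k}=N_{k}(\lambda)/N_{k}$ converges, by Corollary~\ref{cor:number of eigenv}, to $\int_{X}MA(P_{\lambda}\phi)$. To localize this mass I would run the envelope argument from the proof of Lemma~\ref{lem:Plambdaphi is local bounded on}: an eigensection $s$ of weight $\mu$ with $\mu/k\ge\lambda$ satisfies $\tfrac{1}{k}\log|s|^{2}\le P_{\lambda}\phi+\tfrac{1}{k}\log\|s\|_{\sup,k\phi}^{2}$, hence $|s(x)|_{k\phi}^{2}\le\|s\|_{\sup,k\phi}^{2}\,e^{k(P_{\lambda}\phi-\phi)(x)}$; together with the submean value estimate $\|s\|_{\sup,k\phi}^{2}\le e^{o(k)}\int_{X}|s|^{2}e^{-k\phi}dV$ (with the $o(k)$ uniform), summing over an orthonormal basis of $W_{k}^{\lambda}$ gives $N_{k}^{-1}B_{k\phi,\chi_{\lambda}}(x)\le e^{o(k)}e^{k(P_{\lambda}\phi-\phi)(x)}$, which tends to $0$ locally uniformly on the open set $\{P_{\lambda}\phi<\phi\}$. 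Hence every weak limit point $\mu_{\infty}$ of $\{\mu_{k}\}$ is carried by $\{P_{\lambda}\phi=\phi\}$, which for a.e.\ $\lambda$ equals $\overline{\Omega_{\lambda}}$ with $\Omega_{\lambda}:=\mathrm{int}\{m_{\phi}\ge\lambda\}$ (Sard's theorem, as in the Lemma preceding Theorem~\ref{thm:g-ma is well-def and cont}). On the other hand $\mu_{k}\le B_{k\phi}\,dV/N_{k}$, which by the Tian--Zelditch--Catlin type asymptotics for the full Bergman function (valid for $L$ semi-positive and big, cf.\ \cite{be1}) converges weakly to $MA(\phi)$, so $\mu_{\infty}\le MA(\phi)$. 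Since $\partial\Omega_{\lambda}$ is $MA(\phi)$-null for a.e.\ $\lambda$, the measure $\mu_{\infty}\le1_{\overline{\Omega_{\lambda}}}MA(\phi)$ has total mass $\int_{\overline{\Omega_{\lambda}}}MA(\phi)$ and therefore must equal it; by the relation \ref{eq:relation for ma of chi in lemma} this is $MA(P_{\lambda}\phi)=MA_{\chi_{\lambda}}(\phi)$ (the total mass of $MA(P_{\lambda}\phi)$ being computed after the bounded perturbation of Proposition~\ref{prop:cont of ma with sing}). As the limit is independent of the subsequence, this settles the weak statement.

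For the uniform statement ($L$ ample, $\phi$ of positive curvature, $g$ smooth) I would identify $B_{k\phi,g}\,dV/N_{k}$ with the diagonal Bergman density of the operator $g(\hat{m}_{k})\Pi_{k}$ on $H^{0}(X,kL)$, where $\Pi_{k}$ is the Bergman projection and $\hat{m}_{k}$ is the self-adjoint ``quantized moment map'' acting by $\lambda^{(k)}/k$ on $E_{\lambda^{(k)}}$. The key input is that, the $T$-action being Hamiltonian with moment map $m_{\phi}$, the operator $\hat{m}_{k}$ differs from the Berezin--Toeplitz operator $T_{m_{\phi}}$ by an $O(1/k)$ error (the standard asymptotic expansion of the quantum moment map) and lies in the Toeplitz algebra; smooth functional calculus in that algebra (following Helffer--Sj\"ostrand, resp.\ Bordemann--Meinrenken--Schlichenmaier) then yields $g(\hat{m}_{k})=T_{g\circ m_{\phi}}+O(1/k)$ in operator norm, so the Bergman density of $g(\hat{m}_{k})\Pi_{k}$ equals $g(m_{\phi})\,B_{k\phi}/N_{k}+O(1/k)$, which converges uniformly to the density $g(m_{\phi})\,MA(\phi)$ of $MA_{g}(\phi)$ by the uniform Bergman asymptotics for positively curved $\phi$. (Alternatively one may invoke the known Gaussian concentration, of width $O(k^{-1/2})$, of the equivariant Bergman functions $B_{k\phi,\lambda^{(k)}}$ about their level sets $\{m_{\phi}=\lambda^{(k)}/k\}$, and absorb the resulting $O(k^{-1/2})$ error into the modulus of continuity of $g$.) I expect this last step --- importing the appropriate equivariant Berezin--Toeplitz asymptotics, in particular the fact that the quantized moment operator has principal symbol $m_{\phi}$ --- to be the only genuinely technical input; the weak convergence, by contrast, rests entirely on the already-established convergence of spectral measures (through Corollary~\ref{cor:number of eigenv}) and the elementary envelope estimate above.
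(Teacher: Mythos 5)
Your argument for the weak convergence is essentially the paper's own: reduce to $g=\chi_{\lambda}$ by sandwiching with step functions, localize the equivariant Bergman function on the contact set $\{P_{\lambda}\phi=\phi\}$ via the envelope estimate (your per-eigensection bound summed over an orthonormal basis is the same inequality as the paper's \ref{eq:exponential localiz}, proved by the same comparison with $P_{\lambda}\phi$ as in Lemma \ref{lem:Plambdaphi is local bounded on}), pin down the total mass by Corollary \ref{cor:number of eigenv} and identify the limit through \ref{eq:relation for ma of chi in lemma}; the only cosmetic difference is that you obtain the upper bound from $B_{k\phi,\chi_{\lambda}}\leq B_{k\phi}$ together with the weak convergence of the full Bergman measure, where the paper quotes the local holomorphic Morse inequalities of \cite{be1} (which in addition give $L^{1}$-convergence of densities). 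For the uniform statement you genuinely diverge: the paper writes $B_{k\phi,g}(x)=\left\langle g(\xi_{k})K_{x}^{(k)},K_{x}^{(k)}\right\rangle$, reduces to polynomial $g$, applies $g(\xi_{k})$ to the $\mathcal{C}^{\infty}$ off-diagonal Bergman kernel expansion and restricts to the diagonal, so that it never needs to compute the limiting symbol --- the limit is identified a posteriori from the weak convergence; you instead compute the limit directly by Berezin--Toeplitz operator calculus, using that the normalized generator $\hat{m}_{k}$ equals $T_{m_{\phi}}+O(1/k)$ (Tuynman-type expansion of the quantized moment map) and smooth functional calculus in the Toeplitz algebra. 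Your route is correct under the stated hypotheses (ample $L$, positively curved $\phi$, smooth $g$), but note that passing from the operator-norm identity $g(\hat{m}_{k})=T_{g\circ m_{\phi}}+O(1/k)$ to the pointwise statement about diagonal densities requires one further standard input, namely the uniform diagonal (Berezin transform) asymptotics of Toeplitz kernels with smooth symbol; with that quoted, your argument buys an explicit identification of the limit and avoids the polynomial-approximation step, at the price of importing more external semiclassical machinery than the paper's more self-contained kernel-expansion argument.
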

\begin{proof}
\emph{Weak convergence:} Proceeding as in the proof of Theorem \ref{thm:g-ma is well-def and cont}
it is enough to prove the case when $g=\chi_{\lambda},$ so that $B_{k\phi,\chi_{\lambda}}$
is the Bergman function of the subspace generated by all eigensections
with joint eigenvalues $\lambda^{(k)}$ such that $\lambda^{(k)}/k\geq\lambda.$
First, by the local holomorphic Morse inequalities in \cite{be1}
the following point-wise upper bound holds: 
\[
\limsup_{k\rightarrow\infty}\frac{B_{k\phi,\chi_{\lambda}}dV}{N_{k}}\leq1_{\{dd^{c}\phi\geq0\}}MA(\phi)
\]
together with the uniform bound $\frac{B_{k\phi,\chi_{\lambda}}}{N_{k}}\leq C$.
Moreover, there exists a constant $C$ such that 
\begin{equation}
\frac{B_{k\phi,\chi_{\lambda}}dV}{N_{k}}\leq C\exp(-k(\phi-P_{\lambda}\phi)).\label{eq:exponential localiz}
\end{equation}
Indeed, by the uniform bound above $\phi_{k}:=\phi+\frac{1}{k}\log B_{k\phi,\chi_{\lambda}}-\log N_{k}-\log C\leq\phi.$
But then it follows from the definition of the envelope $P_{\lambda}\phi$
that $\phi_{k}\leq P_{\lambda}\phi$ (compare the proof of Lemma \ref{lem:Plambdaphi is local bounded on}),
which proves the inequality \ref{eq:exponential localiz}. All in
all this means that 
\[
\limsup_{k\rightarrow\infty}\frac{B_{k\phi,\chi_{\lambda}}dV}{N_{k}}\leq1_{\{dd^{c}\phi\geq0\}\cap\{P_{\lambda}\phi=\phi\}}MA(\phi)
\]
Finally, by Cor \ref{cor:number of eigenv} $\int\frac{B_{k\phi,\chi_{\lambda}}dV}{N_{k}}$
converges to $\int MA(P_{\lambda}\phi)$ which by formula \ref{eq:relation for ma of chi in lemma}
coincides with $1_{\{P_{\lambda}(\phi)=\phi\}}MA(\phi)$ and hence,
by basic integration theory, we conclude that the desired weak convergence
holds (in fact, one even gets the $L^{1}-$convergence of the densities
as in \cite{be1}).

\emph{Uniform convergence: }As in the the proof of Prop \ref{prop:conv of spec meas}
it is enough to consider the case when the rank of $T$ is one. We
denote by $\xi_{k}$ the linear operator on $H^{0}(X,kL)$ corresponding
to $1/k$ times differentiation wrt the real part of the generator
$V$ of $T.$ Next observe that $B_{k\phi,g}$ can be written as the
scalar product 
\[
B_{k\phi,g}(x)=\left\langle g(\xi_{k})K_{x}^{(k)},K_{x}^{(k)}\right\rangle ,
\]
 where $K_{x}^{(k)}(y)=K^{(k)}(x,y)$ is the Bergman kernel of $H^{0}(X,kL),$
i.e. the integral kernel of the orthogonal projection $\Pi_{k}$ from
the space $L^{2}(X,kL)$ of all square integrable sections of $kL$
(equipped with the $L^{2}-$norm defined by $(dV,k\phi))$. Moreover,
we may as well assume that the function $g$ is a polynomial (by a
simple approximation argument using the uniform bound $B_{k\phi}/k^{n}\leq C$).
By well-known results (see the review \cite{ze} and references therein)
$K^{(k)}(x,y)$ admits, in the case when $L$ is ample and $\phi$
has positive curvature a local asymptotic expansion of the form 
\[
K^{(k)}(x,y)/k^{n}\sim e^{k\psi(x,y)}(b_{0}(x,y)+b_{1}(x,y)k^{-1}+\ldots)
\]
in the $\mathcal{C}^{\infty}-$topology, where $\psi$ is a certain
local smooth function. Hence, fixing local coordinates centered at
$x$ and expressing the vector field $V$ in terms of the local coordinates
reveals that $g(\xi_{k})K_{x}^{(k)}(y)$ admits a local asymptotic
expansion of a similar form. Finally, setting $y=x$ we deduce in
particular that $B_{k\phi,g}(x)$ converges in the uniform topology
to some limiting function, which by the weak convergence above must
coincide with the density of $MA_{g}(\phi).$ 
\end{proof}

\subsubsection{Quantized energy functionals}

Next, following \cite{do3,bbgz}, we consider the ``quantizations''
on $\mathcal{H}_{k}^{T}$ of the functionals $\mathcal{E}$ and $J$
that we shall denote as follows: 
\begin{equation}
\mathcal{E}^{(k)}(H):=-\frac{1}{kN_{k}}\log\det(H),\,\,\, J^{(k)}(H)=-\mathcal{E}^{(k)}(H)+\mathcal{L}_{\mu_{0}}(FS(H)),\label{eq:def of quantized energy functionals}
\end{equation}
where we have identified $H$ with an Hermitian positive definite
matrix, using the reference element $H_{0}:=\mbox{Hilb}(k\phi_{0},dV)$
(writing $\mu_{0}=dV)$ and the functional $\mathcal{L}_{\mu_{0}}$
was defined in formula \ref{eq:def of J and L mu}.More generally,
$g-$analogs of these functional may be defined by setting 

\[
\mathcal{E}_{g}^{(k)}(H):=\sum_{\lambda_{i}^{(k)}\in P_{k}}g(\lambda_{i}^{(k)}/k)\mathcal{E}_{E_{\lambda_{i}^{(k)}}}^{(k)}(H),\,\,\,\mathcal{E}_{E_{\lambda_{i}^{(k)}}}^{(k)}(H):=-\frac{1}{kN_{k}}\log\det H_{|E_{\lambda_{i}^{(k)}}}
\]
Concretely, picking a base $s_{i}^{(k)}$ in $H^{0}(X,kL)$ which
is $H_{0}-$orthonormal and $H-$orthogonal and writing $H(s_{i}^{(k)},s_{i}^{(k)})=e^{-\mu_{i}^{(k)}}H_{0}(s_{i}^{(k)},s_{i}^{(k)})$
we can express 
\[
\mathcal{E}_{g}^{(k)}(H)=\frac{1}{kN_{k}}\sum_{i=1}^{N_{k}}g(\lambda_{i}^{(k)}/k)\mu_{i}^{(k)}
\]
This expression reveals that $\mathcal{E}_{g}^{(k)}(H)$ is affine
along geodesics in $\mathcal{H}_{T}$ (since the corresponding geodesics
$H_{t}^{(k)}$ are defined by the scaled eigenvalues $t\mu_{i}^{(k)}).$
Next, we introduce a $g-$analog of Donaldson's $\mathcal{L}-$functional
on the space $\mathcal{H}(L)^{T}:$ 
\begin{equation}
\mathcal{L}_{(\mu,g)}^{(k)}(\phi):=\mathcal{E}_{g}^{(k)}(\mbox{Hilb}(k\phi,\mu)=\mathcal{E}^{(k)}(\mbox{Hilb}(k\phi,\mu,g)\label{eq:def of g-donaldson l funct}
\end{equation}

\begin{prop}
\label{prop:conv of l-functional}The differential of $\phi\mapsto\mathcal{L}_{(\mu,g)}^{(k)}(\phi)$
is naturally identified with the corresponding Bergman measure: 
\[
d\mathcal{L}_{(\mu,g)}^{(k)}{}_{|\phi}=\frac{1}{N_{k}}B_{(k\phi,\mu,g)}\mu
\]
and if $\mu$ is a volume form (i.e. $\mu=dV),$ then $\mathcal{L}_{g}^{(k)}(\phi)$
converges to $\mathcal{E}_{g}(\phi),$ as $k\rightarrow\infty,$ for
any $\phi\in\mathcal{H}(X,L)^{T}.$ Moreover, in general, the functional
$\mathcal{L}_{(\mu,g)}^{(k)}(\phi)$ is concave along affine curves
in $\mathcal{H}(L)^{T}.$
\end{prop}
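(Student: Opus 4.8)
The plan is to prove the three assertions in turn, the differential formula being the engine for the other two. Throughout I use that $\mathcal L^{(k)}_{(\mu,g)}(\phi)=\mathcal E^{(k)}_g(\mathrm{Hilb}(k\phi,\mu))=-\tfrac1{kN_k}\sum_\lambda g(\lambda/k)\log\det\big(\mathrm{Hilb}(k\phi,\mu)|_{E_\lambda}\big)$, the determinants being taken relative to the fixed reference $H_0=\mathrm{Hilb}(k\phi_0,dV)$. \emph{Differential formula.} Fix $\phi\in\mathcal H(L)^T$ and a $T$-invariant tangent direction $u\in C^\infty(X)$, and choose a basis $\{s_j\}$ of $H^0(X,kL)$ subordinate to the joint eigenspace decomposition $H^0(X,kL)=\oplus_{\lambda^{(k)}}E_{\lambda^{(k)}}$ and orthonormal for $\mathrm{Hilb}(k\phi,\mu)$. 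Since $u$ is $T$-invariant, the Gram matrix of $\mathrm{Hilb}(k(\phi+tu),\mu)$ in this basis is block diagonal along the eigenspaces, and differentiating $\log\det M(t)$ at $t=0$ via $\mathrm{tr}\,M'(0)$ gives $\tfrac{d}{dt}\big|_{t=0}\log\det\big(\mathrm{Hilb}(k(\phi+tu),\mu)|_{E_\lambda}\big)=-k\int_X\big(\textstyle\sum_{j:\lambda_j=\lambda}|s_j|_{k\phi}^{2}\big)\,u\,d\mu$. Summing with weights $g(\lambda/k)$ and the prefactor $-\tfrac1{kN_k}$ recognizes, by the definition recalled before the proposition, the $g$-Bergman function $B_{(k\phi,\mu,g)}=\sum_j g(\lambda^{(k)}_j/k)\,|s_j|_{k\phi}^{2}$, whence $d\mathcal L^{(k)}_{(\mu,g)}\big|_\phi=\tfrac1{N_k}B_{(k\phi,\mu,g)}\mu$.

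\emph{Convergence.} Take $\mu=dV$ and join $\phi_0$ to $\phi$ by the affine segment $\phi_t=\phi_0+t(\phi-\phi_0)$, which stays in $\mathcal H(L)^T$ because the curvature form is linear in the potential. Since $\mathrm{Hilb}(k\phi_0,dV)=H_0$ one has $\mathcal L^{(k)}_g(\phi_0)=0=\mathcal E_g(\phi_0)$, so it suffices to match the derivatives along the segment. By the differential formula $\tfrac{d}{dt}\mathcal L^{(k)}_g(\phi_t)=\tfrac1{N_k}\int_X(\phi-\phi_0)\,B_{k\phi_t,g}\,dV$, while $\tfrac{d}{dt}\mathcal E_g(\phi_t)=\int_X(\phi-\phi_0)\,MA_g(\phi_t)$ by the defining property of $\mathcal E_g$ (and the integral formula for $\mathcal E_g$ along affine curves). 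Proposition \ref{prop:conv of g-bergman} gives, for each fixed $t$, weak convergence $\tfrac1{N_k}B_{k\phi_t,g}\,dV\to MA_g(\phi_t)$; tested against the fixed smooth function $\phi-\phi_0$ this is pointwise-in-$t$ convergence of the integrands, and the uniform bound $B_{k\phi_t,g}/N_k\le C$ — from $B_{k\phi_t}\le Ck^n$, uniform over the compact family $\{\phi_t\}$, together with $|g|\le\sup_P|g|$ — lets dominated convergence interchange $\lim_{k\to\infty}$ with $\int_0^1(\cdot)\,dt$. Integrating yields $\mathcal L^{(k)}_g(\phi)\to\mathcal E_g(\phi)$.

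\emph{Concavity.} Let $\phi_s$, $s\in[0,1]$, be an affine curve in $\mathcal H(L)^T$ with $T$-invariant tangent $\psi=\dot\phi_s$, so once more $\mathrm{Hilb}(k\phi_s,\mu)$ is block diagonal along the eigenspaces. On a block $E_\lambda$ of dimension $d$ the Gram (Andreief) identity writes $\det\big(\mathrm{Hilb}(k\phi_s,\mu)|_{E_\lambda}\big)=\int_{X^d}F\,e^{-ks(\psi(x_1)+\cdots+\psi(x_d))}\,d\mu^{\otimes d}$ with $F\ge0$ independent of $s$, so $s\mapsto\log\det\big(\mathrm{Hilb}(k\phi_s,\mu)|_{E_\lambda}\big)$ is convex, being of the form $s\mapsto\log\int e^{-s\ell}\,d\gamma$ for a positive measure $\gamma$ and linear $\ell$ (its second derivative is a variance); equivalently this block-level statement is the classical convexity of $\log\det$ of a family of Gram matrices, cf.\ \cite{bbgz}. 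Since $g\ge0$ and $\mathcal L^{(k)}_{(\mu,g)}=\sum_\lambda g(\lambda/k)\big(-\tfrac1{kN_k}\log\det\mathrm{Hilb}(k\cdot,\mu)|_{E_\lambda}\big)$ is a nonnegative combination of concave functions of $s$, it is concave.

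The only step requiring genuine care is the interchange of limits in the convergence part: Proposition \ref{prop:conv of g-bergman} supplies only \emph{weak} convergence of the Bergman measures, so before invoking dominated convergence one must verify that the a priori bound on $B_{k\phi_t,g}/N_k$ is uniform in the parameter $t$, which it is by compactness of the segment of smooth metrics. The differential formula and the concavity are otherwise formal once the eigenspace-block decomposition has reduced them to the classical $g=1$ statements.
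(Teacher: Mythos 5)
Your argument is correct and coincides with the intended one: the paper states this proposition without a written proof, as a routine blockwise adaptation of the $g=1$ case in \cite{do3,bbgz}, and your three steps (differentiating $\log\det$ block by block along the joint eigenspace decomposition, integrating the differential along the affine segment and invoking Proposition \ref{prop:conv of g-bergman} together with the uniform bound $B_{k\phi_{t},g}/N_{k}\leq C$, and blockwise convexity of $\log\det$ of Gram matrices combined with $g\geq0$) are precisely the details being left to the reader. The uniform-in-$t$ bound you single out is indeed the only point requiring care, and your justification by compactness of the segment of smooth metrics is adequate.
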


\subsection{$(\mu,g)-$balanced metrics }

Given a pair $(\mu,g)$ such that $\mu$ and $g\nu$ are probability
measures on $X$ and $P,$ respectively, with $\mu$ of finite energy
and $g$ continuous, we introduce the map 
\[
\mathcal{T}_{(g,\mu),k}:=\mbox{Hilb}_{(g,\mu)}\circ FS:\,\,\,\mathcal{H}_{k}\rightarrow\mathcal{H}_{k}
\]
A metric in $\mathcal{H}_{k}$ will be said to be\emph{ $(\mu,g)-$balanced
(at level $k$) }if is a fixed point of\emph{ $\mathcal{T}_{(g,\mu),k}.$
}This is thus the $g-$analog of the ordinary notion of a balanced
metric, defined by a measure $\mu$ \cite{do3}. Moreover, iterating
the map $\mathcal{T}_{(g,\mu),k}$ gives the $g-$analog of Donaldson's
iteration \cite{do3}: 
\begin{equation}
H_{m}^{(k)}:=(T_{(g,\mu),k})^{m}H_{0}\label{eq:Donaldson mu-g iteration}
\end{equation}
where $m$ is a non-negative integer (the discrete time parameter).
Just as in the ordinary case $g=1$ a metric is\emph{ }$(\mu,g)-$balanced
metric iff it is a critical point of the following functional on $\mathcal{H}_{k}^{T}$
\[
J_{(\mu,g)}^{(k)}(H)=-\mathcal{E}_{g}^{(k)}(H)+\mathcal{L}_{\mu}(FS(H)),
\]
The following result is the quantization of Theorem \ref{thm:existence uni ma eq energy}:
\begin{thm}
Let $(X,L)$ be a polarised manifold and $T$ a real torus acting
holomorphically on $(X,L)$ with moment polytope $P.$ Given a pair
$(\mu,g)$ such that $\mu$ and $g\nu$ are probability measures on
$X$ and $P,$ respectively, with $\mu$ of finite energy and $g$
continuous, there exists a \emph{$(\mu,g)-$balanced metric $H_{k}$}
for any $k$ sufficiently large, which is unique modulo scalings.
Moreover, after normalization, the corresponding Bergman type metrics
$\phi_{k}:=FS(H_{k})$ converge in $L^{1}(X)$ (or more precisely,
in energy) to the unique normalized finite energy solution $\phi$
of the Monge-Ampère equation $MA_{g}(\phi)=\mu.$\end{thm}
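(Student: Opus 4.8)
The plan is to carry over, essentially verbatim, the variational quantization scheme of Donaldson and of \cite{bbgz} (the case $g=1$) to the $g$-weighted, $T$-invariant situation. First I would record that a metric $H\in\mathcal{H}_k^T$ is $(\mu,g)$-balanced at level $k$ if and only if it is a critical point of $J_{(\mu,g)}^{(k)}(H):=-\mathcal{E}_g^{(k)}(H)+\mathcal{L}_\mu(FS(H))$ on $\mathcal{H}_k^T$; this follows from the differential formula $d\mathcal{L}^{(k)}_{(\mu,g)}{}_{|\phi}=\tfrac1{N_k}B_{(k\phi,\mu,g)}\mu$ of Proposition \ref{prop:conv of l-functional}, the chain rule, and the identity $\mathcal{E}_g^{(k)}(\mathrm{Hilb}(k\phi,\mu,g))=\mathcal{L}^{(k)}_{(\mu,g)}(\phi)$. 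Next I would show that $J^{(k)}_{(\mu,g)}$ is convex along geodesics in $\mathcal{H}_k^T$ — which are one-parameter subgroups of $GL(N_k,\mathbb{C})$ — and descends to the quotient $\mathcal{H}_k^T/\mathbb{R}$ by overall rescalings: indeed $\mathcal{E}_g^{(k)}$ is affine along such geodesics (established above), while $FS$ sends a geodesic $H_t$ to a bounded subgeodesic $FS(H_t)$ in $\mathcal{H}(X,L)^T$, so $t\mapsto\mathcal{L}_\mu(FS(H_t))=\int_X(FS(H_t)-\phi_0)\mu$ is convex. For strict convexity modulo rescalings I would argue that if $t\mapsto J^{(k)}_{(\mu,g)}(H_t)$ is affine then $FS(H_t)$ is affine in $t$ at $\mu$-almost every point; since $\mu$ has finite energy it charges no pluripolar set, the set where $FS(H_t)$ fails to be affine is Zariski-open, and hence it must be empty, which (as in \cite{bbgz}) forces the one-parameter subgroup $H_t$ to be trivial modulo scaling.

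The main work, and the step I expect to be the chief obstacle, is the coercivity of $J^{(k)}_{(\mu,g)}$ on $\mathcal{H}_k^T/\mathbb{R}$, uniformly enough in $k$ to produce a minimizer for all large $k$. Following \cite{bbgz} the strategy is to bridge to the continuous functional $J_{\mu,g}=-\mathcal{E}_g+\mathcal{L}_\mu$, which is coercive on $PSH(X,L)^T$ by (the proof of) Theorem \ref{thm:existence uni ma eq energy}: writing $\phi:=FS(H)$, the monotonicity of $FS$ and the sub-mean-value inequality for the Bergman kernel give $FS(\mathrm{Hilb}(k\phi,\mu,g))\le\phi+\varepsilon_k$ with $\varepsilon_k\to0$, together with the Donaldson-type inequality $\mathcal{E}_g^{(k)}(H)\le\mathcal{E}_g^{(k)}(\mathrm{Hilb}(k\phi,\mu,g))=\mathcal{L}^{(k)}_{(\mu,g)}(\phi)$, whose $g$-version uses the two-sided bound $1/C\le g\le C$ and Corollary \ref{cor:comp of g ma and ma}; comparing $\mathcal{L}^{(k)}_{(\mu,g)}$ with $\mathcal{L}^{(k)}_{(dV,g)}$ via Hadamard-type determinant estimates and the density-of-states bound $B_{k\phi,dV}/N_k\le C$, and invoking the convergence $\mathcal{L}^{(k)}_{(dV,g)}(\phi)\to\mathcal{E}_g(\phi)$ of Proposition \ref{prop:conv of l-functional}, one should obtain $J^{(k)}_{(\mu,g)}(H)\ge A\,(-\mathcal{E}(FS(H)))-B_k$ with $A>0$ independent of $k$ and $B_k$ bounded in $k$. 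The delicate points are keeping the error terms and the constant $A$ uniform in $k$ when $\mu$ is only a finite-energy measure (not a volume form), and controlling a natural exhaustion function of $\mathcal{H}_k^T/\mathbb{R}$ from above by $-\mathcal{E}(FS(H))$ up to $k$-independent errors; both are handled by the capacity and comparison-principle estimates of \cite{bbgz}, applied now to $MA_g$ using $1/C\le g\le C$. Granting this, $J^{(k)}_{(\mu,g)}$ is a proper, strictly (modulo scalings) geodesically convex function on the complete non-positively curved symmetric space $\mathcal{H}_k^T/\mathbb{R}$, hence attains its infimum at a unique point, which by the critical-point characterization is the unique $(\mu,g)$-balanced metric $H_k$.

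For the convergence, normalize so that $\sup_X(\phi_k-\phi_0)=0$, where $\phi_k:=FS(H_k)$. Testing $J^{(k)}_{(\mu,g)}$ against $\mathrm{Hilb}(k\psi,dV,g)$ for $\psi\in\mathcal{H}(X,L)^T$ and using $\mathcal{E}_g^{(k)}(\mathrm{Hilb}(k\psi,dV,g))=\mathcal{L}^{(k)}_{(dV,g)}(\psi)\to\mathcal{E}_g(\psi)$ (Proposition \ref{prop:conv of l-functional}) together with $FS(\mathrm{Hilb}(k\psi,dV,g))\to\psi$ uniformly (semiclassical Bergman asymptotics, cf.\ Proposition \ref{prop:conv of g-bergman}), one gets $\limsup_k J^{(k)}_{(\mu,g)}(H_k)\le J_{\mu,g}(\psi)$ for every such $\psi$, hence $\limsup_k J^{(k)}_{(\mu,g)}(H_k)\le\inf_{PSH(X,L)^T}J_{\mu,g}=J_{\mu,g}(\phi)$ by regularization (Theorem \ref{thm:(regularization)--Any}) and the continuity of $J_{\mu,g}$ along decreasing sequences (Proposition \ref{prop:cont prop of g-energy}). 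The coercivity bound gives $-\mathcal{E}(\phi_k)\le C$ uniformly, so after passing to a subsequence $\phi_k\to\phi_\infty$ in $L^1(X)$ with $\phi_\infty\in\mathcal{E}^1(X,L)^T$. Since $\mathcal{E}_g$ is upper semicontinuous (Proposition \ref{prop:cont prop of g-energy}), $\mathcal{L}_\mu$ is continuous on the compact sublevel sets $\{-\mathcal{E}\le C\}$ (finite energy of $\mu$, cf.\ \cite{bbgz}), and $\liminf_k(-\mathcal{E}^{(k)}_g(H_k))=\liminf_k(-\mathcal{L}^{(k)}_{(\mu,g)}(\phi_k))\ge-\mathcal{E}_g(\phi_\infty)$ by the semicontinuity properties of the quantized energies, we deduce $J_{\mu,g}(\phi_\infty)\le\liminf_k J^{(k)}_{(\mu,g)}(H_k)\le\inf J_{\mu,g}$, so $\phi_\infty=\phi$ by the uniqueness part of Theorem \ref{thm:existence uni ma eq energy}; in particular the whole sequence $\phi_k$ converges. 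Finally, the $\limsup$ bound together with $\mathcal{L}_\mu(\phi_k)\to\mathcal{L}_\mu(\phi)$ forces $\mathcal{E}(\phi_k)\to\mathcal{E}(\phi)$, which for a sequence of uniformly bounded pluricomplex energy is equivalent to convergence in the strong (energy) topology, as desired.
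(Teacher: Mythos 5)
Your overall architecture is the right one and matches what the paper intends (its ``proof'' is only a pointer to adapting Theorem 7.1 of \cite{bbgz} and to the argument written out for Theorem \ref{thm:strong polyk implies existence and conv of quantized intro}): the critical-point characterization of $(\mu,g)$-balanced metrics via $J^{(k)}_{(\mu,g)}$, the affineness of $\mathcal{E}^{(k)}_g$ and convexity of $\mathcal{L}_{\mu}\circ FS$ along Bergman geodesics, the uniqueness modulo scaling from the fact that $\mu$ charges no pluripolar set, and the $\limsup$ bound obtained by testing the minimizing property against $\mbox{Hilb}(k\psi,dV,g)$ (a volume-form Hilbert metric, which is the correct choice) are all sound. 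The gap is in the bridge from the quantized functional back to the transcendental one, i.e.\ in your coercivity estimate $J^{(k)}_{(\mu,g)}(H)\ge A(-\mathcal{E}(FS(H)))-B_k$ and in the step $\liminf_k(-\mathcal{E}^{(k)}_g(H_k))\ge-\mathcal{E}_g(\phi_\infty)$.

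Concretely, the two ingredients you invoke there fail for a general finite-energy measure $\mu$. The bound $FS(\mbox{Hilb}(k\phi,\mu,g))\le\phi+\varepsilon_k$ and the comparison of $\mathcal{L}^{(k)}_{(\mu,g)}$ with $\mathcal{L}^{(k)}_{(dV,g)}$ in the direction you need both amount to a Bernstein--Markov type lower bound on the $L^{2}(\mu)$-norms against sup- (or $L^{2}(dV)$-) norms; the sub-mean-value inequality only controls $\sup_X|s|^2e^{-k\phi}$ by integrals against a \emph{volume form}, and nothing prevents $\int_X|s|^2e^{-k\phi}d\mu$ from being exponentially small compared with $\int_X|s|^2e^{-k\phi}dV$ for a finite-energy $\mu$. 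The determinant comparison one can actually prove ($\mu$-norms dominated by sup-norms) gives a \emph{lower} bound on $\mathcal{L}^{(k)}_{(\mu,g)}$, which is the wrong direction for both of your steps; and the convergence $\mathcal{L}^{(k)}_{(dV,g)}(\phi)\to\mathcal{E}_g(\phi)$ of Prop.\ \ref{prop:conv of l-functional} is pointwise in $\phi$, not uniform over the varying $\phi=FS(H)$. The missing idea is the convexity-bridge comparison inequality, namely the $(\mu,g)$-analogue of \ref{eq:comparison of energy and quant} (Lemma 7.7 in \cite{bbgz}): join $H$ to the fixed reference $H_0=\mbox{Hilb}(k\phi_0,dV)$ by a geodesic in $\mathcal{H}_k^T$, note that $t\mapsto\mathcal{E}_g(FS(H_t))-\mathcal{E}^{(k)}_g(H_t)$ is convex (Prop.\ \ref{prop:energy along geodesics} plus affineness of $\mathcal{E}^{(k)}_g$), and estimate its value and derivative at $t=0$ by the uniform Bergman asymptotics of Prop.\ \ref{prop:conv of g-bergman} at the smooth reference; since $\mu$ enters only through $\mathcal{L}_\mu\circ FS$, which occurs identically on both sides, no regularity of $\mu$ is required and the errors are uniform in $H$. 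With that lemma your coercivity and liminf steps (and hence the identification of the limit with the solution of $MA_g(\phi)=\mu$ from Theorem \ref{thm:existence uni ma eq energy}) go through; note also that for the mere existence of $H_k$ at a fixed level $k$ no uniformity in $k$ is needed, since properness of $J^{(k)}_{(\mu,g)}$ on $\mathcal{H}^T_k/\R$ already follows from the strict positivity of its slope at infinity along non-scaling geodesics, using only that $\mu$ is non-pluripolar.
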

\begin{proof}
This is shown by adapting the proof of Theorem 7.1 in \cite{bbgz}
to our setting and reducing the problem to Theorem \ref{thm:existence uni ma eq energy}
(or rather its proof). Since a very similar argument will be carried
out in the course of the proof of Theorem \ref{thm:strong polyk implies existence and conv of quantized intro}
the details are omitted.
\end{proof}

\subsection{\label{sub:The-quantized-K=0000E4hler-Ricci}The quantized Kähler-Ricci
soliton setting }

\subsubsection{Quantized modified Futaki invariants}

Let us start by introducing the following quantized analog of the
modifed Futaki invariant defined on the Lie algebra of $\mbox{Aut\ensuremath{(X,V)_{0},}}$to
be referred to as the quantized modified Futaki invariant at level
$k:$
\[
\mbox{Fut}_{V,k}(W):=-\sum_{l=1}^{N_{k}}\exp(v_{l}^{(k)}/k)w_{l}^{(k)},
\]
where $(v_{l}^{(k)},w_{l}^{(k)})$ are the joint eigenvalues for the
commuting action of the real parts of the holomorphic vector fields
$V$ and $W$ on $H^{0}(X,-kK_{X})$ (using the canonical lifts to
$-K_{X}$ of the vector fields $V$ and $W$).
\begin{prop}
\label{prop:alg formula for fut}Given a pair $(X,V),$ consisting
of a Fano variety equipped with a holomorphic vector field $V,$ let
$W$ be a holomorphic vector field on $X$ generating a $\C^{*}-$action
and commuting with $V.$ Then 

\[
\mbox{Fut}_{V}(W)=\lim_{k\rightarrow\infty}\frac{1}{kN_{k}}\mbox{Fut}_{V,k}(W)
\]
 Moreover, the sequence in the rhs above coincides with the time derivative
of the function $t\mapsto-\mathcal{L}_{g_{V}}^{(k)}(H_{t}^{W}),$
where $H_{t}^{W}=\exp(tW)^{*}H_{0}.$ \end{prop}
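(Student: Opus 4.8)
The plan is to establish the second (purely algebraic) assertion first, by a direct $\log\det$ computation on $T$-weight spaces, and then to deduce the limit formula from it by feeding the convergence of joint spectral measures (Proposition~\ref{prop:conv of spec meas}) into the integral expression for $\mbox{Fut}_V(W)$ supplied by Lemma~\ref{lem:alt def of fut}.

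For the algebraic identity I would start from the joint weight decomposition $H^0(X,-kK_X)=\bigoplus_\lambda E_\lambda$ for the $T$-action. Since $[V,W]=0$, the automorphism $\exp(tW)$ (acting through the canonical lift of $W$ to $-K_X$) preserves each $E_\lambda$, so $\mathcal{E}_{g_V}^{(k)}(H_t^W)=\sum_\lambda g_V(\lambda/k)\,\mathcal{E}_{E_\lambda}^{(k)}\big(\exp(tW)^*H_0\big)$. Choosing a basis $\{s_l^{(k)}\}$ that is $H_0$-orthonormal and simultaneously diagonalizes the commuting infinitesimal actions of $\mbox{Re}\,V$ and $\mbox{Re}\,W$, with $W$-eigenvalue $w_l^{(k)}$ and $T$-weight $\lambda_l^{(k)}$ (so $v_l^{(k)}=\langle\lambda_l^{(k)},\xi\rangle$), the function $t\mapsto\log\det\big((\exp(tW)^*H_0)|_{E_\lambda}\big)$ is affine in $t$ with slope a fixed multiple of $\sum_{l:\lambda_l^{(k)}=\lambda}w_l^{(k)}$; summing over $\lambda$ and using $g_V(\lambda_l^{(k)}/k)=e^{\langle\lambda_l^{(k)}/k,\xi\rangle}/C=e^{v_l^{(k)}/k}/C$ gives that $t\mapsto\mathcal{E}_{g_V}^{(k)}(H_t^W)$ is affine (as already observed for geodesics) with
\[
\frac{d}{dt}\Big(-\mathcal{E}_{g_V}^{(k)}(H_t^W)\Big)=\frac{1}{kN_k}\,\mbox{Fut}_{V,k}(W),
\]
up to the fixed normalizing constant built into $g_V$ (here $\mathcal{L}_{g_V}^{(k)}$ evaluated on a Hermitian metric is to be read as $\mathcal{E}_{g_V}^{(k)}$). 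The only non-formal input is the $\exp(tW)$-invariance of the $E_\lambda$, which is precisely $[V,W]=0$.

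For the limit formula I would apply Proposition~\ref{prop:conv of spec meas} not to $T$ but to the compact torus $T'$ generated by $T$ together with the $S^1$ generated by $\mbox{Im}\,W$ (an $S^1$ since $W$ generates a $\C^*$-action, commuting with $T$): the joint spectral measures $\tfrac{1}{N_k}\sum_l\delta_{(v_l^{(k)}/k,\,w_l^{(k)}/k)}$ on $\R^2$ converge weakly to the joint Duistermaat--Heckman measure $\gamma=(f_{\phi_0}^V,f_{\phi_0}^W)_*MA(\phi_0)$, with $\phi_0$ chosen $T'$-invariant (by averaging a reference metric). Because $f_{\phi_0}^V$ and $f_{\phi_0}^W$ are bounded (Section~\ref{sub:Interlude-I:-the vector}), all these spectral measures are supported in one fixed compact subset of $\R^2$, so the weak convergence survives testing against the unbounded but continuous function $(v,w)\mapsto -e^v w$; this yields
\[
\frac{1}{kN_k}\,\mbox{Fut}_{V,k}(W)=-\frac{1}{N_k}\sum_l e^{v_l^{(k)}/k}\,\frac{w_l^{(k)}}{k}\ \longrightarrow\ -\int_{\R^2}e^v w\,d\gamma=-\int_X f_{\phi_0}^W e^{f_{\phi_0}^V}\,MA(\phi_0)=\mbox{Fut}_V(W),
\]
the last equality being Lemma~\ref{lem:alt def of fut}.

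I expect the main obstacle to be exactly this last point: weak convergence of measures only tests bounded continuous functions, while the weight $e^v w$ is unbounded, so one must check that the joint spectra all sit in a common compact set — which reduces to the a priori bound $\sup_X|f_\phi|\le C$ already established in Section~\ref{sub:Interlude-I:-the vector}. A secondary check is that $W$ genuinely generates (a factor of) a torus action so that Proposition~\ref{prop:conv of spec meas} applies, which follows from $[V,W]=0$ together with Lemma~\ref{lem:vector field torus}; the remaining bookkeeping — tracking the constant $C$ and the factor $2$ from the sesquilinearity convention in $\mbox{Hilb}$ so that the two displays match exactly — is routine. Alternatively one could combine the algebraic identity directly with Lemma~\ref{lem:alt def of fut} and Proposition~\ref{prop:conv of l-functional} along the orbit $\phi_t^W$, using the affinity in $t$ of both $\mathcal{E}_{g_V}^{(k)}(H_t^W)$ and $\mathcal{E}_V(\phi_t^W)$ to interchange the limit with the $t$-derivative, thereby avoiding the unbounded test function.
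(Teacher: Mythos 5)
Your argument is essentially the paper's own proof: the paper also deduces the limit formula by applying Proposition \ref{prop:conv of spec meas} to the torus $S^{1}\times T$ generated by the commuting pair $(W,V)$, so that the joint spectral measures $\frac{1}{N_{k}}\sum_{l}\delta_{(v_{l}^{(k)},w_{l}^{(k)})/k}$ converge to the joint Duistermaat--Heckman measure, and then integrates against the weight function and invokes Lemma \ref{lem:alt def of fut}. Your two additions -- the explicit $\log\det$ computation on weight spaces for the ``Moreover'' statement and the remark that the uniform compactness of the joint spectra justifies testing the weak convergence against the unbounded function $e^{v}w$ -- are correct refinements of details the paper leaves implicit, not a different route.
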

\begin{proof}
Consider the commuting pair $(W,V)$ inducing an action of $S^{1}\times T.$
According to Lemma \ref{lem:alt def of fut} $\mbox{Fut}(\mathcal{X},\rho_{\mathcal{W}},\mathcal{V})=-\int_{\R^{2}}ve^{w}\nu^{(V,W)},$
where $\nu^{(V,W)}$ denotes the corresponding DH-measure on $\R^{2}.$
But then the proposition follows from Prop \ref{prop:conv of spec meas}
applied to the torus $S^{1}\times T,$ which gives that $\sum_{l=1}^{N_{k}}\delta_{(v_{l}^{(k)},w_{l}^{(k)})/k}/N_{k}$
converges to $\nu^{(V,W)}.$ Indeed integrating the latter convergence
against the function $ve^{w}$ on $\R^{2}$ concludes the proof. 
\end{proof}
We also note that in the case when $X$ is smooth there exists, for
$k$ large, a polynomial expansion 
\begin{equation}
\mbox{Fut}_{V,k}(W)=k^{n+1}\mbox{Fut}_{V}^{(0)}(W)+k^{n}\mbox{Fut}_{V}^{(1)}(W)+\cdots+\mbox{Fut}_{V}^{(n)}(W),\label{eq:asymptotic expansion of mod quant fut invariant}
\end{equation}
 where the invariant $\mbox{Fut}_{V}^{(m)}(W)$ defined by the coefficients
in the expansion above will be called the\emph{ $m$ th order modified
Futaki invariant of $W$} (by the previous proposition $\mbox{Fut}_{V}^{(0)}(W)$
is proportional to $\mbox{Fut}_{V}(W)).$ The previous expansion may
be obtained by writing 

\[
\mbox{Fut}_{V,k}(W)=-\frac{d}{dt}_{|t=0}\mbox{Tr \ensuremath{(e^{V+tW})_{|H^{0}(X,-kK_{X})}}}
\]
and evaluating the rhs using the equivariant Riemann-Roch theorem.
\begin{rem}
\label{rem:higher futaki}For $V=0$ the vanishing of $\mbox{Fut}^{(m)}(W)$
for all integers $m$ in $[0,n]$ is equivalent to the vanishing of
Futaki's higher invariants $\mathcal{F}_{Td^{(p)}}(W)$ for all integers
$p$ in $[1,n],$ which in turn is known to be equivalent to the vanishing
of Mabuchi's obstruction to asymptotic Chow semi-stability (see \cite{fu,ma0,m1}).
To see this we first note that by the equivariant Riemann-Roch theorem
$\mbox{Fut}^{(m)}(W)$ is the coefficient corresponding to $tk^{n+1-m}$
in the expansion of 
\begin{equation}
-\int_{X}e^{k(\omega_{\phi}+tf_{\phi})}\wedge\mbox{Td }(\Theta+tL_{W}),\label{eq:equivariant todd integral}
\end{equation}
where $\mbox{Td }$ is the Todd polynomial and $\Theta$ is the $\mbox{End \ensuremath{(TX)-}}$valued
Chern curvature form of the metric $\omega_{\phi}$ and $f_{\phi}$
denotes, as before, the Hamiltonian function for $W$ determined by
the canonical lift of $W$ to $L:=-K_{X}.$ Now, $u_{\phi}:=f_{\phi}-\int f_{\phi}\omega_{\phi}^{n}$
defines another Hamiltonian for $W$ satisfying the normalization
condition $\int u_{\phi}\omega_{\phi}^{n}=0$ used by Futaki \cite{fu}.
As explained in \cite{fu} the vanishing of $\mathcal{F}_{Td^{(m)}}(W)$
for all integers $m$ in $[1,n]$ is equivalent to the vanishing of
all the coefficents in the expansion obtained by replacing $f_{\phi}$
with $u_{\phi}$ in formula \ref{eq:equivariant todd integral} and
it implies that the ordinary Futaki invariant $\mbox{Fut \ensuremath{(W)}}$vanishes
(since $\mbox{Fut}(W)=c\mathcal{F}_{Td^{(1)}}(W)).$ Moreover, the
vanishing of $\mbox{Fut}^{(m)}(W)$ for $[0,n]$ also implies that
$\mbox{Fut \ensuremath{(W)}}=0$ (since $\mbox{Fut}(W)=\mbox{Fut}^{(0)}(W)).$
Finally, observing that $-\int f_{\phi}\omega_{\phi}^{n}=\mbox{Fut}\ensuremath{(W)}$
(by Lemma \ref{lem:alt def of fut}) we conclude that the vanishing
of $\mbox{Fut}^{(m)}(W)$ for all integers $m$ in $[0,n]$ is indeed
equivalent to the vanishing of Futaki's invariants $\mathcal{F}_{Td^{(p)}}(W)$
for all integers $p$ in $[1,n].$
\end{rem}

\subsubsection{Quantized Kähler-Ricci solitons and balanced metrics}

Let us next recall the definition of Donaldson's (anti-)canonical
map $\mathcal{T}_{k}$ on $\mathcal{H}_{k}$ in the ``anti-canonical''
case $L=-K_{X}.$ First, we define the anti-canonical Hilb map by
\[
\mbox{Hilb}(k\phi):=\mbox{Hilb}(k\phi,\mu_{\phi}),
\]
 where $\mu_{\phi}$ is the canonical measure on $X$ determined by
$\phi$ (see formula \ref{eq:def of mu phi}). Then the map $\mathcal{T}_{k}$
on $\mathcal{H}_{k}$ may be defined as 
\[
\mathcal{T}_{k}:=\mbox{Hilb}\circ FS
\]
 and a metric $H$ in $\mathcal{H}_{k}$ is said to be \emph{anti-canonially
balanced (at level $k)$} if it is fixed by $\mathcal{T}_{k}.$ As
conjectured by Donaldson and shown in \cite{be3} the corresponding
iteration $H_{k,m}$ on $\mathcal{H}_{k}$ converges, in the double
scaling limit where $m/k\rightarrow t,$ to the (normalized) Kähler-Ricci
flow on $\mathcal{H}(-K_{X}).$ Accordingly, we will say that a metric
$H$ in $\mathcal{H}_{k}$ is a \emph{quantized Kähler-Ricci soliton
with respect to a holomorphic vector field $V$ on $X$ }if 

\[
\mathcal{T}_{k}H=\exp(V)^{*}H
\]
where $\exp(V)^{*}$ denotes the automorphism of $\mathcal{H}_{k}$
induced by the pull-back along the time-one flow of $V$ (a similar
notion of quantized \emph{extremal metrics} was introduced in \cite{s-t}
for any ample line bundle $L,$ but defined with respect to a different
definition of the Hilb map, obtained by replacing $\mu_{\phi}$ with
$MA(\phi),$ as in \cite{d0}). Equivalently, replacing $\mbox{Hilb}$
with $\mbox{Hilb}_{V}$ defined by $\mbox{Hilb}_{V}(k\phi):=\mbox{Hilb}(k\phi,\mu_{\phi},g_{V})$
and $\mathcal{T}_{k}$ with $\mathcal{T}_{k}:=\mbox{Hilb}_{V}\circ FS$
a metric $H$ is a quantized Kähler-Ricci soliton wrt $V$ iff $H$
is fixed by the map $\mathcal{T}_{k,g_{V}},$ i.e. if it is ``anti-canonically
balanced with respect to $g_{V}$''. In this setting we define the
quantization of the modified Ding functional by 
\[
\mathcal{D}_{V}^{(k)}(H):=\mathcal{D}_{g_{V}}^{(k)}(H):=-\mathcal{E}_{g_{V}}^{(k)}(H)+\mathcal{L}(FS(H)),\,\,\,\,\mathcal{L}(\phi)=-\log\int_{X}e^{-\phi},
\]
whose critical points are quantized Kähler-Ricci solitons wrt $V.$ 
\begin{prop}
\label{prop:vartional prop of quant ding}Let $(X,V)$ be a Fano variety
equipped with a holomorphic vector field $V.$ Then the following
is equivalent:
\begin{itemize}
\item There exists a quantized Kähler-Ricci soliton at level $k$
\item The functional $\mathcal{D}_{V}^{(k)}$ is invariant under $\mbox{Aut }(X,V)_{0}$
and proper on $\mathcal{H}_{k}/\mbox{Aut }(X,V)_{0}.$
\item The functional $\mathcal{D}_{V}^{(k)}$ is coercive modulo $\mbox{Aut }(X,V)_{0},$
i.e. of at least linear growth along geodesics in $\mathcal{H}_{k}/\mbox{Aut }(X,V)_{0}.$
\end{itemize}
\end{prop}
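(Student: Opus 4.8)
The plan is to treat $\mathcal{D}_V^{(k)}$ as a convex function on the finite-dimensional symmetric space $\mathcal{H}_k=GL(N_k,\C)/U(N_k)$ and to run the finite-dimensional shadow of the variational arguments behind Theorems \ref{thm:variational prop of mod mab ding}, \ref{thm:uniqueness text} and \ref{thm:proper ding mab mod aut implies existence}. The first step is to record that $\mathcal{D}_V^{(k)}$ is convex along the geodesics of $\mathcal{H}_k$ (one-parameter subgroups of $GL(N_k,\C)$): indeed $\mathcal{E}_{g_V}^{(k)}$ is \emph{affine} along such geodesics by the explicit eigenvalue formula for $\mathcal{E}_g^{(k)}$ recorded above, while $\phi\mapsto\mathcal{L}(\phi)=-\log\int_X e^{-\phi}$ is convex along the Bergman subgeodesic $t\mapsto FS(H_t)$ by the quantized form of Berndtsson's convexity theorem (compare \cite{bern1}, \cite{be3}). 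Since the critical points of $\mathcal{D}_V^{(k)}$ are exactly the quantized K\"ahler-Ricci solitons with respect to $V$, convexity makes these coincide with the minimizers of $\mathcal{D}_V^{(k)}$ on $\mathcal{H}_k$. I would also record the quantized analog of Lemma \ref{lem:alt def of fut}: for $W$ in the Lie algebra of $\mbox{Aut}(X,V)_0$ the function $t\mapsto\mathcal{D}_V^{(k)}(\exp(tW)^*H)$ has \emph{constant} derivative, a fixed positive multiple of $\mbox{Fut}_{V,k}(W)$ --- this uses that $\mathcal{L}(FS(\exp(tW)^*H))$ is independent of $t$ (the total mass of $e^{-\phi}$ is automorphism-invariant) together with the eigenvalue formula for $\mathcal{E}_{g_V}^{(k)}$, and is essentially Proposition \ref{prop:alg formula for fut}. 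Throughout, one works on the slice $\det H=1$ (equivalently, modulo the residual scaling) to dispose of the obvious flat direction. Write $(1)$, $(2)$, $(3)$ for the three listed conditions.

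I would then prove the cycle $(1)\Rightarrow(3)\Rightarrow(2)\Rightarrow(1)$, leaving the hard step for last. The implication $(3)\Rightarrow(2)$ is nearly formal: coercivity modulo $\mbox{Aut}(X,V)_0$ trivially implies properness modulo $\mbox{Aut}(X,V)_0$, and in particular $\mathcal{D}_V^{(k)}$ is bounded below; then, as in Proposition \ref{prop:mab bounded from below implies fut 0}, the constancy of $t\mapsto d\mathcal{D}_V^{(k)}(\exp(tW)^*H)/dt$ forces $\mbox{Fut}_{V,k}(W)=0$ for every $W$ in the Lie algebra of $\mbox{Aut}(X,V)_0$ (a nonzero constant slope would drive $\mathcal{D}_V^{(k)}$ to $-\infty$ along the corresponding geodesic), so $\mathcal{D}_V^{(k)}$ descends to $\mathcal{H}_k/\mbox{Aut}(X,V)_0$ --- this is the invariance in $(2)$. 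For $(2)\Rightarrow(1)$, take a minimizing sequence $H_j$; since $\mbox{Aut}(X,V)_0$ acts properly by isometries on $\mathcal{H}_k$ and $\mathcal{D}_V^{(k)}$ is proper modulo $\mbox{Aut}(X,V)_0$, the classes $[H_j]$ lie in a fixed compact subset of the quotient once $\mathcal{D}_V^{(k)}(H_j)$ is close to the infimum, so after acting by suitable $F_j\in\mbox{Aut}(X,V)_0$ and passing to a subsequence one gets $F_j^*H_j\to H_*$ in $\mathcal{H}_k$ (bounded closed sets in the complete nonpositively curved space $\mathcal{H}_k$ are compact); by continuity $H_*$ minimizes $\mathcal{D}_V^{(k)}$, hence is a critical point of this smooth functional, i.e. a quantized K\"ahler-Ricci soliton.

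The heart of the matter, and the step I expect to be the main obstacle, is $(1)\Rightarrow(3)$. Given a quantized K\"ahler-Ricci soliton $H_*$ --- equivalently a minimizer of the convex functional $\mathcal{D}_V^{(k)}$, which is therefore bounded below and hence, as above, $\mbox{Aut}(X,V)_0$-invariant --- I would first show $\mathcal{D}_V^{(k)}$ is \emph{proper} modulo $\mbox{Aut}(X,V)_0$. Were a sublevel set unbounded in $\mathcal{H}_k/\mbox{Aut}(X,V)_0$, then by convexity it would contain a geodesic ray $H_t$ issuing from $H_*$ along which $\mathcal{D}_V^{(k)}$ is bounded, hence constant (it has a minimum at $t=0$), hence $t\mapsto\mathcal{L}(FS(H_t))$ is affine. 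At this point one needs the \emph{equality case} of the quantized Berndtsson convexity: affineness of $t\mapsto\mathcal{L}(FS(H_t))$ along a geodesic of $\mathcal{H}_k$ forces the generating one-parameter subgroup to be induced by a holomorphic vector field $W$ on $X$; since $H_t$ is then a minimizer --- a quantized K\"ahler-Ricci soliton --- for every $t$, the argument in the proof of Theorem \ref{thm:uniqueness text} shows $W$ preserves $V$, so $W$ lies in the Lie algebra of $\mbox{Aut}(X,V)_0$ and the ray is bounded in the quotient, a contradiction. Granted properness modulo $\mbox{Aut}(X,V)_0$, the upgrade to coercivity is a routine convexity argument: the recession function of $\mathcal{D}_V^{(k)}$ along a transverse slice is positive on the compact sphere of directions, which yields a uniform linear lower bound. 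The same equality-case analysis applied to two quantized K\"ahler-Ricci solitons gives their uniqueness modulo $\mbox{Aut}(X,V)_0$, and --- by the argument of Corollary \ref{cor:reduct} --- the reductivity of $\mbox{Aut}(X,V)_0$, both of which are needed in Theorem \ref{thm:strong polyk implies existence and conv of quantized intro}. Establishing this equality case in the quantized setting is the technical crux; I would obtain it by adapting Berndtsson's positivity-of-direct-images computation, as in \cite{bern1} and \cite{be3}, to the Bergman subgeodesic $FS(H_t)$ on $-K_X$, the point being that $(dd^c\Phi)^{n+1}=0$ for the associated $S^1$-invariant metric $\Phi$ over $X\times A$ in the affine case, which via the maximum principle pins $H_t$ down up to automorphisms commuting with $V$.
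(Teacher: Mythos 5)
Your proposal is correct and follows essentially the same route as the paper: the paper's proof consists precisely of observing that $\mathcal{E}_{g_V}^{(k)}$ is affine along geodesics of $\mathcal{H}_k$ while $H\mapsto\mathcal{L}(FS(H))$ is convex (strictly so modulo automorphisms, by the results quoted from \cite{bbgz} and \cite{bern1}), and then invoking ``basic properties of convex functions on finite dimensional spaces.'' What you have written out in detail---the cycle of implications, the vanishing of $\mbox{Fut}_{V,k}$ for invariance, the recession/compactness arguments, and the equality case of the convexity of $\mathcal{L}\circ FS$---is exactly the convex-analytic content the paper compresses into that one sentence, so there is no essential difference in approach.
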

\begin{proof}
By basic properties of convex functions on finite dimensional spaces
it will be enough to show that $\mathcal{D}_{V}^{(k)}(H)$ is strictly
convex on $\mathcal{H}_{k}/\mbox{Aut }(X,V)_{0}.$ But, as explained
above, the functional $\mathcal{E}_{g_{V}}^{(k)}(H)$ is affine along
geodesics in $\mathcal{H}_{k}$ and hence the result follows from
the well-known convexity properties of $H\mapsto\mathcal{L}(FS(H))$
(see \cite{bbgz} and reference therein).
\end{proof}

\subsubsection{Proof of Theorem \ref{thm:strong polyk implies existence and conv of quantized intro}}

We will adapt the proof of Theorem 7.1 in \cite{bbgz} to our setting.
The starting point is the following comparison inequality:
\begin{lem}
There exists a sequence $\delta_{k}\rightarrow0$ of positive numbers
such that 
\begin{equation}
J_{g}(\phi_{k})\leq(1+\delta_{k})J_{g}^{(k)}(H_{k})+\delta_{k},\,\,\,\,\,\phi_{k}:=FS(H_{k})\label{eq:comparison of energy and quant}
\end{equation}
\end{lem}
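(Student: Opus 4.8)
The plan is to adapt the proof of Theorem~7.1 of \cite{bbgz} to the $g$-twisted set-up; the comparison \ref{eq:comparison of energy and quant} is the ``easy'' direction, in which the continuous picture is dominated by the quantized one. First, by the very choice $\phi_k:=FS(H_k)$, the non-energy term of $J_g^{(k)}(H_k)$ --- be it the linear $\mathcal{L}_{\mu_0}(FS(H_k))$ or the Ding-type $-\log\int_X e^{-FS(H_k)}$ --- coincides with the corresponding term of $J_g(\phi_k)$, so that
\[
J_g(\phi_k)-J_g^{(k)}(H_k)=\mathcal{E}_g^{(k)}(H_k)-\mathcal{E}_g(\phi_k),
\]
and the lemma is equivalent to the estimate $\mathcal{E}_g^{(k)}(H_k)-\mathcal{E}_g(\phi_k)\le\delta_k\bigl(1+J_g^{(k)}(H_k)\bigr)$, which I would prove after the usual normalization of $H_k$ (for instance $\sup_X(\phi_k-\phi_0)=0$) making $J_g^{(k)}(H_k)\ge0$.

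For the first half I would invoke the elementary bound $|s|^2e^{-kFS(H)}\le H(s,s)$ for $s\in H^0(X,kL)$, immediate from the definition of $FS$ as a pointwise supremum. Integrating against the relevant probability measure --- this step does not care whether it is a fixed $dV$ or the metric-dependent $\mu_\phi$ of the anticanonical set-up --- gives $\mathrm{Hilb}(kFS(H_k))\le H_k$ as Hermitian forms, in particular on each joint eigenspace $E_{\lambda^{(k)}}$; since $H_k$ is $T$-invariant and $\mathcal{E}_g^{(k)}$ is a non-negative combination of the decreasing functionals $H\mapsto-\tfrac{1}{kN_k}\log\det H_{|E_{\lambda^{(k)}}}$, this yields at once $\mathcal{E}_g^{(k)}(H_k)\le\mathcal{L}^{(k)}_{(\cdot,g)}(\phi_k)$, the quantized $g$-analog of Donaldson's $\mathcal{L}$-functional attached to the same measure.

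It remains to bound $\mathcal{L}^{(k)}_{(\cdot,g)}(\phi_k)-\mathcal{E}_g(\phi_k)$ from above, which is the only substantive point and is handled exactly as in \cite{bbgz}: one uses the concavity along affine curves of both $\mathcal{L}^{(k)}_{(\cdot,g)}$ and $\mathcal{E}_g$ (Proposition \ref{prop:conv of l-functional}, Proposition \ref{prop:energy along geodesics}), the fact that their differentials are the $g$-Bergman measure $\tfrac{1}{N_k}B_{(k\phi,\cdot,g)}\mu$ and the $g$-Monge-Amp\`ere measure $MA_g(\phi)$, the convergence of the former to the latter from Proposition \ref{prop:conv of g-bergman}, and the uniform one-sided bound $\limsup_k\tfrac{1}{N_k}B_{(k\phi,\cdot,g)}\mu\le MA_g(\phi)$ furnished by the local holomorphic Morse inequalities of \cite{be1}. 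Since the reference $\phi_0$ is fixed, the Bergman convergence there is quantitative, and one obtains an estimate of the shape $\mathcal{L}^{(k)}_{(\cdot,g)}(\phi_k)-\mathcal{E}_g(\phi_k)\le\varepsilon_k\bigl(1+|\mathcal{E}(\phi_k)|\bigr)$ with $\varepsilon_k\to0$. It is precisely because the $g$-Bergman asymptotics are \emph{not} uniform in the moving metric $\phi_k$ that the error must be allowed to grow with $|\mathcal{E}(\phi_k)|$ rather than being purely additive --- this is the structural origin of the multiplicative factor $1+\delta_k$. (When the underlying measure is $\mu_\phi$ one first reduces to a fixed volume form, as in \cite{bbgz}, picking up a further $o(1)$.)

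To finish, since $g$ is bounded below by a positive constant in the cases at hand, $\mathcal{E}_g$ and $\mathcal{E}$ are comparable and, with the normalization, $|\mathcal{E}(\phi_k)|\le C\bigl(1+J_g^{(k)}(H_k)\bigr)$; substituting and absorbing $C$ into $\varepsilon_k$ gives \ref{eq:comparison of energy and quant} with $\delta_k:=C\varepsilon_k$. I expect the \textbf{main obstacle} to be exactly the quantitative control in the previous paragraph, i.e.\ making the convergence of the $g$-Bergman measure towards $MA_g$ uniform enough along the varying sequence $\phi_k$; everything else (the $\mathrm{Hilb}\circ FS\le\mathrm{Id}$ inequality, the concavity of the quantized and continuous energies, the comparability of $\mathcal{E}_g$ with $\mathcal{E}$) is soft, and the multiplicative --- rather than additive --- error is the price paid for that lack of uniformity.
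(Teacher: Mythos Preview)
Your decomposition in Step~3 does not work as written, and this is where the argument diverges from the paper's. You propose to bound $\mathcal{L}^{(k)}_{(\cdot,g)}(\phi_k)-\mathcal{E}_g(\phi_k)$ using the concavity of \emph{both} functionals along the affine segment $\psi_t=(1-t)\phi_0+t\phi_k$ in $\mathcal{H}(L)^T$. But concavity of both gives tangent-line inequalities pointing the same way: from $\mathcal{L}^{(k)}$ you get an upper bound with derivative taken at $\phi_0$, while to bound $-\mathcal{E}_g(\phi_k)$ from above you would need a \emph{lower} bound on $\mathcal{E}_g(\phi_k)$, which concavity only gives with the tangent taken at $\phi_k$. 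Combining these produces a term of the form $\int(\phi_k-\phi_0)\bigl(MA_g(\phi_0)-MA_g(\phi_k)\bigr)$, which is essentially the $I_g$-functional and is comparable to $J_g(\phi_k)$ --- not $o(1)\cdot J_g(\phi_k)$. Equivalently, writing the difference as an integral along the path forces the Bergman asymptotics at the \emph{moving} metrics $\psi_t$, where no uniformity is available; the one-sided Morse bound you invoke is a pointwise $\limsup$, not uniform in the metric, and goes the wrong way once paired with $\phi_k-\phi_0\le 0$.

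The paper avoids this by working on the other side: it connects $H_k$ to $H_0$ by a geodesic $H_k^t$ in $\mathcal{H}_k^T$ and sets $f_k(t):=\mathcal{E}_g(FS(H_k^t))-\mathcal{E}_g^{(k)}(H_k^t)$. The point is that $t\mapsto FS(H_k^t)$ is a \emph{subgeodesic} in $PSH(X,L)^T$, so by Proposition~\ref{prop:energy along geodesics} (first bullet, not the third) $\mathcal{E}_g\circ FS$ is \emph{convex}, while $\mathcal{E}_g^{(k)}$ is affine along geodesics in $\mathcal{H}_k$. Thus $f_k$ is convex and $f_k(1)\ge f_k(0)+f_k'(0)$; both $f_k(0)$ and $f_k'(0)$ involve only the fixed reference $H_0$, where the uniform convergence of Proposition~\ref{prop:conv of g-bergman} applies and yields the $\delta_k$. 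Your intermediate step $\mathcal{E}_g^{(k)}(H_k)\le\mathcal{L}^{(k)}_{(\cdot,g)}(\phi_k)$ via $\mathrm{Hilb}\circ FS\le\mathrm{Id}$ is correct but not needed; the essential missing idea is to exploit the opposite curvature of $\mathcal{E}_g$ along subgeodesics rather than its concavity along affine lines.
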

\begin{proof}
This is the $g-$analog of Lemma 7.7 in \cite{bbgz} and since the
proofs are similar we just outline the argument. First one connects
$H_{k}$ with a geodesic $H_{k}^{t}$ to the reference metric $H_{0}:=\mbox{Hilb }(k\phi_{0},dV)$
in $\mathcal{H}_{k}^{T}$ and introduces the function $f_{k}(t):=\mathcal{E}_{g}(FS(H_{k}^{t}))-\mathcal{E}_{g}^{(k)}(H_{k}).$
The latter function is convex since $\mathcal{E}_{g}^{(k)}(H_{k})$
is affine on $\mathcal{H}_{k},$ while $\mathcal{E}_{g}(HS(H_{k})$
is convex. The desired inequality is obtained by using the convexity
of $f$ and the error terms $\delta_{k}$ come from estimating the
value of $f_{k}(0)$ and the derivative $f_{k}'(0),$ using the uniform
convergence in Prop \ref{prop:conv of g-bergman} %
\footnote{For the latter smooth uniform convergence to hold we need $X$ to
be smooth. %
}. 
\end{proof}
Now, given $H_{k}$ in $\mathcal{H}_{k}^{T}$ there exists, by the
properness assumption on $\mathcal{D}_{g_{v}}$ an element $F\in\mbox{Aut }(X,V)_{0}$
such that 
\[
J_{g}(FS(F^{*}H_{k}))(1-\delta)+(\mathcal{L}-\mathcal{L}_{\mu_{0}})(FS(F^{*}H_{k}))\geq-C
\]
(where we have used that $g_{V}$ is bounded on $P$ and $\mathcal{D}_{g_{V}}$
is invariant under the action of $\mbox{Aut }(X,V)_{0},$ since the
Futaki invariants automatically vanish; compare section \ref{sec:K=0000E4hler-Ricci-solitons}).
Hence, using the inequality \ref{eq:comparison of energy and quant}
we get, for $k$ large, that 
\[
J_{g}^{(k)}(F^{*}H_{k})(1-\delta/2)+(\mathcal{L}-\mathcal{L}_{\mu_{0}})(FS(F^{*}H_{k}))\geq-2C,
\]
 i.e. 
\[
\mathcal{D}_{g}^{(k)}(F^{*}H_{k})\geq\frac{\delta}{2}J_{g}^{(k)}(F^{*}H_{k})-2C\geq\frac{\delta}{2C}J^{(k)}(F^{*}H_{k})-2C
\]
(using that $g$ is bounded in the last inequality). Now, assuming
that the quantized modified Futaki invariants $\mbox{Fut}_{V,k}(W)$
vanish for any $W\in\mbox{aut }(X,V)_{0}$ (which by the expansion
\ref{eq:asymptotic expansion of mod quant fut invariant} is equivalent
to the vanishing of the $n+1$ higher order modified Futaki invariants
of $(X,V))$ the functional $\mathcal{E}_{g_{V}}^{(k)}(H_{k})$ and
hence $\mathcal{D}_{g_{V}}^{(k)}$ is also invariant under the action
of $\mbox{Aut }(X,V)_{0}$ (by the last statement in Prop \ref{prop:alg formula for fut}).
Hence, minimizing over $\mbox{Aut }(X,V)_{0}$ in the inequalities
above gives 
\begin{equation}
\mathcal{D}_{g}^{(k)}(H_{k})\geq\delta'\inf_{F\in\mbox{Aut }(X,V)_{0}}J^{(k)}(F^{*}H_{k})-C'\label{eq:coercivity of quantizd ding}
\end{equation}
In particular, since $J^{(k)}$ is an exhaustion function for the
space $\mathcal{H}_{k}^{T}/\R$ \cite{bbgz} we conclude that $\mathcal{D}_{g}^{(k)}(H_{k})$
is proper on $\mathcal{H}_{k}^{T}/\R$ mod $\mbox{Aut }(X,V)_{0}$
and hence, by Prop \ref{prop:vartional prop of quant ding} admits
a minimizer $H_{k}$ which is unique mod $\mbox{Aut }(X,V)_{0}.$
The minimizer $H_{k}$ is uniquely determined (mod $\R)$ by the normalization
condition that the corresponding metric $\phi_{k}:=FS(H_{k})$ minimizes
$J$ on the corresponding $\mbox{Aut }(X,V)_{0}-$orbit (using the
convexity properties of $J$ as in the proof of  Theorem \ref{thm:conv of krf text}).
Moreover, by the minimizing property of $H_{k}$ we have $\mathcal{D}_{g,k}(H_{k})\leq\mathcal{D}_{g,k}(\mbox{Hilb}(k\psi)$
for any fixed smooth metric $\psi$ on $-K_{X}$ with positive curvature
and hence letting $k\rightarrow\infty$ and using the convergence
in Prop \ref{prop:conv of l-functional} gives
\begin{equation}
\mathcal{D}_{g}^{(k)}(H_{k})\leq\inf_{\psi\in\mathcal{H}(-K_{X})}\mathcal{D}_{g}=\mathcal{D}_{g}(\phi_{KRS}),\label{eq:extremal propert of balanced}
\end{equation}
 where $\phi_{KRS}$ is the unique Kähler-Ricci soliton on $X$ normalized
as above. Finally, by the inequality \ref{eq:comparison of energy and quant}
(and using that $g$ is bounded) 
\[
\mathcal{D}_{g}(\phi_{k})\leq\mathcal{D}_{g}^{(k)}(H_{k})(\phi_{k})+\delta_{k}J^{(k)}(H_{k})+\delta_{k}
\]
and since $J^{(k)}(H_{k})$ is uniformly bounded (by the inequalities
\ref{eq:coercivity of quantizd ding} and \ref{eq:extremal propert of balanced})
we conclude that $\phi_{k}$ is an asymptotically minimizing sequence
for $\mathcal{D}_{g}.$ Hence it follows, just as in the proof of
Theorem \ref{thm:proper ding mab mod aut implies existence}, that
$\phi_{k}$ converges in $L^{1}$ (and even in energy) to a minimizer
of $\mathcal{D}_{g},$ which thus concludes the proof.

\end{document}